\RequirePackage{iftex}
\ifpTeX
  \documentclass[a4paper,dvipdfmx,11pt]{amsart}
\else
  \documentclass[a4paper,11pt]{amsart}
\fi
\usepackage{amsmath,amsthm,amssymb,mathrsfs,stmaryrd,mathtools}
\allowdisplaybreaks
\usepackage{bm}
\usepackage{url}
\usepackage{color}
\usepackage{eucal}
\usepackage{slashed}
\usepackage{physics}
\usepackage{graphicx}
\usepackage[alpine]{ifsym}
\usepackage{fontawesome5}
\usepackage{tikz}
\usetikzlibrary{nfold,arrows}
\usepackage{tikz-cd}
\usetikzlibrary{intersections, calc, arrows, arrows.meta, shadows}
\usepackage[utf8]{inputenc} 
\usepackage[T1]{fontenc}
\usepackage[style=alphabetic,sorting=nty]{biblatex}
\usepackage{geometry} 
\usepackage[inline]{enumitem} 
\setlist[enumerate]{itemsep=2pt,parsep=2pt,before={\parskip=2pt}}
\setlist[description]{style=standard}
\usepackage[colorlinks=true, hyperindex, linkcolor=magenta, pagebackref=false, citecolor=cyan, pdfpagelabels]{hyperref} 
\usepackage{aliascnt}
\usepackage{cleveref}
\usepackage{quiver}
\usepackage{mathtools}
\mathtoolsset{showonlyrefs=true}

\newcommand{\calB}{\mathcal{B}}
\newcommand{\calC}{\mathcal{C}}
\newcommand{\calD}{\mathcal{D}}
\newcommand{\calE}{\mathcal{E}}

\newcommand{\calU}{\mathcal{U}}

\newcommand{\rmN}{\mathrm{N}}

\newcommand{\rmu}{\mathrm{u}}

\newcommand{\bbI}{\mathbb{I}}

\newcommand{\C}{\calC}
\newcommand{\D}{\calD}
\newcommand{\E}{\calE}

\DeclareMathOperator{\Const}{const}

\newcommand{\An}{\mathrm{An}}

\newcommand{\Cat}{\mathrm{Cat}}

\newcommand{\CSeg}{\mathrm{CSeg}}

\newcommand{\Fun}{\mathrm{Fun}}

\newcommand{\PrL}{\mathrm{Pr}^{L}}
\newcommand{\PrR}{\mathrm{Pr}^{R}}

\newcommand{\Seg}{\mathrm{Seg}}
\newcommand{\Set}{\mathrm{Set}}

\newcommand{\sSpace}{\mathrm{sSpace}}

\newcommand{\sSet}{\mathrm{sSet}}

\newcommand{\ac}{\mathrm{ac}}

\newcommand{\const}{\mathrm{const}}

\newcommand{\co}{\mathrm{co}}

\DeclareMathOperator{\colim}{colim}

\renewcommand{\ev}{\mathrm{ev}} 
\newcommand{\fib}{\mathrm{fib}}

\DeclareMathOperator{\Hom}{Hom}
\newcommand{\inc}{\mathrm{inc}}
\newcommand{\inj}{\mathrm{inj}}
\DeclareMathOperator{\id}{id}

\newcommand{\inv}{\mathrm{inv}}

\newcommand{\KQ}{\mathrm{KQ}}
\newcommand{\Lan}{\mathrm{Lan}}
 
\DeclareMathOperator{\Map}{Map}

\DeclareMathOperator{\myop}{op}

\DeclareMathOperator{\pr}{pr} 

\newcommand{\rev}{\mathrm{rev}} 

\newcommand{\str}{\mathrm{str}} 
\newcommand{\Sp}{\mathrm{Sp}} 
\newcommand{\triv}{\mathrm{triv}} 

\newcommand{\flag}{\mathord{\text{\faFlag}}}
\newcommand{\AI}{\mathrm{AI}}
\newcommand{\AInvCat}{\mathrm{AICat}}
\newcommand{\Herm}{\mathrm{Herm}}
\newcommand{\HermFlag}{\Herm^{\flag}}
\newcommand{\image}{\operatorname{im}}

\newcommand{\FlDagCat}{\Cat^{\flag,\dag}}
\newcommand{\coFlDagCat}{\Cat^{\co\flag,\dag}}
\newcommand{\DagCatInf}{\Cat^{\dag}}

\newcommand{\SegRev}{\Seg_{\rev}}
\newcommand{\SegRevU}{\Seg_{\rev}^{u}}

\newcommand{\flagtag}{\mathrm{flag}}
\newcommand{\eqtag}{\mathrm{eq}}
\newcommand{\conj}{\mathrm{conj}}

\newcommand{\fftag}{\mathrm{ff}}
\newcommand{\pstag}{\mathrm{ps}}

\newcommand{\estag}{\mathrm{es}}

\newcommand{\sfK}{\mathsf{K}}

\newcommand{\oneDag}{\mathbf{1}_{\dagger}}

\usepackage{relsize}
\usepackage[bbgreekl]{mathbbol}
\usepackage{amsfonts}
\DeclareSymbolFontAlphabet{\mathbb}{AMSb} 
\DeclareSymbolFontAlphabet{\mathbbl}{bbold}
\newcommand{\prism}{{\mathlarger{\mathbbl{\Delta}}}}

\DeclareFontFamily{U}{dmjhira}{}
\DeclareFontShape{U}{dmjhira}{m}{n}{
  <-> dmjhira
}{}
\DeclareFontSubstitution{U}{dmjhira}{m}{n}
\newcommand{\yo}{\text{{\usefont{U}{dmjhira}{m}{n}\symbol{"48}}}}

\newcommand{\xtwoheadrightarrow}[1]{%
  \overset{#1}{\twoheadrightarrow}%
}

\theoremstyle{definition}
\newtheorem{theorem}{Theorem}[subsection]

\newaliascnt{conjecture}{theorem}

\aliascntresetthe{conjecture}
\crefname{conjecture}{conjecture}{conjectures}
\Crefname{conjecture}{Conjecture}{Conjectures}

\newaliascnt{construction}{theorem}
\newtheorem{construction}[construction]{Construction}
\aliascntresetthe{construction}
\crefname{construction}{construction}{constructions}
\Crefname{construction}{Construction}{Constructions}

\newaliascnt{convention}{theorem}
\newtheorem{convention}[convention]{Convention}
\aliascntresetthe{convention}
\crefname{convention}{convention}{conventions}
\Crefname{convention}{Convention}{Conventions}

\newaliascnt{corollary}{theorem}
\newtheorem{corollary}[corollary]{Corollary}
\aliascntresetthe{corollary}
\crefname{corollary}{corollary}{corollaries}
\Crefname{corollary}{Corollary}{Corollaries}

\newaliascnt{definition}{theorem}
\newtheorem{definition}[definition]{Definition}
\aliascntresetthe{definition}
\crefname{definition}{definition}{definitions}
\Crefname{definition}{Definition}{Definitions}

\newaliascnt{example}{theorem}

\aliascntresetthe{example}
\crefname{example}{example}{examples}
\Crefname{example}{Example}{Examples}

\newaliascnt{lemma}{theorem}
\newtheorem{lemma}[lemma]{Lemma}
\aliascntresetthe{lemma}
\crefname{lemma}{lemma}{lemmas}
\Crefname{lemma}{Lemma}{Lemmas}

\newaliascnt{notation}{theorem}
\newtheorem{notation}[notation]{Notation}
\aliascntresetthe{notation}
\crefname{notation}{notation}{notations}
\Crefname{notation}{Notation}{Notations}

\newaliascnt{observation}{theorem}

\aliascntresetthe{observation}
\crefname{observation}{observation}{observations}
\Crefname{observation}{Observation}{Observations}

\newaliascnt{proposition}{theorem}
\newtheorem{proposition}[proposition]{Proposition}
\aliascntresetthe{proposition}
\crefname{proposition}{proposition}{propositions}
\Crefname{proposition}{Proposition}{Propositions}

\newaliascnt{remark}{theorem}
\newtheorem{remark}[remark]{Remark}
\aliascntresetthe{remark}
\crefname{remark}{remark}{remarks}
\Crefname{remark}{Remark}{Remarks}

\newaliascnt{fact}{theorem}

\aliascntresetthe{fact}
\crefname{fact}{fact}{facts}
\Crefname{fact}{Fact}{Facts}

\newaliascnt{terminology}{theorem}

\aliascntresetthe{terminology}
\crefname{terminology}{terminology}{terminologies}
\Crefname{terminology}{Terminology}{Terminologies}

\theoremstyle{definition}
\newtheorem*{theorem*}{Theorem}
\newtheorem*{conjecture*}{Conjecture}
\newtheorem*{construction*}{Construction}
\newtheorem*{convention*}{Convention}
\newtheorem*{corollary*}{Corollary}
\newtheorem*{definition*}{Definition}
\newtheorem*{example*}{Example}
\newtheorem*{lemma*}{Lemma}
\newtheorem*{notation*}{Notation}
\newtheorem*{observation*}{Observation}
\newtheorem*{proposition*}{Proposition}
\newtheorem*{remark*}{Remark}
\newtheorem*{terminology*}{Terminology}

\newtheorem*{maintheorema}{Theorem A}
\newtheorem*{maintheoremb}{Theorem B}
\newtheorem*{maintheoremc}{Theorem C}
\newtheorem*{maintheoremd}{Theorem D}

\title[Models for dagger $(\infty,1)$-categories III]{Models for dagger $(\infty,1)$-categories III:\\Univalent flagged dagger $(\infty,1)$-categories and unitarily complete dagger Segal anima}

\author{Keima Akasaka} 

\thanks{Graduate School of Science and Engineering, Chiba University.
  Email: \href{mailto:quasi.cosmoi@gmail.com}{\texttt{quasi.cosmoi@gmail.com}}.
}

\date{\today}

\begin{document}

\begin{abstract}
  We identify the $(\infty,1)$-category presented by the dagger Bergner, dagger Joyal, and dagger Rezk models of Parts I and II with that of univalent flagged dagger $(\infty,1)$-categories.
  We first construct a realization--nerve adjunction between reversal simplicial anima and Hermitian flags.
  We prove that its nerve restricts to an equivalence between flagged dagger $(\infty,1)$-categories and dagger Segal anima, under which univalence corresponds to unitary completeness.
  We then construct the adjunctions for univalentization and Hermitian completion and identify univalentization with unitary completion.
\end{abstract}

\maketitle
\setcounter{tocdepth}{1}
\tableofcontents

\section{Introduction}\label{intro.section}

In Parts~I \cite{Aka26} and~II \cite{Aka26-2}, we constructed the dagger Bergner, dagger Joyal, and dagger Rezk model structures and related them by Quillen equivalences.
The purpose of this third part is to identify their common underlying $(\infty,1)$-category with the $(\infty,1)$-category of univalent flagged dagger $(\infty,1)$-categories in the sense of \cite{FerrerEtAl24}.
We establish this comparison through a realization--nerve adjunction between reversal simplicial anima and Hermitian flags.

The comparison must account for both the underlying anti-involution and the chosen Hermitian structures on objects.
The realization of a dagger Segal anima yields a Hermitian flag whose underlying flag is essentially surjective.
Unitary completeness then requires this flag to be fully faithful as a morphism into the Hermitian anima.
Separating these two conditions allows us to construct the nerve on all Hermitian flags and to identify precisely the subcategories on which it is an equivalence.

\subsection*{Definitions}

We briefly describe the objects which occur in the comparison.
An \emph{anti-involutive $(\infty,1)$-category} is a homotopy fixed point of the opposite action on $\Cat_{(\infty,1)}$.
For such an object $\calC$, write $\calC_*$ for its underlying $(\infty,1)$-category.
The homotopy fixed points of its equivariant core form the anima $\Herm(\calC)$ of \emph{Hermitian structures}, with a forgetful morphism $\epsilon_{\calC}:\Herm(\calC)\to\iota_0\calC_*$.

A \emph{Hermitian flag} is a morphism $i:A\to\Herm(\calC)$, with an anima $A$ and an anti-involutive $(\infty,1)$-category $\calC$.
Its \emph{underlying flag} is the composite $\epsilon_{\calC}i:A\to\iota_0\calC_*$.
A Hermitian flag is called
\begin{itemize}
  \item a \emph{flagged dagger $(\infty,1)$-category} if $\epsilon_{\calC}i$ is essentially surjective;
  \item a \emph{coflagged dagger $(\infty,1)$-category} if $i$ is fully faithful;
  \item a \emph{univalent flagged dagger $(\infty,1)$-category} (or a \emph{dagger $(\infty,1)$-category}) if both conditions hold.
\end{itemize}
We denote the full subcategories of flagged, coflagged, and univalent flagged dagger $(\infty,1)$-categories by $\FlDagCat_{(\infty,1)}$, $\coFlDagCat_{(\infty,1)}$, and $\DagCatInf_{(\infty,1)}$, respectively.

\subsection*{The main theorems}

Let $\prism_{\rev}$ be the reversal simplex category.
We write $\SegRev(\An)\subseteq\Fun(\prism_{\rev}^{\myop},\An)$ for the full subcategory of presheaves whose restriction to $\prism^{\myop}$ satisfies the Segal condition.
Unitary completeness is locality with respect to $e_{\rev}:\yo_{\rev}[0]\to E_{\rev}$ (\cref{direct.def.unitary-complete}).
Here $E_{\rev}([n])=\{0,1\}^{n+1}$, with morphisms induced by precomposition on vertices, and $e_{\rev}$ selects the constant tuple $0$.
The full subcategory of unitarily complete dagger Segal anima is denoted by $\SegRevU(\An)$.

The main construction is the realization--nerve adjunction
\begin{align}
  \ac_{\flag}:\Fun(\prism_{\rev}^{\myop},\An)
  \rightleftarrows\HermFlag_{(\infty,1)}:\rmN_{\flag}
\end{align}
obtained from the walking flagged dagger simplices of \cref{direct.construction.nerve}.
Every value of the ambient nerve is Segal.
Computing the unit and counit shows that the nerve is fully faithful on flagged objects, and that every dagger Segal anima occurs in this way (\cref{direct.thm.unit,direct.thm.counit}).
The walking unitary then detects the coflagged condition (\cref{direct.thm.unitary}), giving our first main result.

\begin{maintheorema}[\cref{direct.thm.flagged-equivalence,direct.cor.unitary-equivalence}]
  The flagged nerve $\rmN_{\flag} : \HermFlag_{(\infty,1)} \to \Fun(\prism_{\rev}^{\myop},\An)$ restricts to equivalences of $(\infty,1)$-categories
  \begin{align}
    \rmN_{\flag}|_{\FlDagCat_{(\infty,1)}}:
    \FlDagCat_{(\infty,1)}
    \xrightarrow{\simeq}\SegRev(\An)
    \quad \text{and} \quad
    \rmN_{\flag}|_{\DagCatInf_{(\infty,1)}}:
    \DagCatInf_{(\infty,1)}
    \xrightarrow{\simeq}\SegRevU(\An).
  \end{align}
\end{maintheorema}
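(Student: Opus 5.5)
The plan is to derive both equivalences formally from the adjunction $\ac_{\flag}\dashv\rmN_{\flag}$ together with the unit and counit computations already announced (\cref{direct.thm.unit,direct.thm.counit}) and the unitary detection result (\cref{direct.thm.unitary}). The general principle I will use is this: an adjunction $L\dashv R$ restricts to an equivalence between the full subcategory of objects on which the counit is an equivalence and the full subcategory of objects on which the unit is an equivalence. I therefore aim to identify these two loci with $\FlDagCat_{(\infty,1)}$ and $\SegRev(\An)$, and then intersect with the unitary conditions.

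\textbf{Flagged case.} First, since every value of $\rmN_{\flag}$ is Segal, $\rmN_{\flag}$ lands in $\SegRev(\An)$. Second, I show that for a Hermitian flag $i:A\to\Herm(\calC)$ the counit $\ac_{\flag}\rmN_{\flag}(i)\to i$ is an equivalence exactly when $\epsilon_{\calC}i$ is essentially surjective.
\begin{itemize}
\item On the anima part, the counit should be the identity of $A$, since $\rmN_{\flag}(i)_0\simeq A$.
\item On the category part, the realization only sees the full sub-anti-involutive category generated by the image of $A$.
\end{itemize}
This is the content of \cref{direct.thm.counit}, and it gives full faithfulness of $\rmN_{\flag}$ on flagged objects. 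Third, \cref{direct.thm.unit} gives that the unit $X\to\rmN_{\flag}\ac_{\flag}X$ is an equivalence for every $X\in\SegRev(\An)$. I also check that $\ac_{\flag}X$ is flagged, since the objects of the realization are generated by $X_0$. With these three facts, $\ac_{\flag}$ and $\rmN_{\flag}$ restrict to inverse equivalences $\FlDagCat_{(\infty,1)}\simeq\SegRev(\An)$.

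\textbf{Univalent case.} It remains to show that, for flagged $i$, the nerve $\rmN_{\flag}(i)$ is $e_{\rev}$-local if and only if $i$ is fully faithful. By the Yoneda lemma for the adjunction,
\[
\Map(E_{\rev},\rmN_{\flag}(i))\simeq\Map(\ac_{\flag}E_{\rev},i),
\]
and similarly for $\yo_{\rev}[0]$. By \cref{direct.thm.unitary}, $\ac_{\flag}E_{\rev}$ is the walking unitary, namely the flag with two points into the walking unitary isomorphism with its Hermitian structure. Maps from it to $i$ therefore parametrize pairs of points of $A$ together with a unitary between their images, which is a path in $\Herm(\calC)$. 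Locality then says that $A\to A\times_{\Herm(\calC)}^{\mathrm{paths}}A$ is an equivalence, i.e. the diagonal comparison $A\to A\times_{\Herm(\calC)}A$. For a map of anima this is exactly being a monomorphism, i.e. fully faithful. Restricting the first equivalence to these full subcategories gives $\DagCatInf_{(\infty,1)}\simeq\SegRevU(\An)$.

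\textbf{Main obstacle.} I expect the hardest step to be the identification in the univalent case, specifically checking that the space of maps out of the walking unitary into $i$ really is the path space of $\Herm(\calC)$ over $A\times A$. This is where Hermitian structures enter: a unitary must correspond to an equivalence in $\Herm(\calC)$, not merely an isomorphism in $\calC_*$. I would first try to compute $\Herm$ of the walking unitary as contractible with the two endpoints, and use that $\Herm$ commutes with the relevant limits.
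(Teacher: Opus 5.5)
Your proposal is correct and follows the paper's proof: the flagged case is exactly the unit/counit argument of \cref{direct.thm.flagged-equivalence}, and your identity $\Map(E_{\rev},\rmN_{\flag}X)\simeq\Map(\ac_{\flag}E_{\rev},X)\simeq A\times_{\Herm(\calC)}A$ is the paper's argument for \cref{direct.thm.unitary}, phrased through the adjunction rather than through full faithfulness of $\rmN_{\flag}$ on flagged sources. Three points need fixing. First, $\ac_{\flag}E_{\rev}\simeq\mathfrak u$ (compatibly with $e_{\rev}\mapsto 0$) cannot be cited from \cref{direct.thm.unitary}, which is what you are proving; it comes from the direct nerve computation $\rmN_{\flag}(0:\mathfrak e\to\mathfrak u)\simeq e_{\rev}$ of \cref{direct.lem.walking-unitary}, plus the counit being an equivalence at the flagged objects $\mathfrak e,\mathfrak u$. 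Second, maps out of $\mathfrak u$ are computed by the adjunction $j^{hC_2}\triv\dashv\Herm$ and contractibility of $I$, not by computing $\Herm$ of $\mathfrak u$. Third, locality makes the first projection $p_0$, not the diagonal $\delta$, an equivalence; this suffices since $p_0\delta\simeq\id_A$.
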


We next compare the completeness morphism with the point-set generator used in Part~II.
Together with the spine morphisms, it defines the localization set $S_{\rev}$ of \cref{presentation.rem.localization-locality}.
Passing from simplicial sets to anima identifies this localization with the unitarily complete objects in Theorem~A.

\begin{maintheoremb}[\cref{presentation.prop.reduction}]
  There is an equivalence of $(\infty,1)$-categories
  \begin{align}
    (L_{S_{\rev}}\Fun(\prism_{\rev}^{\myop},\sSet)_{\inj})_{(\infty,1)}
    \simeq
    \DagCatInf_{(\infty,1)}.
  \end{align}
\end{maintheoremb}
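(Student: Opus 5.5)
The plan is to factor the equivalence through Theorem~A and reduce to a comparison of localizations. First, I identify the underlying $(\infty,1)$-category of the injective model structure $\Fun(\prism_{\rev}^{\myop},\sSet)_{\inj}$ with the presheaf $(\infty,1)$-category $\Fun(\prism_{\rev}^{\myop},\An)$. This is the standard fact that the injective (or projective) model structure on diagrams in the Kan--Quillen model structure presents the functor $(\infty,1)$-category into anima. Second, I use the general principle that a left Bousfield localization $L_S M$ of a combinatorial left proper simplicial model category presents the full subcategory of $M_{(\infty,1)}$ spanned by $S$-local objects. This gives
\begin{align}
  (L_{S_{\rev}}\Fun(\prism_{\rev}^{\myop},\sSet)_{\inj})_{(\infty,1)}\simeq \Fun(\prism_{\rev}^{\myop},\An)^{S_{\rev}\text{-loc}},
\end{align}
where locality is tested against the images of the morphisms of $S_{\rev}$ in $\Fun(\prism_{\rev}^{\myop},\An)$. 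Injective cofibrancy of the sources and targets is not needed for this, since all objects are injectively cofibrant.

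Next, I identify the $S_{\rev}$-local objects with $\SegRevU(\An)$, using \cref{presentation.rem.localization-locality}. The spine part is handled by adjunction. The spine inclusions $\yo_{\rev}$ applied to the spines are the left Kan extensions along $\prism\to\prism_{\rev}$ of the ordinary spine inclusions. So locality against them is exactly the Segal condition on the restriction to $\prism^{\myop}$, which cuts out $\SegRev(\An)$. For the completeness generator, I show that its image in anima is equivalent, as a map out of $\yo_{\rev}[0]$, to $e_{\rev}:\yo_{\rev}[0]\to E_{\rev}$ of \cref{direct.def.unitary-complete}. At least I show that the two are equivalent after Segal localization, which suffices because we already restrict to Segal objects. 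Combining these steps with the second equivalence of Theorem~A, $\rmN_{\flag}:\DagCatInf_{(\infty,1)}\xrightarrow{\simeq}\SegRevU(\An)$, finishes the proof.

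The main obstacle is the comparison of the point-set completeness generator from Part~II with $e_{\rev}$. The Part~II generator is presumably a small simplicial-set model of a walking unitary, for instance a reversal analogue of the nerve of the walking isomorphism, or a finite piece of it. By contrast, $E_{\rev}$ is the presheaf $[n]\mapsto\{0,1\}^{n+1}$, the reversal codiscrete object on two points. I would first try to show that $E_{\rev}$ is itself the discrete (levelwise set) presheaf underlying the point-set generator. If instead the generator is only a finite subobject, I would show the inclusion is an $S_{\rev}$-local equivalence relative to the spine maps. Concretely, I would check that both have the same Segal localization, by computing mapping anima into an arbitrary dagger Segal anima $X$ and reducing via the Segal condition to the space of unitaries in $X_1$. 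Both sides should compute the fiber of the relevant map to $X_0$ over the space of invertible, dagger-compatible $1$-simplices.
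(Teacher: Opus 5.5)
Your plan is the paper's proof: it composes the identification $(L_{S_{\rev}}\Fun(\prism_{\rev}^{\myop},\sSet)_{\inj})_{(\infty,1)}\simeq\SegRevU(\An)$ with the equivalence $\DagCatInf_{(\infty,1)}\simeq\SegRevU(\An)$ of Theorem~A, where the first identification comes from the description of left Bousfield localizations by local objects, with spine locality giving the Segal condition and generator locality giving unitary completeness. The obstacle you flag is settled by your first option: the Part~II generator is $E_{\rev}^{\pstag}=\delta\rmN_{\dag}(\bbI_{\dag}^{\str})$, which is levelwise isomorphic to $\delta(\{0,1\}^{\bullet+1})$ compatibly with every map of $\prism_{\rev}$, since the dagger reverses chains and this is precomposition with the order-reversal, so no Segal-relative comparison is needed; your $(\iota_{\rev})_!$-adjunction handling of the spines is a harmless variant of the paper's direct computation, provided you note that the point-set spine is glued along levelwise monomorphisms and is therefore a homotopy colimit.
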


The Quillen equivalences of Parts~I and~II therefore show that the dagger Bergner, dagger Joyal, and dagger Rezk model categories all present $\DagCatInf_{(\infty,1)}$ (\cref{presentation.cor.all-models}).

The two conditions on a Hermitian flag also determine two adjunctions.
Image factorization replaces $i:A\to\Herm(\calC)$ by the inclusion of its image and defines a reflection onto coflagged objects (\cref{flag.prop.ambient-coflagged-reflection}).
Its restriction to flagged objects is univalentization.

\begin{maintheoremc}[\cref{flag.thm.univalentization,flag.cor.unitary-completion}]
  The inclusion $\DagCatInf_{(\infty,1)} \hookrightarrow \FlDagCat_{(\infty,1)}$ admits a left adjoint
  \begin{align}
    L_{\co\flag}|_{\flag} : \FlDagCat_{(\infty,1)} \to \DagCatInf_{(\infty,1)}.
  \end{align}
  Under the equivalence of Theorem~A, this adjunction corresponds to the reflective localization $\SegRev(\An) \to \SegRevU(\An)$ with respect to the completeness morphism.
\end{maintheoremc}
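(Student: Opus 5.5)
We construct the reflection onto coflagged objects by image factorization and restrict it to flagged objects; the identification with unitary completion then comes from Theorem~A. Given a Hermitian flag $i:A\to\Herm(\calC)$, factor $i$ in $\An$ as an effective epimorphism $A\twoheadrightarrow \image(i)$ followed by a monomorphism $\image(i)\hookrightarrow\Herm(\calC)$. Set $L_{\co\flag}(A,i,\calC)=(\image(i),\inc,\calC)$, with unit $(A\twoheadrightarrow\image(i),\id_{\calC})$.

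The universal property splits into two pieces. A morphism of Hermitian flags $(A,i,\calC)\to(B,j,\calD)$ consists of $F:\calC\to\calD$ of anti-involutive categories, a map $f:A\to B$, and a homotopy $j f\simeq \Herm(F) i$. For this to factor uniquely through the unit when $j$ is a monomorphism, we use two facts:
\begin{itemize}
  \item effective epimorphisms are left orthogonal to monomorphisms in $\An$, which gives the factorization $\image(i)\to B$ uniquely;
  \item the $\calC$-component is untouched.
\end{itemize}
Writing $\HermFlag$ as a lax-limit or Grothendieck construction over $\AInvCat$ makes the mapping anima a fibre product, where orthogonality gives the equivalence. This is \cref{flag.prop.ambient-coflagged-reflection}.

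Next we check that the reflection preserves flaggedness. Since $A\to\image(i)$ is surjective on $\pi_0$, the composite $\epsilon_{\calC}\circ\inc$ has the same essential image as $\epsilon_{\calC} i$. So a flagged input yields an object that is both flagged and coflagged, i.e.\ univalent. Because $\DagCatInf$ and $\FlDagCat$ are full subcategories, the restricted functor is left adjoint to the inclusion $\DagCatInf\hookrightarrow\FlDagCat$.

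For the second assertion, transport the adjunction along $\rmN_{\flag}$ using both equivalences of Theorem~A. This gives a left adjoint to $\SegRevU(\An)\hookrightarrow\SegRev(\An)$. Left adjoints to a fixed inclusion are unique, and the unitarily complete objects are by definition the $e_{\rev}$-local objects among Segal ones. Hence it suffices to show that $\SegRevU(\An)$ is reflective in $\SegRev(\An)$ via the $e_{\rev}$-localization, i.e.\ that the $S$-localization of presheaves (spine maps plus $e_{\rev}$) restricted to Segal objects lands exactly in the local objects. The subtle point here is that the $e_{\rev}$-localization need not a priori preserve Segal objects, but this does not matter. The transported reflector already lands in $\SegRevU$, and its unit $X\to\rmN_{\flag}L_{\co\flag}\ac_{\flag}X$ exhibits a reflection in $\SegRev$. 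We must show it is also an $S$-local equivalence in the full presheaf category, so that it agrees with the localization with respect to the completeness morphism.

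This last step is the main obstacle. I would show that the unit is an $S$-equivalence by writing the map $A\to\image(i)$ as a colimit of its Čech nerve. On nerves, this expresses $\rmN_{\flag}$ of the image as a geometric realization of objects obtained from $X$ by pushouts along $e_{\rev}$ (or products with $E_{\rev}$). This uses the walking unitary $E_{\rev}$ detecting the coflagged condition in \cref{direct.thm.unitary}: $\Map(E_{\rev},\rmN_{\flag}(-))$ computes the self-fibre product $A\times_{\Herm(\calC)}A$. Such maps are $e_{\rev}$-equivalences, and $S$-equivalences are closed under colimits, which concludes the proof.
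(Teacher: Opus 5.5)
Your construction of the reflection and its restriction to flagged objects is the paper's own argument. \cref{flag.prop.ambient-coflagged-reflection} obtains $L_{\co\flag}$ by pulling back the image-factorization reflection $L_{\fftag}\dashv\inc$ of $\Fun([1],\An)$ along $\Herm$, which is the same orthogonality computation you carry out on mapping anima. \cref{flag.thm.univalentization} then checks flaggedness exactly as you do, via $\pi_0$-surjectivity of $A\twoheadrightarrow\image(i)$. For the second assertion, your first two moves already finish the proof: transport along Theorem~A, and uniqueness of left adjoints to the inclusion $\SegRevU(\An)\hookrightarrow\SegRev(\An)$. This is why the paper records \cref{flag.cor.unitary-completion} without further argument.

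The step you call the main obstacle is not one. Let $L$ be any left adjoint to $\SegRevU(\An)\hookrightarrow\SegRev(\An)$, with unit $u_W\colon W\to LW$. Take any $S$-local $Z$; since locality for the spine maps is the Segal condition, these are exactly the $Z\in\SegRevU(\An)$. Then $u_W^*\colon\Map(LW,Z)\to\Map(W,Z)$ is an equivalence by the adjunction. These are the mapping anima of $\Fun(\prism_{\rev}^{\myop},\An)$, because all the subcategories involved are full. So $u_W$ is an $S$-local equivalence into an $S$-local object, and $L$ agrees with the restriction to $\SegRev(\An)$ of the localization at the spine maps and $e_{\rev}$ for free.

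Equivalently, $\yo_{\rev}[0]$ and $E_{\rev}\simeq\rmN_{\flag}(\mathfrak u)$ are themselves dagger Segal. Hence $\SegRevU(\An)$ is tautologically the $e_{\rev}$-local part of $\SegRev(\An)$, and any reflector onto it is the localization at $e_{\rev}$ inside $\SegRev(\An)$.

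You should therefore drop the \v{C}ech-nerve argument. It is unnecessary, and as sketched it is also unsupported. Nothing in your proposal shows that $\rmN_{\flag}$ of the pieces $A\times_{\Herm(\calC)}\cdots\times_{\Herm(\calC)}A$ arise from $\rmN_{\flag}X$ by pushouts along $e_{\rev}$ or by products with $E_{\rev}$. Nor does it show that the resulting comparison maps are $e_{\rev}$-equivalences. In Rezk's analogous explicit completion construction, that step is a genuine computation rather than a formal closure property.
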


Restricting the underlying anti-involutive $(\infty,1)$-category to the full subcategory spanned by objects equivalent to those selected by the flag gives a coreflection onto flagged objects (\cref{flag.prop.ambient-flagged-coreflection}).
On coflagged objects, this gives Hermitian completion.

\begin{maintheoremd}[\cref{flag.thm.hermitian-completion}]
  The inclusion $\DagCatInf_{(\infty,1)} \hookrightarrow \coFlDagCat_{(\infty,1)}$ admits a right adjoint
  \begin{align}
    R_{\flag}|_{\co\flag} : \coFlDagCat_{(\infty,1)} \to \DagCatInf_{(\infty,1)}.
  \end{align}
\end{maintheoremd}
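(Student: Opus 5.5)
To prove Theorem~D, I would first construct the ambient coreflection $R_{\flag}$ on all Hermitian flags, and then check that it carries coflagged objects to univalent ones. Given a Hermitian flag $i:A\to\Herm(\calC)$, let $\calC'\subseteq\calC$ be the full anti-involutive subcategory spanned by objects of $\calC_*$ equivalent to some $\epsilon_{\calC}i(a)$. Since the opposite action preserves the objects of $\calC_*$, this subcategory is stable under the involution and hence inherits an anti-involutive structure. The flag $i$ factors through $\Herm(\calC')\to\Herm(\calC)$, which is a monomorphism of anima: $\Herm$ is computed as homotopy fixed points of the equivariant core, and the core of a full subcategory inclusion is a monomorphism, so homotopy fixed points preserve this. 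Set $R_{\flag}(i)=(i':A\to\Herm(\calC'))$; its underlying flag is essentially surjective by construction, so $R_{\flag}(i)$ is flagged. The counit $R_{\flag}(i)\to i$ is the pair $(\id_A,\ \calC'\hookrightarrow\calC)$.

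To verify the coreflection, take a flagged $j:B\to\Herm(\calD)$ and a morphism $(f,F):j\to i$, consisting of $f:B\to A$ and an anti-involutive functor $F:\calD\to\calC$ compatible with the flags. Every object of $\calD_*$ is equivalent to some $\epsilon_{\calD}j(b)$. It is therefore sent by $F$ to an object equivalent to $\epsilon_{\calC}i(f(b))$, and so $F$ lands in $\calC'$. Since $\calC'\hookrightarrow\calC$ is a fully faithful inclusion of homotopy fixed points, which is a monomorphism in anti-involutive categories, maps $\calD\to\calC'$ are exactly the maps $\calD\to\calC$ with essential image in $\calC'$. The compatibility homotopy with the flags lifts uniquely because $\Herm(\calC')\to\Herm(\calC)$ is mono. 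Hence postcomposition with the counit induces an equivalence of mapping anima, and $R_{\flag}$ is right adjoint to $\FlDagCat_{(\infty,1)}\hookrightarrow\HermFlag_{(\infty,1)}$; this is \cref{flag.prop.ambient-flagged-coreflection}.

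It remains to restrict to coflagged objects. If $i$ is fully faithful, i.e.\ a monomorphism of anima, then $i'$ followed by the mono $\Herm(\calC')\to\Herm(\calC)$ equals $i$. By cancellation for monomorphisms, $i'$ is again a monomorphism, so $R_{\flag}(i)$ lies in $\FlDagCat_{(\infty,1)}\cap\coFlDagCat_{(\infty,1)}=\DagCatInf_{(\infty,1)}$. Since $\DagCatInf_{(\infty,1)}\subseteq\coFlDagCat_{(\infty,1)}$ are full subcategories, the universal property above, applied to univalent sources $j$, shows that $R_{\flag}|_{\co\flag}$ is right adjoint to the inclusion.

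The main obstacle is making the monomorphism claim for $\Herm(\calC')\to\Herm(\calC)$, and the corresponding claim for anti-involutive functors, precise at the level of homotopy fixed points, rather than only on underlying categories. I would deduce both from the fact that homotopy fixed points, being limits, preserve monomorphisms, combined with the fully faithful inclusion $\calC'_*\hookrightarrow\calC_*$ being equivariant.
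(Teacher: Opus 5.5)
Your restriction step is the paper's. The proof of \cref{flag.thm.hermitian-completion} also uses that $\Herm(\calC_{S_X})\to\Herm(\calC)$ is fully faithful (\cref{target.lem.equivariant-full-core}) and cancels it against the fully faithful $i$; the restricted adjunction is then formal.

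You differ in how you produce the ambient coreflection $R_{\flag}$. The paper never checks the universal property on mapping anima by hand. It lifts the essentially surjective/fully faithful factorization system from $\Cat_{(\infty,1)}$ to $\AInvCat_{(\infty,1)}$ by taking homotopy fixed points of the equivariant equivalence $\Fun([2],\Cat_{(\infty,1)})^{\estag,\fftag}\simeq\Fun([1],\Cat_{(\infty,1)})$ (\cref{flag.lem.anti-involutive-factorization}). The dual of \cite[Lemma~5.2.8.19]{HTT} then gives a coreflection of $\Fun([1],\AInvCat_{(\infty,1)})$ onto the arrows with essentially surjective underlying functor. The paper pulls this back along $j^{hC_2}\triv$ using the adjunct description of $\HermFlag_{(\infty,1)}$ in \cref{target.lem.equivalences-of-flagged-and-coflagged}. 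Your $\calC'$ appears there as the image of $i^{\sharp}:j^{hC_2}\triv(A)\to\calC$. This packaging gives for free the two facts your route must supply by hand: the coherent anti-involutive structure on $\calC'$, and the unique lifting of anti-involutive functors through $\calC'\hookrightarrow\calC$ (by orthogonality). In exchange, your route makes the mapping-anima computation explicit, and it does go through.

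Two of your justifications need repair.

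First, stability of $\calC'$ under the anti-involution does not follow from ``the opposite action preserves objects''. The functor $d_{\calC}$ can move objects: it swaps the two summands of $F_{\AI}([0])_*\simeq[0]\amalg[0]^{\myop}$. So that reasoning would apply to every full subcategory. The correct reason is that each $\epsilon_{\calC}i(a)$ underlies a Hermitian point, so $d_{\calC}$ fixes it up to equivalence. You then still need that a replete full subcategory whose set of equivalence classes is $d_{\calC}$-stable carries a unique compatible anti-involutive structure. This is a homotopy-fixed-point computation on a discrete $C_2$-set of subsets.

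Second, ``homotopy fixed points preserve monomorphisms'' only shows that $\Map_{\AInvCat_{(\infty,1)}}(\calD,\calC')\to\Map_{\AInvCat_{(\infty,1)}}(\calD,\calC)$ is an inclusion of components. You also need its image to be exactly the functors whose underlying functor lands in $\calC'_*$. For that, use the empty-or-contractible fibre argument of \cref{target.lem.equivariant-full-core}, applied via \cref{target.lem.homotopy-fixed-point-formalism} (2) to the $C_2$-anima with underlying anima $\Map_{\Cat_{(\infty,1)}}(\calD_*,\calC_*)$.
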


The ambient reflection and coreflection commute in the sense of \cref{properties.prop.commuting-corrections}.
We also describe unitary equivalences by relative homotopy fixed points and show that $E_{\rev}$ represents their anima (\cref{direct.thm.coherent-unitary-representability}).
This description gives a Dwyer--Kan criterion for equivalences after unitary completion (\cref{presentation.thm.unitary-dk}).

\subsection*{Organization}

This third part is organized as follows.
\begin{itemize}
  \item \Cref{direct.section.targets} recalls homotopy fixed points and defines anti-involutive $(\infty,1)$-categories, Hermitian flags, and the flagged and coflagged conditions;
  \item \Cref{direct.section} constructs the realization--nerve adjunction and computes its unit and counit.
  It then characterizes unitary completeness and proves the two equivalences of Theorem~A;
  \item \Cref{flag.section} constructs the two flag adjunctions of Theorems~C and~D, identifies univalentization with unitary completion, and proves their compatibility;
  \item \Cref{presentation.section} compares the completeness morphism with its point-set counterpart and identifies the model-categorical presentations of Theorem~B;
  \item \Cref{unitary.section} describes unitary equivalences by relative homotopy fixed points and proves the unitary Dwyer--Kan criterion.
\end{itemize}

\subsection*{Notation}

We fix some notational conventions.
\begin{itemize}
  \item Fix universes $\mathcal U\subset\mathcal V\subset\mathcal W$.
  \item The symbols $\An$ and $\Cat_{(\infty,1)}$ refer to $\mathcal U$-small anima and $(\infty,1)$-categories;
  their hatted versions $\widehat{\An}$ and $\widehat{\Cat}_{(\infty,1)}$ refer to $\mathcal V$-small objects.
  \item The notation $\Map_{\calE}(x,y)$ denotes an anima, whereas $\Fun(\calC,\calD)$ denotes the entire functor $(\infty,1)$-category.
  In particular, $\Map_{\Cat_{(\infty,1)}}(\calC,\calD)=\iota_0\Fun(\calC,\calD)$.
  \item Adjunctions between functors, and their equivariant units and counits, are understood in the standard $(\infty,2)$-category of $(\infty,1)$-categories, functors, and natural transformations.
\end{itemize}

\subsection*{AI declaration}

In preparing this paper, the author used GPT-6 Astra to improve the English and identify linguistic errors. 
It was also used to identify relevant references, allowing the author to shorten independently developed proofs by citing existing results. 
The author assumes full responsibility for all mathematical content.

\subsection*{Acknowledgements}

This work was supported by JST SPRING, Grant Number JPMJSP2109.

\section{Univalent flagged dagger \texorpdfstring{$(\infty,1)$}{(∞,1)}-categories}\label{direct.section.targets}\label{target.section}

In this section, we recall the definitions of univalent flagged dagger $(\infty,1)$-categories in the sense of \cite{FerrerEtAl24}.
We first recall homotopy fixed points and the equivariant core, and then organize choices of Hermitian structures into the $(\infty,1)$-category of Hermitian flags (\cref{target.def.hermitian-flags}).

\subsection{\texorpdfstring{$G$}{G}-actions}

We recall group actions, homotopy fixed points, and the equivariant adjunctions used below (\cref{target.lem.homotopy-fixed-point-formalism}).
Let $G$ be a small group and let $\calE$ be an $(\infty,1)$-category.
Although we only need $G=C_2$, the statements in this subsection apply to general $G$.

\begin{definition}
  A \emph{$G$-action} in $\calE$ is a functor $X : BG \to \calE$.
  A \emph{morphism of $G$-actions} $F : X \to Y$ is a natural transformation from $X$ to $Y$.
  We let $\Fun(BG,\calE)$ denote the $(\infty,1)$-category of $G$-actions and morphisms of $G$-actions in $\calE$.
\end{definition}

\begin{convention}
  When $\E$ is $\Cat_{(\infty,1)}$ or $\widehat{\Cat}_{(\infty,1)}$, we will call a $G$-action a \emph{$G$-category}, and a morphism of $G$-actions a \emph{$G$-equivariant functor}.

  Similarly, when $\E$ is $\An$ or $\widehat{\An}$, we will call a $G$-action a \emph{$G$-anima}, and a morphism of $G$-actions a \emph{$G$-equivariant morphism}.
\end{convention}

\begin{definition}
  Let $X\in\Fun(BG,\calE)$.
  Define the \emph{homotopy fixed-point object} $X^{hG}$ of $X$, if it exists, by the $BG$-shaped limit in $\calE$:
  \begin{align}
    X^{hG}
    :=
    \lim_{BG} X.
  \end{align}
\end{definition}

\begin{remark} \label{target.rem.constant-limit-adjunction}
  The unique functor $BG\to *$ induces a functor
  \begin{align}
    \triv:
    \calE \to \Fun(BG,\calE)
    : X \mapsto (X,\triv) := \Const_{BG}X.
  \end{align}
  If $\calE$ admits $BG$-shaped limits, then the constant-diagram functor admits the homotopy-fixed-point functor as a right adjoint:
  \begin{align}
    \triv:
    \calE
    \rightleftarrows
    \Fun(BG,\calE):
    (-)^{hG}.
  \end{align}
\end{remark}

\begin{notation}
  For a morphism of $G$-actions $F:X \to Y$, write
  \begin{align}
    F_{*}
    : X_{*} := X(*)
    \to Y(*) =: Y_{*}
    \quad \text{and} \quad
    F^{hG}
    :=
    \lim_{BG}F:
    X^{hG}
    \to
    Y^{hG}.
  \end{align}
\end{notation}

\begin{lemma}\label{target.lem.constant-groupoid-limit}
  Let $\calE$ be a locally small $(\infty,1)$-category.
  There is a natural equivalence of $(\infty,1)$-categories
  \begin{align}
    \lim_{BG}
    \Const_{BG}(\calE)
    \simeq
    \Fun(BG,\calE).
  \end{align}
\end{lemma}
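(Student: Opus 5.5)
We prove the equivalence by reducing it to the universal property of limits in $\Cat_{(\infty,1)}$ (or $\widehat{\Cat}_{(\infty,1)}$, since $\calE$ may be large), combined with the standard identification of limits of constant diagrams with cotensors. Concretely, $\Const_{BG}(\calE)$ is the constant diagram $BG\to\widehat{\Cat}_{(\infty,1)}$ at $\calE$. In any $(\infty,1)$-category $\calK$ admitting cotensors by anima (or by small $(\infty,1)$-categories), the limit of a constant diagram $\Const_K(x)$ indexed by an $\infty$-groupoid $K$ is the cotensor $x^{K}$. Applied to $\calK=\widehat{\Cat}_{(\infty,1)}$, whose cotensor is $\Fun(K,-)$, this gives the claim with $K=BG$.

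The argument proceeds in three steps.

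\emph{Corepresented functors.} For a test $(\infty,1)$-category $\calD$, we compute $\Map(\calD,\lim_{BG}\Const_{BG}\calE)\simeq\lim_{BG}\Map(\calD,\calE)$. The right-hand side is the limit of a constant $BG$-diagram of anima with value $\Map(\calD,\calE)$.

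\emph{Constant limits in anima.} For an anima $M$, the limit of the constant diagram $BG\to\An$ at $M$ is $\Map_{\An}(BG,M)$. This holds because $\An$ is generated under colimits by the point, so the limit of a constant diagram indexed by a groupoid $K$ corepresents $\Map(K,-)$, that is, $\lim_K\Const M\simeq\Map(\colim_K *,M)=\Map(K,M)$. Taking $M=\Map(\calD,\calE)=\iota_0\Fun(\calD,\calE)$ and using that $BG$ is an anima gives
\[
\Map(BG,\iota_0\Fun(\calD,\calE))\simeq\iota_0\Fun(BG,\Fun(\calD,\calE))\simeq\Map(\calD,\Fun(BG,\calE)),
\]
by the currying adjunction $\Fun(BG\times\calD,\calE)\simeq\Fun(\calD,\Fun(BG,\calE))$.

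\emph{Yoneda.} These equivalences are natural in $\calD$, so the Yoneda lemma yields $\lim_{BG}\Const_{BG}(\calE)\simeq\Fun(BG,\calE)$. Naturality in $\calE$ follows because every identification used is natural in $\calE$ as well.

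The main point to handle carefully is the middle step: making the identification $\lim_{BG}\Const M\simeq\Map(BG,M)$ coherent rather than merely objectwise, and doing so natural in both $\calD$ and $\calE$. To address this, we realize the constant diagram as the pullback of $M$ along $BG\to *$. By the adjunction in \cref{target.rem.constant-limit-adjunction} (with $\calE=\An$), $\lim_{BG}$ is right adjoint to $\triv$, so $\Map(N,\lim_{BG}\Const M)\simeq\Map_{\Fun(BG,\An)}(\Const N,\Const M)$. Straightening identifies the latter with maps $N\times BG\to M$ over nothing, that is, with $\Map(N,\Map(BG,M))$. The local smallness hypothesis ensures these mapping anima live in the fixed universe, so that all of the above takes place in $\widehat{\Cat}_{(\infty,1)}$.
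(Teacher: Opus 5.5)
Your argument is correct, but it is not the paper's argument.

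\textbf{The paper's route.} The paper computes the limit in a concrete model. It classifies $\Const_{BG}(\calE)$ by the trivial cocartesian fibration $BG\times\calE\to BG$ and computes the limit as the $(\infty,1)$-category of cocartesian sections. Over a groupoid every section is automatically cocartesian, since it sends arrows to equivalences. So the limit is $\Fun_{/BG}(BG,BG\times\calE)\simeq\Fun(BG,\calE)$.

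\textbf{Your route.} You detect both sides through the anima of maps out of a test object $\calD$. You reduce to the fact that a constant $BG$-indexed limit of anima is the cotensor $\Map_{\An}(BG,-)$, curry, and conclude by Yoneda.

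\textbf{What each buys.} The paper's argument is a one-line computation. It also produces the explicit cocartesian-section description that the paper reuses later, for instance when describing objects and morphisms of $\AInvCat_{(\infty,1)}$. Yours is model-independent and works in any $(\infty,1)$-category with cotensors by anima. It also isolates where the groupoid hypothesis enters: only through $\colim_{BG}*\simeq BG$. For a general indexing category one would get the cotensor by its groupoid completion.

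\textbf{The cost of your route.} It is the coherence bookkeeping you flag. To get naturality in $\calE$, the identification of corepresented functors must be natural in both variables. One can package this as the statement that $\lim_{BG}\Const_{BG}(-)$ and $\Fun(BG,-)$ are both right adjoint to $BG\times(-)$.

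Finally, your argument does not actually use local smallness of $\calE$. What it uses is that $\calE$ is an object of $\widehat{\Cat}_{(\infty,1)}$, whose mapping anima are $\mathcal V$-small.
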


\begin{proof}
  The constant diagram $\Const_{BG} \calE$ is classified by the projection $BG\times\calE\to BG$.
  Its limit is computed as the $(\infty,1)$-category of cocartesian sections.
  Since $BG$ is a groupoid, every section sends its arrows to equivalences in $BG\times\calE$, which are cocartesian.
  Hence we have 
  \begin{align}
    \lim_{BG}\Const_{BG}(\calE)
    &\simeq
    \Fun_{/BG}(BG,BG\times\calE)\\
    &\simeq
    \Fun(BG,\calE).
  \end{align}
\end{proof}

\begin{construction}
  Let $\calC:BG\to\Cat_{(\infty,1)}$ and $\calD:BG\to\widehat{\Cat}_{(\infty,1)}$ be $G$-categories.
  Define a $G$-category $\Fun^{\conj}(\calC,\calD):BG\to\widehat\Cat_{(\infty,1)}$ by the composite
  \begin{align}
    BG
    \xrightarrow{(\inv,\id)}
    BG^{\myop}\times BG 
    \xrightarrow{\calC^{\myop}\times\calD}
    \Cat_{(\infty,1)}^{\myop}\times\widehat{\Cat}_{(\infty,1)} 
    \xrightarrow{\Fun(-,-)}
    \widehat{\Cat}_{(\infty,1)}.
  \end{align}
  Its value at $* \in BG$ is $\Fun(\calC_{*},\calD_{*})$.
  Here $\inv: BG \to BG^{\myop} : g \mapsto (g^{-1})^{\myop}$ is inversion, and $\calC^{\myop}:BG^{\myop}\to\Cat_{(\infty,1)}^{\myop}$ is the opposite functor of $\calC$.
\end{construction}

\begin{construction}
  Let $\calC:BG\to\widehat{\Cat}_{(\infty,1)}$ be a $G$-category, and let $x,y\in\calC^{hG}$.
  Let $\widetilde x,\widetilde y:\triv(*)\to\calC$ be their adjuncts under the adjunction $\triv\dashv(-)^{hG}$.
  Define a $G$-anima $\Map_{\calC}(x,y):BG\to\widehat{\An}$ by the following pullback in $\Fun(BG,\widehat{\Cat}_{(\infty,1)})$:
  \begin{align}
    \Map_{\calC}(x,y)
    :=
    \triv(*)
    \times_{(\widetilde x,\widetilde y),\calC\times\calC,(\ev_0,\ev_1)}
    \Fun^{\conj}(\triv([1]),\calC).
  \end{align}
  Its value at $* \in BG$ is $\Map_{\calC_{*}}(x_{*},y_{*})$.
\end{construction}

\begin{lemma}\label{target.lem.homotopy-fixed-point-formalism}
  Let $\calC,\calD:BG\to\widehat{\Cat}_{(\infty,1)}$ be $G$-categories, and let $F : \calC \to \calD$ be an equivariant functor.
  Then the following statements hold.
  \begin{enumerate}
    \item If $F_{*}$ is an equivalence, then $F^{hG}$ is an equivalence.
    \item For $x,y\in\calC^{hG}$, there is a natural equivalence of anima
    \begin{align}
      \Map_{\calC^{hG}}(x,y) 
      \simeq 
      \lim_{BG}\Map_{\calC}(x,y).
    \end{align}
    \item If $F_{*}$ is fully faithful, then $F^{hG}$ is fully faithful.
    \item Suppose that the equivariant functors $L:\calC\to\calD, R:\calD\to\calC$, together with an equivariant unit and counit, form an equivariant adjunction in the sense of \cite[Appendix~B]{Hei26}.
    Then there is an adjunction $L^{hG}\dashv R^{hG}$.
    \item Let $L:\calC\to\calD$ be an equivariant functor.
    If $L_{*}$ admits a right adjoint $R_{*}$, then $R_{*}$ admits an equivariant refinement $R:\calD\to\calC$ for which the unit and counit of $L_{*}\dashv R_{*}$ are equivariant.
    Consequently, there is an adjunction $L^{hG}\dashv R^{hG}$.
    \item Let $\calB \rightarrow \calD \leftarrow \calC$ be a cospan in $\Fun(BG,\widehat{\Cat}_{(\infty,1)})$.
    Form the pullback $\calB \times_{\calD} \calC$ in $\Fun(BG,\widehat{\Cat}_{(\infty,1)})$.
    Then there are natural equivalences of $(\infty,1)$-categories
    \begin{align}
      (\calB\times_{\calD}\calC)_{*} 
      \simeq \calB_{*} \times_{\calD_{*}} \calC_{*}
      \quad \text{and} \quad 
      (\calB\times_{\calD}\calC)^{hG}
      \simeq
      \calB^{hG} \times_{\calD^{hG}} \calC^{hG}.
    \end{align}
    \item Let $\calC \in \Cat_{(\infty,1)}$.
    There is an equivalence of $(\infty,1)$-categories
    \begin{align}
      \Fun^{\conj}((\calC,\triv),\calD)^{hG}
      \simeq
      \Fun(\calC,\calD^{hG}).
    \end{align}
  \end{enumerate}
\end{lemma}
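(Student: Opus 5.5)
We prove the seven items in an order where later ones use earlier ones. The guiding principle is that $(-)^{hG}=\lim_{BG}$ is a limit. It therefore preserves equivalences, pullbacks and limits generally, and it commutes with forming mapping anima. Most statements then reduce to the corresponding non-equivariant facts, applied pointwise because $BG$ is a groupoid.

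\textbf{Items (1) and (6).} For (1), an equivariant functor $F$ whose underlying $F_*$ is an equivalence is an equivalence in $\Fun(BG,\widehat{\Cat}_{(\infty,1)})$, since equivalences of functors are detected objectwise and $BG$ has a single object. Applying $\lim_{BG}$ then gives that $F^{hG}$ is an equivalence. For (6), limits in $\Fun(BG,\widehat{\Cat}_{(\infty,1)})$ are computed pointwise, which gives the first equivalence. Limits commute with limits, which gives the second.

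\textbf{Items (2), (3) and (7).} For (2), we use $\Map_{\calC^{hG}}(x,y)\simeq *\times_{\calC^{hG}\times\calC^{hG}}\Fun([1],\calC^{hG})$. By (7) applied to $[1]$ with trivial action, $\Fun([1],\calC^{hG})\simeq\Fun^{\conj}(\triv([1]),\calC)^{hG}$. By (6), $\lim_{BG}$ commutes with the defining pullback of $\Map_{\calC}(x,y)$, and this gives (2). Item (7) is therefore needed first. We prove it by a natural Yoneda argument: for a test category $\calE$,
\begin{align}
\Map(\calE,\Fun^{\conj}((\calC,\triv),\calD)^{hG})
&\simeq\Map_{\Fun(BG,\widehat{\Cat})}(\triv\calE,\Fun^{\conj}(\triv\calC,\calD))\\
&\simeq\Map(\triv(\calE\times\calC),\calD)
\simeq\Map(\calE\times\calC,\calD^{hG}).
\end{align}
The middle step is the equivariant currying adjunction $-\times\triv\calC\dashv\Fun^{\conj}(\triv\calC,-)$ in $\Fun(BG,\widehat\Cat)$, where for trivial $\calC$ the conjugation action is just postcomposition. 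The last step is $\triv\dashv(-)^{hG}$ from \cref{target.rem.constant-limit-adjunction}. Item (3) follows from (2): $F$ induces an equivariant map $\Map_\calC(x,y)\to\Map_\calD(Fx,Fy)$ that is an equivalence on underlying anima, so by (1) for $G$-anima its limit is an equivalence.

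\textbf{Items (4) and (5).} For (4), the $2$-functor $\lim_{BG}$ preserves the triangle identities. Concretely, the equivariant unit and counit are $2$-cells in $\Fun(BG,\widehat\Cat)$, viewed as an $(\infty,2)$-category following \cite[Appendix~B]{Hei26}, and applying $\lim_{BG}$ yields $L^{hG}\dashv R^{hG}$. Item (5) is the main obstacle, since it requires coherently equivariantizing a right adjoint. The plan is to use the fact that an adjunction is determined by its left adjoint. The space of right adjoints with unit and counit, given $L_*$, is contractible; equivalently, the forgetful map from adjunctions to left adjoints is fully faithful, indeed an equivalence onto the left adjoint functors. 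So $\mathrm{Adj}\to\Fun^{L}$ is an equivalence of $(\infty,1)$-categories, compatibly with the $G$-actions, via the universal property of the walking adjunction (Riehl--Verity). The $G$-action on $L$, that is, a lift $BG\to\Fun^L(\calC_*,\calD_*)$ over the action, then lifts uniquely to $BG\to\mathrm{Adj}$. This lift is exactly an equivariant adjunction in the sense of (4), and (4) then gives the final claim.
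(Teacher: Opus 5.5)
Your proposal is correct. Items (1), (3), (4) and (6) follow the paper's argument. Items (2) and (7) agree in substance. The paper proves (7) by identifying $\Fun^{\conj}((\calC,\triv),\calD)$ with $\Fun(\calC,-)\circ\calD$ and using that $\Fun(\calC,-)$ preserves limits, and it proves (2) by the same fact applied to $[1]$. Your Yoneda/currying computation just unpacks ``right adjoints preserve limits,'' so proving (7) first changes nothing.

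The real difference is in (5). The paper observes that every transition functor of a $BG$-diagram is an equivalence, so the right-adjoint mate of each equivariance square of $L$ is automatically invertible. It then cites the dual of \cite[Lemma~B.1]{Hei26} to obtain the equivariant right adjoint with equivariant unit and counit. You instead use the Riehl--Verity theorem that a left adjoint extends essentially uniquely to a homotopy coherent adjunction. That contractibility needs the fully coherent data; it fails for $(R,\eta,\varepsilon)$ with triangle identities witnessed only by chosen homotopies.

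The trade-off is as follows. The paper's route works for any diagram shape satisfying the mate condition and uses the groupoid shape only to check that condition. Yours uses the groupoid shape essentially, to reduce everything to a lifting problem of anima, and pays for this with a heavier coherence theorem.

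Your sketch of (5) needs two corrections to be precise. First, an equivariant structure on $L$ is not a map $BG\to\Fun^L(\calC_*,\calD_*)$. It is a map $BG\to\iota_0\Fun([1],\widehat{\Cat}_{(\infty,1)})$ lying over $(\calC,\calD)$. Second, restriction along the left adjoint gives $\iota_0\Fun(\mathbf{Adj},\widehat{\Cat}_{(\infty,2)})\to\iota_0\Fun([1],\widehat{\Cat}_{(\infty,1)})$, and this is a monomorphism of anima onto the left-adjoint components. So the map classifying $L$ lifts uniquely. Currying, which is valid because $BG$ is a groupoid, then turns the lift into a functor $\mathbf{Adj}\to\Fun(BG,\widehat{\Cat}_{(\infty,2)})$, i.e.\ an adjunction as in (4). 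Phrased this way, ``compatibly with the $G$-actions'' is automatic, and you need only this space-level statement, not an equivalence of $(\infty,1)$-categories.
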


\begin{proof}
  (1)
  Since $F_*$ is an equivalence, $F$ is an equivalence in $\Fun(BG,\widehat\Cat_{(\infty,1)})$.
  Taking the limit preserves this equivalence.

  (2)
  The functor $\Fun([1],-)$ preserves limits, and the $G$-action on $[1]$ is trivial.
  Since limits commute with pullbacks, we have
  \begin{align}
    \lim_{BG}\Map_{\calC}(x,y)
    &\simeq
    \{(x,y)\}
    \times_{\calC^{hG}\times\calC^{hG}}
    \lim_{BG}\Fun^{\conj}(\triv([1]),\calC) \\
    &\simeq
    \{(x,y)\}
    \times_{\calC^{hG}\times\calC^{hG}}
    \Fun([1],\calC^{hG}) \\
    &\simeq
    \Map_{\calC^{hG}}(x,y).
  \end{align}

  (3)
  The functor $F:\calC\to\calD$ induces a morphism of $G$-anima $\Map_{\calC}(x,y)\to\Map_{\calD}(F^{hG}x,F^{hG}y)$.
  Its value at $* \in BG$ is $\Map_{\calC_{*}}(x_{*},y_{*}) \to \Map_{\calD_{*}}(F_{*}x_{*},F_{*}y_{*})$, which is an equivalence by full faithfulness of $F_*$.
  Taking limits and using (2) proves full faithfulness of $F^{hG}$.

  (4)
  The limit $2$-functor preserves adjunctions as in \cite[proof of Lemma~B.1]{Hei26}.
  Applying it to the given equivariant adjunction gives $L^{hG}\dashv R^{hG}$.

  (5)
  All transition functors are equivalences, so the right-adjoint mate of each equivariance square of $L$ is an equivalence.
  The dual of \cite[Lemma~B.1]{Hei26} supplies the equivariant right adjoint together with an equivariant unit and counit.
  Applying (4) gives the homotopy-fixed-point adjunction.

  (6)
  Let $\sfK$ be the cospan category $\{0\to 1\leftarrow 2\}$.
  The given cospan determines a functor $H: BG\times\sfK \to \widehat{\Cat}_{(\infty,1)}$.
  Taking the $\sfK$-shaped limit in the second variable defines a functor 
  \begin{align}
    (\calB\times_{\calD}\calC)(-)
    :=
    \lim_{\sfK}H(-,-):
    BG
    \to
    \widehat{\Cat}_{(\infty,1)}.
  \end{align}
  Since limits in functor $(\infty,1)$-categories are computed pointwise, we have 
  \begin{align}
    (\calB\times_{\calD}\calC)_{*}
    &\simeq
    \lim_{\sfK}H(*,-)
    \simeq
    \calB_{*} \times_{\calD_{*}} \calC_{*}, \\
    (\calB\times_{\calD}\calC)^{hG}
    &\simeq
    \lim_{BG} \lim_{\sfK} H
    \simeq
    \lim_{\sfK} \lim_{BG} H
    \simeq
    \calB^{hG} \times_{\calD^{hG}} \calC^{hG}.
  \end{align}

  (7)
  Since the $G$-action on $\calC$ is trivial, there is a natural equivalence of $G$-categories
  \begin{align}
    \Fun^{\conj}((\calC,\triv),\calD)
    \simeq
    \Fun(\calC,-)\circ\calD
    : BG \to \widehat\Cat_{(\infty,1)}.
  \end{align}
  Since $\Fun(\calC,-)$ preserves limits, we have
  \begin{align}
    \Fun^{\conj}((\calC,\triv),\calD)^{hG}
    &\simeq
    \lim_{BG}(\Fun(\calC,-)\circ\calD) \\
    &\simeq
    \Fun(\calC,\lim_{BG}\calD) \\
    &=
    \Fun(\calC,\calD^{hG}).
  \end{align}
\end{proof}

\subsection{Anti-involutive \texorpdfstring{$(\infty,1)$}{(∞,1)}-categories}

We now specialize to $G=C_2$.
Taking homotopy fixed points of the opposite action defines anti-involutive $(\infty,1)$-categories, and the equivariant core supplies their Hermitian anima (\cref{target.def.ainv,target.def.hermitian-core}).

\begin{notation}
  Let $\rev:\prism\to\prism$ be the functor which is the identity on objects and sends $\alpha:[m]\to[n]$ to 
  \begin{align}
    \alpha^{\vee} := \bar\rho_n\circ\alpha^{\myop}\circ\bar\rho_m^{-1},
  \end{align}
  where $\bar\rho_n:[n]^{\myop}\to[n]$ is the order-preserving isomorphism $i\mapsto n-i$.
\end{notation}

\begin{notation}\label{rev.def.reversal-simplex}
  Let $\prism_{\rev}$ be the category whose objects are the finite nonempty ordinals and whose morphisms are the order-preserving or order-reversing maps.
\end{notation}

\begin{notation}
  We write $(X,\rho)$ for a $C_{2}$-action $(X,\rho): BC_{2} \to \calE$.
  Here $X$ is its value at $* \in BC_{2}$, while $\rho$ denotes the full coherent action datum.
  This includes the equivalences $\rho(\omega):X\xrightarrow{\simeq}X$, together with their unit, multiplication, and all higher coherence data.
\end{notation}

\begin{remark}
  The reversal action on simplicial anima is the conjugation construction:
  there is an equivalence of $C_2$-categories
  \begin{align}
    (\Fun(\prism^{\myop},\An),(\rev^{\myop})^{*})
    \simeq
    \Fun^{\conj}((\prism^{\myop},\rev^{\myop}),(\An,\triv)).
  \end{align}
\end{remark}

\begin{lemma}\label{rev.lem.reversal-segal}
  The following statements hold.
  \begin{enumerate}
    \item The $C_{2}$-category $(\Fun(\prism^{\myop},\An),(\rev^{\myop})^{*})$ restricts to a $C_{2}$-category
    \begin{align}
      (\Seg(\An),(\rev^{\myop})^{*}) 
      : BC_{2} \to \widehat{\Cat}_{(\infty,1)}.
    \end{align}
    \item The $C_2$-category $(\Seg(\An),(\rev^{\myop})^{*})$ restricts to a $C_2$-category
    \begin{align}
      (\CSeg(\An),(\rev^{\myop})^{*})
      : BC_2\to\widehat{\Cat}_{(\infty,1)}.
    \end{align}
    \item There is a $C_{2}$-category
    \begin{align}
      (\Cat_{(\infty,1)},(-)^{\myop}) : BC_{2} \to \widehat{\Cat}_{(\infty,1)}.
    \end{align}
    \item The $C_{2}$-category $(\Cat_{(\infty,1)},(-)^{\myop})$ restricts to a $C_{2}$-category
    \begin{align}
      (\An,\triv)
      \simeq 
      (\An,(-)^{\myop}) 
      : BC_{2} \to \widehat{\Cat}_{(\infty,1)}.
    \end{align}
  \end{enumerate}
\end{lemma}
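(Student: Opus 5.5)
The four statements share one mechanism: a $C_2$-action on an $(\infty,1)$-category $\calE$ restricts to a full subcategory $\calE_0\subseteq\calE$ as soon as the generating involution $\rho(\omega)$ carries $\calE_0$ into itself. Full subcategories closed under an autoequivalence $\Phi$ correspond to subobjects of $\calE$ in $\Cat_{(\infty,1)}$. So a functor $BC_2\to\widehat{\Cat}_{(\infty,1)}$ landing in such a subobject is determined by its underlying functor on $BC_2$ together with a property that needs checking only on objects. Concretely, I would use that the inclusion of a replete full subcategory is a monomorphism, so that $\Fun(BC_2,\calE_0)\to\Fun(BC_2,\calE)$ is fully faithful onto those functors whose value lands in $\calE_0$. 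Here we view $\widehat{\Cat}_{(\infty,1)}$-valued actions as objects over $\calE$ via straightening; I will phrase this with the cocartesian fibration over $BC_2$ and its full subfibration. With that lemma in hand, each item reduces to an object-level closure check.

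For (1), the action $(\rev^{\myop})^*$ sends a simplicial anima $X$ to $X\circ\rev^{\myop}$. Since $\rev$ is the identity on objects and sends the spine inclusions $\{i,i+1\}\to[n]$ to $\{n-i-1,n-i\}\to[n]$, it maps the spine of $[n]$ to itself, permuting the edges. Hence the Segal maps $X_n\to X_1\times_{X_0}\cdots\times_{X_0}X_1$ for $X\circ\rev^{\myop}$ are the Segal maps for $X$ composed with the reversal of the iterated fiber product. So $X\circ\rev^{\myop}$ is Segal iff $X$ is. For (2), I would use the same argument with completeness. The completeness test is locality against $J\to\Delta^0$ (or, equivalently, the condition that $X_0\to X_{\mathrm{eq}}$ is an equivalence). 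Since $\rev$ fixes $[0]$ and sends the walking isomorphism to an isomorphic copy of itself, locality is preserved.

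For (3), I would not build the action by hand. Instead, I would invoke the standard fact that $\mathrm{Aut}(\Cat_{(\infty,1)})\simeq C_2$ (Toën, Lurie), whose generator is $(-)^{\myop}$. This gives a group map $C_2\to\mathrm{Aut}(\Cat_{(\infty,1)})$, and delooping yields $BC_2\to\widehat{\Cat}_{(\infty,1)}$. Alternatively, I would transport the action along the equivalence $\Cat_{(\infty,1)}\simeq\CSeg(\An)$. Under this equivalence $(-)^{\myop}$ corresponds to $(\rev^{\myop})^*$, since $\rev$ is an involutive automorphism of $\prism$, so $(\rev^{\myop})^*$ is a strict $C_2$-action already. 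Then (3) follows from (2), which is the route I would actually write.

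For (4), anima are closed under $(-)^{\myop}$, since a groupoid's opposite is a groupoid, so the restriction lemma gives $(\An,(-)^{\myop})$. The main obstacle is the identification with the trivial action, which involves coherence and not just an objectwise equivalence $X^{\myop}\simeq X$. I would argue via $\mathrm{Aut}(\An)\simeq *$: $\An$ is freely generated under colimits by the point, so its space of autoequivalences is contractible. A $C_2$-action on $\An$ is then a map $BC_2\to B\mathrm{Aut}(\An)\simeq *$, which is unique up to contractible choice, hence equivalent to $\triv$. If a more explicit route is preferred, I would check on $\CSeg$ that $\rev^{\myop}$ acts on the constant simplicial anima, whose nerves model $\An$, trivially and coherently, because $\rev$ restricts to the identity on $\prism_{\le 0}$ and constant objects are left Kan extended from there.
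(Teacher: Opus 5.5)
Your proposal is correct. Part (1), and the route you say you would write for (3) (transporting $(\rev^{\myop})^{*}$ along $\CSeg(\An)\simeq\Cat_{(\infty,1)}$), are the paper's. Your cocartesian-subfibration formulation also makes explicit what the paper only calls ``inheriting'' the action. Drop the sentence about $\Fun(BC_2,\calE_0)\to\Fun(BC_2,\calE)$, though: it concerns $C_2$-objects \emph{in} $\calE$, not $C_2$-actions \emph{on} $\calE$.

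The real differences are in (2) and (4).

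For (2), the paper uses the explicit square $Q_X$ built from the edges $e_{02},e_{13}\colon[1]\to[3]$. It checks $\rev(e_{02})=e_{13}$ and $\rev(p_n)=p_n$, so that $Q_{X^{\rev}}\simeq Q_X$ after swapping the factors. You use locality against $\rmN(I)\to\Delta^0$ instead. That needs a one-line justification you left implicit: since $\rev$ is strictly involutive, $(\rev^{\myop})^{*}$ is its own inverse, so $\Map(\rmN(I),X\circ\rev^{\myop})\simeq\Map(\rmN(I)\circ\rev^{\myop},X)$. Combine this with $\rmN(I)\circ\rev^{\myop}\cong\rmN(I^{\myop})\cong\rmN(I)$. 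Your version treats (1) and (2) uniformly, since reversal carries each generating map to an isomorphic one. The paper's version is more hands-on, and it matches the $\{0,2\},\{1,3\}$ pullback reused later to define $V_1^{\eqtag}$ in the appendix.

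For (4), the paper only cites \cite[Remark~2.5]{HLAS16}, while your argument via $\mathrm{Aut}(\An)\simeq *$ actually proves the uniqueness. You should add that an autoequivalence preserves the terminal object. Then, under $\Fun^{L}(\An,\An)\simeq\An$, the autoequivalences form the contractible component of $*$, and $C_2$-actions on $\An$ are classified by $\Map(BC_2,B\mathrm{Aut}(\An))\simeq *$.

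Your To\"en--Lurie alternative for (3) is also valid, but it uses much heavier input than needed.
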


\begin{proof}
  (1)
  For $n\geq2$, the functor $\rev$ carries the spine inclusion to the same spine with its consecutive edges in reverse order.
  Hence $X$ is Segal if and only if $X^{\rev}:=X\circ\rev^{\myop}$ is Segal.
  The full subcategory $\Seg(\An)$ therefore inherits the asserted action and the equivariant evaluation functor.

  (2)
  Let $X\in\Seg(\An)$.
  Write $e_{02},e_{13}:[1]\to[3]$ for the inclusions with images $\{0,2\}$ and $\{1,3\}$, respectively.
  For the unique map $p_n:[n]\to[0]$, write $c_n:=X(p_n):X_0\to X_n$.
  Then $X$ is complete if and only if the following square is a pullback in $\An$:
  \begin{align}
    Q_X:\quad
    \begin{tikzpicture}[auto,baseline=(current bounding box.center)]
      \node (tl) at (0, 1.5)
        {$X_0$};
      \node (tr) at (4, 1.5)
        {$X_3$};
      \node (bl) at (0, 0)
        {$X_0\times X_0$};
      \node (br) at (4, 0)
        {$X_1\times X_1$};
      \draw[->]
        (tl) to node
        {$\scriptstyle c_3$} (tr);
      \draw[->]
        (tl) to node[swap]
        {$\scriptstyle \Delta$} (bl);
      \draw[->]
        (tr) to node
        {$\scriptstyle (e_{02}^*,e_{13}^*)$} (br);
      \draw[->]
        (bl) to node
        {$\scriptstyle c_1\times c_1$} (br);
    \end{tikzpicture}
  \end{align}
  In $\prism$, we have $\rev(e_{02})=e_{13}$, $\rev(e_{13})=e_{02}$, and $\rev(p_n)=p_n$.
  Let $\tau_k:X_k\times X_k\to X_k\times X_k$ exchange the factors for $k=0,1$.
  Under the identification $X^{\rev}_{n}=X_n$, these identities give an equivalence $Q_{X^{\rev}}\simeq Q_X$ in $\Fun([1]\times[1],\An)$.
  Thus $Q_{X^{\rev}}$ is a pullback if and only if $Q_X$ is, so $X^{\rev}$ is complete if and only if $X$ is complete.
  Therefore the $C_{2}$-action on $\Seg(\An)$ restricts to that on $\CSeg(\An)$.

  (3)
  Under the equivalence $\CSeg(\An) \simeq \Cat_{(\infty,1)}$, the $C_{2}$-action $(\rev^{\myop})^{*}$ on $\CSeg(\An)$ corresponds to the opposite action $(-)^{\myop}$ on $\Cat_{(\infty,1)}$.

  (4)
  By \cite[Remark~2.5]{HLAS16}, the $C_{2}$-action on $\An$ is unique:
  there is an equivalence of $G$-categories 
  \begin{align}
    (\An,\triv)\simeq(\An,(-)^{\myop}) 
    : BC_{2} \to \widehat\Cat_{(\infty,1)}.
  \end{align}
\end{proof}

\begin{definition}\label{target.def.ainv}
  The $(\infty,1)$-category $\AInvCat_{(\infty,1)}$ of \emph{anti-involutive $(\infty,1)$-categories} is the homotopy fixed-point $(\infty,1)$-category of $(\Cat_{(\infty,1)},(-)^{\myop})$:
  \begin{align}
    \AInvCat_{(\infty,1)}
    :=
    (\Cat_{(\infty,1)}, (-)^{\myop})^{hC_{2}}.
  \end{align}
\end{definition}

\begin{remark}\label{target.not.ainv-cone-convention}
  An object of $\AInvCat_{(\infty,1)}$ is a cocartesian section of the opposite-action fibration over $BC_2$, by the description of limits of the $(\infty,1)$-category of cocartesian sections.
  It includes $d_{\calC}:\calC\simeq\calC^{\myop}$ and $d_{\calC}^{\myop}d_{\calC}\simeq\id_{\calC}$.
  The section also specifies the compatibility of this equivalence with $d_{\calC}$ and every higher $BC_2$-coherence.

  In this convention, a morphism $F:(\calC,d_{\calC}) \to (\calD,d_{\calD})$ in $\AInvCat_{(\infty,1)}$ contains an equivalence of functors $d_{\calD}F \simeq F^{\myop}d_{\calC} : \calC\to\calD^{\myop}$.
\end{remark}

\begin{lemma}\label{target.prop.equivariant-core}
  The following statements hold.
  \begin{enumerate}
    \item The adjunction $j: \An \rightleftarrows \Cat_{(\infty,1)} : \iota_{0}$ admits an equivariant refinement
    \begin{align}
      j:
      (\An,\triv)
      \simeq 
      (\An,(-)^{\myop})
      \rightleftarrows
      (\Cat_{(\infty,1)},(-)^{\myop}):
      \iota_0.
    \end{align}
    \item Taking homotopy fixed points gives an adjunction
    \begin{align}
      j^{hC_2}:
      \Fun(BC_2,\An)
      \rightleftarrows
      \AInvCat_{(\infty,1)}:
      \iota_0^{hC_2}.
    \end{align}
  \end{enumerate}
\end{lemma}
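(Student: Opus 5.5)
The plan is to obtain (1) from \cref{target.lem.homotopy-fixed-point-formalism}(5), and then deduce (2) from (1) by \cref{target.lem.homotopy-fixed-point-formalism}(4) together with \cref{target.lem.constant-groupoid-limit}.

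For (1), first make $j$ equivariant. By \cref{rev.lem.reversal-segal}(4), the opposite action on $\Cat_{(\infty,1)}$ restricts to the full subcategory $\An$, and this restricted action is identified with the trivial one. The inclusion $j$ is therefore the inclusion of a full $C_2$-subcategory, so it is an equivariant functor
\[
j:(\An,(-)^{\myop})\to(\Cat_{(\infty,1)},(-)^{\myop}).
\]
Concretely, the restriction of $(\Cat_{(\infty,1)},(-)^{\myop}):BC_2\to\widehat{\Cat}_{(\infty,1)}$ to the subfunctor spanned by groupoids is well defined because $(-)^{\myop}$ preserves groupoids. The underlying functor $j_*$ has the right adjoint $\iota_0$. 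Applying \cref{target.lem.homotopy-fixed-point-formalism}(5) to $L=j$ then gives an equivariant refinement of $\iota_0$, with equivariant unit and counit. The refinement is unique up to equivalence, and on objects it is induced by $\iota_0(\calC^{\myop})\simeq\iota_0(\calC)^{\myop}$. This proves (1).

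For (2), apply \cref{target.lem.homotopy-fixed-point-formalism}(4), or directly the last sentence of (5), to the equivariant adjunction from (1). This yields an adjunction
\[
j^{hC_2}\dashv\iota_0^{hC_2}.
\]
The right-hand side is $\AInvCat_{(\infty,1)}$ by \cref{target.def.ainv}. For the left-hand side, compose with the equivalence $(\An,(-)^{\myop})\simeq(\An,\triv)$ and use \cref{target.lem.constant-groupoid-limit}. Since $\An$ is locally small, this gives
\[
(\An,\triv)^{hC_2}=\lim_{BC_2}\Const_{BC_2}(\An)\simeq\Fun(BC_2,\An).
\]
Transporting $j^{hC_2}$ along this equivalence gives the stated adjunction.

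The main obstacle I expect is the compatibility step: the equivalence $(\An,\triv)\simeq(\An,(-)^{\myop})$ from \cite{HLAS16} has to be composed with $j$ so that the composite is still the equivariant inclusion. I would handle this by defining the action on $\An$ as the restriction of the opposite action, so that $j$ is equivariant on the nose. The trivialization is then only used at the end, to identify the homotopy fixed points with $\Fun(BC_2,\An)$. A secondary point is the size of the universes: \cref{target.lem.homotopy-fixed-point-formalism} is stated for $\widehat{\Cat}_{(\infty,1)}$-valued actions, and both $\An$ and $\Cat_{(\infty,1)}$ are $\mathcal V$-small, so the lemma applies.
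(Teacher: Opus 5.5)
Your proposal is correct and matches the paper's proof. For (1), you use the equivariant full inclusion $(\An,(-)^{\myop})\hookrightarrow(\Cat_{(\infty,1)},(-)^{\myop})$, the trivialization $(\An,\triv)\simeq(\An,(-)^{\myop})$, and \cref{target.lem.homotopy-fixed-point-formalism}~(5); for (2), you use part~(4) of that lemma together with \cref{target.lem.constant-groupoid-limit}. Composing with the trivialization before or after taking homotopy fixed points makes no difference, and the paper's extra observation, via part~(3), that $j^{hC_2}$ is fully faithful is not needed for the statement.
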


\begin{proof}
  (1)
  Compose the equivariant inclusion
  $(\An,(-)^{\myop})\hookrightarrow(\Cat_{(\infty,1)},(-)^{\myop})$
  with the equivalence $(\An,\triv)\simeq(\An,(-)^{\myop})$.
  Its underlying right adjoint is $\iota_0$.
  Hence \cref{target.lem.homotopy-fixed-point-formalism} (5) supplies the equivariant adjunction.

  (2)
  Apply \cref{target.lem.homotopy-fixed-point-formalism} (4) and identify $(\An,\triv)^{hC_2}$ with $\Fun(BC_2,\An)$ by \cref{target.lem.constant-groupoid-limit}.
  By \cref{target.lem.homotopy-fixed-point-formalism} (3), $j^{hC_2}$ is fully faithful, so its unit is an equivalence.
\end{proof}

\begin{definition}\label{target.def.hermitian-core}
  Define the \emph{Hermitian-core functor} $\Herm: \AInvCat_{(\infty,1)} \to \An$ by the composite 
  \begin{align}
    \Herm
    : \AInvCat_{(\infty,1)}
    \xrightarrow{\iota_0^{hC_2}}
    \Fun(BC_{2},\An) 
    \xrightarrow{(-)^{hC_2}}
    \An.
  \end{align}
\end{definition}

\begin{notation}\label{target.not.underlying-flag}
  For $\C \in \AInvCat_{(\infty,1)}$, write 
  \begin{align}
    \epsilon_{\calC}:
    \Herm(\calC)
    :=
    (\iota_0^{hC_{2}}\calC)^{hC_{2}}
    \to
    (\iota_0^{hC_{2}}\calC)_{*}
    \simeq 
    \iota_0\calC_{*}
  \end{align}
  for the value at $* \in BC_2$ of the counit $\Const_{BC_{2}}\Herm(\C) \to \iota_0^{hC_{2}}\calC$ of the adjunction $\const_{BC_{2}} \dashv (-)^{hC_{2}}$.
\end{notation}

\begin{lemma}
  For every $A \in \An$, there is an equivalence of anima 
  \begin{align}
    \Herm(j^{hC_{2}}\triv(A))
    \simeq 
    \Map_{\An}(BC_{2},A).
  \end{align}
\end{lemma}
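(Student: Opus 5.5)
We unwind the definition $\Herm = (-)^{hC_2}\circ\iota_0^{hC_2}$ and compute each factor on $j^{hC_2}\triv(A)$.

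\emph{Step 1: the first factor.} By \cref{target.prop.equivariant-core} (2), $j^{hC_2}$ is fully faithful, so the unit $\triv(A)\to\iota_0^{hC_2}j^{hC_2}\triv(A)$ is an equivalence in $\Fun(BC_2,\An)$. Here we identify $\triv(A)\in\Fun(BC_2,\An)\simeq(\An,\triv)^{hC_2}$ via \cref{target.lem.constant-groupoid-limit}. Hence
\begin{align}
  \Herm(j^{hC_2}\triv(A))\simeq \triv(A)^{hC_2}=\lim_{BC_2}\Const_{BC_2}A .
\end{align}

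\emph{Step 2: the limit.} A limit of a constant diagram in $\An$ indexed by an anima is a cotensor. Concretely, $\lim_{BC_2}\Const_{BC_2}A\simeq\Map_{\An}(BC_2,A)$, because for any $T\in\An$ we have
\begin{align}
  \Map(T,\lim_{BC_2}\Const A)\simeq\Map_{\Fun(BC_2,\An)}(\Const T,\Const A)\simeq\Map(T\times BC_2,A).
\end{align}
The last equivalence holds since $\Const$ corresponds to pullback along $BC_2\to *$ under the straightening $\Fun(BC_2,\An)\simeq\An_{/BC_2}$, which is right adjoint to $T\times BC_2$-forgetting. Alternatively, use $\lim_K\Const_K A\simeq\Fun(K,A)$ in $\Cat_{(\infty,1)}$, together with the fact that $\An\subseteq\Cat_{(\infty,1)}$ is closed under limits, which yields $\Map_{\An}(BC_2,A)$. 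Combining with Step 1 gives the claim.

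\emph{Main obstacle.} The delicate point is in Step 1: we must make sure that the object $j^{hC_2}\triv(A)$ is obtained by applying $j^{hC_2}$ to the object $\triv(A)$ under the identification $(\An,\triv)^{hC_2}\simeq\Fun(BC_2,\An)$. This matters because the equivariant structure on $j$ involves the nontrivial identification $(\An,\triv)\simeq(\An,(-)^{\myop})$ of \cref{rev.lem.reversal-segal} (4). Fully faithfulness of $j^{hC_2}$ sidesteps any explicit analysis of this twist, since only the unit being an equivalence is used. The cotensor identification in Step 2 is routine.
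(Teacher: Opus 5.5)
Your proposal is correct and follows the paper's proof. The paper also uses that the unit of $j^{hC_2}\dashv\iota_0^{hC_2}$ is an equivalence to reduce to $\triv(A)^{hC_2}=\lim_{BC_2}\Const_{BC_2}A$, and then identifies this limit with $\Map_{\An}(BC_2,A)$ via \cref{target.lem.constant-groupoid-limit}; that is your ``alternative'' in Step~2, and your Yoneda/cotensor computation is simply a more explicit justification of the same identification.
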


\begin{proof}
  By \cref{target.prop.equivariant-core}, the unit of the adjunction $j^{hC_2}\dashv\iota_0^{hC_2}$ is an equivalence.
  By \cref{target.lem.constant-groupoid-limit}, we have
  \begin{align}
    \Herm(j^{hC_2}\triv(A))
    &=
    (\iota_0^{hC_2}j^{hC_2}\triv(A))^{hC_2}\\
    &\simeq
    \triv(A)^{hC_2} \\
    &=
    \lim_{BC_2}\Const_{BC_2}(A) \\
    &\simeq
    \Map_{\An}(BC_2,A).
  \end{align}
\end{proof}

\begin{lemma}\label{target.lem.equivariant-full-core}
  Let $F:\calC\to\calD$ be a morphism in $\AInvCat_{(\infty,1)}$ whose underlying functor $F_*:\calC_*\to\calD_*$ is fully faithful.
  Then the following statements hold.
  \begin{enumerate}
    \item The induced morphism $\Herm(F):\Herm(\calC)\to\Herm(\calD)$ is fully faithful.
    \item Its essential image consists of the Hermitian points whose underlying objects belong to the essential image of $F_*$.
  \end{enumerate}
\end{lemma}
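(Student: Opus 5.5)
We reduce both statements to properties of the equivariant core $\iota_0^{hC_2}F$ and then pass to homotopy fixed points. Write $X:=\iota_0^{hC_2}\calC$ and $Y:=\iota_0^{hC_2}\calD$ in $\Fun(BC_2,\An)$, with underlying anima $X_*\simeq\iota_0\calC_*$ and $Y_*\simeq\iota_0\calD_*$. Then $\Herm(F)=(\iota_0^{hC_2}F)^{hC_2}:X^{hC_2}\to Y^{hC_2}$.

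The underlying map $(\iota_0^{hC_2}F)_*$ is $\iota_0F_*:\iota_0\calC_*\to\iota_0\calD_*$. Since $F_*$ is fully faithful, $\iota_0F_*$ is a monomorphism of anima: it is fully faithful as a functor of groupoids, i.e.\ an inclusion of a union of path components. Its image consists of the components of objects in the essential image of $F_*$.

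For (1), regard $X\to Y$ as a morphism of $C_2$-categories via $\An\subseteq\widehat{\Cat}_{(\infty,1)}$ (trivial and opposite actions agree on anima by \cref{rev.lem.reversal-segal} (4)). Its underlying functor is fully faithful, so \cref{target.lem.homotopy-fixed-point-formalism} (3) shows that $X^{hC_2}\to Y^{hC_2}$ is fully faithful. For a map of anima, this says it is a monomorphism.

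For (2), let $Y'\subseteq Y$ be the sub-$C_2$-anima spanned by the components of $Y_*$ lying in the essential image of $\iota_0F_*$. This union of components is $C_2$-stable, since the action on $Y_*$ is induced by $d_{\calD}$ and $d_{\calD}F\simeq F^{\myop}d_{\calC}$. Because $X_*\to Y_*$ is a monomorphism that is surjective onto $Y'_*$, the map $X\to Y'$ is an equivalence on underlying objects. By \cref{target.lem.homotopy-fixed-point-formalism} (1), it is then an equivalence $X^{hC_2}\simeq Y'^{hC_2}$.

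It remains to identify $Y'^{hC_2}$ inside $Y^{hC_2}$. Since $Y'\to Y$ is a monomorphism that is a union of components, the square
\begin{align}
  \begin{tikzpicture}[auto,baseline=(current bounding box.center)]
    \node (tl) at (0,1.5) {$Y'^{hC_2}$};
    \node (tr) at (3,1.5) {$Y^{hC_2}$};
    \node (bl) at (0,0) {$Y'_*$};
    \node (br) at (3,0) {$Y_*$};
    \draw[->] (tl) to (tr);
    \draw[->] (tl) to (bl);
    \draw[->] (tr) to node {$\scriptstyle \epsilon_{\calD}$} (br);
    \draw[->] (bl) to (br);
  \end{tikzpicture}
\end{align}
is a pullback. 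Indeed, a homotopy fixed point of $Y$ is a section of the associated fibration over $BC_2$. Such a section factors through the subfibration for $Y'$ precisely when its value at the basepoint lies in $Y'_*$, since $BC_2$ is connected and membership in a union of components is a property. Hence the essential image of $\Herm(F)$ consists exactly of the Hermitian points $h\in\Herm(\calD)$ with $\epsilon_{\calD}(h)$ in the essential image of $F_*$.

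The main point needing care is this pullback claim, that the fixed points of a component-wise sub-$C_2$-anima are detected on the underlying point. I would argue it by writing the inclusion as $Y'\simeq Y\times_{\pi_0Y}\pi_0Y'$ and using that $(-)^{hC_2}$ commutes with pullbacks. Then $(\pi_0 Y')^{hC_2}\to(\pi_0Y)^{hC_2}$ is the inclusion of $C_2$-fixed components, and every component met by a fixed point is fixed.
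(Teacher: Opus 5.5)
Your proof is correct, but it is organized differently from the paper's.

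\textbf{The paper's route.} It argues fiberwise. For each $y\in\Herm(\calD)$ it forms the $C_2$-anima $P_y=\triv(*)\times_{\iota_0^{hC_2}\calD}\iota_0^{hC_2}\calC$ and uses \cref{target.lem.homotopy-fixed-point-formalism}~(6) to identify the fiber of $\Herm(F)$ over $y$ with $P_y^{hC_2}$. It then observes that $(P_y)_*$, the fiber of $\iota_0F_*$ over $\epsilon_{\calD}(y)$, is empty or contractible. Hence $P_y^{hC_2}$ is empty or contractible accordingly. This single computation gives (1) and (2) at once, with no need to build a sub-$C_2$-anima or check $C_2$-stability.

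\textbf{Your route.} You factor $\iota_0^{hC_2}F$ through the sub-$C_2$-anima $Y'$ of components it meets, and obtain $X\simeq Y'$ from part~(1) of the formalism lemma. You then prove $(Y')^{hC_2}\simeq Y^{hC_2}\times_{Y_*}Y'_*$ by passing through $\pi_0$ and the fixed points of the discrete $C_2$-set $\pi_0Y_*$. This uses the same key input, commutation of $(-)^{hC_2}$ with pullbacks, but once globally rather than fiber by fiber. It buys a slightly more explicit conclusion: $\Herm(\calC)$ is the pullback of $\epsilon_{\calD}$ along the inclusion of the components of $\iota_0\calD_*$ meeting the essential image of $F_*$.

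(1) already follows from this, since $\Herm(F)$ is then an equivalence followed by a base change of a monomorphism. So your separate appeal to part~(3) is valid but redundant.

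\textbf{Two small remarks.}
\begin{enumerate}
\item The reference to the uniqueness of the $C_2$-action on $\An$ is not needed. A $C_2$-anima becomes a $C_2$-category simply by postcomposing with $\An\subseteq\widehat{\Cat}_{(\infty,1)}$. What you tacitly use is that this inclusion preserves limits, so homotopy fixed points agree whether computed in $\An$ or in $\widehat{\Cat}_{(\infty,1)}$.
\item $Y'$ is most cleanly defined as the image factorization of $X\to Y$ in $\Fun(BC_2,\An)$, which is computed pointwise. This makes both the $C_2$-stability and the existence of the factorization $X\to Y'$ automatic, rather than argued via $d_{\calD}F\simeq F^{\myop}d_{\calC}$.
\end{enumerate}
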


\begin{proof}
  Since $F_{*}$ is fully faithful, so is $\iota_0F_* : \iota_{0}\calC_{*} \to \iota_{0}\calD_{*}$.
  Let $y\in\Herm(\calD)$ and let $\widetilde y:\triv(*)\to\iota_0^{hC_2}(\calD)$ be its adjunct.
  Form the pullback $P_y:=\triv(*)\times_{\iota_0^{hC_2}\calD}\iota_0^{hC_2}\calC$ in $\Fun(BC_2,\An)$.
  By \cref{target.lem.homotopy-fixed-point-formalism} (6), homotopy fixed points commute with pullbacks, so we have 
  \begin{align}
    \Herm(\C) \times_{\Herm(\D)} \{y\}
    \simeq 
    P_y^{hC_2}.
  \end{align}
  The underlying anima of $P_y$ is the fiber of $\iota_0F_*$ over $\epsilon_{\calD}(y)$.
  It is empty if that object is outside the essential image, and contractible otherwise.
  In the first case, $P_y^{hC_2}$ is empty.
  In the second case, $P_y\to\triv(*)$ is an objectwise equivalence, so $P_y^{hC_2}$ is contractible.

  Thus every fiber $\Herm(\C) \times_{\Herm(\D)} \{y\}$ is empty or contractible, so $\Herm(F)$ is fully faithful. 
  The description of its essential image follows from the same computation.
\end{proof}

\begin{proposition}\label{properties.prop.anti-involutive}
  The following statements hold.
  \begin{enumerate}
    \item The $(\infty,1)$-category $\AInvCat_{(\infty,1)}$ is presentable.
    \item The evaluation $\ev_*:\AInvCat_{(\infty,1)}\to\Cat_{(\infty,1)}$ creates small limits and colimits.
    \item The evaluation $\ev_*:\AInvCat_{(\infty,1)}\to\Cat_{(\infty,1)}$ admits a left adjoint $F_{\AI} :\Cat_{(\infty,1)} \to \AInvCat_{(\infty,1)}$.
    \item The functor $\Herm:\AInvCat_{(\infty,1)}\to\An$ is an accessible right adjoint, with left adjoint $j^{hC_2}\triv$.
  \end{enumerate}
\end{proposition}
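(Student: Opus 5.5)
We prove the four statements in order, deducing everything from general facts about limits of presentable $(\infty,1)$-categories along functors that are simultaneously left and right adjoints.

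\textbf{Step 1 (presentability).} The $C_2$-action $(-)^{\myop}$ on $\Cat_{(\infty,1)}$ acts by equivalences. Each $\rho(\omega)$ is therefore both a left and a right adjoint. Hence the diagram $BC_2\to\widehat{\Cat}_{(\infty,1)}$ factors through $\PrL$ and also through $\PrR$. Limits in $\PrL$ and in $\PrR$ are computed in $\widehat{\Cat}_{(\infty,1)}$. Consequently $\AInvCat_{(\infty,1)}=\lim_{BC_2}\Cat_{(\infty,1)}$ is presentable, which gives (1).

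\textbf{Step 2 (creation of limits and colimits).} The projection $\ev_*$ from a limit in $\PrL$ is a left adjoint, and the projection from a limit in $\PrR$ is a right adjoint. So $\ev_*$ preserves small limits and colimits. For creation we also need conservativity and lifting. Concretely, I would use the model of $\AInvCat_{(\infty,1)}$ as cocartesian sections (\cref{target.not.ainv-cone-convention}). A diagram of sections has its limit and colimit computed fiberwise over the single object $*$. The transition equivalences preserve limits and colimits, so the fiberwise (co)limit is again a cocartesian section. Equivalences of sections are detected at $*$, because $BC_2$ has one object. This yields (2).

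\textbf{Step 3 (free anti-involutive category).} Since $\ev_*$ is an accessible functor between presentable $(\infty,1)$-categories that preserves limits, the adjoint functor theorem gives a left adjoint $F_{\AI}$, which is (3). An explicit alternative is the left Kan extension along $*\to BC_2$, namely $\calC\mapsto\calC\sqcup\calC^{\myop}$ with the swap action; I would mention it only as a remark.

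\textbf{Step 4 ($\Herm$ as a right adjoint).} By definition $\Herm$ is the composite $(-)^{hC_2}\circ\iota_0^{hC_2}$. The functor $\iota_0^{hC_2}$ is right adjoint to $j^{hC_2}$ by \cref{target.prop.equivariant-core}. The functor $(-)^{hC_2}:\Fun(BC_2,\An)\to\An$ is right adjoint to $\triv$ by \cref{target.rem.constant-limit-adjunction}. Composing these adjunctions gives $j^{hC_2}\triv\dashv\Herm$.

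For accessibility, both factors are right adjoints between presentable $(\infty,1)$-categories. $\Fun(BC_2,\An)$ is presentable, and $\AInvCat_{(\infty,1)}$ is presentable by Step 1. Right adjoints between presentable $(\infty,1)$-categories are automatically accessible, so $\Herm$ is accessible. This gives (4).

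The main obstacle is Step 2, specifically verifying that the $BC_2$-limit computed in $\PrL$ agrees with the one in $\widehat{\Cat}_{(\infty,1)}$ and that its projections are simultaneously left and right adjoints. I would cite the standard statement (Lurie, HTT 5.5.3.13 and 5.5.3.18) and check that $\ev_*$ is exactly the projection to the vertex $*$.
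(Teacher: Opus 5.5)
Your proposal is correct and follows essentially the same route as the paper. You obtain presentability from the opposite action factoring through both $\PrL$ and $\PrR$ (HTT 5.5.3.13, 5.5.3.18), creation from preservation plus pointwise detection of equivalences, $F_{\AI}$ from the adjoint functor theorem, and $j^{hC_2}\triv\dashv\Herm$ by composing $j^{hC_2}\dashv\iota_0^{hC_2}$ with $\triv\dashv(-)^{hC_2}$, with accessibility from HTT 5.4.7.7. The only difference is that your explicit fiberwise argument with cocartesian sections in Step 2 is an extra check the paper skips: it simply combines preservation, conservativity of $\ev_*$, and the existence of (co)limits in the presentable $\AInvCat_{(\infty,1)}$.
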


\begin{proof}
  (1)
  Since the opposite action is an equivalence on $\Cat_{(\infty,1)}$, it preserves limits and colimits.
  The action therefore defines diagrams in both $\PrL$ and $\PrR$.
  Presentability of $\AInvCat_{(\infty,1)} = \lim_{BC_{2}}(\Cat_{(\infty,1)},(-)^{\myop})$ follows from \cite[Proposition~5.5.3.13 and Theorem~5.5.3.18]{HTT}.

  (2)
  By the same theorems, the evaluation $\ev_*$ preserves small limits and colimits.
  Since equivalences between cocartesian sections are detected pointwise, $\ev_*$ is conservative.
  Hence it also reflects limit cones and colimit cocones.
  Any underlying limit cone or colimit cocone is equivalent to the image of such a (co)cone in $\AInvCat_{(\infty,1)}$.

  (3)
  It follows from (1), (2), and the adjoint functor theorem \cite[Corollary~5.5.2.9 (2)]{HTT}.

  (4)
  There is an adjunction $j^{hC_2}\triv:\An\rightleftarrows\AInvCat_{(\infty,1)}:\Herm$.
  Both $(\infty,1)$-categories are presentable, so this right adjoint is accessible by \cite[Proposition~5.4.7.7]{HTT}.
\end{proof}

\subsection{Hermitian flags}

We introduce the $(\infty,1)$-category of Hermitian flags $i : A \to \Herm(\calC)$, where $A \in \An$ and $\calC \in \AInvCat_{(\infty,1)}$.

\begin{definition}\label{target.def.hermitian-flags}
  Define the $(\infty,1)$-category $\HermFlag_{(\infty,1)}$ of \emph{Hermitian flags} by the pullback in $\widehat{\Cat}_{(\infty,1)}$:
  \begin{align}
    \HermFlag_{(\infty,1)}
    :=
    \Fun([1],\An) \times_{\ev_1,\An,\Herm} \AInvCat_{(\infty,1)}.
  \end{align}
\end{definition}

\begin{remark}
  A Hermitian flag consists of a morphism of anima $i : A \to \Herm(\calC)$, where $A \in \An$ and $\C \in \AInvCat_{(\infty,1)}$.
  By \cref{target.not.underlying-flag}, there is a canonical \emph{underlying flag} $A \xrightarrow{i} \Herm(\calC) \xrightarrow{\epsilon_{\C}} \iota_{0}\C_{*}$.
\end{remark}

\begin{remark}\label{target.prop.mapping-spaces}
  Let $(i : A \to \Herm(\calC)), (j : B \to \Herm(\calD)) \in \HermFlag_{(\infty,1)}$.
  There is a natural equivalence
  \begin{align}
    \Map_{\HermFlag_{(\infty,1)}}(i,j)
    &\simeq
    \Map_{\Fun([1],\An)}(i,j)
    \times_{\Map_{\An}(\Herm(\calC),\Herm(\calD))}
    \Map_{\AInvCat_{(\infty,1)}}(\calC,\calD) \\
    &\simeq 
    \Map_{\An}(A,B) \times_{\Map_{\An}(A,\Herm(\calD))} \Map_{\AInvCat_{(\infty,1)}}(\calC,\calD).
  \end{align}
\end{remark}

\begin{proposition}\label{target.lem.ambient-presentability}
  The $(\infty,1)$-category $\HermFlag_{(\infty,1)}$ is presentable.
\end{proposition}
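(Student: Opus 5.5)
The plan is to realize $\HermFlag_{(\infty,1)}$ as a pullback in $\PrR$ and invoke the closure of presentable $(\infty,1)$-categories under limits along accessible right adjoints, \cite[Proposition~5.5.3.13 and Theorem~5.5.3.18]{HTT}. By \cref{target.def.hermitian-flags}, $\HermFlag_{(\infty,1)}$ is the pullback of the cospan
\begin{align}
  \Fun([1],\An)\xrightarrow{\ev_1}\An\xleftarrow{\Herm}\AInvCat_{(\infty,1)}
\end{align}
in $\widehat{\Cat}_{(\infty,1)}$. Since the forgetful functor $\PrR\to\widehat{\Cat}_{(\infty,1)}$ preserves small limits, it suffices to check that this is a cospan in $\PrR$: all three vertices must be presentable and both legs must be accessible right adjoints.

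First, $\An$ is presentable, and so is $\Fun([1],\An)$, by \cite[Proposition~5.5.3.6]{HTT}. Next, $\AInvCat_{(\infty,1)}$ is presentable by \cref{properties.prop.anti-involutive} (1), and $\Herm$ is an accessible right adjoint, with left adjoint $j^{hC_2}\triv$, by \cref{properties.prop.anti-involutive} (4).

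It remains to handle $\ev_1$. Limits and colimits in $\Fun([1],\An)$ are computed pointwise, so $\ev_1$ preserves small limits and all filtered colimits, and in particular it is accessible. By the adjoint functor theorem \cite[Corollary~5.5.2.9]{HTT} it is therefore a right adjoint. Concretely, its left adjoint is left Kan extension along $\{1\}\hookrightarrow[1]$, which sends $B$ to $(\emptyset\to B)$. One could equally note that $\ev_1$ has a right adjoint $B\mapsto(B\to B)$, but only the right-adjoint property of $\ev_1$ matters here.

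With all inputs in $\PrR$, the pullback computed in $\widehat{\Cat}_{(\infty,1)}$ is presentable. I expect no real obstacle. The only point needing care is that the limit is taken in $\widehat{\Cat}_{(\infty,1)}$ and agrees with the limit in $\PrR$, which is exactly the content of \cite[Theorem~5.5.3.18]{HTT}. Since the cospan's legs are strictly functors rather than isofibrations, I would also remark that $\infty$-categorical pullbacks are homotopy-invariant, so no replacement is needed.
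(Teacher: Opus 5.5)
Your proposal is correct and is essentially the paper's proof. In both, the defining pullback is a cospan in $\PrR$: $\AInvCat_{(\infty,1)}$ is presentable with $\Herm$ an accessible right adjoint (\cref{properties.prop.anti-involutive}), and $\ev_1$ is right adjoint to $B\mapsto(\varnothing\to B)$, so \cite[Proposition~5.5.3.6 and Theorem~5.5.3.18]{HTT} applies (your extra citation of \cite[Proposition~5.5.3.13]{HTT}, the $\mathrm{Pr}^{L}$ analogue, is not needed).
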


\begin{proof}
  By \cref{properties.prop.anti-involutive}, $\AInvCat_{(\infty,1)}$ is presentable and $\Herm$ is an accessible right adjoint.
  The functor $\ev_1:\Fun([1],\An)\to\An$ is right adjoint to $H\mapsto(\varnothing\to H)$.
  Thus the pullback of \cref{target.def.hermitian-flags} is a pullback in $\PrR$ and is presentable by \cite[Proposition~5.5.3.6 and Theorem~5.5.3.18]{HTT}.
\end{proof}

\begin{lemma}\label{target.lem.hermitian-flag-equivalences}
  The forgetful functors
  \begin{align}
    \HermFlag_{(\infty,1)} \to \An
    &: (i:A\to\Herm(\calC))\mapsto A, \\
    \HermFlag_{(\infty,1)} \to \Cat_{(\infty,1)}
    &: (i:A\to\Herm(\calC))\mapsto\calC_{*}
  \end{align}
  jointly detect equivalences.
\end{lemma}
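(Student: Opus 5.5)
We need to show: a morphism $(f,F)$ in $\HermFlag_{(\infty,1)}$ from $i:A\to\Herm(\calC)$ to $j:B\to\Herm(\calD)$ is an equivalence as soon as $f:A\to B$ is an equivalence of anima and $F_*:\calC_*\to\calD_*$ is an equivalence of $(\infty,1)$-categories. The plan is to reduce, in two steps, to the standard fact that the projections out of a pullback of $(\infty,1)$-categories jointly detect equivalences, and then treat each factor separately.

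\textbf{Step 1 (reduce to the two factors).} Use the defining pullback of \cref{target.def.hermitian-flags}. A morphism in $\Fun([1],\An)\times_{\An}\AInvCat_{(\infty,1)}$ is an equivalence if and only if its images in $\Fun([1],\An)$ and in $\AInvCat_{(\infty,1)}$ are both equivalences. One way to see this is through \cref{target.prop.mapping-spaces}: mapping anima of the pullback are pullbacks of mapping anima, so an inverse can be assembled componentwise. Equivalently, the equivalences in a pullback of $\infty$-categories are exactly the pairs of equivalences lying over an equivalence.

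\textbf{Step 2 (the arrow factor).} In $\Fun([1],\An)$, equivalences are detected pointwise. So the square consisting of $f$ and $\Herm(F)$ is an equivalence once $f$ and $\Herm(F)$ are both equivalences.

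\textbf{Step 3 (the $\AInvCat_{(\infty,1)}$ factor).} The image of $(f,F)$ in $\AInvCat_{(\infty,1)}$ is $F$. Since $F_*=\ev_*(F)$ is an equivalence, and $\ev_*$ is conservative by \cref{properties.prop.anti-involutive}(2) (equivalences of cocartesian sections are detected pointwise), $F$ is an equivalence in $\AInvCat_{(\infty,1)}$. Alternatively, \cref{target.lem.homotopy-fixed-point-formalism}(1) gives this directly. Because $\Herm$ is a functor, $\Herm(F)$ is then an equivalence, which completes Step 2 as well.

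Combining the three steps proves the lemma. The only point requiring care is Step 1, the fact that equivalences in a pullback of $(\infty,1)$-categories are detected componentwise. Here the approach is either to cite it, or to verify it from the mapping-anima formula: for each object, postcomposition with $(f,F)$ is a map of pullbacks of mapping anima which is an equivalence on each corner, hence an equivalence, so the Yoneda lemma applies. The rest is formal.
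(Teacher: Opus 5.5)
Your proof is correct and is essentially the paper's argument: \cref{target.prop.mapping-spaces} together with pointwise detection in $\Fun([1],\An)$ reduces the question to $f$ and $F$ (with $\Herm(F)$ an equivalence once $F$ is), and conservativity of $\ev_*$ from \cref{properties.prop.anti-involutive}~(2) reduces $F$ to $F_*$. The one slip is your aside that \cref{target.lem.homotopy-fixed-point-formalism}~(1) gives Step~3 directly: that statement is about an equivariant functor between $G$-categories and the functor it induces on homotopy fixed points, not about a single morphism of $\AInvCat_{(\infty,1)}$, so rely on the conservativity argument instead.
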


\begin{proof}
  Let $(F,f):(i:A\to\Herm(\calC))\to(j:B\to\Herm(\calD))$ be a morphism in $\HermFlag_{(\infty,1)}$, with $F:\calC\to\calD$ and $f:A\to B$.
  By \cref{target.prop.mapping-spaces} and pointwise detection of equivalences in $\Fun([1],\An)$, $(F,f)$ is an equivalence if and only if $F$ and $f$ are equivalences.
  By \cref{properties.prop.anti-involutive} (2), $F$ is an equivalence if and only if its underlying functor $F_*$ is an equivalence.
\end{proof}

\begin{lemma}\label{target.lem.herm-forgetful-adjoints}
  The following statements hold.
  \begin{enumerate}
    \item The forgetful functor $\HermFlag_{(\infty,1)} \to \AInvCat_{(\infty,1)} : (i : A \to \Herm(\C)) \mapsto \C$ admits fully faithful left and right adjoints
    \begin{align}
      L_{\AI,\HermFlag} 
      &: \AInvCat_{(\infty,1)} \to \HermFlag_{(\infty,1)} 
      : \C \mapsto (! : \emptyset \to \Herm(\C)), \\
      R_{\AI,\HermFlag} 
      &:\AInvCat_{(\infty,1)} \to \HermFlag_{(\infty,1)} 
      : \C \mapsto (\id_{\Herm(\C)} : \Herm(\C) \to \Herm(\C)).
    \end{align} 
    \item The forgetful functor $\HermFlag_{(\infty,1)} \to \An : (i : A \to \Herm(\C)) \mapsto A$ admits a left adjoint and a fully faithful right adjoint 
    \begin{align}
      L_{\An,\HermFlag}
      &: \An \to \HermFlag_{(\infty,1)} 
      : A \mapsto (\eta_{A} : A \to \Herm(j^{hC_{2}}\triv(A))), \\
      R_{\An,\HermFlag}
      &: \An \to \HermFlag_{(\infty,1)} 
      : A \mapsto (! : A \to \Herm(\oneDag) \simeq *).
    \end{align} 
    \item The forgetful functor $\HermFlag_{(\infty,1)} \to \An : (i : A \to \Herm(\C)) \mapsto \Herm(\C)$ admits a left adjoint
    \begin{align}
      \overline{L}_{\An,\HermFlag}
      : \An \to \HermFlag_{(\infty,1)} 
      : A \mapsto (\emptyset \to \Herm(j^{hC_{2}}\triv(A))).
    \end{align} 
  \end{enumerate}
\end{lemma}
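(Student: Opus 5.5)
We verify each adjunction directly from the mapping-space formula of \cref{target.prop.mapping-spaces},
\begin{align}
  \Map_{\HermFlag_{(\infty,1)}}(i,j)
  \simeq
  \Map_{\An}(A,B)\times_{\Map_{\An}(A,\Herm(\calD))}\Map_{\AInvCat_{(\infty,1)}}(\calC,\calD),
\end{align}
for $i:A\to\Herm(\calC)$ and $j:B\to\Herm(\calD)$. In each case we exhibit a natural equivalence of mapping anima and then check full faithfulness where it is claimed.

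For (1), take $i=(\emptyset\to\Herm(\calC))$. Then $\Map_{\An}(\emptyset,B)$ and $\Map_{\An}(\emptyset,\Herm(\calD))$ are contractible, so the mapping anima reduces to $\Map_{\AInvCat_{(\infty,1)}}(\calC,\calD)$. This gives the left adjoint $L_{\AI,\HermFlag}$.

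For the right adjoint, take $j=\id_{\Herm(\calD)}$. The factor $\Map_{\An}(A,\Herm(\calD))$ maps to itself by the identity, so the fibre product collapses to $\Map_{\AInvCat_{(\infty,1)}}(\calC,\calD)$. This gives $R_{\AI,\HermFlag}$.

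In both cases the composite with the forgetful functor is the identity of $\AInvCat_{(\infty,1)}$, which is exactly the unit, respectively the counit. Hence both adjoints are fully faithful.

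For (2), first consider the left adjoint. Using \cref{properties.prop.anti-involutive}(4), we have
\begin{align}
  \Map_{\AInvCat_{(\infty,1)}}(j^{hC_2}\triv(A),\calD)\simeq\Map_{\An}(A,\Herm(\calD)),
\end{align}
under which the map to $\Map_{\An}(A,\Herm(\calD))$ is precomposition with the unit $\eta_A$. Under this identification it becomes the identity, so the fibre product collapses to $\Map_{\An}(A,B)$.

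For the right adjoint $R_{\An,\HermFlag}$, we need $\Herm(\oneDag)\simeq *$, where $\oneDag$ is the terminal anti-involutive category, namely $*$ with its unique structure. Since $\Herm$ is a right adjoint, it preserves the terminal object, so $\Herm(\oneDag)\simeq *$. Moreover $\Map_{\AInvCat_{(\infty,1)}}(\calC,\oneDag)\simeq *$, so the mapping anima into $(B\to *)$ is $\Map_{\An}(A,B)$. The counit is the identity of $B$, so this adjoint is fully faithful.

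For (3), take $i=(\emptyset\to\Herm(j^{hC_2}\triv(A)))$. As in (1), the mapping anima is $\Map_{\AInvCat_{(\infty,1)}}(j^{hC_2}\triv(A),\calD)$, which by \cref{properties.prop.anti-involutive}(4) is $\Map_{\An}(A,\Herm(\calD))$. This is the statement that $\overline{L}_{\An,\HermFlag}$ is left adjoint to $(i:B\to\Herm(\calD))\mapsto\Herm(\calD)$.

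The only step needing care is naturality in (2): we must confirm that the identification of the projection with precomposition by $\eta_A$ holds coherently in $j$. This follows from the standard description of adjunction equivalences, which are induced by composing with the unit. Everything else is a formal collapse of fibre products with contractible or identity legs.
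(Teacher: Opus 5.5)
Your proposal is correct and follows essentially the paper's route: (1) and (2) come from collapsing the fibre product in \cref{target.prop.mapping-spaces}, and (3) comes from the adjunction $j^{hC_2}\triv\dashv\Herm$. For (3) the paper composes the left adjoint of (1) with $j^{hC_2}\triv$, which is exactly the mapping-space computation you unfold directly.
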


\begin{proof}
  (1) and (2) follow from \cref{target.prop.mapping-spaces}.

  (3) 
  This forgetful functor is the composite of the forgetful functor in (1) with $\Herm$.
  So it admits a left adjoint given by the composite of the left adjoint in (1) with $j^{hC_{2}}\triv$.
\end{proof}

\subsection{Univalent flagged dagger \texorpdfstring{$(\infty,1)$}{(∞,1)}-categories}

We impose the flagged and coflagged conditions on Hermitian flags and define univalent flagged dagger $(\infty,1)$-categories by requiring both.

\begin{definition}
  We define some full subcategories of $\HermFlag_{(\infty,1)}$.
  \begin{itemize}
    \item The full subcategory of \emph{flagged dagger $(\infty,1)$-categories}
    \begin{align}
      \FlDagCat_{(\infty,1)} 
      \subseteq 
      \HermFlag_{(\infty,1)}
    \end{align}
    spanned by the Hermitian flags $(i : A \to \Herm(\calC))$ whose underlying flag $\epsilon_{\calC}i : A \to \iota_0\calC_{*}$ is essentially surjective.
    \item The full subcategory of \emph{coflagged dagger $(\infty,1)$-categories}
    \begin{align}
      \coFlDagCat_{(\infty,1)} 
      \subseteq 
      \HermFlag_{(\infty,1)}
    \end{align}
    spanned by the Hermitian flags $(i : A \to \Herm(\calC))$ for which $i$ is fully faithful.
    \item The full subcategory of \emph{univalent flagged dagger $(\infty,1)$-categories} (or \emph{dagger $(\infty,1)$-categories})
    \begin{align}
      \DagCatInf_{(\infty,1)} 
      := \FlDagCat_{(\infty,1)} \cap \coFlDagCat_{(\infty,1)} 
      \subseteq \HermFlag_{(\infty,1)}.
    \end{align}
  \end{itemize}
\end{definition}

\begin{lemma}\label{target.lem.equivalences-of-flagged-and-coflagged}
  The following statements hold.
  \begin{enumerate}
    \item There is an equivalence of $(\infty,1)$-categories
    \begin{align}
      \FlDagCat_{(\infty,1)}
      \simeq
      \Fun([1],\AInvCat_{(\infty,1)})^{\estag}
      \times_{\ev_0,\AInvCat_{(\infty,1)},j^{hC_2}\triv}\An.
    \end{align}
    Here $\Fun([1],\AInvCat_{(\infty,1)})^{\estag} \subseteq \Fun([1],\AInvCat_{(\infty,1)})$ denotes the full subcategory spanned by morphisms whose underlying functors are essentially surjective.
    \item There is an equivalence of $(\infty,1)$-categories
    \begin{align}
      \coFlDagCat_{(\infty,1)}
      \simeq
      \Fun([1],\An)^{\fftag}
      \times_{\ev_1,\An,\Herm}\AInvCat_{(\infty,1)}.
    \end{align}
  \end{enumerate}
\end{lemma}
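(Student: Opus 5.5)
Part (2) is the easier statement, and I would handle it first. By \cref{target.def.hermitian-flags}, $\HermFlag_{(\infty,1)}$ is the pullback $\Fun([1],\An)\times_{\ev_1,\An,\Herm}\AInvCat_{(\infty,1)}$. The subcategory $\coFlDagCat_{(\infty,1)}$ is cut out by a condition that only involves the $\Fun([1],\An)$-component, namely that $i$ is fully faithful (a monomorphism of anima). Pulling back along the full inclusion $\Fun([1],\An)^{\fftag}\hookrightarrow\Fun([1],\An)$ yields a full subcategory of the pullback. Its objects are exactly the coflagged ones, because the fibre product of full subcategories is computed objectwise. This gives the equivalence in (2) directly.

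For part (1), the plan is to rewrite a Hermitian flag $i:A\to\Herm(\calC)$ through the adjunction of \cref{properties.prop.anti-involutive} (4), $j^{hC_2}\triv\dashv\Herm$. Transposing $i$ gives a morphism $\tilde i: j^{hC_2}\triv(A)\to\calC$ in $\AInvCat_{(\infty,1)}$. At the level of whole categories, the comma construction for an adjunction gives an equivalence
\[
\Fun([1],\An)\times_{\ev_1,\An,\Herm}\AInvCat_{(\infty,1)}\simeq \An\times_{j^{hC_2}\triv,\AInvCat_{(\infty,1)},\ev_0}\Fun([1],\AInvCat_{(\infty,1)}).
\]
This is the standard fact that the comma categories $(\id\downarrow \Herm)$ and $(j^{hC_2}\triv\downarrow\id)$ agree. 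I would prove it by comparing mapping anima via \cref{target.prop.mapping-spaces}, using the adjunction equivalence $\Map_{\An}(A,\Herm(\calD))\simeq\Map_{\AInvCat}(j^{hC_2}\triv A,\calD)$, and constructing the functor itself from the unit/counit. Alternatively, one can cite the general statement that both sides are the lax pullback classified by the bifibration of the adjunction.

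It then remains to check that the flagged condition on the left corresponds to essential surjectivity of the underlying functor of $\tilde i$. The underlying category of $j^{hC_2}\triv(A)$ is $jA=A$, viewed as an $\infty$-groupoid (via \cref{target.prop.equivariant-core}, evaluation at $*$). I need to check that $\tilde i_*:A\to\calC_*$ on objects is $\epsilon_{\calC}\circ i$. This follows because $\tilde i=\mathrm{counit}\circ j^{hC_2}\triv(i)$, and at $*$ the counit $j\triv\Herm(\calC)\to\calC$ restricts on cores to the counit of $\const\dashv(-)^{hC_2}$ composed with the core inclusion, which by \cref{target.not.underlying-flag} is $\epsilon_{\calC}$. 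Essential surjectivity of a functor out of a groupoid depends only on its map on cores, so the full subcategories match.

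The main obstacle is the last identification: the counit of the composite adjunction $j^{hC_2}\triv\dashv(-)^{hC_2}\circ\iota_0^{hC_2}$, evaluated at $*$, must be compared with $\epsilon_{\calC}$. I would unwind it as the composite of the two counits, evaluated at $*$. Here evaluation at $*$ commutes with $j^{hC_2}$ and $\iota_0^{hC_2}$ because these functors are homotopy fixed points of equivariant functors.
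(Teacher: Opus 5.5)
Your proposal is correct and follows essentially the same route as the paper. For (2) you restrict the defining pullback to fully faithful arrows, and for (1) you transpose through $j^{hC_2}\triv\dashv\Herm$ and identify essential surjectivity of $(i^{\sharp})_*$ with that of $\epsilon_{\calC}i$. Your unwinding of the composite counit at $*$ spells out a step the paper states in one line, namely that $(i^{\sharp})_*$ is adjoint to $\epsilon_{\calC}i$ under $j\dashv\iota_0$.
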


\begin{proof}
  (1)
  By \cref{properties.prop.anti-involutive} (4), the adjunction $j^{hC_2}\triv\dashv\Herm$ identifies a Hermitian flag $X=(i:A\to\Herm(\calC))$ with its adjunct $i^{\sharp}:j^{hC_2}\triv(A)\to\calC$.
  Then we have
  \begin{align}
    \HermFlag_{(\infty,1)}
    &=
    \Fun([1],\An)\times_{\ev_1,\An,\Herm}\AInvCat_{(\infty,1)} \\
    &\simeq
    \Fun([1],\AInvCat_{(\infty,1)})
    \times_{\ev_0,\AInvCat_{(\infty,1)},j^{hC_2}\triv}\An.
  \end{align}
  The underlying functor $(i^{\sharp})_*:jA\to\calC_*$ is adjoint to the underlying flag $\epsilon_{\calC}i$ under $j\dashv\iota_0$.
  Thus $(i^{\sharp})_*$ is essentially surjective if and only if $X$ is flagged.
  Therefore the above equivalence restricts to an equivalence
  \begin{align}
    \FlDagCat_{(\infty,1)}
    \simeq
    \Fun([1],\AInvCat_{(\infty,1)})^{\estag}
    \times_{\ev_0,\AInvCat_{(\infty,1)},j^{hC_2}\triv}\An.
  \end{align}

  (2)
  By definition, a Hermitian flag $i:A\to\Herm(\calC)$ is coflagged if and only if $i$ is fully faithful.
  Hence we have 
  \begin{align}
    \coFlDagCat_{(\infty,1)}
    \simeq
    \Fun([1],\An)^{\fftag}
    \times_{\ev_1,\An,\Herm}\AInvCat_{(\infty,1)}.
  \end{align}
\end{proof}

\begin{lemma}
  The following statements hold.
  \begin{enumerate}
    \item The forgetful functor $\FlDagCat_{(\infty,1)} \to \AInvCat_{(\infty,1)} : (i : A \to \Herm(\C)) \mapsto \C$ admits a right adjoint 
    \begin{align}
      R_{\flag}R_{\AI,\HermFlag}
      :\AInvCat_{(\infty,1)}\to\FlDagCat_{(\infty,1)}.
    \end{align}
    \item The forgetful functor $\coFlDagCat_{(\infty,1)} \to \AInvCat_{(\infty,1)} : (i : A \to \Herm(\C)) \mapsto \C$ admits fully faithful left and right adjoints
    \begin{align}
      L_{\AI,\HermFlag} 
      : \AInvCat_{(\infty,1)} \to \coFlDagCat_{(\infty,1)} 
      \subseteq \HermFlag_{(\infty,1)}, \\
      R_{\AI,\HermFlag} 
      : \AInvCat_{(\infty,1)} \to \coFlDagCat_{(\infty,1)} 
      \subseteq \HermFlag_{(\infty,1)}.
    \end{align} 
    \item The forgetful functor $\FlDagCat_{(\infty,1)} \to \An : (i : A \to \Herm(\C)) \mapsto A$ admits left and right adjoints
    \begin{align}
      L_{\An,\HermFlag}
      &: \An\to\FlDagCat_{(\infty,1)} 
      \subseteq \HermFlag_{(\infty,1)},\\
      R_{\flag}R_{\An,\HermFlag}
      &: \An\to\FlDagCat_{(\infty,1)}.
    \end{align}
    \item The forgetful functor $\coFlDagCat_{(\infty,1)} \to \An : (i : A \to \Herm(\C)) \mapsto A$ admits a left adjoint
    \begin{align}
      L_{\co\flag}L_{\An,\HermFlag}
      :\An\to\coFlDagCat_{(\infty,1)}.
    \end{align}
    \item The forgetful functor $\coFlDagCat_{(\infty,1)} \to \An : (i : A \to \Herm(\C)) \mapsto \Herm(\C)$ admits a left adjoint
    \begin{align}
      \overline{L}_{\An,\HermFlag} 
      : \An \to \coFlDagCat_{(\infty,1)}
      \subseteq \HermFlag_{(\infty,1)}.
    \end{align}
  \end{enumerate}
\end{lemma}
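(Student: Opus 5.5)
The plan is to obtain every item by composing adjunctions already available with the two ambient corrections: the coreflection $R_{\flag}$ onto flagged objects and the reflection $L_{\co\flag}$ onto coflagged objects. These are constructed later, in \cref{flag.prop.ambient-flagged-coreflection} and \cref{flag.prop.ambient-coflagged-reflection}, and the statement implicitly refers to them. I would recall them concretely. $R_{\flag}$ sends $i:A\to\Herm(\calC)$ to $A\to\Herm(\calC')$, where $\calC'\subseteq\calC$ is the full anti-involutive subcategory on objects equivalent to those in the image of $\epsilon_{\calC}i$. Here $i$ factors through $\Herm(\calC')$ by \cref{target.lem.equivariant-full-core}. $L_{\co\flag}$ replaces $i$ by the inclusion of its image $\image(i)\hookrightarrow\Herm(\calC)$.

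The general principle is this. If $U:\HermFlag_{(\infty,1)}\to\calE$ has a right adjoint $R$ and $\calF\subseteq\HermFlag_{(\infty,1)}$ is coreflective with coreflector $R'$, then $U|_{\calF}$ has right adjoint $R'R$. Dually, a left adjoint $L$ gives the left adjoint $L''L$ of $U|_{\calF}$ when $\calF$ is reflective with reflector $L''$. If $L$ or $R$ already lands in $\calF$, no correction is needed, and full faithfulness is inherited.

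With this principle the items follow one by one.

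For (1), apply it to $R_{\AI,\HermFlag}$ from \cref{target.lem.herm-forgetful-adjoints} (1) and the coreflector $R_{\flag}$.

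For (2), observe that $\emptyset\to\Herm(\calC)$ and $\id_{\Herm(\calC)}$ are monomorphisms of anima, hence fully faithful, so they are already coflagged. The adjoints of \cref{target.lem.herm-forgetful-adjoints} (1) then restrict to $\coFlDagCat_{(\infty,1)}$ and stay fully faithful.

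For (3), the right adjoint is $R_{\flag}R_{\An,\HermFlag}$ by the principle. For the left adjoint, I must check that $\eta_A:A\to\Herm(j^{hC_2}\triv A)$ is flagged. By the proof of \cref{target.lem.equivalences-of-flagged-and-coflagged} (1), $\eta_A$ is flagged if and only if its adjunct, the identity of $j^{hC_2}\triv A$, has essentially surjective underlying functor, which it does.

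For (4), compose $L_{\An,\HermFlag}$ with the reflector $L_{\co\flag}$. The only point to verify is that the forgetful functor to $A$ is the relevant one. Since the map $A\to\image(i)$ is not an equivalence in general, one cannot simply say that $L_{\co\flag}$ preserves $A$. Instead I would argue with mapping spaces directly: for coflagged $j:B\to\Herm(\calD)$, \cref{target.prop.mapping-spaces} together with the reflection property gives $\Map(L_{\co\flag}L_{\An,\HermFlag}A,j)\simeq\Map(L_{\An,\HermFlag}A,j)\simeq\Map_{\An}(A,B)$.

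For (5), the flag $\emptyset\to\Herm(j^{hC_2}\triv A)$ is coflagged, so the adjoint of \cref{target.lem.herm-forgetful-adjoints} (3) restricts.

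The main obstacle is the existence and form of $R_{\flag}$ and $L_{\co\flag}$ themselves. For $R_{\flag}$, I would verify the universal property with \cref{target.prop.mapping-spaces}. A morphism from a flagged object $B\to\Herm(\calD)$ into $\calC$ factors uniquely through the full subcategory $\calC'$, because every object of $\calD_*$ is equivalent to one in the image of the flag. This uniqueness uses \cref{target.lem.homotopy-fixed-point-formalism} (3) for full faithfulness of $\calC'\to\calC$ on fixed points. For $L_{\co\flag}$, I would use the (effective epi, mono) factorization in $\An$ together with \cref{target.prop.mapping-spaces}.
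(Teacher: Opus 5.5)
Your proposal is correct and matches the paper's proof, which likewise obtains each item by restricting the adjoints of \cref{target.lem.herm-forgetful-adjoints} or composing them with the reflection $L_{\co\flag}$ of \cref{flag.prop.ambient-coflagged-reflection} and the coreflection $R_{\flag}$ of \cref{flag.prop.ambient-flagged-coreflection}. Two small remarks: your worry in (4) is unnecessary, since a composite of adjunctions is an adjunction and your mapping-space computation is exactly that composite; and in your direct check of $R_{\flag}$ you leave implicit that the full subcategory $\calC'$ inherits an anti-involution, which the paper supplies through the factorization system of \cref{flag.lem.anti-involutive-factorization}.
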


\begin{proof}
  These statements follow from \cref{target.lem.herm-forgetful-adjoints,flag.prop.ambient-coflagged-reflection,flag.prop.ambient-flagged-coreflection}. 
\end{proof}

\begin{remark}
  Every $\calC \in \AInvCat_{(\infty,1)}$ has a canonical coflag $(\calC,\Herm(\calC),\id_{\Herm(\calC)})$.
  Its underlying flag is the forgetful morphism $\epsilon_{\calC}:\Herm(\calC) \to \iota_{0}\calC_{*}$.
  Hence this canonical coflag is flagged if and only if every object of $\calC$ \emph{admits a Hermitian structure}.
\end{remark}

\section{The flagged nerve construction}\label{direct.section}

In this section, we construct the realization--nerve adjunction on all Hermitian flags (\cref{flagged-nerve.lem.adjunction}).
Computing its unit and counit identifies flagged dagger $(\infty,1)$-categories with dagger Segal anima (\cref{direct.thm.flagged-equivalence}).
We then use the walking unitary to identify the coflagged condition with unitary completeness.

\subsection{Dagger Segal anima}

The walking flagged dagger simplices combine an ordinary chain with canonical Hermitian structures on its vertices (\cref{direct.construction.nerve}).
Their restricted Yoneda functor is the flagged nerve.

\begin{definition}
  A \emph{reversal simplicial anima} is a functor $W : \prism_{\rev}^{\myop} \to \An$.
  We let $\Fun(\prism_{\rev}^{\myop},\An)$ denote the $(\infty,1)$-category of reversal simplicial anima.
\end{definition}

\begin{definition}
  Let $W\in\Fun(\prism_{\rev}^{\myop},\An)$.
  We will say that it is a \emph{dagger Segal anima} if its restriction to $\prism^{\myop}$ satisfies the ordinary Segal condition.
\end{definition}

\begin{notation}
  We let 
  \begin{align}
    \SegRev(\An) \subseteq \Fun(\prism_{\rev}^{\myop},\An)
  \end{align}
  denote the full subcategory spanned by dagger Segal anima.
\end{notation}

\begin{construction}\label{direct.construction.nerve}
  For every $[n] \in \prism_{\rev}$, define the anti-involutive $(\infty,1)$-category $[n]_{\AI}$ by the following pushout square in $\AInvCat_{(\infty,1)}$:
  \begin{align}
    \begin{tikzpicture}[auto,baseline=(current bounding box.center)]
      \node (tl) at (0,1.5) {$F_{\AI}(j\iota_{0}[n])$};
      \node (tr) at (4,1.5) {$j^{hC_{2}}\triv(\iota_{0}[n])$};
      \node (bl) at (0,0) {$F_{\AI}([n])$};
      \node (br) at (4,0) {$[n]_{\AI}$};
      \draw[->] (tl) to node {} (tr);
      \draw[->] (tl) to node[swap] {} (bl);
      \draw[->] (tr) to node {$\scriptstyle i_{n}^{\sharp}$} (br);
      \draw[->] (bl) to (br);
    \end{tikzpicture}
  \end{align}
  Here the left vertical map is induced by the inclusion $j\iota_{0}[n] \hookrightarrow [n]$, and the upper horizontal map is induced by the counit 
  \begin{align}
    F_{\AI}(j\iota_{0}[n])
    \simeq 
    F_{\AI}(j^{hC_{2}}\triv(\iota_{0}[n])_{*})
    \to 
    j^{hC_{2}}\triv(\iota_{0}[n]).
  \end{align}

  For every $[n] \in \prism_{\rev}$, define a Hermitian flag $[n]_{\flag}$ by
  \begin{align}
    [n]_{\flag}
    := 
    ([n]_{\AI}, \iota_{0}[n], i_{n} : \iota_{0}[n] \to \Herm([n]_{\AI})),
  \end{align}
  where $i_{n}$ is the adjunct of $i_{n}^{\sharp}$ under the adjunction $j^{hC_{2}}\triv \dashv \Herm$.
  This construction defines a functor
  \begin{align}
    [-]_{\flag}
    : \prism_{\rev}
    \to
    \HermFlag_{(\infty,1)}.
  \end{align}
\end{construction}

\begin{lemma}\label{direct.lem.walking-model}
  For every $[n] \in \prism_{\rev}^{\myop}$, there is an equivalence of $(\infty,1)$-categories
  \begin{align}
    ([n]_{\AI})_{*}
    \simeq
    [n] \amalg_{j\iota_0[n]} [n]^{\myop}.
  \end{align}
\end{lemma}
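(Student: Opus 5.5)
We compute the underlying $(\infty,1)$-category of $[n]_{\AI}$ by applying $\ev_*$ to the defining pushout. By \cref{properties.prop.anti-involutive} (2), $\ev_*$ preserves colimits, so
\begin{align}
  ([n]_{\AI})_*
  \simeq
  F_{\AI}([n])_*
  \amalg_{F_{\AI}(j\iota_0[n])_*}
  (j^{hC_2}\triv(\iota_0[n]))_*.
\end{align}
This reduces the lemma to identifying three underlying categories and the two maps between them.

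The first step is to show that the underlying category of the free anti-involutive category is $F_{\AI}(\calD)_*\simeq \calD\amalg\calD^{\myop}$, naturally in $\calD$. We would first compute the right adjoint side. For $\calE\in\AInvCat_{(\infty,1)}$, the forgetful functor $\ev_*$ is the limit projection from $\lim_{BC_2}$. The left adjoint of restriction along $*\to BC_2$ is left Kan extension, given by the coproduct over the fibre $C_2$. Its value at $*$ is therefore $\coprod_{g\in C_2} g\cdot\calD=\calD\amalg\calD^{\myop}$. To make this rigorous, we would check the universal property directly:
\begin{align}
  \Map_{\Cat_{(\infty,1)}}(\calD,\calE_*)
  \simeq
  \Map(\calD\amalg\calD^{\myop},\calE_*)^{hC_2}.
\end{align}
Here $C_2$ acts by swapping the summands and conjugating by $d_{\calE}$. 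The mapping-anima $C_2$-action is free on the two summands, so its homotopy fixed points are computed by one summand. This holds since $\lim_{BC_2}$ of a coinduced $C_2$-anima is the underlying anima. We expect this step to be the main technical obstacle, namely keeping track of the coherence data of the induced action.

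Next, since $\iota_0[n]$ is discrete, $j^{hC_2}\triv(\iota_0[n])$ has underlying category $j\iota_0[n]$, viewed as a discrete category with trivial involution. For a set, $(-)^{\myop}$ acts trivially, by \cref{target.lem.homotopy-fixed-point-formalism} and \cref{rev.lem.reversal-segal} (4). The counit $F_{\AI}(j\iota_0[n])_*\simeq j\iota_0[n]\amalg j\iota_0[n]\to j\iota_0[n]$ is then the fold map. The map $F_{\AI}(j\iota_0[n])_*\to F_{\AI}([n])_*$ is the coproduct of the inclusions $j\iota_0[n]\hookrightarrow[n]$ and $j\iota_0[n]\hookrightarrow[n]^{\myop}$.

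Finally, we apply pushout pasting. A pushout of $[n]\amalg[n]^{\myop}$ along $\iota\amalg\iota$ with the fold map $j\iota_0[n]\amalg j\iota_0[n]\to j\iota_0[n]$ is exactly $[n]\amalg_{j\iota_0[n]}[n]^{\myop}$. We see this by writing the fold map as the pushout of $j\iota_0[n]\amalg j\iota_0[n]$ along the codiagonal, and using that coproducts commute with pushouts. This gives the claimed equivalence $([n]_{\AI})_*\simeq[n]\amalg_{j\iota_0[n]}[n]^{\myop}$. Naturality in $[n]\in\prism_{\rev}^{\myop}$ is not required by the statement beyond objectwise identification, so we do not pursue it.
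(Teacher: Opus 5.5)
Your proposal is correct and follows the paper's proof. Like the paper, you apply the colimit-preserving functor $(-)_*$ to the defining pushout, use $F_{\AI}(\calC)_*\simeq\calC\amalg\calC^{\myop}$ and $(j^{hC_2}\triv(\iota_0[n]))_*\simeq j\iota_0[n]$, and reduce $([n]\amalg[n]^{\myop})\amalg_{j\iota_0[n]\amalg j\iota_0[n]}j\iota_0[n]$ to $[n]\amalg_{j\iota_0[n]}[n]^{\myop}$. The only addition is your sketch of the formula for $F_{\AI}$, via homotopy fixed points of a coinduced mapping anima, which the paper simply asserts.
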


\begin{proof}
  For every $\calC\in\Cat_{(\infty,1)}$, the free anti-involutive $(\infty,1)$-category satisfies $F_{\AI}(\calC)_*\simeq\calC\amalg\calC^{\myop}$.
  Since $(-)_{*} : \AInvCat_{(\infty,1)} \to \Cat_{(\infty,1)}$ preserves colimits, we have 
  \begin{align}
    ([n]_{\AI})_{*}
    &\simeq 
    ([n] \amalg [n]^{\myop}) \amalg_{j\iota_{0}[n] \amalg j\iota_{0}[n]} j\iota_{0}[n] \\ 
    &\simeq 
    [n] \amalg_{j\iota_{0}[n]} [n]^{\myop}.
  \end{align}
\end{proof}

\begin{definition}\label{flagged-nerve.construction.nerve}
  For $X = (i : A \to \Herm(\calC)) \in \HermFlag_{(\infty,1)}$, define the \emph{flagged nerve} $\rmN_{\flag}(X) : \prism_{\rev}^{\myop} \to \An$ of $X$ by the following formula for every $[n] \in \prism_{\rev}^{\myop}$:
  \begin{align}
    \rmN_{\flag}(X)_{n}
    &:= 
    \Map_{\HermFlag_{(\infty,1)}}([n]_{\flag},X) \\
    &\simeq
    \Map_{\An}(\iota_{0}[n],A) \times_{\Map_{\An}(\iota_{0}[n],\Herm(\C))} \Map_{\AInvCat_{(\infty,1)}}([n]_{\AI},\C).
  \end{align}
  This construction defines the \emph{flagged-nerve functor} 
  \begin{align}
    \rmN_{\flag} 
    : \HermFlag_{(\infty,1)} 
    \to 
    \Fun(\prism_{\rev}^{\myop},\An).
  \end{align}
\end{definition}

\begin{lemma}\label{flagged-nerve.lem.value-of-flagged-nerve}
\hypersetup{linkcolor=blue,citecolor=blue,urlcolor=blue}
  For every $X = (i : A \to \Herm(\calC)) \in \HermFlag_{(\infty,1)}$, there is a natural equivalence of reversal simplicial anima
  \begin{align}
    \rmN_{\flag}(X)_{\bullet}
    \simeq
    A^{\bullet+1} \times_{(\epsilon_{\calC}i)^{\bullet+1},(\iota_0\calC_{*})^{\bullet+1},(\ev_{0},\cdots,\ev_{n})} \iota_0 \Fun([\bullet],\calC_{*}).
  \end{align}
\end{lemma}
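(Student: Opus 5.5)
We compute $\rmN_{\flag}(X)_n$ from the formula in \cref{flagged-nerve.construction.nerve} by unwinding the pushout of \cref{direct.construction.nerve} under mapping anima.

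The first step handles the mapping anima out of $[n]_{\AI}$. Since $[n]_{\AI}$ is a pushout in $\AInvCat_{(\infty,1)}$, we have
\begin{align}
  \Map_{\AInvCat_{(\infty,1)}}([n]_{\AI},\calC)
  \simeq
  \Map_{\AInvCat}(F_{\AI}[n],\calC)
  \times_{\Map_{\AInvCat}(F_{\AI}j\iota_0[n],\calC)}
  \Map_{\AInvCat}(j^{hC_2}\triv(\iota_0[n]),\calC).
\end{align}
We then apply the adjunctions $F_{\AI}\dashv\ev_*$ (\cref{properties.prop.anti-involutive} (3)) and $j^{hC_2}\triv\dashv\Herm$ (\cref{properties.prop.anti-involutive} (4)). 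The outer terms become $\iota_0\Fun([n],\calC_*)$ and $\Map_{\An}(\iota_0[n],\Herm(\calC))$. The middle term becomes $\Map_{\Cat}(j\iota_0[n],\calC_*)\simeq\Map_{\An}(\iota_0[n],\iota_0\calC_*)$.

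The key point is to identify the map $\Map_{\An}(\iota_0[n],\Herm(\calC))\to\Map_{\An}(\iota_0[n],\iota_0\calC_*)$, induced by the counit in \cref{direct.construction.nerve}, with postcomposition by $\epsilon_{\calC}$. This is a triangle-identity argument. The counit $F_{\AI}((j^{hC_2}\triv A)_*)\to j^{hC_2}\triv A$ is adjunct to the identity of $(j^{hC_2}\triv A)_*\simeq jA$. Precomposing with it therefore sends $g^\sharp$ to $(g^\sharp)_*$, which is the adjunct of $\epsilon_{\calC}g$ under $j\dashv\iota_0$, exactly as in the proof of \cref{target.lem.equivalences-of-flagged-and-coflagged} (1). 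The step "$(g^\sharp)_*$ corresponds to $\epsilon_{\calC}\circ g$" unwinds the definition of $\epsilon_{\calC}$ as the counit of $\triv\dashv(-)^{hC_2}$ evaluated at $*$. This identification, together with its naturality in $[n]$, is the main obstacle.

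Substituting into the formula of \cref{flagged-nerve.construction.nerve}, the factor $\Map_{\An}(\iota_0[n],\Herm(\calC))$ cancels by pasting pullbacks. We obtain
\begin{align}
  \rmN_{\flag}(X)_n\simeq \Map_{\An}(\iota_0[n],A)\times_{\Map_{\An}(\iota_0[n],\iota_0\calC_*)}\iota_0\Fun([n],\calC_*).
\end{align}
Since $\iota_0[n]$ is the discrete set $\{0,\dots,n\}$, we have $\Map_{\An}(\iota_0[n],A)\simeq A^{n+1}$. The restriction $\iota_0\Fun([n],\calC_*)\to(\iota_0\calC_*)^{n+1}$ is $(\ev_0,\dots,\ev_n)$, which gives the stated formula levelwise.

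It remains to check functoriality in $[n]\in\prism_{\rev}^{\myop}$. For order-preserving maps this is clear, since every identification above is natural in maps of $[n]$. For the reversal, the functor $[-]_{\flag}$ already encodes the $C_2$-structure: the reversal $[n]\cong[n]^{\myop}$ acts on $[n]\amalg_{j\iota_0[n]}[n]^{\myop}$ (\cref{direct.lem.walking-model}) by swapping the two summands. On the right-hand side it therefore acts through $d_{\calC}$, sending $\iota_0\Fun([n],\calC_*)\simeq\iota_0\Fun([n]^{\myop},\calC_*^{\myop})\simeq\iota_0\Fun([n]^{\myop},\calC_*)$. We interpret the right-hand side as a reversal simplicial anima through this action. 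Naturality of the equivalence then again follows from naturality of the adjunction equivalences in the source variable.
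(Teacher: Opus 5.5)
Your proposal is correct and follows the same route as the paper: you decompose $\Map_{\AInvCat_{(\infty,1)}}([n]_{\AI},\calC)$ via the defining pushout, cancel the $\Map_{\An}(\iota_0[n],\Herm(\calC))$ factor by pasting pullbacks, and apply $F_{\AI}\dashv\ev_*$ to reach $A^{n+1}\times_{(\iota_0\calC_*)^{n+1}}\iota_0\Fun([n],\calC_*)$. Two points the paper's chain of equivalences leaves implicit are made explicit in your version: the triangle-identity check that the comparison map is postcomposition with $\epsilon_{\calC}$, and the description of the reversal action through $d_{\calC}$.
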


\begin{proof}
  For every $[n] \in \prism_{\rev}$, by \cref{target.prop.mapping-spaces}, we have
  \begin{align}
    \rmN_{\flag}(X)_{n}
    &\simeq
    \Map_{\An}(\iota_{0}[n],A) \times_{\Map_{\An}(\iota_{0}[n],\Herm(\C))} \Map_{\AInvCat_{(\infty,1)}}([n]_{\AI},\C) \\
    &\simeq
    \Map_{\An}(\iota_{0}[n],A) \times_{\Map_{\AInvCat_{(\infty,1)}}(F_{\AI}(j\iota_{0}[n]),\C)} \Map_{\AInvCat_{(\infty,1)}}(F_{\AI}([n]),\C) \\
    &\simeq
    \Map_{\An}(\iota_{0}[n],A) \times_{\Map_{\Cat_{(\infty,1)}}(j\iota_{0}[n],\C_{*})} \Map_{\Cat_{(\infty,1)}}([n],\C_{*}) \\
    &\simeq
    A^{n+1} \times_{(\iota_{0}\C_{*})^{n+1}} \iota_{0}\Fun([n],\C_{*}).
  \end{align}
\end{proof}

\begin{lemma}\label{flagged-nerve.lem.segal}
  The flagged nerve $\rmN_{\flag} : \HermFlag_{(\infty,1)} \to \Fun(\prism_{\rev}^{\myop},\An)$ factors through $\SegRev(\An)$:
  \begin{align}
    \rmN_{\flag} : 
    \HermFlag_{(\infty,1)}
    \to \SegRev(\An)
    \subseteq \Fun(\prism_{\rev}^{\myop},\An).
  \end{align}
\end{lemma}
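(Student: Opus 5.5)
The plan is to use the explicit formula of \cref{flagged-nerve.lem.value-of-flagged-nerve}. Being Segal is a condition on the restriction to $\prism^{\myop}$, so the reversal structure plays no role. Write $W:=\rmN_{\flag}(X)$ and $a:=\epsilon_{\calC}i:A\to\iota_0\calC_*$. Then
\begin{align}
  W_n\simeq A^{n+1}\times_{(\iota_0\calC_*)^{n+1}}\iota_0\Fun([n],\calC_*).
\end{align}
The simplicial anima $M_\bullet:=\iota_0\Fun([\bullet],\calC_*)$ is the Rezk nerve of $\calC_*$, hence a (complete) Segal anima. The simplicial anima $K_\bullet:=(\iota_0\calC_*)^{\bullet+1}$ and $A^{\bullet+1}$ are the coskeletal, or Čech-type, simplicial anima $\mathrm{cosk}_0$ of $\iota_0\calC_*$ and of $A$. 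Both are Segal: their spine maps are
\begin{align}
  B^{n+1}\to B^2\times_B B^2\times_B\cdots\times_B B^2,
\end{align}
and these are equivalences by direct inspection.

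The argument then rests on a structural observation: $W\simeq \mathrm{cosk}_0(A)\times_{\mathrm{cosk}_0(\iota_0\calC_*)}M$ as simplicial anima. This holds levelwise by the formula above, and naturality in $[n]$ is part of the naturality already asserted in \cref{flagged-nerve.lem.value-of-flagged-nerve}. The Segal condition states that the spine maps $X_n\to X_1\times_{X_0}\cdots\times_{X_0}X_1$ are equivalences. Both sides of each spine map are limits that commute with pullbacks of simplicial anima. Hence the full subcategory $\Seg(\An)\subseteq\Fun(\prism^{\myop},\An)$ is closed under pullbacks: the spine map of a pullback is the pullback of the spine maps, and a pullback of equivalences is an equivalence. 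Applying this to the cospan $\mathrm{cosk}_0(A)\to\mathrm{cosk}_0(\iota_0\calC_*)\leftarrow M$ shows that $W|_{\prism^{\myop}}$ is Segal.

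Alternatively, one can verify the spine condition directly. An element of $W_1\times_{W_0}\cdots\times_{W_0}W_1$ consists of $n+1$ points of $A$ together with $n$ composable morphisms of $\calC_*$ whose endpoints match the images of those points. Using the Segal property of $M$ and the fact that the factor $A^{n+1}$ is determined vertexwise, this is equivalent to $W_n$.

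The only step needing care is making the identification $W\simeq\mathrm{cosk}_0(A)\times_{\mathrm{cosk}_0(\iota_0\calC_*)}M$ functorial in $\prism^{\myop}$, not merely levelwise. I would obtain it as follows. The functor $[n]\mapsto\Map_{\HermFlag_{(\infty,1)}}([n]_{\flag},X)$ is, by \cref{target.prop.mapping-spaces} and the pushout defining $[n]_{\AI}$, a pullback of the corepresented functors
\begin{align}
  \Map_{\An}(\iota_0[-],A),\quad \Map_{\Cat_{(\infty,1)}}(j\iota_0[-],\calC_*),\quad \Map_{\Cat_{(\infty,1)}}([-],\calC_*),
\end{align}
each natural in $[n]$. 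These are exactly $\mathrm{cosk}_0(A)$, $\mathrm{cosk}_0(\iota_0\calC_*)$ and $M$. The conclusion $\rmN_{\flag}(X)\in\SegRev(\An)$ then follows, since $\SegRev(\An)$ is the full subcategory of reversal simplicial anima whose restriction to $\prism^{\myop}$ is Segal.
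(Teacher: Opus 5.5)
Your proposal is correct and follows essentially the same route as the paper, which uses \cref{flagged-nerve.lem.value-of-flagged-nerve} to write $\rmN_{\flag}X$ as a pullback, over $(\iota_0\calC_*)^{\bullet+1}$, of the Rezk nerve of $\calC_*$, and then pulls back its Segal equivalences. You also make explicit what the paper's one-line proof leaves implicit: the $0$-coskeleta $A^{\bullet+1}$ and $(\iota_0\calC_*)^{\bullet+1}$ are themselves Segal, Segal objects are closed under pullbacks, and naturality is needed only over $\prism$.
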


\begin{proof}
  Let $X = (i : A \to \Herm(\calC)) \in \HermFlag_{(\infty,1)}$.
  The ordinary nerve $[n] \mapsto \rmN(\calC)_{n}$ defines a Segal anima.
  Pulling back this Segal equivalence along the morphism in \cref{flagged-nerve.lem.value-of-flagged-nerve} gives an equivalence
  \begin{align}
    (\rmN_{\flag}X)_n
    \simeq
    (\rmN_{\flag}X)_1 \times_{(\rmN_{\flag}X)_0} \cdots \times_{(\rmN_{\flag}X)_0} (\rmN_{\flag}X)_1.
  \end{align}
\end{proof}

\begin{lemma}\label{flagged-nerve.lem.adjunction}
  Let $\yo_{\rev} : \prism_{\rev} \to \Fun(\prism_{\rev}^{\myop},\An)$ be the Yoneda embedding.
  There is an adjunction
  \begin{align}
    \ac_{\flag} := 
    \Lan_{\yo_{\rev}}([-]_{\flag})
    : \Fun(\prism_{\rev}^{\myop},\An)
    \rightleftarrows
    \HermFlag_{(\infty,1)}
    : \rmN_{\flag}.
  \end{align}
\end{lemma}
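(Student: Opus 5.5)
This is an instance of the standard nerve--realization adjunction for a functor from a small category into a cocomplete $(\infty,1)$-category. The plan is to recall that general fact and check its hypotheses for $[-]_{\flag}:\prism_{\rev}\to\HermFlag_{(\infty,1)}$.

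First, I check that the left Kan extension exists. The category $\prism_{\rev}$ is small. By \cref{target.lem.ambient-presentability}, $\HermFlag_{(\infty,1)}$ is presentable, hence cocomplete. The universal property of the presheaf $(\infty,1)$-category \cite[Theorem~5.1.5.6]{HTT} then shows that $\Lan_{\yo_{\rev}}([-]_{\flag})$ exists. It is the essentially unique colimit-preserving extension of $[-]_{\flag}$ along $\yo_{\rev}$. Concretely, it is computed by
\begin{align}
  \ac_{\flag}(W)\simeq\colim_{(\yo_{\rev}[n]\to W)}[n]_{\flag},
\end{align}
where the colimit is indexed by the category of elements of $W$.

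Second, I identify the right adjoint. The same result of \cite[Theorem~5.1.5.6]{HTT}, in its adjoint-functor form \cite[Proposition~5.2.6.3]{HTT}, provides a right adjoint. Its value at $X$ is the restricted Yoneda presheaf $[n]\mapsto\Map_{\HermFlag_{(\infty,1)}}([n]_{\flag},X)$, and this is exactly $\rmN_{\flag}(X)$ by \cref{flagged-nerve.construction.nerve}. Alternatively, one can compute directly. By the co-Yoneda lemma, $W\simeq\colim\yo_{\rev}[n]$ over its elements. Combining this with the Yoneda lemma gives
\begin{align}
  \Map(\ac_{\flag}W,X)
  \simeq\lim_{(\yo_{\rev}[n]\to W)}\Map([n]_{\flag},X)
  \simeq\lim_{(\yo_{\rev}[n]\to W)}\Map(\yo_{\rev}[n],\rmN_{\flag}X)
  \simeq\Map(W,\rmN_{\flag}X),
\end{align}
and each step is natural in both $W$ and $X$.

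The only point needing care is size. The category $\HermFlag_{(\infty,1)}$ is defined as a pullback in $\widehat{\Cat}_{(\infty,1)}$, so I must confirm it is locally $\mathcal U$-small. This follows from \cref{target.prop.mapping-spaces}, since each factor of the mapping anima there is small. Consequently $\rmN_{\flag}$ lands in $\mathcal U$-small presheaves $\Fun(\prism_{\rev}^{\myop},\An)$. I must also make sure that the functoriality of $[-]_{\flag}$ in \cref{direct.construction.nerve} is a genuine functor out of $\prism_{\rev}$, including the order-reversing maps. This is given by the construction, so nothing further is required.
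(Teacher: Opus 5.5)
Your proposal is correct and follows the paper's proof. Presentability of $\HermFlag_{(\infty,1)}$ (\cref{target.lem.ambient-presentability}) and the universal property of presheaves \cite[Theorem~5.1.5.6]{HTT} give the left Kan extension, and its right adjoint is the restricted Yoneda functor, which is $\rmN_{\flag}$ by definition; your co-Yoneda computation and size check only make explicit what the paper leaves implicit.
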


\begin{proof}
  The $(\infty,1)$-category $\HermFlag_{(\infty,1)}$ is presentable by \cref{target.lem.ambient-presentability}.
  Hence the universal property of presheaves \cite[Theorem~5.1.5.6]{HTT} gives the left Kan extension and its right adjoint,
  which is the restricted Yoneda functor of \cref{flagged-nerve.construction.nerve}.
\end{proof}

\subsection{Calculating unit and counit}

We compute the underlying realization using colimits of representables (\cref{direct.lem.underlying-realization}).
We then identify the unit and counit using ordinary Segal completion (\cref{direct.thm.unit,direct.thm.counit}).

\begin{notation}\label{direct.not.categorical-realization}
  We fix some notational conventions.
  \begin{itemize}
    \item Let $\iota_{\rev}:\prism\hookrightarrow\prism_{\rev}$ denote the inclusion of the order-preserving maps.
    \item Let
    \begin{align}
      \ac:
      \Fun(\prism^{\myop},\An)
      \rightleftarrows
      \Cat_{(\infty,1)}
      :\rmN
    \end{align}
    denote the realization--nerve adjunction, where $\rmN$ is given by $\rmN(\calC)_n:=\iota_0\Fun([n],\calC)$.
  \end{itemize}
\end{notation}

\begin{lemma}\label{direct.lem.walking-flag-kan-extension}
  For every $[n]\in\prism_{\rev}$, there is an equivalence in $\Fun([1],\Cat_{(\infty,1)})$, natural in $[n]\in\prism_{\rev}$,
  \begin{align}
    \colim_{([m],\beta)\in\iota_{\rev}/[n]}
    (j\iota_0[m]\to[m])
    \simeq
    (j\iota_0[n]\to([n]_{\AI})_*).
  \end{align}
\end{lemma}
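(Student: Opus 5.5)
The plan is to compute both components of the colimit separately and then check that the structure map and the naturality match. Colimits in $\Fun([1],\Cat_{(\infty,1)})$ are computed pointwise, so it suffices to compute $\colim_{\iota_{\rev}/[n]} j\iota_0[m]$ and $\colim_{\iota_{\rev}/[n]}[m]$ and to identify the induced map between them.

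\emph{Reduction to simplicial sets.} The comma category $\iota_{\rev}/[n]$ is the category of elements of the simplicial set
\begin{align}
  K_n := \iota_{\rev}^{*}\yo_{\rev}[n] = \Hom_{\prism_{\rev}}(-,[n]) : \prism^{\myop}\to\Set .
\end{align}
So for any functor $F:\prism\to\Cat_{(\infty,1)}$, the colimit $\colim_{\iota_{\rev}/[n]}F$ is the value at $K_n$ of the colimit-preserving extension $\Lan_{\yo}F:\Fun(\prism^{\myop},\An)\to\Cat_{(\infty,1)}$. Here $K_n$ is regarded as a discrete simplicial anima.

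Next I describe $K_n$. A map $[m]\to[n]$ in $\prism_{\rev}$ is order-preserving or order-reversing, and it is both exactly when it is constant. Composing with $\bar\rho_n$ identifies the order-reversing maps with the simplicial set $\Delta[n]\circ\rev^{\myop}$, the reversal of $\Delta[n]$. This gives a pushout of simplicial sets
\begin{align}
  K_n \cong \Delta[n]\amalg_{\mathrm{sk}_0\Delta[n]}(\Delta[n]\circ\rev^{\myop}).
\end{align}
It is a pushout along a monomorphism of discrete presheaves, hence also a pushout in $\Fun(\prism^{\myop},\An)$.

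\emph{Target component.} For $F=\id:[m]\mapsto[m]$ we have $\Lan_{\yo}F=\ac$. Since $\ac$ preserves colimits, $\ac(\mathrm{sk}_0\Delta[n])\simeq j\iota_0[n]$, and $\ac(\Delta[n]\circ\rev^{\myop})\simeq[n]^{\myop}$ (by the compatibility of $\ac$ with the reversal and opposite actions, \cref{rev.lem.reversal-segal} (3)), we get
\begin{align}
  \colim_{\iota_{\rev}/[n]}[m]\simeq [n]\amalg_{j\iota_0[n]}[n]^{\myop}\simeq([n]_{\AI})_*,
\end{align}
where the last equivalence is \cref{direct.lem.walking-model}.

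\emph{Source component.} For $F=j\iota_0$, the extension $\Lan_{\yo}(j\iota_0)$ again preserves colimits. It sends $\Delta[m]$ to $j\iota_0[m]$, and it sends $\mathrm{sk}_0\Delta[n]\cong\coprod_{n+1}\Delta[0]$ to $j\iota_0[n]$. Applying it to the pushout gives $j\iota_0[n]\amalg_{j\iota_0[n]}j\iota_0[n]\simeq j\iota_0[n]$. The map to the target component is induced by the vertex inclusions $j\iota_0[m]\to[m]$. Under the identifications above, it becomes the common vertex inclusion $j\iota_0[n]\to[n]\amalg_{j\iota_0[n]}[n]^{\myop}$. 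In the proof of \cref{direct.lem.walking-model}, this is exactly the map underlying $i_n^{\sharp}$.

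\emph{Main obstacle: naturality in $[n]\in\prism_{\rev}$.} Naturality along order-preserving maps is immediate from the naturality of the pushout decomposition of $K_n$. The delicate case is an order-reversing $\alpha:[n]\to[n']$: it exchanges the two summands of the decomposition of $K_n$, and it must match the map $([n]_{\AI})_*\to([n']_{\AI})_*$ induced by $[-]_{\flag}$, which involves the anti-involution $d$ of $[n']_{\AI}$. To handle this, I would avoid the explicit decomposition and work with the natural map
\begin{align}
  \colim_{\iota_{\rev}/[n]}([m]\to\ldots)\longrightarrow(j\iota_0[n]\to([n]_{\AI})_*),
\end{align}
defined as the colimit of the maps $\beta_*:[m]\to([n]_{\AI})_*$. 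For order-reversing $\beta$, the map $\beta_*$ factors through the $[n]^{\myop}$-summand via $d$. This map is natural in $[n]\in\prism_{\rev}$ by functoriality of $[-]_{\flag}$, so the computation above only has to show that it is an equivalence, which can be checked for each $[n]$ separately.
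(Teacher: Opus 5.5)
Your proposal is correct and follows essentially the same route as the paper: you write $\iota_{\rev}^{\myop,*}\yo_{\rev}([n])$ as the pushout of its order-preserving and order-reversing parts along the constant maps, realize it to get $[n]\amalg_{j\iota_0[n]}[n]^{\myop}\simeq([n]_{\AI})_*$, and compute the source endpoint as $j\iota_0[n]$. (The paper writes the reversing part directly as $\rmN([n]^{\myop})$, which makes the gluing along the identity on vertices immediate.) As in the paper, you get naturality in $[n]\in\prism_{\rev}$ by defining the comparison as the colimit of the composites $[m]\to([m]_{\AI})_*\to([n]_{\AI})_*$ induced by $[-]_{\flag}(\beta)$, and you then check objectwise that it is an equivalence because it restricts to the forward and reverse chain inclusions.
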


\begin{proof}
  For each $([m],\beta)\in\iota_{\rev}/[n]$, the forward-chain inclusion of \cref{direct.construction.nerve} gives the composite
  \begin{align}
    [m]
    \to
    ([m]_{\AI})_*
    \xrightarrow{([-]_{\flag}(\beta))_*}
    ([n]_{\AI})_*.
  \end{align}
  Together with the induced morphism $j\iota_0[m]\to j\iota_0[n]$, these composites define a comparison morphism
  \begin{align}
    \colim_{([m],\beta)\in\iota_{\rev}/[n]}
    (j\iota_0[m]\to[m])
    \to
    (j\iota_0[n]\to([n]_{\AI})_*).
  \end{align}
  We show that this comparison is an equivalence at both endpoints.

  At the source endpoint, we have 
  \begin{align}
    \colim_{([m],\beta)\in\iota_{\rev}/[n]}j\iota_0[m]
    &\simeq
    j(\colim_{([m],\beta)\in\iota_{\rev}/[n]}\rmN([m])_0) \\
    &\simeq
    j((\iota_{\rev}^{\myop,*}\yo_{\rev}([n]))_0) \\
    &\simeq
    j\iota_0[n].
  \end{align}
  Under this identification, the source comparison is the identity of $j\iota_0[n]$.

  At the target endpoint, order-preserving and order-reversing maps give a pushout of presheaves
  \begin{align}
    \iota_{\rev}^{\myop,*}\yo_{\rev}([n])
    \simeq
    \rmN([n])
    \amalg_{\Const_{\iota_0[n]}}
    \rmN([n]^{\myop}).
  \end{align}
  The intersection consists of the constant maps.
  Its inclusions are injections of discrete anima, so the degreewise pushout is also a pushout in anima.
  Moreover, $\Const_{\iota_0[n]}\simeq\rmN(j\iota_0[n])$.
  By \cref{direct.lem.walking-model}, we have 
  \begin{align}
    \colim_{([m],\beta)\in\iota_{\rev}/[n]}[m]
    &\simeq
    \ac(\iota_{\rev}^{\myop,*}\yo_{\rev}([n]))\\
    &\simeq
    [n]\amalg_{j\iota_0[n]}[n]^{\myop}\\
    &\simeq
    ([n]_{\AI})_*.
  \end{align}

  For order-preserving $\beta$, the target cocone is the composite of $[m]\xrightarrow{\beta}[n]$ with the forward-chain inclusion.
  For order-reversing $\beta$, it is the composite of $[m]\xrightarrow{\beta}[n]^{\myop}$ with the reverse-chain inclusion.
  These descriptions agree on constant morphisms through the common vertices.
  Thus the target comparison is induced by the identity on both chains and their common vertices, so it is the displayed equivalence.
  This equivalence is natural in $[n]$.
\end{proof}

\begin{lemma}\label{direct.lem.underlying-realization}
  Let $W\in\Fun(\prism_{\rev}^{\myop},\An)$ and write $\ac_{\flag}W=(i_W:A_W\to\Herm(\calC_W))$.
  The following statements hold.
  \begin{enumerate}
    \item There is an equivalence in $\Fun([1],\Cat_{(\infty,1)})$, natural in $W$,
    \begin{align}
      \colim_{\rmN([m])\to\iota_{\rev}^{\myop,*}W}
      (j\iota_0[m]\to[m])
      \simeq
      (jA_W\to(\calC_W)_*).
    \end{align}
    \item The endpoint identifications in (1) give a commutative square, natural in $W$, whose horizontal arrows are equivalences:
    \begin{align}
      \begin{tikzpicture}[auto,baseline=(current bounding box.center)]
        \node (tl) at (0,1.5) {$jW_0$};
        \node (tr) at (4,1.5) {$jA_W$};
        \node (bl) at (0,0)
          {$\ac(\iota_{\rev}^{\myop,*}W)$};
        \node (br) at (4,0) {$(\calC_W)_*$};
        \draw[->] (tl) to
          node {$j(\eta^{\flag}_W)_0$}
          node[swap] {$\simeq$} (tr);
        \draw[->] (tl) to
          node[swap]
          {$((\eta_{\iota_{\rev}^{\myop,*}W})_0)^{\sharp}$}
          (bl);
        \draw[->] (tr) to
          node {$(\epsilon_{\calC_W}i_W)^{\sharp}$}
          (br);
        \draw[->] (bl) to
          node {$\theta_W$}
          node[swap] {$\simeq$} (br);
      \end{tikzpicture}
    \end{align}
    Thus the underlying flag of $\ac_{\flag}W$ is naturally equivalent to the flag given by the degree-zero part of the ordinary realization unit.
  \end{enumerate}
\end{lemma}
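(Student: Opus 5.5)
The plan is to reduce everything to the representable case, which is \cref{direct.lem.walking-flag-kan-extension}, and then extend along colimits. The functor
\begin{align}
  \HermFlag_{(\infty,1)}\to\Fun([1],\Cat_{(\infty,1)}) : (i:A\to\Herm(\calC))\mapsto\bigl(jA\xrightarrow{(\epsilon_{\calC}i)^{\sharp}}\calC_*\bigr)
\end{align}
should preserve small colimits. I would check this in three steps. First, by \cref{target.lem.equivalences-of-flagged-and-coflagged} (1), $\HermFlag_{(\infty,1)}$ is equivalent to $\Fun([1],\AInvCat_{(\infty,1)})\times_{\ev_0,\AInvCat_{(\infty,1)},j^{hC_2}\triv}\An$, where $X$ is sent to $i^{\sharp}:j^{hC_2}\triv(A)\to\calC$. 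Second, colimits in this pullback are computed componentwise, because $j^{hC_2}\triv$ is a left adjoint by \cref{properties.prop.anti-involutive} (4). Third, $\ev_*$ preserves colimits by \cref{properties.prop.anti-involutive} (2), and $(j^{hC_2}\triv A)_*\simeq jA$. Under these identifications $(i^{\sharp})_*$ is adjoint to $\epsilon_{\calC}i$, exactly as in the proof of \cref{target.lem.equivalences-of-flagged-and-coflagged} (1).

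For (1), I would write $W\simeq\colim_{\yo_{\rev}[n]\to W}\yo_{\rev}[n]$. Since $\ac_{\flag}$ is a left Kan extension, $\ac_{\flag}W\simeq\colim[n]_{\flag}$. Applying the colimit-preserving functor above and then \cref{direct.lem.walking-flag-kan-extension} gives
\begin{align}
  (jA_W\to(\calC_W)_*)\simeq\colim_{\yo_{\rev}[n]\to W}\ \colim_{([m],\beta)\in\iota_{\rev}/[n]}(j\iota_0[m]\to[m]).
\end{align}
The iterated colimit is a left Kan extension of $[m]\mapsto(j\iota_0[m]\to[m])$ along $\yo\colon\prism\to\Fun(\prism^{\myop},\An)$, evaluated at $\iota_{\rev}^{\myop,*}W$. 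The justification is that restriction $\iota_{\rev}^{\myop,*}$ preserves colimits and sends $\yo_{\rev}[n]$ to $\colim_{\iota_{\rev}/[n]}\rmN([m])$. Rewriting the result as a colimit over the category of elements $\rmN([m])\to\iota_{\rev}^{\myop,*}W$ gives (1). Naturality in $W$ comes from functoriality of the Kan extension.

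The main obstacle is making this cofinality/Fubini step precise. The natural approach is not to manipulate indexing categories by hand, but to compare two colimit-preserving functors $\Fun(\prism_{\rev}^{\myop},\An)\to\Fun([1],\Cat_{(\infty,1)})$:
\begin{itemize}
  \item the first is $W\mapsto(jA_W\to(\calC_W)_*)$;
  \item the second is $W\mapsto\Lan_{\yo}(j\iota_0[-]\to[-])(\iota_{\rev}^{\myop,*}W)$.
\end{itemize}
Both preserve colimits, so it suffices to give a natural equivalence of their restrictions along $\yo_{\rev}$, and this is exactly \cref{direct.lem.walking-flag-kan-extension}. Its naturality in $[n]\in\prism_{\rev}$ is the delicate point, and I would rely on the lemma's stated naturality.

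For (2), I would identify the two endpoint colimits of the second functor. The source endpoint is $j$ applied to $\colim_{\rmN([m])\to V}\iota_0[m]$ with $V=\iota_{\rev}^{\myop,*}W$. This colimit is $\Lan_{\yo}(\rmN(-)_0)(V)\simeq V_0=W_0$, because evaluation at $0$ preserves colimits. The target endpoint is $\colim[m]\simeq\ac(V)$ by definition of the ordinary realization. The arrow between them is then the map $jV_0\to\ac V$ induced on representables by $j\iota_0[m]\to[m]$. This is precisely the adjunct of the degree-zero unit $(\eta_V)_0\colon V_0\to\iota_0\ac(V)$, which I would check on representables and extend by colimits.

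Comparing this with the first functor yields the square in (2). The top arrow is identified with $j(\eta^{\flag}_W)_0$ by evaluating the unit $W\to\rmN_{\flag}\ac_{\flag}W$ in degree $0$. Here $(\rmN_{\flag}X)_0\simeq A$ by \cref{flagged-nerve.lem.value-of-flagged-nerve}, since $\iota_0\Fun([0],\calC_*)\simeq\iota_0\calC_*$ cancels. The final sentence of the statement then follows by transposing the square under $j\dashv\iota_0$.
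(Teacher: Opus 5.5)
Your argument is correct, but it is organized differently from the paper's.

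\textbf{Your route.} You obtain (1) abstractly from the universal property of presheaves, $\Fun^{L}(\Fun(\prism_{\rev}^{\myop},\An),\calD)\simeq\Fun(\prism_{\rev},\calD)$, using the coherent naturality of \cref{direct.lem.walking-flag-kan-extension}. Only afterwards do you identify the components.

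\textbf{The paper's route.} It goes the other way. It first defines $\theta_W$ as the adjunct under $\ac\dashv\rmN$ of $\pr_{\ac_{\flag}W}\circ\iota_{\rev}^{\myop,*}\eta^{\flag}_W$. The degree-zero part of the mate identity $\rmN(\theta_W)\eta_{\iota_{\rev}^{\myop,*}W}\simeq\pr_{\ac_{\flag}W}\iota_{\rev}^{\myop,*}\eta^{\flag}_W$ then gives the square of (2) by construction. Composing that square with the identification of the colimit arrow with $((\eta_{\iota_{\rev}^{\myop,*}W})_0)^{\sharp}$ gives an explicit natural comparison in $\Fun([1],\Cat_{(\infty,1)})$. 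Both sides preserve colimits in $W$. The paper checks this endpointwise via the right adjoints of \cref{target.lem.herm-forgetful-adjoints}; your pullback description also works. At $\yo_{\rev}[n]$ the comparison is the map of \cref{direct.lem.walking-flag-kan-extension}, so it is an equivalence for every $W$.

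\textbf{What each buys.} The paper's route needs only an objectwise ``is an equivalence'' check on representables, with no coherence. It also delivers $\theta_W$ together with the mate identity that \cref{direct.thm.unit,direct.thm.counit} later use. Your route gets (1) more cleanly, but the component identifications then need separate work.

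\textbf{Step to fill in.} Your sentence ``the top arrow is identified with $j(\eta^{\flag}_W)_0$ by evaluating the unit in degree $0$'' is not yet a proof. You should observe two things:
\begin{itemize}
  \item $W\mapsto(\eta^{\flag}_W)_0$ and the source component of your equivalence are both natural transformations between the colimit-preserving functors $W\mapsto W_0$ and $W\mapsto A_W$;
  \item on representables both restrict to the identity of the set-valued functor $[n]\mapsto\iota_0[n]$.
\end{itemize}
Pointwise agreement suffices there, since mapping anima into $0$-truncated presheaves are discrete.

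For the bottom arrow you may simply take $\theta_W$ to be the target component, since the statement does not define it. However, matching it with the paper's $\theta_W$, and so obtaining the mate identity, would be an extra comparison. That comparison is no longer between discrete-valued functors.
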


\begin{proof}
  We prove (1) and (2) together.
  For $X=(i:A\to\Herm(\calC))$, the second projection of \cref{flagged-nerve.lem.value-of-flagged-nerve} defines a natural morphism
  \begin{align}
    \pr_{X}:
    \iota_{\rev}^{\myop,*}\rmN_{\flag}X
    \to
    \rmN(\calC_*).
  \end{align}
  Define $\theta_W:\ac(\iota_{\rev}^{\myop,*}W)\to(\calC_W)_*$ to be the adjunct under the adjunction $\ac\dashv\rmN$ of
  \begin{align}
    \iota_{\rev}^{\myop,*}W
    \xrightarrow{\iota_{\rev}^{\myop,*}\eta^{\flag}_W}
    \iota_{\rev}^{\myop,*}\rmN_{\flag}\ac_{\flag}W
    \xrightarrow{\pr_{\ac_{\flag}W}}
    \rmN(\calC_W)_*.
  \end{align}
  It is natural in $W$, and its defining mate identity is
  \begin{align}
    \rmN(\theta_W)\eta_{\iota_{\rev}^{\myop,*}W}
    \simeq
    \pr_{\ac_{\flag}W}
    \iota_{\rev}^{\myop,*}\eta^{\flag}_W.
  \end{align}
  In degree zero, this gives
  \begin{align}
    \iota_0(\theta_W)
    (\eta_{\iota_{\rev}^{\myop,*}W})_0
    \simeq
    \epsilon_{\calC_W}i_W(\eta^{\flag}_W)_0.
  \end{align}
  Taking adjuncts under the adjunction $j\dashv\iota_0$ constructs the square in (2).

  Evaluation at $[0]$ and the left adjoints $j$ and $\ac$ preserve colimits.
  Then we have 
  \begin{align}
    \colim_{\rmN([m])\to\iota_{\rev}^{\myop,*}W}
    j\iota_0[m]
    \simeq jW_0
    \quad \text{and} \quad 
    \colim_{\rmN([m])\to\iota_{\rev}^{\myop,*}W}
    [m]
    \simeq
    \ac(\iota_{\rev}^{\myop,*}W).
  \end{align}

  For a morphism $\sigma:\rmN([m])\to\iota_{\rev}^{\myop,*}W$, the induced morphism $[m]\to\ac(\iota_{\rev}^{\myop,*}W)$ is adjoint to the composite $\eta_{\iota_{\rev}^{\myop,*}W}\sigma$ under the adjunction $\ac\dashv\rmN$.
  Its restriction to $j\iota_0[m]$ is therefore adjoint to
  \begin{align}
    \iota_0[m]
    \xrightarrow{\sigma_0}
    W_0
    \xrightarrow{(\eta_{\iota_{\rev}^{\myop,*}W})_0}
    \iota_0\ac(\iota_{\rev}^{\myop,*}W).
  \end{align}
  Consequently, the preceding endpoint identifications give an equivalence of arrows
  \begin{align}
    \colim_{\rmN([m])\to\iota_{\rev}^{\myop,*}W}
    (j\iota_0[m]\to[m])
    \simeq
    (jW_0
    \xrightarrow{((\eta_{\iota_{\rev}^{\myop,*}W})_0)^{\sharp}}
    \ac(\iota_{\rev}^{\myop,*}W)).
  \end{align}
  Composing with the square in (2) defines a natural comparison
  \begin{align}
    \colim_{\rmN([m])\to\iota_{\rev}^{\myop,*}W}
    (j\iota_0[m]\to[m])
    \to
    (jA_W
    \xrightarrow{(\epsilon_{\calC_W}i_W)^{\sharp}}
    (\calC_W)_*).
  \end{align}

  Both sides of this comparison preserve colimits in $W$.
  For the left side, the endpoint functors are $W\mapsto jW_0$ and $W\mapsto\ac(\iota_{\rev}^{\myop,*}W)$.
  They preserve colimits.
  For the right side, $\ac_{\flag}$ is a left adjoint, and the projections from $\HermFlag_{(\infty,1)}$ to $\An$ and $\AInvCat_{(\infty,1)}$ preserve colimits by \cref{target.lem.herm-forgetful-adjoints} (1), (2).
  Moreover, $j$ preserves colimits, and $(-)_*:\AInvCat_{(\infty,1)}\to\Cat_{(\infty,1)}$ preserves colimits by \cref{properties.prop.anti-involutive} (2).

  Then the unit at $\yo_{\rev}([n])$ corresponds to $\id_{[n]_{\flag}}$, and its degree-zero part is the identity of $\iota_0[n]$.
  Its composite with $\pr_{[n]_{\flag}}$ sends $\beta:\iota_{\rev}([m])\to[n]$ to $[m] \to ([m]_{\AI})_* \xrightarrow{([-]_{\flag}(\beta))_*} ([n]_{\AI})_*$.
  Hence the comparison on $\yo_{\rev}([n])$ is precisely the equivalence of \cref{direct.lem.walking-flag-kan-extension}.

  It follows that the comparison is an equivalence for every $W$, proving (1).
  Its endpoint maps $j(\eta^{\flag}_W)_0$ and $\theta_W$ are therefore equivalences, proving (2).
  Since $j$ is fully faithful, $(\eta^{\flag}_W)_0$ is itself an equivalence of anima.
\end{proof}

\begin{lemma}\label{direct.lem.ordinary-completion}
  The following statements hold.
  \begin{enumerate}
    \item For every Segal anima $V:\prism^{\myop}\to\An$, the degree-zero unit $(\eta_V)_0:V_0\to\iota_0\ac V$ is essentially surjective.
    \item For every Segal anima $V:\prism^{\myop}\to\An$, the vertex maps and the unit $\eta_V$ induce an equivalence of anima, natural in $V$ and $[n]\in\prism$,
    \begin{align}
      V_n\xrightarrow{\simeq}
      V_0^{n+1}\times_{(\iota_0\ac V)^{n+1}}
      \iota_0\Fun([n],\ac V).
    \end{align}
    \item More generally, let $u:A\to\iota_0\calC$ and define a simplicial anima $V : \prism^{\myop} \to \An$ by
     \begin{align}
      V_n:=A^{n+1}\times_{(\iota_0\calC)^{n+1}}\iota_0\Fun([n],\calC).
    \end{align}
    Then the functor $\ac V\to\calC$ adjoint to the second projection $V\to\rmN(\calC)$ is fully faithful, and its essential image consists of the objects equivalent to some $u(a)$.
  \end{enumerate}
\end{lemma}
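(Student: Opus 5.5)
I would prove (3) first, since (1) and (2) are the special case $A=V_0$, $\calC=\ac V$, $u=(\eta_V)_0$, once one knows that a Segal anima is recovered from its realization in this way.

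For (3), the plan has three steps.

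\emph{Step 1: the shape of $V$.} The anima $V$ is a Segal anima; this is the same pullback argument as in \cref{flagged-nerve.lem.segal}. Its associated Segal space is the pullback of $\rmN(\calC)$ along $A\to\iota_0\calC$. Concretely:
\begin{itemize}
  \item its space of objects is $A$;
  \item its mapping anima are $V(a,b)\simeq\Map_{\calC}(u(a),u(b))$, read off from the fiber of $V_1\to V_0\times V_0$;
  \item composition is the composition of $\calC$, through $V_2\simeq V_1\times_{V_0}V_1$.
\end{itemize}

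\emph{Step 2: realization of a Segal anima.} I would use the standard fact that for a Segal anima the realization $\ac V$ is its Rezk completion. Write $V\to\widehat V$ for the localization at $e:\yo[0]\to E$, or use the explicit Segal-space completion. This localization is fully faithful and essentially surjective in the Dwyer--Kan sense. Hence
\[
  \Map_{\ac V}(a,b)\simeq V(a,b),
\]
and every object of $\ac V$ is equivalent to the image of some $a\in V_0$. I expect to cite this fact (e.g.\ Rezk, or Joyal--Tierney in the $\infty$-form via \cite{HTT}-style arguments) rather than reprove it. It is the main obstacle. If a citation is unavailable, I would argue by presenting $V$ as a colimit of $\rmN([m])$, as in \cref{direct.lem.underlying-realization}, and compute mapping anima via the Segal condition. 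That route is harder.

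\emph{Step 3: apply the criterion.} The functor $\ac V\to\calC$ induces on mapping anima the composite
\[
  V(a,b)\simeq\Map_{\ac V}(a,b)\to\Map_{\calC}(u(a),u(b)).
\]
By Step 1 this is the identity under the identifications, so the functor is fully faithful on objects coming from $V_0$. By essential surjectivity from Step 2, it is fully faithful everywhere. Its essential image is generated by the images of $V_0=A$, which are the objects $u(a)$. This proves (3).

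For (1), the essential surjectivity of $(\eta_V)_0$ is exactly the essential surjectivity in Step 2.

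For (2), apply (3) to $u=(\eta_V)_0:V_0\to\iota_0\ac V$. Call the resulting simplicial anima $V'$. The unit and the vertex maps give a map $V\to V'$ of Segal anima that is the identity on $V_0$. On mapping anima it is the equivalence $V(a,b)\simeq\Map_{\ac V}(a,b)$ from Step 2. A map of Segal anima that is an equivalence on objects and on mapping anima is an equivalence in every degree, by the Segal condition. Naturality in $V$ and $[n]$ holds because everything is built from the unit $\eta_V$ and the vertex maps.
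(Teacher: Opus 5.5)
Your proposal is correct and follows essentially the paper's route: both rest on Rezk's theorem that the completion unit $\eta_V$ of a Segal anima is a Dwyer--Kan equivalence, obtain (2) from full faithfulness in degree one plus the Segal condition, and reduce (3) to the mapping-anima identification $V(a,b)\simeq\Map_{\calC}(u(a),u(b))$. The only minor difference is in (3). The paper factors $V\to\rmN(\calC)$ through the nerve of the full subcategory on the objects equivalent to some $u(a)$ and applies Rezk's theorem to that Dwyer--Kan equivalence. You instead combine the Dwyer--Kan property of the unit with the adjunct functor $\ac V\to\calC$.
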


\begin{proof}
  (1)
  Since $\rmN$ identifies $\Cat_{(\infty,1)}$ with the complete Segal anima, $\eta_V:V\to\rmN (\ac V)$ is the ordinary completion unit.
  It is a Dwyer--Kan equivalence by \cite[Theorem~7.7]{Rez01}.
  Thus $(\eta_V)_0$ is essentially surjective.

  (2)
  Full faithfulness of the same Dwyer--Kan equivalence implies that the following square is a pullback square in $\An$:
  \begin{align}
    \begin{tikzpicture}[auto,baseline=(current bounding box.center)]
      \node (tl) at (0,1.5) {$V_1$};
      \node (tr) at (4.5,1.5) {$\iota_0\Fun([1],\ac V)$};
      \node (bl) at (0,0) {$V_0\times V_0$};
      \node (br) at (4.5,0) {$(\iota_0\ac V)\times(\iota_0\ac V)$};
      \draw[->] (tl) to node {$\scriptstyle(\eta_V)_1$} (tr);
      \draw[->] (tl) to node[swap] {$\scriptstyle(d_1,d_0)$} (bl);
      \draw[->] (tr) to node {$\scriptstyle(\ev_0,\ev_1)$} (br);
      \draw[->] (bl) to node {$\scriptstyle(\eta_V)_0\times(\eta_V)_0$} (br);
    \end{tikzpicture}
  \end{align}
  Applying the Segal equivalences for $V$ and $\rmN(\ac V)$ therefore gives, for every $n\geq 2$,
  \begin{align}
    V_n
    \xrightarrow{\simeq}
    V_0^{n+1}\times_{(\iota_0\ac V)^{n+1}}
    \iota_0\Fun([n],\ac V).
  \end{align}
  The case $n=0$ is immediate, and the case $n=1$ is the pullback above.
  All these maps are the comparisons induced by the vertices and $\eta_V$, so they are natural in $V$ and $[n]$.

  (3)
  The second projection $V\to\rmN(\calC)$ factors through the nerve of the full subcategory of $\calC$ on the objects equivalent to some $u(a)$.
  Base change of the Segal equivalences shows that $V$ is Segal, and the factorization is essentially surjective on objects.
  For $a,b\in A\simeq V_0$, it induces the equivalence $\Map_{V}(a,b)\simeq\Map_{\calC}(u(a),u(b))$.
  Hence this factorization is a Dwyer--Kan equivalence, so its completion is an equivalence onto that full subcategory by \cite[Theorem~7.7]{Rez01}.
  Composing with the full inclusion into $\calC$ gives the functor adjoint to the second projection.
  This proves its full faithfulness and the asserted description of its essential image.
\end{proof}

\begin{proposition}\label{direct.thm.unit}
  Let $W\in\SegRev(\An)$ and write $\ac_{\flag}W=(i_W:A_W\to\Herm(\calC_W))$.
  The following statements hold.
  \begin{enumerate}[beginpenalty=10000,midpenalty=10000]
    \item The unit $\eta^{\flag}_W:W\to\rmN_{\flag}\ac_{\flag}W$ of the adjunction in \cref{flagged-nerve.lem.adjunction} is an equivalence.
    \item $\ac_{\flag}W$ is flagged.
  \end{enumerate}
\end{proposition}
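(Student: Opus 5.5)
The plan is to prove (2) first and then use it, together with \cref{flagged-nerve.lem.value-of-flagged-nerve} and \cref{direct.lem.ordinary-completion}, to check the unit degreewise. Write $V:=\iota_{\rev}^{\myop,*}W$; by hypothesis $V$ is a Segal anima. By \cref{direct.lem.underlying-realization} (2), the underlying flag $\epsilon_{\calC_W}i_W:A_W\to\iota_0(\calC_W)_*$ is identified with $(\eta_V)_0:V_0\to\iota_0\ac V$, via the equivalences $(\eta^{\flag}_W)_0$ and $\theta_W$. By \cref{direct.lem.ordinary-completion} (1), $(\eta_V)_0$ is essentially surjective, so $\ac_{\flag}W$ is flagged. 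This proves (2).

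For (1), equivalences in $\Fun(\prism_{\rev}^{\myop},\An)$ are detected levelwise, and the objects of $\prism_{\rev}$ are those of $\prism$. It therefore suffices to show that $\iota_{\rev}^{\myop,*}\eta^{\flag}_W:V\to\iota_{\rev}^{\myop,*}\rmN_{\flag}\ac_{\flag}W$ is an equivalence of simplicial anima. By \cref{flagged-nerve.lem.value-of-flagged-nerve}, the target in degree $n$ is
\begin{align}
  A_W^{n+1}\times_{(\iota_0(\calC_W)_*)^{n+1}}\iota_0\Fun([n],(\calC_W)_*).
\end{align}
Transport along the equivalences $(\eta^{\flag}_W)_0:W_0\simeq A_W$ and $\theta_W:\ac V\simeq(\calC_W)_*$ of \cref{direct.lem.underlying-realization} (2), which are compatible with the flags. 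This identifies the target with
\begin{align}
  V_0^{n+1}\times_{(\iota_0\ac V)^{n+1}}\iota_0\Fun([n],\ac V).
\end{align}
The composite from $V_n$ should then be exactly the comparison map of \cref{direct.lem.ordinary-completion} (2), which is an equivalence, naturally in $[n]\in\prism$.

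The key verification, and the expected main obstacle, is that the degree-$n$ component of $\eta^{\flag}_W$ matches the map of \cref{direct.lem.ordinary-completion} (2) after these transports. It has two components, and I would check each separately.
\begin{itemize}
  \item The $A_W^{n+1}$-component is given by the vertex maps $V_n\to V_0^{n+1}$ followed by $(\eta^{\flag}_W)_0$, by naturality of $\eta^{\flag}_W$ with respect to the vertex inclusions $[0]\to[n]$.
  \item The $\iota_0\Fun([n],-)$-component is $\pr_{\ac_{\flag}W}\circ\iota_{\rev}^{\myop,*}\eta^{\flag}_W$. By the mate identity defining $\theta_W$, this agrees with $\rmN(\theta_W)\eta_V$.
\end{itemize}
Both identifications are natural in $[n]\in\prism$, so the squares assemble into an equivalence of simplicial objects. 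Since the comparison of \cref{direct.lem.ordinary-completion} (2) is an equivalence, so is $\iota_{\rev}^{\myop,*}\eta^{\flag}_W$. Levelwise detection of equivalences then shows that $\eta^{\flag}_W$ is an equivalence. Compatibility with the reversal maps needs no separate check, since $\eta^{\flag}_W$ is already a morphism in $\Fun(\prism_{\rev}^{\myop},\An)$.
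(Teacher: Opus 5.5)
Your proposal is correct and follows the paper's own proof: you identify $\iota_{\rev}^{\myop,*}\eta^{\flag}_W$ degreewise with the ordinary completion comparison of \cref{direct.lem.ordinary-completion} (2), via the equivalences $(\eta^{\flag}_W)_0$ and $\theta_W$ and the mate identity defining $\theta_W$, conclude using conservativity of restriction along the identity-on-objects functor $\iota_{\rev}$, and obtain flaggedness from \cref{direct.lem.ordinary-completion} (1) together with the square of \cref{direct.lem.underlying-realization} (2). The only difference is that you prove (2) first, although your argument for (1) never uses it.
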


\begin{proof}
  (1)
  The equivalences $(\eta^{\flag}_W)_0:W_0\xrightarrow{\simeq}A_W$ and $\theta_W:\ac(\iota_{\rev}^{\myop,*}W)\xrightarrow{\simeq}(\calC_W)_*$ of \cref{direct.lem.underlying-realization} (2) induce the second arrow in
  \begin{align}
    W_n
    &\to
    W_0^{n+1}\times_{(\iota_0\ac(\iota_{\rev}^{\myop,*}W))^{n+1}}
    \iota_0\Fun([n],\ac(\iota_{\rev}^{\myop,*}W)) \\*
    &\xrightarrow{\simeq}
    A_W^{n+1}\times_{(\iota_0(\calC_W)_*)^{n+1}}
    \iota_0\Fun([n],(\calC_W)_*) \\
    &\simeq
    (\rmN_{\flag}\ac_{\flag}W)_n.
  \end{align}
  The first arrow is the ordinary completion comparison, and the last equivalence is \cref{flagged-nerve.lem.value-of-flagged-nerve}.
  The defining mate identity for $\theta_W$ in the proof of \cref{direct.lem.underlying-realization} identifies this composite with $(\eta^{\flag}_W)_n$.
  Since $\iota_{\rev}^{\myop,*}W$ is Segal, the first arrow is an equivalence by \cref{direct.lem.ordinary-completion} (2).

  Thus $\iota_{\rev}^{\myop,*}\eta^{\flag}_W$ is an equivalence.
  Since $\iota_{\rev}^{\myop}$ is the identity on objects, its restriction functor is conservative.
  Hence $\eta^{\flag}_W$ is an equivalence in $\Fun(\prism_{\rev}^{\myop},\An)$.

  (2)
  By \cref{direct.lem.ordinary-completion} (1), the morphism $(\eta_{\iota_{\rev}^{\myop,*}W})_0:W_0\to\iota_0\ac(\iota_{\rev}^{\myop,*}W)$ is essentially surjective.
  By \cref{direct.lem.underlying-realization} (2), we have
  \begin{align}
    \epsilon_{\calC_W}i_W (\eta^{\flag}_W)_0
    \simeq
    \iota_0(\theta_W) (\eta_{\iota_{\rev}^{\myop,*}W})_0
    : W_0
    \to 
    \iota_{0}(\calC_{W})_{*}.
  \end{align}
  Both $(\eta^{\flag}_W)_0$ and $\iota_0(\theta_W)$ are equivalences.
  Therefore $\epsilon_{\calC_W}i_W$ is essentially surjective, so $\ac_{\flag}W$ is flagged.
\end{proof}

\begin{proposition}\label{direct.thm.counit}
  Let $X=(i:A\to\Herm(\calC))\in\HermFlag_{(\infty,1)}$.
  Write $\varepsilon^{\flag}_{X}:\ac_{\flag}\rmN_{\flag}X\to X$ for the counit of $\ac_{\flag}\dashv\rmN_{\flag}$ at $X$.
  The following statements hold.
  \begin{enumerate}
    \item Under the degree-zero identifications of \cref{flagged-nerve.lem.value-of-flagged-nerve} and \cref{direct.lem.underlying-realization} (2), $\varepsilon^{\flag}_{X}$ induces the identity of $A$.
    \item On underlying anti-involutive $(\infty,1)$-categories, $\varepsilon^{\flag}_{X}$ identifies its source with the full subcategory
    \begin{align}
      (\calC_{*})_A
      :=\{x\in\calC_{*}\mid x\simeq\epsilon_{\calC}i(a)\text{ for some }a\in A\}
      \subseteq\calC_{*}.
    \end{align}
    \item $\varepsilon^{\flag}_{X}$ is an equivalence if and only if $X$ is flagged.
  \end{enumerate}
\end{proposition}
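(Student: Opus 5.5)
The plan is to apply \cref{direct.lem.underlying-realization} to $W:=\rmN_{\flag}X$ and then use the triangle identity $\rmN_{\flag}(\varepsilon^{\flag}_X)\circ\eta^{\flag}_{\rmN_{\flag}X}\simeq\id$. By \cref{flagged-nerve.lem.segal}, $W$ is a dagger Segal anima, so \cref{direct.thm.unit} (1) shows that $\eta^{\flag}_W$ is an equivalence. The triangle identity then shows that $\rmN_{\flag}(\varepsilon^{\flag}_X)$ is an equivalence of reversal simplicial anima.

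For (1), evaluate $\rmN_{\flag}(\varepsilon^{\flag}_X)$ in degree zero. By \cref{flagged-nerve.lem.value-of-flagged-nerve}, the degree-zero part of $\rmN_{\flag}$ of a flag $(i':A'\to\Herm(\calC'))$ is $A'$, since $\iota_0\Fun([0],\calC'_*)\simeq\iota_0\calC'_*$. This identification is natural in morphisms of flags. Hence the degree-zero part of $\rmN_{\flag}(\varepsilon^{\flag}_X)$ is the $\An$-component $A_W\to A$ of the counit. Precomposed with $(\eta^{\flag}_W)_0:W_0\simeq A_W$, it is the identity of $W_0=A$. Under the identification $A_W\simeq W_0\simeq A$ of \cref{direct.lem.underlying-realization} (2), this gives (1).

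For (2), I would study the underlying functor $F:=(\varepsilon^{\flag}_X)_*:(\calC_W)_*\to\calC_*$ through the equivalence $\theta_W:\ac(\iota_{\rev}^{\myop,*}W)\simeq(\calC_W)_*$. The main obstacle is to show that $F\theta_W$ is the functor adjoint to the second projection $\pr_X:\iota_{\rev}^{\myop,*}\rmN_{\flag}X\to\rmN(\calC_*)$. I would argue through the mate identity defining $\theta_W$. Naturality of $\pr$ in morphisms of flags gives
\[
\rmN(F)\circ\pr_{\ac_{\flag}W}\simeq\pr_X\circ\iota_{\rev}^{\myop,*}\rmN_{\flag}(\varepsilon^{\flag}_X).
\]
Composing this with $\iota_{\rev}^{\myop,*}\eta^{\flag}_W$ and applying the triangle identity gives
\[
\rmN(F\theta_W)\,\eta_{\iota_{\rev}^{\myop,*}W}\simeq\pr_X.
\]
So $F\theta_W$ is indeed the adjunct of $\pr_X$. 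Now $\iota_{\rev}^{\myop,*}W$ is exactly the simplicial anima $V$ of \cref{direct.lem.ordinary-completion} (3) with $u=\epsilon_{\calC}i$. That lemma shows $F\theta_W$ is fully faithful with essential image $(\calC_*)_A$. Since $\theta_W$ is an equivalence, $F$ identifies $(\calC_W)_*$ with $(\calC_*)_A$, proving (2).

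For (3), I would use \cref{target.lem.hermitian-flag-equivalences}: $\varepsilon^{\flag}_X$ is an equivalence if and only if its $\An$-component and $F$ are equivalences. The $\An$-component is always an equivalence by (1). By (2), $F$ is an equivalence if and only if $(\calC_*)_A=\calC_*$, that is, if and only if $\epsilon_{\calC}i$ is essentially surjective. This is exactly the condition that $X$ is flagged. The converse direction is also covered independently by \cref{direct.thm.unit} (2), since if the counit is an equivalence then $X\simeq\ac_{\flag}W$ is flagged.
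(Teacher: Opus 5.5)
Your proposal is correct and follows the paper's proof essentially step for step. Part (1) uses the degree-zero triangle identity. Part (2) combines the mate identity for $\theta_W$, naturality of $\pr$ and the triangle identity to identify $(\varepsilon^{\flag}_X)_*\theta_W$ with the adjunct of $\pr_X$, then applies \cref{direct.lem.ordinary-completion} (3) with $u=\epsilon_{\calC}i$. Part (3) uses \cref{target.lem.hermitian-flag-equivalences}. Your preliminary observation that $\rmN_{\flag}(\varepsilon^{\flag}_X)$ is an equivalence, and your alternative argument for the converse of (3) via \cref{direct.thm.unit} (2), are both valid but not needed.
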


\begin{proof}
  Write $\ac_{\flag}\rmN_{\flag}X=(i_{\rmN_{\flag}X}:A_{\rmN_{\flag}X}\to\Herm(\calC_{\rmN_{\flag}X}))$.
  Denote the flag component and the underlying functor of $\varepsilon^{\flag}_{X}$ by $(\varepsilon^{\flag}_{X})_{\flagtag}:A_{\rmN_{\flag}X}\to A$ and $(\varepsilon^{\flag}_{X})_*:(\calC_{\rmN_{\flag}X})_*\to\calC_*$, respectively.

  (1)
  The triangle identity is $\rmN_{\flag}(\varepsilon^{\flag}_{X}) \eta^{\flag}_{\rmN_{\flag}X}\simeq\id_{\rmN_{\flag}X}$.
  Taking degree zero in the triangle identity above and using \cref{flagged-nerve.lem.value-of-flagged-nerve}, we obtain
  \begin{align}
    (\varepsilon^{\flag}_{X})_{\flagtag} (\eta^{\flag}_{\rmN_{\flag}X})_0
    \simeq\id_A.
  \end{align}
  The second factor identifies $A$ with $A_{\rmN_{\flag}X}$ by \cref{direct.lem.underlying-realization} (2).
  Under this identification, the preceding identity identifies $(\varepsilon^{\flag}_{X})_{\flagtag}$ with the identity of $A$.

  (2)
  By \cref{flagged-nerve.lem.value-of-flagged-nerve}, the second projection is $\pr_{X}:\iota_{\rev}^{\myop,*}\rmN_{\flag}X \to\rmN(\calC_*)$.
  The defining mate identity for $\theta_{\rmN_{\flag}X}$, naturality of the second projection, and the triangle identity in (1) give
  \begin{align}
    \rmN((\varepsilon^{\flag}_{X})_* \theta_{\rmN_{\flag}X}) \eta_{\iota_{\rev}^{\myop,*}\rmN_{\flag}X} 
    &\simeq
    \rmN((\varepsilon^{\flag}_{X})_*)\pr_{\ac_{\flag}\rmN_{\flag}X}
    \iota_{\rev}^{\myop,*}\eta^{\flag}_{\rmN_{\flag}X} \\
    &\simeq
    \pr_{X} \iota_{\rev}^{\myop,*}(\rmN_{\flag}(\varepsilon^{\flag}_{X})\eta^{\flag}_{\rmN_{\flag}X}) \\
    &\simeq
    \pr_{X}.
  \end{align}
  Taking adjuncts under the adjunction $\ac\dashv\rmN$ gives the natural equivalence
  \begin{align}
    (\varepsilon^{\flag}_{X})_* \theta_{\rmN_{\flag}X}
    \simeq
    \varepsilon_{\calC_*} \ac(\pr_{X}).
  \end{align}
  Here $\varepsilon_{\calC_*}$ is the counit of the adjunction $\ac\dashv\rmN$.
  By \cref{direct.lem.ordinary-completion} (3), applied to $\epsilon_{\calC}i:A\to\iota_0\calC_*$, the functor on the right is fully faithful with essential image $(\calC_*)_A$.
  Thus it factors as
  \begin{align}
    \ac(\iota_{\rev}^{\myop,*}\rmN_{\flag}X)
    \xrightarrow{\simeq}(\calC_*)_A
    \hookrightarrow\calC_*.
  \end{align}
  Since $\theta_{\rmN_{\flag}X}$ is an equivalence, $(\varepsilon^{\flag}_{X})_*$ is fully faithful with essential image $(\calC_*)_A$.

  (3)
  By \cref{target.lem.hermitian-flag-equivalences}, the flag anima and the underlying $(\infty,1)$-category jointly detect equivalences of Hermitian flags.
  By (1) and (2), $\varepsilon^{\flag}_{X}$ is therefore an equivalence if and only if the full inclusion $(\calC_*)_A\hookrightarrow\calC_*$ is essentially surjective.
  This holds if and only if $\epsilon_{\calC}i:A\to\iota_0\calC_*$ is essentially surjective, which is exactly the flagged condition on $X$.
\end{proof}

\begin{theorem}[Flagged nerve equivalence]\label{direct.thm.flagged-equivalence}
  The following statements hold.
  \begin{enumerate}
    \item The flagged nerve $\rmN_{\flag} : \HermFlag_{(\infty,1)} \to \SegRev(\An)$ in \cref{flagged-nerve.lem.segal} restricts to an equivalence of $(\infty,1)$-categories
    \begin{align}
      \rmN_{\flag}|_{\FlDagCat_{(\infty,1)}}:
      \FlDagCat_{(\infty,1)}
      \xrightarrow{\simeq}
      \SegRev(\An),
    \end{align}
    with inverse $\ac_{\flag}|_{\SegRev(\An)}$.
    \item For every $X \in \FlDagCat_{(\infty,1)}$ and every $Y\in\HermFlag_{(\infty,1)}$,
    the natural morphism
    \begin{align}
      \Map_{\HermFlag_{(\infty,1)}}(X,Y)
      \xrightarrow{\simeq}
      \Map_{\Fun(\prism_{\rev}^{\myop},\An)}(\rmN_{\flag}X,\rmN_{\flag}Y)
    \end{align}
    is an equivalence.
  \end{enumerate}
\end{theorem}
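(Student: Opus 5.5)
We deduce both statements formally from the unit and counit computations of \cref{direct.thm.unit,direct.thm.counit}, using the standard fact that an adjunction restricts to an equivalence between the full subcategories on which the unit and counit are equivalences.

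For (1), consider the adjunction $\ac_{\flag}\dashv\rmN_{\flag}$ of \cref{flagged-nerve.lem.adjunction}. First, $\rmN_{\flag}$ carries $\FlDagCat_{(\infty,1)}$ into $\SegRev(\An)$, since by \cref{flagged-nerve.lem.segal} every flagged nerve is Segal. Second, $\ac_{\flag}$ carries $\SegRev(\An)$ into $\FlDagCat_{(\infty,1)}$ by \cref{direct.thm.unit} (2). Both subcategories are full, so the adjunction restricts to an adjunction
\begin{align}
  \ac_{\flag}|_{\SegRev(\An)}:\SegRev(\An)\rightleftarrows\FlDagCat_{(\infty,1)}:\rmN_{\flag}|_{\FlDagCat_{(\infty,1)}},
\end{align}
whose unit and counit are the restrictions of $\eta^{\flag}$ and $\varepsilon^{\flag}$. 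The unit is an equivalence on every $W\in\SegRev(\An)$ by \cref{direct.thm.unit} (1). The counit is an equivalence on every flagged $X$ by \cref{direct.thm.counit} (3). An adjunction whose unit and counit are both equivalences is an adjoint equivalence, so (1) follows with the stated inverse.

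For (2), fix $X$ flagged and $Y$ arbitrary. The composite
\begin{align}
  \Map_{\HermFlag_{(\infty,1)}}(X,Y)\to\Map(\rmN_{\flag}X,\rmN_{\flag}Y)\simeq\Map_{\HermFlag_{(\infty,1)}}(\ac_{\flag}\rmN_{\flag}X,Y)
\end{align}
is, by the triangle identities, precomposition with the counit $\varepsilon^{\flag}_X$. Here the second map is the adjunction equivalence. Since $\varepsilon^{\flag}_X$ is an equivalence by \cref{direct.thm.counit} (3), the composite is an equivalence, and hence so is the first map.

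The only point requiring care is that identification of the adjunct: we must check that sending $f$ to $\rmN_{\flag}f$ and then taking its adjunct gives $f\circ\varepsilon^{\flag}_X$. This is the standard formula, since the adjunct of $\rmN_{\flag}(f)$ is $\varepsilon^{\flag}_Y\circ\ac_{\flag}\rmN_{\flag}(f)\simeq f\circ\varepsilon^{\flag}_X$ by naturality of the counit. Everything else is formal.
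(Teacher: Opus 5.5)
Your proposal is correct and follows the paper's proof essentially step for step. For (1), you restrict $\ac_{\flag}\dashv\rmN_{\flag}$ using \cref{direct.thm.unit} (2) and conclude from \cref{direct.thm.unit} (1) and \cref{direct.thm.counit} (3); for (2), you compare with precomposition by the invertible counit $\varepsilon^{\flag}_X$. The only cosmetic difference is that the paper composes in the other order and identifies the result via the triangle identity $\rmN_{\flag}(\varepsilon^{\flag}_X)\eta^{\flag}_{\rmN_{\flag}X}\simeq\id$, whereas you use naturality of the counit; both are the same standard adjunct computation.
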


\begin{proof}
  (1)
  By \cref{direct.thm.unit} (2), $\ac_{\flag}$ takes dagger Segal anima to flagged dagger $(\infty,1)$-categories.
  Thus the adjunction restricts to these full subcategories.
  Both the unit and the restricted counit are equivalences by \cref{direct.thm.unit} (1) and \cref{direct.thm.counit} (3).

  (2)
  Since $X$ is flagged, $\varepsilon^{\flag}_{X}$ is an equivalence by \cref{direct.thm.counit} (3).
  Precomposition with $\varepsilon^{\flag}_{X}$ and the realization--nerve adjunction give
  \begin{align}
    \Map_{\HermFlag_{(\infty,1)}}(X,Y)
    &\xrightarrow[\simeq]{(\varepsilon^{\flag}_{X})^*}
    \Map_{\HermFlag_{(\infty,1)}}(\ac_{\flag}\rmN_{\flag}X,Y) \\
    &\simeq
    \Map_{\Fun(\prism_{\rev}^{\myop},\An)}(\rmN_{\flag}X,\rmN_{\flag}Y).
  \end{align}
  The triangle identity $\rmN_{\flag}(\varepsilon^{\flag}_{X})\eta^{\flag}_{\rmN_{\flag}X}\simeq\id_{\rmN_{\flag}X}$ identifies the composite with the morphism induced by $\rmN_{\flag}$.
\end{proof}

\subsection{Unitary completeness and coflagging}

We define unitary completeness as locality with respect to the walking unitary.
For the nerve of a Hermitian flag, this is exactly full faithfulness of the morphism into the Hermitian anima (\cref{direct.thm.unitary}).
The flagged nerve equivalence therefore restricts to an equivalence between dagger $(\infty,1)$-categories and unitarily complete dagger Segal anima (\cref{direct.cor.unitary-equivalence}).

\begin{construction}
  Define a reversal simplicial anima $E_{\rev} : \prism_{\rev}^{\myop} \to \An$ by $E_{\rev}([n])=\{0,1\}^{n+1}$, with every map of $\prism_{\rev}$ acting by precomposition on these functions on vertices. 
\end{construction}

\begin{notation}
  Let $e_{\rev}:\yo_{\rev}[0]\to E_{\rev}$ select the constant tuple $0$. 
\end{notation}

\begin{definition}\label{direct.def.unitary-complete}
  Let $W\in\Fun(\prism_{\rev}^{\myop},\An)$.
  We will say that it is \emph{unitarily complete} if
  \begin{align}
    e_{\rev}^*:\Map_{\Fun(\prism_{\rev}^{\myop},\An)}(E_{\rev},W)
    \to\Map_{\Fun(\prism_{\rev}^{\myop},\An)}(\yo_{\rev}[0],W)\simeq W_0
  \end{align}
  is an equivalence of anima.
\end{definition}

\begin{notation}
  We define some full subcategories of $\Fun(\prism_{\rev}^{\myop},\An)$.
  \begin{itemize}
    \item The full subcategory
    \begin{align}
      \calU_{\rev}(\An) 
      \subseteq 
      \Fun(\prism_{\rev}^{\myop},\An)
    \end{align}
    spanned by unitarily complete reversal simplicial anima.
    \item The full subcategory
    \begin{align}
      \SegRevU(\An)
      :=\SegRev(\An)\cap\calU_{\rev}(\An) 
      \subseteq \Fun(\prism_{\rev}^{\myop},\An)
    \end{align}
    spanned by unitarily complete dagger Segal anima.
  \end{itemize}
\end{notation}

\begin{notation}
  We fix some notational conventions.
  \begin{itemize}
    \item Let $I$ be the indiscrete groupoid on $\{0,1\}$, regarded as an anima.
    \item Define a Hermitian flag (indeed a dagger $(\infty,1)$-category)
    \begin{align}
      \mathfrak e 
      := 
      (\id : * \to \Herm(j^{hC_2}\triv(*)) \simeq *).
    \end{align}
    \item Define a Hermitian flag (indeed a flagged dagger $(\infty,1)$-category)
    \begin{align}
      \mathfrak u 
      := 
      (i_{\mathfrak u} : \{0,1\} \to \Herm(j^{hC_2}\triv(I))),
    \end{align}
    where $i_{\mathfrak u}$ is adjoint to the inclusion $j^{hC_2}\triv(\{0,1\}) \hookrightarrow j^{hC_2}\triv(I)$.
    \item Write 
    \begin{align}
      0 : \mathfrak e \to \mathfrak u
    \end{align}
    for a morphism of Hermitian flags selecting the object $0$ of $\{0,1\}$.
  \end{itemize}
\end{notation}

\begin{lemma}\label{direct.lem.e-and-u}
  For every $X=(i : A \to \Herm(\calC)) \in \HermFlag_{(\infty,1)}$, the following statements hold.
  \begin{enumerate}
    \item There is an equivalence of anima
    \begin{align}
      \Map_{\HermFlag_{(\infty,1)}}(\mathfrak e,X)
      \simeq A.
    \end{align}
    \item There is an equivalence of anima
    \begin{align}
      \Map_{\HermFlag_{(\infty,1)}}(\mathfrak u,X)
      \simeq A\times_{\Herm(\calC)}A.
    \end{align}
    \item Under the equivalences in (1) and (2), precomposition with $0:\mathfrak e\to\mathfrak u$,
    \begin{align}
      0^*:
      \Map_{\HermFlag_{(\infty,1)}}(\mathfrak u,X)
      \to
      \Map_{\HermFlag_{(\infty,1)}}(\mathfrak e,X),
    \end{align}
    corresponds to the first projection $p_0:A\times_{\Herm(\calC)}A\to A$.
  \end{enumerate}
\end{lemma}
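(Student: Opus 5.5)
The plan is to compute both mapping anima with the pullback formula of \cref{target.prop.mapping-spaces}, and then to identify the anti-involutive part with the adjunction $j^{hC_2}\triv\dashv\Herm$ of \cref{properties.prop.anti-involutive} (4). For any Hermitian flag of the form $(k:B\to\Herm(j^{hC_2}\triv(H)))$ with $k$ adjoint to a map $j^{hC_2}\triv(B)\to j^{hC_2}\triv(H)$ induced by $f:B\to H$, we get
\begin{align}
  \Map_{\HermFlag_{(\infty,1)}}((B,H,k),X)
  &\simeq
  \Map_{\An}(B,A)\times_{\Map_{\An}(B,\Herm(\calC))}\Map_{\AInvCat_{(\infty,1)}}(j^{hC_2}\triv(H),\calC)\\
  &\simeq
  \Map_{\An}(B,A)\times_{\Map_{\An}(B,\Herm(\calC))}\Map_{\An}(H,\Herm(\calC)).
\end{align}
Here the map $\Map_{\An}(H,\Herm(\calC))\to\Map_{\An}(B,\Herm(\calC))$ is precomposition with $f$, by naturality of the adjunction equivalence. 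I would first record this general formula and then specialize it.

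For (1), take $B=H=*$ and $f=\id$. The fiber product becomes $A\times_{\Herm(\calC)}\Herm(\calC)\simeq A$.

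For (2), take $B=\{0,1\}$, $H=I$, and $f$ the inclusion. Since $\Map_{\An}(\{0,1\},-)=(-)^2$, the first factor is $A\times A$ and the base is $\Herm(\calC)^2$. The key observation is that $I$ is contractible, so $\Map_{\An}(I,\Herm(\calC))\simeq\Herm(\calC)$, and restriction along $\{0,1\}\hookrightarrow I$ becomes the diagonal $\Delta:\Herm(\calC)\to\Herm(\calC)^2$. This gives
\[
  (A\times A)\times_{\Herm(\calC)^2,\Delta}\Herm(\calC)\simeq A\times_{\Herm(\calC)}A.
\]
In both cases one must check that the Hermitian flag structure maps $\iota_{\mathfrak e}$ and $i_{\mathfrak u}$ are exactly the adjuncts of the $j^{hC_2}\triv$-images of these maps. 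This holds by definition for $\mathfrak u$. For $\mathfrak e$, it holds because $\Herm(j^{hC_2}\triv(*))\simeq\Map(BC_2,*)\simeq *$.

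For (3), the morphism $0:\mathfrak e\to\mathfrak u$ consists of $\{0\}\hookrightarrow\{0,1\}$ on flag anima and $j^{hC_2}\triv(*\xrightarrow{0}I)$ on anti-involutive parts. Naturality of the formula above in the pair $(B\to H)$ shows that $0^*$ is induced by restriction along $\{0\}\to\{0,1\}$ and $*\to I$. On the factors this is the projection $A^2\to A$ to the $0$-th coordinate, and on $\Map(I,\Herm(\calC))\simeq\Herm(\calC)$ it is the identity. Hence $0^*$ is the first projection $p_0$.

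The main obstacle is bookkeeping coherence. One has to ensure that the identification in (2) is natural in the diagram $(\{0\}\to\{0,1\})\to(*\to I)$, so that (3) follows from functoriality rather than a separate check. I would handle this by phrasing the general formula as a natural equivalence of functors on $\Fun([1],\An)^{\myop}$, namely $(B\to H)\mapsto \Map_{\An}(B,A)\times_{\Map_{\An}(B,\Herm(\calC))}\Map_{\An}(H,\Herm(\calC))$. That formulation makes (1)–(3) instances of a single naturality statement.
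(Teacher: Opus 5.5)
Your proposal is correct and follows essentially the same route as the paper: the pullback formula of \cref{target.prop.mapping-spaces} combined with the adjunction $j^{hC_2}\triv\dashv\Herm$, contractibility of $I$ for (2), and naturality for (3), where $0^*$ is restriction at $0$ on $\Map_{\An}(\{0,1\},A)$ and on $\Map_{\An}(I,\Herm(\calC))$. Packaging the formula as one natural equivalence in the arrow $(B\to H)$ makes explicit the naturality that the paper invokes more briefly in (3).
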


\begin{proof}
  (1)
  The adjunction $j^{hC_2}\triv\dashv\Herm$ and \cref{target.prop.mapping-spaces} give
  \begin{align}
    \Map_{\HermFlag_{(\infty,1)}}(\mathfrak e,X)
    &\simeq
    \Map_{\An}(*,A)\times_{\Map_{\An}(*,\Herm(\calC))}\Hom_{\AInvCat_{(\infty,1)}}(j^{hC_2}\triv(*),\calC)
    \\
    &\simeq 
    A\times_{\Herm(\calC)}\Herm(\calC)
    \\
    &\simeq 
    A.
  \end{align}

  (2)
  The same adjunction and \cref{target.prop.mapping-spaces} give
  \begin{align}
    \Map_{\HermFlag_{(\infty,1)}}(\mathfrak u,X)
    &\simeq
    \Map_{\An}(\{0,1\},A)\times_{\Map_{\An}(\{0,1\},\Herm(\calC))}
    \Hom_{\AInvCat_{(\infty,1)}}(j^{hC_2}\triv(I),\calC)
    \\
    &\simeq A^2\times_{\Herm(\calC)^2}\Map_{\An}(I,\Herm(\calC))
    \\
    &\simeq 
    A^2\times_{\Herm(\calC)^2}\Herm(\calC) \\
    &\simeq 
    A\times_{\Herm(\calC)}A.
  \end{align}
  Here the constant-path morphism $\Herm(\calC)\to\Map_{\An}(I,\Herm(\calC))$ is an equivalence, since $I$ is contractible.

  (3)
  By naturality of \cref{target.prop.mapping-spaces}, precomposition with $0$ is induced by evaluation at $0$ on both $\Map_{\An}(\{0,1\},A)$ and $\Map_{\An}(I,\Herm(\calC))$.
  Thus the morphism on the pullbacks in (1) and (2) is
  \begin{align}
    A\times_{\Herm(\calC)}A
    &\simeq 
    A^2\times_{\Herm(\calC)^2}\Herm(\calC) \\
    &\to
    A\times_{\Herm(\calC)}\Herm(\calC) \\
    &\simeq 
    A,
  \end{align}
  induced by the first projections $A^2\to A$ and $\Herm(\calC)^2\to\Herm(\calC)$ and the identity of $\Herm(\calC)$.
  Thus this is the first projection $p_0:A\times_{\Herm(\calC)}A\to A$.
\end{proof}

\begin{proposition}\label{direct.lem.walking-unitary}
  There is an equivalence in $\Fun([1],\Fun(\prism_{\rev}^{\myop},\An))$:
  \begin{align}
    \rmN_{\flag}(0 : \mathfrak e \to \mathfrak u)
    \simeq
    (e_{\rev}:\yo_{\rev}[0]\to E_{\rev}).
  \end{align}
\end{proposition}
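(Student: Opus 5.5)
We compute both nerves directly from \cref{flagged-nerve.lem.value-of-flagged-nerve}, identify them with the representable and with $E_{\rev}$, and then check that the comparison is compatible with the maps $0$ and $e_{\rev}$.

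\emph{The nerve of $\mathfrak e$.} The underlying category of $j^{hC_2}\triv(*)$ is $*$, and its flag is $\id_*$. Hence
\begin{align}
  \rmN_{\flag}(\mathfrak e)_n\simeq *^{n+1}\times_{*^{n+1}}\iota_0\Fun([n],*)\simeq *.
\end{align}
Every structure map of $\prism_{\rev}$ acts trivially, so $\rmN_{\flag}(\mathfrak e)$ is the terminal reversal simplicial anima. This agrees with $\yo_{\rev}[0]$, since $\Hom_{\prism_{\rev}}([n],[0])$ is a point.

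\emph{The nerve of $\mathfrak u$.} The underlying category of $j^{hC_2}\triv(I)$ is $jI$, and $jI$ is contractible. Its underlying flag is the inclusion $\{0,1\}\to I$. Therefore
\begin{align}
  \rmN_{\flag}(\mathfrak u)_n\simeq \{0,1\}^{n+1}\times_{I^{n+1}}\iota_0\Fun([n],jI)\simeq\{0,1\}^{n+1}\times_{I^{n+1}}I\simeq\{0,1\}^{n+1}.
\end{align}
The second equivalence uses $\Fun([n],jI)\simeq jI$ and $\iota_0 jI\simeq I$, with $I\to I^{n+1}$ the diagonal. The third uses that $I$ and $I^{n+1}$ are contractible, so the fiber product is just $\{0,1\}^{n+1}$.

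\emph{Functoriality in $\prism_{\rev}$.} I would check this levelwise. By \cref{flagged-nerve.lem.value-of-flagged-nerve} the equivalence of that lemma is natural in $\prism_{\rev}^{\myop}$, so a map $\alpha:[m]\to[n]$ acts on the factor $A^{\bullet+1}$ by precomposition on vertices. This is exactly the structure of $E_{\rev}$. On the other factor it acts through the reversal action on $\iota_0\Fun([\bullet],\calC_*)$. For $\mathfrak u$ that factor is contractible, so no coherence data survive.

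To make this honest, I would note that $\Fun(\prism_{\rev}^{\myop},\An)$ receives $\rmN_{\flag}(\mathfrak u)$ as a pullback of reversal simplicial anima. One leg is $[n]\mapsto \{0,1\}^{n+1}=\Map(\iota_0[n],\{0,1\})$, which is the set-valued presheaf $E_{\rev}$. The other legs are $[n]\mapsto\Map(\iota_0[n],I)$ and the nerve part, and both are levelwise contractible. A levelwise equivalence of presheaves is an equivalence, so the projection $\rmN_{\flag}(\mathfrak u)\to E_{\rev}$ is an equivalence. The same argument for $\mathfrak e$ gives $\rmN_{\flag}(\mathfrak e)\simeq \yo_{\rev}[0]$.

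\emph{Identifying the arrow.} The projection to the $A^{\bullet+1}$ factor is natural in $X$, so $\rmN_{\flag}(0)$ corresponds to the map $*\to\{0,1\}^{n+1}$ induced by $0:*\to\{0,1\}$ on flags. That map selects the constant tuple $0$, which is $e_{\rev}$. Since both functors of $X$ are functors of the arrow $0$ and the projection is a natural transformation, we obtain the equivalence in $\Fun([1],\Fun(\prism_{\rev}^{\myop},\An))$.

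The main obstacle is the functoriality step. The naturality in \cref{flagged-nerve.lem.value-of-flagged-nerve} over order-reversing maps involves the anti-involution on $\calC$. The plan sidesteps this by projecting onto the flag factor $\Map(\iota_0[\bullet],A)$. That factor is evidently a functor on $\prism_{\rev}^{\myop}$ through $\iota_0[\bullet]$, and precomposition acts on it as on vertices. The remaining factors of the pullback are levelwise contractible in both cases.
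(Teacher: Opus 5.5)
Your proposal is correct and follows essentially the same route as the paper. You compute levelwise via \cref{flagged-nerve.lem.value-of-flagged-nerve}, observe that the nerve factor over $jI$ contributes nothing, so the flag projection $p_n$ is an equivalence, and obtain functoriality in $\prism_{\rev}$ from the fact that the flag component of $[\beta]_{\flag}$ is the vertex map; you then identify $\rmN_{\flag}(0)$ with $e_{\rev}$ by naturality of the flag projection in $X$, and your pullback of reversal simplicial anima with levelwise contractible corners is just a more explicit form of the paper's identity $p_m\rmN_{\flag}(\mathfrak u)(\beta)\simeq\beta^*p_n$.
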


\begin{proof}
  The underlying $(\infty,1)$-category of $j^{hC_2}\triv(I)$ is $jI$, and the underlying flag of $\mathfrak u$ is the inclusion $\{0,1\} \hookrightarrow I$.
  By \cref{flagged-nerve.lem.value-of-flagged-nerve}, for every $[n]\in\prism_{\rev}$, there is a pullback square in $\An$:
  \begin{align}
    \begin{tikzpicture}[auto,baseline=(current bounding box.center)]
      \node (tl) at (0,1.5) {$\rmN_{\flag}(\mathfrak u)_{n}$};
      \node (tr) at (4,1.5) {$\iota_0\Fun([n],jI)$};
      \node (bl) at (0,0) {$\{0,1\}^{n+1}$};
      \node (br) at (4,0) {$I^{n+1}$};
      \draw[->] (tl) to (tr);
      \draw[->] (tl) to node[swap] {$p_n$} (bl);
      \draw[->] (tr) to node {$\scriptstyle(\ev_0,\ldots,\ev_n)$} (br);
      \draw[->] (bl) to (br);
    \end{tikzpicture}
  \end{align}
  Since $I$ is the indiscrete groupoid on $\{0,1\}$, vertex evaluation is an equivalence.
  Hence $p_n$ is an equivalence.

  For every order-preserving or order-reversing map $\beta:[m]\to[n]$, the flag component of $[\beta]_{\flag} : [m]_{\flag} \to [n]_{\flag}$ is its map of vertices.
  Thus we have 
  \begin{align}
    p_m \rmN_{\flag}(\mathfrak u)(\beta)
    \simeq
    \beta^* p_n,
    \quad \text{where} \quad
    \beta^*(a_0,\ldots,a_n)
    :=(a_{\beta(0)},\ldots,a_{\beta(m)}).
  \end{align}
  Therefore they give an equivalence in $\Fun(\prism_{\rev}^{\myop},\An)$:
  \begin{align}
    \rmN_{\flag}(\mathfrak u)\xrightarrow{\simeq}E_{\rev}.
  \end{align}

  The same argument for the terminal groupoid and its singleton flag gives an equivalence in $\Fun(\prism_{\rev}^{\myop},\An)$:
  \begin{align}
    \rmN_{\flag}(\mathfrak e)\simeq\yo_{\rev}[0].
  \end{align}

  The flag component of $0:\mathfrak e\to\mathfrak u$ selects $0\in\{0,1\}$.
  Hence the natural flag projections give a commutative square in $\Fun(\prism_{\rev}^{\myop},\An)$:
  \begin{align}
    \begin{tikzpicture}[auto,baseline=(current bounding box.center)]
      \node (tl) at (0,1.5) {$\rmN_{\flag}(\mathfrak e)$};
      \node (tr) at (4,1.5) {$\rmN_{\flag}(\mathfrak u)$};
      \node (bl) at (0,0) {$\yo_{\rev}[0]$};
      \node (br) at (4,0) {$E_{\rev}$};
      \draw[->] (tl) to node {$\rmN_{\flag}(0)$} (tr);
      \draw[->] (tl) to node[swap] {$\simeq$} (bl);
      \draw[->] (tr) to node {$\simeq$} (br);
      \draw[->] (bl) to node {$e_{\rev}$} (br);
    \end{tikzpicture}
  \end{align}
\end{proof}

\begin{proposition}\label{direct.thm.unitary}
  For every $X=(i : A \to \Herm(\calC)) \in \HermFlag_{(\infty,1)}$, the following conditions are equivalent.
  \begin{enumerate}
    \item The flagged nerve $\rmN_{\flag}X$ is unitarily complete; 
    $\rmN_{\flag}X \in \calU_{\rev}(\An)$.
    \item The Hermitian flag $i:A\to\Herm(\calC)$ is fully faithful;
    $X \in \coFlDagCat_{(\infty,1)}$.
  \end{enumerate}
\end{proposition}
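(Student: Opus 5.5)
The plan is to translate the unitary-completeness condition on $\rmN_{\flag}X$ into a condition on $X$ itself, via \cref{direct.lem.walking-unitary} and the fully faithful part of the flagged nerve equivalence, and then to apply \cref{direct.lem.e-and-u}.

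First, by \cref{direct.lem.walking-unitary}, the morphism $e_{\rev}^*$ of \cref{direct.def.unitary-complete} for $W=\rmN_{\flag}X$ is identified with
\begin{align}
  \rmN_{\flag}(0)^*:
  \Map(\rmN_{\flag}\mathfrak u,\rmN_{\flag}X)
  \to
  \Map(\rmN_{\flag}\mathfrak e,\rmN_{\flag}X).
\end{align}
Both $\mathfrak e$ and $\mathfrak u$ are flagged. For $\mathfrak u$, the underlying flag $\{0,1\}\hookrightarrow I$ is essentially surjective, and $\mathfrak e$ is trivially flagged. Hence \cref{direct.thm.flagged-equivalence} (2) identifies these mapping anima, naturally in the source, with $\Map_{\HermFlag_{(\infty,1)}}(\mathfrak u,X)$ and $\Map_{\HermFlag_{(\infty,1)}}(\mathfrak e,X)$. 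Naturality in the source makes the square with $0^*$ commute. So $\rmN_{\flag}X$ is unitarily complete if and only if $0^*:\Map(\mathfrak u,X)\to\Map(\mathfrak e,X)$ is an equivalence.

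Second, \cref{direct.lem.e-and-u} (3) identifies $0^*$ with the first projection $p_0:A\times_{\Herm(\calC)}A\to A$. This projection has the diagonal $\Delta:A\to A\times_{\Herm(\calC)}A$ as a section, so $p_0$ is an equivalence if and only if $\Delta$ is an equivalence. A morphism of anima $i$ is a monomorphism, i.e.\ fully faithful in the sense of a $(-1)$-truncated map, exactly when its diagonal is an equivalence. This is the standard characterization (cf.\ \cite[Lemma~5.5.6.15]{HTT}): the fibres of $\Delta$ over a point $(a,b,\gamma)$ are the path-fibres, which are all contractible if and only if each fibre of $i$ is empty or contractible. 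This gives (1)$\Leftrightarrow$(2).

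The main obstacle is the first step. We need the identification of \cref{direct.lem.walking-unitary}, which is an equivalence of arrows, to be compatible with the mapping-anima equivalences of \cref{direct.thm.flagged-equivalence} (2), so that the square relating $e_{\rev}^*$ and $0^*$ genuinely commutes. I would handle this by using functoriality of $\rmN_{\flag}$ applied to the arrow $0$, together with the naturality of $\Map(-,\rmN_{\flag}X)$ under the equivalence of arrows. The remaining checks are routine.
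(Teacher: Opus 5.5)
Your proposal is correct and follows essentially the same route as the paper. The paper likewise uses \cref{direct.thm.flagged-equivalence} (2), with the flagged sources $\mathfrak e$ and $\mathfrak u$, together with \cref{direct.lem.walking-unitary}, to get the commutative square identifying $e_{\rev}^*$ with $0^*$; it then applies \cref{direct.lem.e-and-u} to identify $0^*$ with $p_0$, a retraction of the diagonal, whose invertibility is equivalent to full faithfulness of $i$.
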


\begin{proof}
  By \cref{direct.thm.flagged-equivalence} (2) and \cref{direct.lem.walking-unitary}, the flagged nerve induces the following commutative square in $\An$ whose vertical maps are equivalences:
  \begin{align}
    \begin{tikzpicture}[auto,baseline=(current bounding box.center)]
      \node (tl) at (0,1.5) {$\Map_{\Fun(\prism_{\rev}^{\myop},\An)}(E_{\rev},\rmN_{\flag}X)$};
      \node (tr) at (6,1.5) {$\Map_{\Fun(\prism_{\rev}^{\myop},\An)}(\yo_{\rev}[0],\rmN_{\flag}X)$};
      \node (bl) at (0,0) {$\Map_{\HermFlag_{(\infty,1)}}(\mathfrak u,X)$};
      \node (br) at (6,0) {$\Map_{\HermFlag_{(\infty,1)}}(\mathfrak e,X)$};
      \draw[->] (tl) to node {$e_{\rev}^{*}$} (tr);
      \draw[->] (bl) to node {$0^{*}$} (br);
      \draw[->] (bl) to node {$\simeq$} (tl);
      \draw[->] (br) to node[swap] {$\simeq$} (tr);
    \end{tikzpicture}
  \end{align}
  By \cref{direct.lem.e-and-u}, the lower horizontal map identifies with the first projection $p_0:A\times_{\Herm(\calC)}A\to A$.
  It is a retraction of the diagonal $\delta:A\to A\times_{\Herm(\calC)} A$ with $p_0\delta\simeq\id_A$.
  Then $p_{0}$ is an equivalence if and only if $\delta$ is an equivalence.
  The latter condition is equivalent to full faithfulness of $i:A\to\Herm(\calC)$.
\end{proof}

\begin{theorem}\label{direct.cor.unitary-equivalence}
  The equivalence of \cref{direct.thm.flagged-equivalence} (1) restricts to an equivalence of $(\infty,1)$-categories
  \begin{align}
    \rmN_{\flag}|_{\DagCatInf_{(\infty,1)}}
    : \DagCatInf_{(\infty,1)}
    \xrightarrow{\simeq}
    \SegRevU(\An).
  \end{align}
\end{theorem}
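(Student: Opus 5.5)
The plan is to restrict the equivalence $\rmN_{\flag}|_{\FlDagCat_{(\infty,1)}}:\FlDagCat_{(\infty,1)}\xrightarrow{\simeq}\SegRev(\An)$ of \cref{direct.thm.flagged-equivalence} (1) to full subcategories on both sides. Since $\DagCatInf_{(\infty,1)}=\FlDagCat_{(\infty,1)}\cap\coFlDagCat_{(\infty,1)}$ is a full subcategory of $\FlDagCat_{(\infty,1)}$, and $\SegRevU(\An)=\SegRev(\An)\cap\calU_{\rev}(\An)$ is a full subcategory of $\SegRev(\An)$, it suffices to check that the equivalence identifies these two full subcategories. Both are replete, because coflaggedness and unitary completeness are invariant under equivalence. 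So we need two things: $\rmN_{\flag}$ carries $\DagCatInf_{(\infty,1)}$ into $\SegRevU(\An)$, and every object of $\SegRevU(\An)$ is equivalent to $\rmN_{\flag}X$ for some $X\in\DagCatInf_{(\infty,1)}$.

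For the first point, let $X\in\DagCatInf_{(\infty,1)}$. Then $\rmN_{\flag}X$ is dagger Segal by \cref{flagged-nerve.lem.segal}. Since $X$ is coflagged, \cref{direct.thm.unitary}, direction (2)$\Rightarrow$(1), shows that $\rmN_{\flag}X$ is unitarily complete. Hence $\rmN_{\flag}X\in\SegRevU(\An)$.

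For essential surjectivity, let $W\in\SegRevU(\An)$ and put $X:=\ac_{\flag}W$. This $X$ is flagged by \cref{direct.thm.unit} (2). The unit $W\to\rmN_{\flag}X$ is an equivalence by \cref{direct.thm.unit} (1), so $\rmN_{\flag}X$ is unitarily complete, since locality against $e_{\rev}$ is invariant under equivalence. Then \cref{direct.thm.unitary}, direction (1)$\Rightarrow$(2), shows that $X$ is coflagged, so $X\in\DagCatInf_{(\infty,1)}$ and $W\simeq\rmN_{\flag}X$. A fully faithful, essentially surjective restriction of an equivalence between full subcategories is an equivalence, with inverse $\ac_{\flag}|_{\SegRevU(\An)}$.

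No step is a genuine obstacle; the substantive input is entirely \cref{direct.thm.unitary}. The only point needing a remark is that unitary completeness is transported along the unit equivalence. This holds because $\calU_{\rev}(\An)$ is defined by a mapping-space condition and is therefore replete.
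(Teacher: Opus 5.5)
Your proposal is correct and takes the same approach as the paper: restrict the equivalence $\rmN_{\flag}|_{\FlDagCat_{(\infty,1)}}$ to full subcategories, using \cref{flagged-nerve.lem.segal} and \cref{direct.thm.unitary} to see that a flagged $X$ lies in $\DagCatInf_{(\infty,1)}$ exactly when $\rmN_{\flag}X\in\SegRevU(\An)$. You spell out the essential-surjectivity step via $\ac_{\flag}$, the unit equivalence and repleteness, which the paper's shorter proof leaves implicit.
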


\begin{proof}
  Let $X=(i:A\to\Herm(\calC))\in\FlDagCat_{(\infty,1)}$.
  By \cref{flagged-nerve.lem.segal}, $\rmN_{\flag}X$ is already Segal.
  By \cref{direct.thm.unitary}, $X\in\DagCatInf_{(\infty,1)}$ if and only if $\rmN_{\flag}X \in \SegRevU(\An)$.
  Since $\DagCatInf_{(\infty,1)}=\FlDagCat_{(\infty,1)}\cap\coFlDagCat_{(\infty,1)}$, restricting $\rmN_{\flag}|_{\FlDagCat_{(\infty,1)}}$ to $\DagCatInf_{(\infty,1)}$ and restricting its codomain to $\SegRevU(\An)$ gives the desired equivalence.
\end{proof}

\begin{corollary}\label{direct.cor.coflagged-is-localization}
  The $(\infty,1)$-category $\coFlDagCat_{(\infty,1)}$ is a localization of $\HermFlag_{(\infty,1)}$ with respect to the morphism $\mathfrak e\to\mathfrak u$.
\end{corollary}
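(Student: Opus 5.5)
We need to show that $\coFlDagCat_{(\infty,1)}$ is precisely the full subcategory of $0$-local objects of $\HermFlag_{(\infty,1)}$, and that this subcategory is reflective. Then it is the localization at $\{0:\mathfrak e\to\mathfrak u\}$.

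\emph{Local objects.} An object $X=(i:A\to\Herm(\calC))$ is $0$-local when
\begin{align}
  0^*:\Map_{\HermFlag_{(\infty,1)}}(\mathfrak u,X)\to\Map_{\HermFlag_{(\infty,1)}}(\mathfrak e,X)
\end{align}
is an equivalence. By \cref{direct.lem.e-and-u} (3), this map is the first projection $p_0:A\times_{\Herm(\calC)}A\to A$. As in the proof of \cref{direct.thm.unitary}, $p_0$ is a retraction of the diagonal $\delta$, so it is an equivalence exactly when $\delta$ is. That holds exactly when $i$ is a monomorphism of anima, i.e.\ fully faithful. So the $0$-local objects are exactly the coflagged ones.

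This argument uses only \cref{direct.lem.e-and-u} and not the nerve. Equivalently, one could pass through \cref{direct.thm.flagged-equivalence} (2), since $\mathfrak e$ and $\mathfrak u$ are flagged, together with \cref{direct.lem.walking-unitary}. In either form, the locality computation is immediate.

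\emph{Reflectivity.} The main remaining point is that the local objects form a reflective localization; membership alone does not give this. The general statement is \cite[Proposition~5.5.4.15]{HTT}. By \cref{target.lem.ambient-presentability}, $\HermFlag_{(\infty,1)}$ is presentable. The set $\{0\}$ is small, so the full subcategory of $0$-local objects is an accessible reflective localization $L$ of $\HermFlag_{(\infty,1)}$. Moreover, this localization is the localization of $\HermFlag_{(\infty,1)}$ at the strongly saturated class generated by $0$.

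By the first step, that subcategory is $\coFlDagCat_{(\infty,1)}$, which proves the corollary.

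As a consistency check, the reflector should agree with the image-factorization reflection $L_{\co\flag}$ of \cref{flag.prop.ambient-coflagged-reflection}. This follows from uniqueness of left adjoints to a given full inclusion, but it is not needed for the statement.
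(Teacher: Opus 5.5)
Your proof is correct and is essentially the paper's own argument. The paper leaves this corollary without a written proof, but it follows from the bottom row of the square in the proof of \cref{direct.thm.unitary}: there $0^*$ is identified with $p_0:A\times_{\Herm(\calC)}A\to A$ via \cref{direct.lem.e-and-u} and shown to be an equivalence exactly when $i$ is fully faithful, and reflectivity then comes from presentability of $\HermFlag_{(\infty,1)}$ and \cite[Proposition~5.5.4.15]{HTT}, which is how the paper later invokes the corollary in \cref{properties.prop.presentability}. You are also right that the locality computation needs only \cref{direct.lem.e-and-u} and not the nerve.
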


\begin{corollary}\label{direct.cor.unitary-inverse-image}
  There is an equivalence of $(\infty,1)$-categories
  \begin{align}
    \coFlDagCat_{(\infty,1)}
    \simeq
    \HermFlag_{(\infty,1)}
    \times_{\rmN_{\flag},\Fun(\prism_{\rev}^{\myop},\An),\inc}\calU_{\rev}(\An).
  \end{align}
\end{corollary}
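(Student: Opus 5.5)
The plan is to identify both sides of the claimed equivalence as full subcategories of $\HermFlag_{(\infty,1)}$ spanned by the same objects, and then to use that a fibre product along the inclusion of a full subcategory is again a full subcategory.

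First I will describe the right-hand side. The functor $\inc:\calU_{\rev}(\An)\hookrightarrow\Fun(\prism_{\rev}^{\myop},\An)$ is the inclusion of a full subcategory, hence a monomorphism in $\widehat{\Cat}_{(\infty,1)}$. Its pullback along $\rmN_{\flag}$ is therefore again a monomorphism of this kind. Concretely, $\HermFlag_{(\infty,1)}\times_{\rmN_{\flag},\Fun(\prism_{\rev}^{\myop},\An),\inc}\calU_{\rev}(\An)$ is equivalent, via the first projection, to the full subcategory of $\HermFlag_{(\infty,1)}$ on those $X$ with $\rmN_{\flag}X\in\calU_{\rev}(\An)$.

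To justify this I will compute mapping anima in the pullback. For objects $(X,W,\rmN_{\flag}X\simeq W)$ and $(Y,V,\rmN_{\flag}Y\simeq V)$, the mapping anima are
\begin{align}
  \Map_{\HermFlag_{(\infty,1)}}(X,Y)\times_{\Map(\rmN_{\flag}X,\rmN_{\flag}Y)}\Map_{\calU_{\rev}(\An)}(W,V).
\end{align}
Since $\inc$ is fully faithful, the second factor maps by an equivalence onto the base, so the projection to the first factor is an equivalence. The objects of the pullback are exactly the $X$ whose nerve lies in the replete subcategory $\calU_{\rev}(\An)$. Repleteness is immediate, because unitary completeness is defined by a mapping-space condition and is therefore invariant under equivalence.

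Next I will apply \cref{direct.thm.unitary}. It says precisely that $\rmN_{\flag}X\in\calU_{\rev}(\An)$ if and only if $X\in\coFlDagCat_{(\infty,1)}$. The two full subcategories of $\HermFlag_{(\infty,1)}$ therefore have the same objects, and so they coincide, which gives the stated equivalence. Notably, this uses \cref{direct.thm.unitary} for all Hermitian flags, not only flagged ones. That is legitimate: its proof relies only on \cref{direct.thm.flagged-equivalence} (2) with the flagged source objects $\mathfrak e$ and $\mathfrak u$, and allows an arbitrary target $X$.

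The only step that needs care is the identification of the fibre product with a full subcategory, since pullbacks in $\widehat{\Cat}_{(\infty,1)}$ are homotopy pullbacks. I will handle this by the mapping-space computation above, together with the fact that equivalences in the pullback are detected componentwise. No step is expected to be a serious obstacle.
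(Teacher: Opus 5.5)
Your proposal is correct and is the argument the paper leaves implicit. The corollary is stated without proof right after \cref{direct.thm.unitary}, and it follows exactly as you describe: the pullback of the full inclusion $\calU_{\rev}(\An)\hookrightarrow\Fun(\prism_{\rev}^{\myop},\An)$ along $\rmN_{\flag}$ is the full subcategory of $\HermFlag_{(\infty,1)}$ on those $X$ with $\rmN_{\flag}X$ unitarily complete, and \cref{direct.thm.unitary}, which is already stated for arbitrary Hermitian flags, identifies these with the coflagged ones.
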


\section{Univalentization and Hermitian completion}\label{flag.section}

In this section, we construct the reflection onto coflagged dagger $(\infty,1)$-categories and the coreflection onto flagged dagger $(\infty,1)$-categories.
Image factorization of the Hermitian flag gives the first, while restriction to its underlying essential image gives the second.
Their restrictions yield univalentization and Hermitian completion, and we compare these adjunctions with the nerve equivalences of Theorem~A (\cref{flag.thm.univalentization,flag.thm.hermitian-completion}).

\subsection{Univalentization as a left adjoint}

Image factorization replaces a Hermitian flag by the full subanima of Hermitian objects which it selects.
This defines a reflection onto coflagged dagger $(\infty,1)$-categories and restricts to univalentization of flagged ones (\cref{flag.prop.ambient-coflagged-reflection,flag.thm.univalentization}).
Under the flagged nerve equivalence, the resulting adjunction corresponds to unitary completion (\cref{flag.cor.unitary-completion}).

\begin{notation}
  For a morphism of anima $i:A\to H$, write
  \begin{align}
    A \xtwoheadrightarrow{e} \image(i) \xhookrightarrow{m} H
  \end{align}
  for its image factorization, where $e$ is essentially surjective and $m$ is fully faithful.
  Here $\image(i)$ is the full subanima of $H$ on the components meeting $i(A)$.
  This factorization is functorial by \cite[Example~5.2.8.16 and Proposition~5.2.8.17]{HTT}.
\end{notation}

\begin{proposition}\label{flag.prop.ambient-coflagged-reflection}
  The inclusion $\inc_{\co\flag} : \coFlDagCat_{(\infty,1)} \hookrightarrow \HermFlag_{(\infty,1)}$ admits a left adjoint
  \begin{align}
    L_{\co\flag}
    : \HermFlag_{(\infty,1)} \to \coFlDagCat_{(\infty,1)}
    : (i:A\to\Herm(\calC)) \mapsto (m:\image(i)\hookrightarrow\Herm(\calC)).
  \end{align}
\end{proposition}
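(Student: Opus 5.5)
The plan is to exhibit, for each Hermitian flag $X=(i:A\to\Herm(\calC))$, a unit morphism $\eta_X:X\to L_{\co\flag}X$ and then check that precomposition with $\eta_X$ induces an equivalence
\begin{align}
  \Map_{\HermFlag_{(\infty,1)}}(L_{\co\flag}X,Y)\xrightarrow{\simeq}\Map_{\HermFlag_{(\infty,1)}}(X,Y)
\end{align}
for every coflagged $Y=(k:B\to\Herm(\calD))$. By the standard criterion for reflective subcategories, this produces the left adjoint on objects and makes it functorial automatically; the functoriality of image factorization cited in the notation is then only needed to identify the resulting functor with the displayed formula. The unit is $(\id_\calC,e):(A,i)\to(\image(i),m)$. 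It is a morphism of Hermitian flags because $me\simeq i$, so it is a pair consisting of $e$ in $\Fun([1],\An)$ over $\id_{\Herm(\calC)}$ and $\id_\calC$ in $\AInvCat_{(\infty,1)}$. The target $L_{\co\flag}X$ is coflagged because $m$ is a monomorphism of anima.

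To compare mapping anima, I will use \cref{target.prop.mapping-spaces}:
\begin{align}
  \Map(X,Y)\simeq\Map_{\An}(A,B)\times_{\Map_{\An}(A,\Herm(\calD))}\Map_{\AInvCat_{(\infty,1)}}(\calC,\calD).
\end{align}
The analogous formula holds for $L_{\co\flag}X$ with $A$ replaced by $\image(i)$. Precomposition with $\eta_X$ is the identity on the $\AInvCat$ factor and is $e^*$ on the other two factors. Fix $F:\calC\to\calD$ and compare the fibres over $F$. On the $X$ side, the fibre is the anima of lifts of $\Herm(F)\circ i:A\to\Herm(\calD)$ through $k$. On the $L_{\co\flag}X$ side, it is the anima of lifts of $\Herm(F)\circ m$ through $k$.

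Since $k$ is a monomorphism, the anima of lifts of a map $g:S\to\Herm(\calD)$ through $k$ is either contractible or empty. It is contractible exactly when $g$ lands in the union of components hit by $k$, since $k$ is the inclusion of a union of components. It therefore suffices to show that $\Herm(F)m$ lands in $\image(k)$ if and only if $\Herm(F)i=\Herm(F)me$ does. This holds because $e$ is surjective on $\pi_0$, so the two maps hit the same components. Both fibres are then simultaneously contractible or empty, and the comparison map on fibres is an equivalence.

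The main, and essentially only, subtle point is identifying the fibres precisely. The fibre of $\Map(A,B)\to\Map(A,\Herm(\calD))$ over $g$ is the anima of lifts, and $k_*:\Map(S,B)\to\Map(S,\Herm(\calD))$ is again a monomorphism, namely an inclusion of components. Once this is in place, the equivalence of the total mapping anima follows by comparing fibrations over $\Map_{\AInvCat_{(\infty,1)}}(\calC,\calD)$, and naturality in $Y$ is immediate.
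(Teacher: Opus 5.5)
Your argument is correct, but it takes a different route from the paper. The paper never checks the universal property of the unit directly. It invokes the (essentially surjective, fully faithful) factorization system on $\An$ together with HTT's Lemma~5.2.8.19 to obtain a reflection $L_{\fftag}$ of $\Fun([1],\An)$ onto $\Fun([1],\An)^{\fftag}$ that leaves the target unchanged. It then pulls this adjunction back along $\Herm\colon\AInvCat_{(\infty,1)}\to\An$, using the identification $\coFlDagCat_{(\infty,1)}\simeq\Fun([1],\An)^{\fftag}\times_{\ev_1,\An,\Herm}\AInvCat_{(\infty,1)}$ of \cref{target.lem.equivalences-of-flagged-and-coflagged}.

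You instead compute the fibres of the precomposition map over each $F\in\Map_{\AInvCat_{(\infty,1)}}(\calC,\calD)$ via \cref{target.prop.mapping-spaces}. This amounts to proving by hand the orthogonality of $\pi_0$-surjections against monomorphisms of anima, which is the content of the factorization system the paper cites.

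Each route has an advantage. Yours is self-contained, and it makes explicit a step the paper leaves implicit: a reflection that is the identity on targets is an adjunction relative to $\ev_1$, and it is this that lets it survive base change along $\Herm$. The paper's version is shorter and structural, and it dualizes verbatim to the coreflection $R_{\flag}$ of \cref{flag.prop.ambient-flagged-coreflection}. There the anti-involutive component does change, so a fibrewise computation like yours would be less convenient, and the paper uses the factorization system on $\AInvCat_{(\infty,1)}$ from \cref{flag.lem.anti-involutive-factorization} instead.
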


\begin{proof}
  By \cite[Example~5.2.8.16 and Lemma~5.2.8.19]{HTT}, image factorization is a reflection of $\Fun([1],\An)$ onto its full subcategory of fully faithful morphisms:
  \begin{align}
    L_{\fftag} 
    : \Fun([1],\An) \rightleftarrows \Fun([1],\An)^{\fftag} : \inc.
  \end{align}
  This reflection preserves target evaluation.
  By \cref{target.lem.equivalences-of-flagged-and-coflagged}, we have 
  \begin{align}
    \coFlDagCat_{(\infty,1)}
    \simeq 
    \Fun([1],\An)^{\fftag} \times_{\ev_{1},\An,\Herm} \AInvCat_{(\infty,1)}.
  \end{align}
  Pulling back the adjunction $L_{\fftag} \dashv \inc$ along $\Herm:\AInvCat_{(\infty,1)}\to\An$ gives an adjunction
  \begin{align}
    L_{\co\flag}:\HermFlag_{(\infty,1)}
    \rightleftarrows\coFlDagCat_{(\infty,1)}
    :\inc_{\co\flag}.
  \end{align}
  Its unit is the image-factorization morphism on the flag anima and the identity on the underlying anti-involutive $(\infty,1)$-category, as asserted.
\end{proof}

\begin{theorem}[Univalentization; {\cite[Statement~2.5]{FerrerEtAl24}}]\label{flag.thm.univalentization}
  The adjunction $L_{\co\flag}\dashv\inc_{\co\flag}$ in \cref{flag.prop.ambient-coflagged-reflection} restricts to an adjunction
  \begin{align}
    L_{\co\flag}|_{\flag} 
    : \FlDagCat_{(\infty,1)} \rightleftarrows \DagCatInf_{(\infty,1)} 
    : \inc_{\co\flag}|_{\flag}.
  \end{align}
\end{theorem}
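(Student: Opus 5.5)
The plan is to check that $L_{\co\flag}$ carries flagged objects to flagged objects; the restricted adjunction then follows formally. If a left adjoint $L\dashv\inc$ between full subcategories has the property that $L$ sends a full subcategory $\calA\subseteq\HermFlag_{(\infty,1)}$ into $\calA\cap\coFlDagCat_{(\infty,1)}$, then for $X\in\FlDagCat_{(\infty,1)}$ and $Y\in\DagCatInf_{(\infty,1)}$ we have
\begin{align}
  \Map_{\DagCatInf_{(\infty,1)}}(L_{\co\flag}X,Y)
  \simeq
  \Map_{\HermFlag_{(\infty,1)}}(X,Y).
\end{align}
This holds because all the categories involved are full subcategories of $\HermFlag_{(\infty,1)}$ and $\DagCatInf_{(\infty,1)}=\FlDagCat_{(\infty,1)}\cap\coFlDagCat_{(\infty,1)}$. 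The unit and counit of the restricted adjunction are simply those of \cref{flag.prop.ambient-coflagged-reflection}.

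The key step is therefore the following claim. Let $X=(i:A\to\Herm(\calC))$ be flagged. We must show that $L_{\co\flag}X=(m:\image(i)\hookrightarrow\Herm(\calC))$ is flagged, i.e.\ that $\epsilon_{\calC}m:\image(i)\to\iota_0\calC_*$ is essentially surjective. By \cref{flag.prop.ambient-coflagged-reflection}, the unit $X\to L_{\co\flag}X$ is the identity on $\calC$ and the morphism $e:A\twoheadrightarrow\image(i)$ on flag anima, with $me\simeq i$. Hence $\epsilon_{\calC}i\simeq(\epsilon_{\calC}m)\circ e$.

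Essential surjectivity of a morphism of anima means surjectivity on $\pi_0$. If a composite is surjective on $\pi_0$, then so is its second factor. Since $\epsilon_{\calC}i$ is surjective on $\pi_0$ by hypothesis, so is $\epsilon_{\calC}m$. Hence $L_{\co\flag}X$ lies in $\FlDagCat_{(\infty,1)}\cap\coFlDagCat_{(\infty,1)}=\DagCatInf_{(\infty,1)}$.

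I expect no serious obstacle. The only point requiring care is the identification of the underlying anti-involutive $(\infty,1)$-category of $L_{\co\flag}X$ with $\calC$, so that the factorization $\epsilon_{\calC}i\simeq\epsilon_{\calC}me$ makes sense. This is supplied by the description of $L_{\co\flag}$ as the pullback of the image-factorization reflection along $\Herm$, which preserves the target evaluation.
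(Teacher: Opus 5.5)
Your proposal is correct and follows the paper's proof. Both arguments factor the underlying flag as $\epsilon_{\calC}i\simeq(\epsilon_{\calC}m)\circ e$ through $\image(i)$, deduce that $\epsilon_{\calC}m$ is essentially surjective, and conclude that $L_{\co\flag}X\in\DagCatInf_{(\infty,1)}$. The only difference is that you spell out the formal restriction of the adjunction to full subcategories, which the paper leaves implicit.
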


\begin{proof}
  By definition, the inclusion $\inc_{\co\flag}$ restricts to an inclusion 
  \begin{align}
    \inc_{\co\flag}|_{\flag}
    := \inc_{\co\flag}|_{\DagCatInf_{(\infty,1)}} 
    : \DagCatInf_{(\infty,1)} \to \FlDagCat_{(\infty,1)}.
  \end{align}
  For $X = (i : A \to \Herm(\calC)) \in \FlDagCat_{(\infty,1)}$, the composite
  \begin{align}
    A
    \xtwoheadrightarrow{e}
    \image(i)
    \xhookrightarrow{m}
    \Herm(\calC)
    \xrightarrow{\epsilon_{\calC}}
    \iota_0\calC_*
  \end{align}
  is essentially surjective.
  Hence $\epsilon_{\calC}m$ is also essentially surjective, so $L_{\co\flag}X$ is flagged.
  Thus $L_{\co\flag}X \in \DagCatInf_{(\infty,1)}$. 
\end{proof}

\begin{corollary}[Univalentization is unitary completion]\label{flag.cor.unitary-completion}
  Under the equivalences \cref{direct.thm.flagged-equivalence} (1) and \cref{direct.cor.unitary-equivalence}, the adjunction $L_{\co\flag}|_{\flag} \dashv \inc_{\co\flag}|_{\flag}$ corresponds to the localization at $e_{\rev}$:
  \begin{align}
    L_{\rmu}
    :\SegRev(\An) \rightleftarrows \SegRevU(\An)
    : \inc.
  \end{align}
\end{corollary}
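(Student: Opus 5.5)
The plan is to transport the adjunction of \cref{flag.thm.univalentization} along the equivalences of Theorem~A and then identify the transported left adjoint with the localization at $e_{\rev}$ by uniqueness of adjoints. By \cref{direct.thm.flagged-equivalence} (1) and \cref{direct.cor.unitary-equivalence}, the square formed by the inclusions $\DagCatInf_{(\infty,1)}\hookrightarrow\FlDagCat_{(\infty,1)}$ and $\SegRevU(\An)\hookrightarrow\SegRev(\An)$ commutes up to equivalence, with $\rmN_{\flag}$ as the horizontal equivalences; this is immediate because both vertical functors are full inclusions and $\rmN_{\flag}$ carries one subcategory onto the other. Conjugating $L_{\co\flag}|_{\flag}$ by $\rmN_{\flag}$ and $\ac_{\flag}$ therefore yields a left adjoint
\begin{align}
  L' := \rmN_{\flag}\,L_{\co\flag}|_{\flag}\,\ac_{\flag}|_{\SegRev(\An)} : \SegRev(\An)\to\SegRevU(\An)
\end{align}
to the inclusion $\SegRevU(\An)\hookrightarrow\SegRev(\An)$. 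In particular this inclusion is reflective.

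It remains to show that this reflection is the localization at $e_{\rev}$, i.e.\ that its local objects are exactly the $e_{\rev}$-local objects inside $\SegRev(\An)$ and that its units are $e_{\rev}$-equivalences. The first point is the definition $\SegRevU(\An)=\SegRev(\An)\cap\calU_{\rev}(\An)$. For the second point, take $W\in\SegRev(\An)$ and set $X:=\ac_{\flag}W$. The unit $W\to L'W$ corresponds under $\rmN_{\flag}$ to the unit $X\to L_{\co\flag}X$, which is the identity on $\calC$ and the surjection $e:A\to\image(i)$ on flags. For any $Z\in\SegRevU(\An)$, write $Z\simeq\rmN_{\flag}Y$ with $Y\in\DagCatInf_{(\infty,1)}$. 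By \cref{direct.thm.flagged-equivalence} (2) and the adjunction $L_{\co\flag}\dashv\inc_{\co\flag}$, precomposition with the unit induces an equivalence
\begin{align}
  \Map(L'W,Z)\simeq\Map(L_{\co\flag}X,Y)\xrightarrow{\simeq}\Map(X,Y)\simeq\Map(W,Z).
\end{align}
Hence the unit is a local equivalence with respect to the class of objects of $\SegRevU(\An)$. A reflective subcategory is determined by its objects, so $L'$ agrees with any other left adjoint to the same inclusion. In particular $L'\simeq L_{\rmu}$, where $L_{\rmu}$ denotes localization of $\SegRev(\An)$ at $e_{\rev}$.

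The main obstacle is making sure that $L_{\rmu}$ really is a reflection of $\SegRev(\An)$ onto $\SegRevU(\An)$. One has to rule out the possibility that localizing at $e_{\rev}$ inside all presheaves leaves $\SegRev(\An)$, and to know that the localization exists at all. The argument above sidesteps both issues: existence is supplied by $L'$ itself, and the local objects are fixed by definition. I would therefore phrase the corollary as saying that the reflective localization of $\SegRev(\An)$ whose local objects are the $e_{\rev}$-local ones is $L'$, and then conclude by uniqueness of left adjoints to a fixed full inclusion. As a sanity check, $e_{\rev}$ is itself sent to an equivalence by $L'$, since $\rmN_{\flag}$ of $\mathfrak e\to\mathfrak u$ is $e_{\rev}$ by \cref{direct.lem.walking-unitary}, and $L_{\co\flag}$ sends $\mathfrak e\to\mathfrak u$ to an equivalence because the image of $\{0,1\}\to\Herm(j^{hC_2}\triv(I))\simeq\ast$ is a point.
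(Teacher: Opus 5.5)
Your proposal is correct and is essentially the argument the paper leaves implicit. The restricted flagged nerves of \cref{direct.thm.flagged-equivalence} (1) and \cref{direct.cor.unitary-equivalence} intertwine the two full inclusions, so transporting $L_{\co\flag}|_{\flag}$ gives a left adjoint to $\SegRevU(\An)\hookrightarrow\SegRev(\An)$; this agrees with $L_{\rmu}$ by uniqueness of left adjoints, because $\SegRevU(\An)=\SegRev(\An)\cap\calU_{\rev}(\An)$ is exactly the $e_{\rev}$-local part of $\SegRev(\An)$. The existence issue you raise is harmless in any case: $\SegRev(\An)$ is presentable and $e_{\rev}$ is a morphism between Segal objects, so \cite[Proposition~5.5.4.15]{HTT} already provides $L_{\rmu}$ inside $\SegRev(\An)$.
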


\begin{corollary}
  The left adjoints
  \begin{align}
    L_{\co\flag} : \HermFlag_{(\infty,1)} \to \coFlDagCat_{(\infty,1)} 
    \quad \text{and} \quad 
    L_{\co\flag}|_{\flag} : \FlDagCat_{(\infty,1)} \to \DagCatInf_{(\infty,1)}
  \end{align}
  preserve small colimits.
\end{corollary}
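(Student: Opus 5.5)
The strategy is to establish that both left adjoints are left adjoints, since left adjoints preserve all small colimits. Colimits are meant in the respective full subcategories, and these subcategories need not be closed under colimits in $\HermFlag_{(\infty,1)}$; the claim is about colimits computed internally.

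For $L_{\co\flag}$, \cref{flag.prop.ambient-coflagged-reflection} gives the adjunction $L_{\co\flag}\dashv\inc_{\co\flag}$. Hence $L_{\co\flag}$ preserves every small colimit that exists in $\HermFlag_{(\infty,1)}$. Since $\HermFlag_{(\infty,1)}$ is presentable by \cref{target.lem.ambient-presentability}, all small colimits exist there. Moreover, the reflective subcategory $\coFlDagCat_{(\infty,1)}$ has all small colimits: they are computed by applying $L_{\co\flag}$ to the colimit formed in $\HermFlag_{(\infty,1)}$. So the statement is meaningful, and it follows formally.

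For $L_{\co\flag}|_{\flag}$, \cref{flag.thm.univalentization} gives the adjunction $L_{\co\flag}|_{\flag}\dashv\inc_{\co\flag}|_{\flag}$, so the same formal argument applies. The only point to check is that small colimits exist in $\FlDagCat_{(\infty,1)}$. This follows from \cref{direct.thm.flagged-equivalence} (1), which identifies $\FlDagCat_{(\infty,1)}$ with $\SegRev(\An)$. The latter is a reflective localization of the presheaf category $\Fun(\prism_{\rev}^{\myop},\An)$ at the spine inclusions (the $\rev$-images of spines), so it is presentable.

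Alternatively, $\FlDagCat_{(\infty,1)}$ is coreflective in $\HermFlag_{(\infty,1)}$ by \cref{flag.prop.ambient-flagged-coreflection}. A coreflective subcategory is closed under colimits, so colimits in $\FlDagCat_{(\infty,1)}$ are simply those of $\HermFlag_{(\infty,1)}$. Likewise $\DagCatInf_{(\infty,1)}$ has colimits, being reflective in $\FlDagCat_{(\infty,1)}$.

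The main obstacle is only this bookkeeping of which colimits are meant; the preservation itself is the general fact that left adjoints preserve colimits. As a consistency check via \cref{flag.cor.unitary-completion}, $L_{\co\flag}|_{\flag}$ corresponds to the localization $L_{\rmu}$, which is again a left adjoint.
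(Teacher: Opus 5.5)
Your proposal is correct and is the argument the paper intends. The paper states this corollary without proof, as an immediate consequence of the adjunctions $L_{\co\flag}\dashv\inc_{\co\flag}$ (\cref{flag.prop.ambient-coflagged-reflection}) and $L_{\co\flag}|_{\flag}\dashv\inc_{\co\flag}|_{\flag}$ (\cref{flag.thm.univalentization}), since left adjoints preserve all colimits; your extra check that the reflective and coreflective subcategories have all small colimits is correct and only makes this explicit.
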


\begin{proposition}
  The left adjoints
  \begin{align}
    L_{\co\flag} : \HermFlag_{(\infty,1)} \to \coFlDagCat_{(\infty,1)}
    \quad \text{and} \quad
    L_{\co\flag}|_{\flag} : \FlDagCat_{(\infty,1)} \to \DagCatInf_{(\infty,1)}
  \end{align}
  preserve small products.
\end{proposition}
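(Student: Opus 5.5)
The plan is to compute products directly in $\HermFlag_{(\infty,1)}$ and in the subcategories, then check that image factorization commutes with them.

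First, products in $\HermFlag_{(\infty,1)}$. By \cref{target.lem.ambient-presentability} the pullback defining $\HermFlag_{(\infty,1)}$ is a pullback in $\PrR$. Both $\ev_1$ and $\Herm$ preserve limits, being right adjoints by \cref{properties.prop.anti-involutive} (4). Hence limits, and in particular small products, are computed componentwise. Thus for a family $X_k=(i_k:A_k\to\Herm(\calC_k))$ we get
\begin{align}
  \prod_k X_k\simeq\Bigl(\prod_k i_k:\prod_k A_k\to\prod_k\Herm(\calC_k)\simeq\Herm\bigl(\prod_k\calC_k\bigr)\Bigr).
\end{align}
Here the underlying category of $\prod_k\calC_k$ is $\prod_k(\calC_k)_*$ by \cref{properties.prop.anti-involutive} (2).

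Next, the subcategories are closed under these products. A product of fully faithful maps of anima, i.e. $(-1)$-truncated maps, is fully faithful. So $\coFlDagCat_{(\infty,1)}$ is closed under products in $\HermFlag_{(\infty,1)}$, and its products are the ambient ones.

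For flagged objects, the underlying flag of $\prod_k X_k$ is $\prod_k\epsilon_{\calC_k}i_k$. Here I use that $\epsilon$ is compatible with products, since it is the counit of $\Const\dashv(-)^{hC_2}$ composed with the limit-preserving $\iota_0^{hC_2}$. A product of essentially surjective maps of anima is essentially surjective, because $\pi_0$ of a product is the product of $\pi_0$'s. This uses choice for infinite families, which holds in the ambient set theory. Hence $\FlDagCat_{(\infty,1)}$ and $\DagCatInf_{(\infty,1)}$ are also closed under products in $\HermFlag_{(\infty,1)}$.

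It then remains to show that image factorization preserves products. The factorization system (essentially surjective, fully faithful) on $\An$ is stable under products, by the two facts just used. Consider the product of the image factorizations of the $i_k$:
\begin{align}
  \prod_k A_k\twoheadrightarrow\prod_k\image(i_k)\hookrightarrow\prod_k\Herm(\calC_k).
\end{align}
Its first map is essentially surjective and its second is fully faithful. By uniqueness of factorizations it is the image factorization of $\prod_k i_k$. Since $L_{\co\flag}$ is the identity on the anti-involutive component, the comparison $L_{\co\flag}(\prod_k X_k)\to\prod_k L_{\co\flag}X_k$ is an equivalence. This is checked with \cref{target.lem.hermitian-flag-equivalences}, using that the comparison is the canonical map between the two factorizations.

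The restricted statement for $L_{\co\flag}|_{\flag}$ follows, since products in $\FlDagCat_{(\infty,1)}$ and $\DagCatInf_{(\infty,1)}$ agree with the ambient ones.

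The main point requiring care is compatibility of $\epsilon_{\calC}$ with products, which is needed for the flagged closure. I would verify it via naturality of the counit together with \cref{target.lem.homotopy-fixed-point-formalism} (6), or its evident extension to arbitrary limits.
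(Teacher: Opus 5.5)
Your proposal is correct and follows essentially the same route as the paper. The paper likewise computes products componentwise in $\HermFlag_{(\infty,1)}$ because $\Herm$ is a right adjoint, observes that the product of the image factorizations is again an (essentially surjective, fully faithful) factorization of $\prod i_\lambda$, and concludes because $L_{\co\flag}$ does not change the anti-involutive component. Your explicit check that $\FlDagCat_{(\infty,1)}$ and $\DagCatInf_{(\infty,1)}$ are closed under ambient products, via compatibility of $\epsilon$ with products and choice, spells out a point that the paper's proof leaves implicit.
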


\begin{proof}
  Let $\Lambda$ be a small set, and let $X_\lambda=(i_\lambda:A_\lambda\to\Herm(\calC_\lambda))$ be a family of Hermitian flags.
  Since $\Herm$ is a right adjoint, products in $\HermFlag_{(\infty,1)}$ are computed on the flag anima and the underlying anti-involutive $(\infty,1)$-categories.
  The flag of $\prod_{\Lambda}X_\lambda$ is therefore given by 
  \begin{align}
    \prod_{\Lambda}A_\lambda
    \xrightarrow{\prod_{\Lambda}i_\lambda}
    \prod_{\Lambda}\Herm(\calC_\lambda)
    \simeq
    \Herm(\prod_{\Lambda}\calC_\lambda).
  \end{align}

  For each $\lambda\in \Lambda$, consider the image factorization
  \begin{align}
    A_\lambda 
    \xtwoheadrightarrow{e_\lambda} 
    \image(i_\lambda) 
    \xhookrightarrow{m_\lambda} 
    \Herm(\calC_\lambda),
  \end{align}
  where $e_\lambda$ is essentially surjective and $m_\lambda$ is fully faithful.
  Then 
  \begin{align}
    \prod_{\Lambda}A_\lambda
    \xtwoheadrightarrow{\prod_{\Lambda}e_\lambda}
    \prod_{\Lambda}\image(i_\lambda)
    \xhookrightarrow{\prod_{\Lambda}m_\lambda}
    \prod_{\Lambda}\Herm(\calC_\lambda)
  \end{align}
  is also an image factorization.

  Essential surjectivity and full faithfulness are preserved by small products.
  Since $L_{\co\flag}$ does not change the underlying anti-involutive $(\infty,1)$-category, the formula in \cref{flag.prop.ambient-coflagged-reflection} gives an equivalence in $\coFlDagCat_{(\infty,1)}$:
  \begin{align}
    L_{\co\flag}(\prod_{\Lambda}X_\lambda)
    \xrightarrow{\simeq}
    \prod_{\Lambda}L_{\co\flag}(X_\lambda).
  \end{align}
  The empty product is the common terminal Hermitian flag $(\id_*:* \to\Herm(\oneDag)\simeq *)$, which is fixed by $L_{\co\flag}$.
  For a family in $\FlDagCat_{(\infty,1)}$, the same comparison gives an equivalence in $\DagCatInf_{(\infty,1)}$:
  \begin{align}
    L_{\co\flag}|_{\flag}(\prod_{\Lambda}X_\lambda)
    \xrightarrow{\simeq}
    \prod_{\Lambda}L_{\co\flag}|_{\flag}(X_\lambda).
  \end{align}
\end{proof}

\subsection{Hermitian completion as a right adjoint}

We restrict the underlying anti-involutive $(\infty,1)$-category to the full subcategory spanned by objects equivalent to those selected by the flag.
The resulting coreflection onto flagged dagger $(\infty,1)$-categories restricts to Hermitian completion on coflagged ones (\cref{flag.thm.hermitian-completion}).
We also identify this coreflection with the composite of the flagged nerve and realization (\cref{flag.prop.coreflection-realization}).

\begin{lemma}\label{flag.lem.anti-involutive-factorization}
  The following statements hold.
  \begin{enumerate}
    \item The morphisms in $\AInvCat_{(\infty,1)}$ whose underlying functors are essentially surjective and those whose underlying functors are fully faithful form a factorization system.
    \item Let $F:\calC\to\calD$ be a morphism in $\AInvCat_{(\infty,1)}$.
    The underlying $(\infty,1)$-category of $\image(F)$ is equivalent, over $\calD_*$, to the full subcategory spanned by the essential image of $F_*$.
  \end{enumerate}
\end{lemma}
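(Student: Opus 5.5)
We transport the (essentially surjective, fully faithful) factorization system on $\Cat_{(\infty,1)}$ along the homotopy-fixed-point construction. The point is that the opposite action preserves both classes, so the factorization of $F_*$ can be made equivariant.

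First, we check that $(-)^{\myop}$ sends essentially surjective functors to essentially surjective ones, and fully faithful functors to fully faithful ones. This holds because $\iota_0(\calC^{\myop})\simeq\iota_0\calC$ and $\Map_{\calC^{\myop}}(x,y)\simeq\Map_{\calC}(y,x)$.

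Next, write $\Fun([1],\Cat_{(\infty,1)})^{\estag,\fftag}$ for the full subcategory of composable pairs $[2]\to\Cat_{(\infty,1)}$ whose first arrow is essentially surjective and whose second arrow is fully faithful. Consider the composition functor $d_1$ from this subcategory to $\Fun([1],\Cat_{(\infty,1)})$. By the orthogonality theory of factorization systems \cite[Lemma~5.2.8.19 and Proposition~5.2.8.17]{HTT}, $d_1$ is an equivalence. The opposite action on $\Cat_{(\infty,1)}$ induces $C_2$-actions on $\Fun([2],\Cat_{(\infty,1)})$ and $\Fun([1],\Cat_{(\infty,1)})$ by postcomposition, and $d_1$ is equivariant for these. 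By the first step, the action restricts to $\Fun([1],\Cat_{(\infty,1)})^{\estag,\fftag}$. So $d_1$ is an equivariant functor whose underlying functor is an equivalence. By \cref{target.lem.homotopy-fixed-point-formalism} (1), $d_1^{hC_2}$ is an equivalence.

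We then identify the homotopy fixed points. Since the action on $[1]$ and $[2]$ is trivial, \cref{target.lem.homotopy-fixed-point-formalism} (7) gives
\begin{align}
  \Fun([k],\Cat_{(\infty,1)})^{hC_2}\simeq\Fun([k],\AInvCat_{(\infty,1)}).
\end{align}
A full subcategory closed under the action has as homotopy fixed points the full subcategory of objects whose underlying value lies in it. Hence the source of $d_1^{hC_2}$ is the category of composable pairs in $\AInvCat_{(\infty,1)}$ whose underlying functors are essentially surjective, respectively fully faithful. This shows that every morphism factors uniquely, i.e.\ with a contractible space of factorizations.

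The step we expect to be the main obstacle is concluding that the two classes form a factorization system. The cleanest route is to use the criterion that unique factorization plus closure of both classes under equivalences and composition suffices. Closure is immediate, since both conditions are checked on underlying functors. If that criterion is inconvenient, we instead prove orthogonality directly. For an essentially surjective $p$ and a fully faithful $q$ in $\AInvCat_{(\infty,1)}$, the lifting square on mapping anima is the $BC_2$-limit of the corresponding square of $C_2$-anima built from the mapping $C_2$-anima of $\Fun^{\conj}$, in the manner of \cref{target.lem.homotopy-fixed-point-formalism} (2),(6). That underlying square is a pullback by orthogonality in $\Cat_{(\infty,1)}$, and limits preserve pullbacks.

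Part (2) follows immediately from this construction. The underlying diagram of the fixed-point factorization of $F$ is the ordinary factorization of $F_*$. Therefore $\image(F)_*$ is, over $\calD_*$, the full subcategory of $\calD_*$ spanned by the essential image of $F_*$.
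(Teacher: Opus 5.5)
Your proposal is correct and follows essentially the same route as the paper. You lift the equivalence $\Fun([2],\Cat_{(\infty,1)})^{\estag,\fftag}\to\Fun([1],\Cat_{(\infty,1)})$ of \cite[Proposition~5.2.8.17]{HTT} to an equivariant one, pass to homotopy fixed points via \cref{target.lem.homotopy-fixed-point-formalism} (1), (7), conclude using closure under equivalences and composition, and read off part (2) at $*\in BC_2$. One slip: you wrote $\Fun([1],\Cat_{(\infty,1)})^{\estag,\fftag}$ for the category of composable pairs, where you mean $\Fun([2],\Cat_{(\infty,1)})^{\estag,\fftag}$.
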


\begin{proof}
  (1)
  Essentially surjective and fully faithful functors form a factorization system on $\Cat_{(\infty,1)}$.
  Write $\Fun([2],\Cat_{(\infty,1)})^{\estag,\fftag}$ for the full subcategory of composable pairs whose first arrow is essentially surjective and whose second arrow is fully faithful.
  By \cite[Proposition~5.2.8.17]{HTT}, the restriction along the inclusion $\{0,2\} \subseteq [2]$ induces an equivalence
  \begin{align}
    \Fun([2],\Cat_{(\infty,1)})^{\estag,\fftag}
    \xrightarrow{\simeq}
    \Fun([1],\Cat_{(\infty,1)}).
  \end{align}
  Since the opposite action preserves both classes, this lifts to an equivalence of $C_2$-categories with the action induced pointwise by $(-)^{\myop}$.
  
  Taking homotopy fixed points and applying \cref{target.lem.homotopy-fixed-point-formalism} (1), (7), we obtain an equivalence
  \begin{align}
    \Fun([2],\AInvCat_{(\infty,1)})^{\estag,\fftag}
    \xrightarrow{\simeq}
    \Fun([1],\AInvCat_{(\infty,1)}).
  \end{align}
  These classes contain all equivalences and are closed under composition.
  Thus these classes form the desired factorization system.

  (2)
  Evaluating its factorization at $*\in BC_2$ recovers the essentially surjective--fully faithful factorization of $F_*$, whose intermediate category is its full image.
\end{proof}

\begin{proposition}\label{flag.prop.ambient-flagged-coreflection}
  The inclusion $\inc_{\flag} : \FlDagCat_{(\infty,1)} \hookrightarrow \HermFlag_{(\infty,1)}$ admits a right adjoint
  \begin{align}
    R_{\flag}
    : \HermFlag_{(\infty,1)} \to \FlDagCat_{(\infty,1)}
    : (i : A \to \Herm(\calC)) \mapsto (\overline{i} : A \to \Herm(\calC_{S_X})).
  \end{align}
  Here for $X=(i:A\to\Herm(\calC))$,
  \begin{itemize}
    \item $S_X := \image(\pi_0(\epsilon_{\calC}i):\pi_0(A)\to\pi_0(\iota_0\calC_*))$;
    \item $\calC_{S_X}$ is the anti-involutive $(\infty,1)$-category whose underlying $(\infty,1)$-category $(\calC_{S_X})_*$ is the full subcategory of $\calC_*$ spanned by the objects whose equivalence classes belong to $S_X$;
    \item $\overline{i} : A \to \Herm(\calC_{S_X})$ is the induced morphism whose composite with $\Herm(\calC_{S_X}) \to \Herm(\calC)$ is equivalent to $i$.
    Its underlying flag $\epsilon_{\calC_{S_X}}\overline{i}:A\to\iota_0(\calC_{S_X})_*$ is essentially surjective.
  \end{itemize}
\end{proposition}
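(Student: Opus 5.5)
We first construct $\calC_{S_X}$ together with the full inclusion $\calC_{S_X}\hookrightarrow\calC$ as the image factorization in $\AInvCat_{(\infty,1)}$ of the adjunct $i^{\sharp}:j^{hC_2}\triv(A)\to\calC$ of $i$. By \cref{flag.lem.anti-involutive-factorization}, $i^{\sharp}$ factors as an essentially surjective morphism $j^{hC_2}\triv(A)\to\image(i^{\sharp})$ followed by a fully faithful morphism $\image(i^{\sharp})\to\calC$. By part (2) of that lemma, the underlying category of $\image(i^{\sharp})$ is the full subcategory of $\calC_*$ on the essential image of $(i^{\sharp})_*:jA\to\calC_*$. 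This functor is adjoint to $\epsilon_{\calC}i$, so that essential image is exactly the set of objects whose classes lie in $S_X$. We therefore set $\calC_{S_X}:=\image(i^{\sharp})$. The first factor is a morphism $j^{hC_2}\triv(A)\to\calC_{S_X}$, and its adjunct is $\overline{i}$. Its composite with $\Herm(\calC_{S_X})\to\Herm(\calC)$ recovers $i$ by naturality of the adjunction. Its underlying flag is essentially surjective, by the adjunction argument already used in \cref{target.lem.equivalences-of-flagged-and-coflagged} (1). Thus $R_{\flag}X\in\FlDagCat_{(\infty,1)}$, and we obtain a morphism $c_X:R_{\flag}X\to X$ that is the identity on $A$ and the inclusion on categories.

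Next we verify the universal property. For $Y=(k:B\to\Herm(\calD))$ flagged, we must show that $c_X\circ-:\Map(Y,R_{\flag}X)\to\Map(Y,X)$ is an equivalence. The cleanest route is the description of \cref{target.lem.equivalences-of-flagged-and-coflagged} (1). Under it, Hermitian flags are arrows $k^{\sharp}:j^{hC_2}\triv(B)\to\calD$ in $\AInvCat_{(\infty,1)}$, and flagged ones are those with essentially surjective underlying functor. A morphism $Y\to X$ is a commutative square consisting of $f:B\to A$ and $F:\calD\to\calC$ with $F k^{\sharp}\simeq i^{\sharp}\, j^{hC_2}\triv(f)$. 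A morphism $Y\to R_{\flag}X$ is the analogous square with $\calC_{S_X}$ in place of $\calC$. Composition with $c_X$ is postcomposition with the fully faithful map $\calC_{S_X}\to\calC$.

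Since $k^{\sharp}$ is essentially surjective and $\calC_{S_X}\to\calC$ is fully faithful, orthogonality in the factorization system of \cref{flag.lem.anti-involutive-factorization} (1) gives unique lifts. Concretely, the space of fillers of the square from $k^{\sharp}$ against $\calC_{S_X}\to\calC$ is contractible. The top edge of that square is $j^{hC_2}\triv(B)\to j^{hC_2}\triv(A)\to\calC_{S_X}$, and the bottom edge is $F$. Hence the map of mapping anima, viewed as a map of fibrations over $\Map_{\An}(B,A)$, has contractible fibers, and so it is an equivalence. We then assemble these equivalences into a right adjoint by the pointwise criterion for adjoints.

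The main obstacle is the orthogonality step. We must make precise that the lifting property of an (essentially surjective, fully faithful) factorization system yields the required equivalence of mapping anima. Here the squares are rather arrows in $\Fun([1],\AInvCat_{(\infty,1)})$ whose source lies in the image of $j^{hC_2}\triv$. To handle this, we first check that the mapping anima in the pullback description of \cref{target.lem.equivalences-of-flagged-and-coflagged} (1) are fiber products over $\Map_{\An}(B,A)$. We then apply the standard fact (HTT 5.2.8) that for $k^{\sharp}$ left orthogonal to $m$, the map $\Map(\calD,\calC_{S_X})\to\Map(\calD,\calC)\times_{\Map(j^{hC_2}\triv B,\calC)}\Map(j^{hC_2}\triv B,\calC_{S_X})$ is an equivalence. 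Pulling this equivalence back along the point of $\Map(j^{hC_2}\triv B,\calC_{S_X})$ determined by $f$ gives the claim.
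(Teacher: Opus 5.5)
Your proof is correct, and it differs from the paper's only in how the adjunction is obtained.

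The paper does not check the universal property by hand. It invokes the dual of \cite[Lemma~5.2.8.19]{HTT} to get a coreflection of $\Fun([1],\AInvCat_{(\infty,1)})$ onto $\Fun([1],\AInvCat_{(\infty,1)})^{\estag}$. It observes that this coreflection preserves $\ev_0$, with counit the identity on sources. It then pulls the adjunction back along $j^{hC_2}\triv$ using \cref{target.lem.equivalences-of-flagged-and-coflagged}~(1). The description of $\calC_{S_X}$ and $\overline{i}$ is read off from the factorization of $i^{\sharp}$, exactly as in your first paragraph.

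You instead construct the candidate counit $c_X$ first and verify the pointwise universal property directly. You write both mapping anima as fiber products over $\Map_{\An}(B,A)$. You then apply orthogonality of $k^{\sharp}$ against $\calC_{S_X}\hookrightarrow\calC$ fiberwise, and finish with the pointwise criterion for right adjoints.

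Both arguments rest on the same input. The classes in \cref{flag.lem.anti-involutive-factorization}~(1) must form a genuine factorization system, so that orthogonality holds; the paper obtains this via \cite[Proposition~5.2.8.17]{HTT}.

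What each route buys:
\begin{itemize}
\item Your version essentially reproves the cited lemma in this fibered setting. It also makes explicit the base-change step that the paper compresses into one sentence.
\item The paper's version is shorter. It produces $R_{\flag}$ as a functor together with its counit in one stroke, without the pointwise criterion.
\end{itemize}
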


\begin{proof}
  By \cref{flag.lem.anti-involutive-factorization} and the dual of \cite[Lemma~5.2.8.19]{HTT}, image factorization is a coreflection onto the full subcategory of arrows whose underlying functors are essentially surjective:
  \begin{align}
    \inc:
    \Fun([1],\AInvCat_{(\infty,1)})^{\estag}
    \rightleftarrows
    \Fun([1],\AInvCat_{(\infty,1)})
    :R_{\estag}.
  \end{align}
  This coreflection preserves source evaluation, and its counit has the identity as its source component.
  Hence it is an adjunction over $\AInvCat_{(\infty,1)}$ via $\ev_0$.

  By \cref{target.lem.equivalences-of-flagged-and-coflagged}, we have 
  \begin{align}
    \HermFlag_{(\infty,1)}
    &\simeq
    \Fun([1],\AInvCat_{(\infty,1)})
    \times_{\ev_0,\AInvCat_{(\infty,1)},j^{hC_2}\triv}\An,\\
    \FlDagCat_{(\infty,1)}
    &\simeq
    \Fun([1],\AInvCat_{(\infty,1)})^{\estag}
    \times_{\ev_0,\AInvCat_{(\infty,1)},j^{hC_2}\triv}\An.
  \end{align}
  Pulling back the adjunction $\inc \dashv R_{\estag}$ along $j^{hC_2}\triv:\An\to\AInvCat_{(\infty,1)}$ gives an adjunction
  \begin{align}
    \inc_{\flag}:\FlDagCat_{(\infty,1)}
    \rightleftarrows\HermFlag_{(\infty,1)}:R_{\flag}.
  \end{align}
  Then the factorization of $i^{\sharp} : j^{hC_{2}}\triv(A) \to \calC$ is given by 
  \begin{align}
    j^{hC_2}\triv(A) 
    \xtwoheadrightarrow{\overline{i}^{\sharp}}
    \calC_{S_X} 
    \hookrightarrow 
    \calC.
  \end{align}
  By \cref{flag.lem.anti-involutive-factorization}, $\calC_{S_{X}}$ is the stated full subcategory.
  The first arrow has an essentially surjective underlying functor, so $\epsilon_{\calC_{S_X}}\overline{i} : A \to \iota_{0}(\calC_{S_X})_{*}$ is essentially surjective.
\end{proof}

\begin{theorem}[Hermitian completion; {\cite[Statement~2.7]{FerrerEtAl24}}]\label{flag.thm.hermitian-completion}
  The adjunction $\inc_{\flag} \dashv R_{\flag}$ in \cref{flag.prop.ambient-flagged-coreflection} restricts to an adjunction
  \begin{align}
    \inc_{\flag}|_{\co\flag}
    : \DagCatInf_{(\infty,1)} \rightleftarrows \coFlDagCat_{(\infty,1)}
    : R_{\flag}|_{\co\flag}.
  \end{align}
\end{theorem}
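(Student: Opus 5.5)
The plan mirrors the proof of \cref{flag.thm.univalentization}, with the roles of the two conditions exchanged. The inclusion $\inc_{\flag}$ restricts to $\DagCatInf_{(\infty,1)}\hookrightarrow\coFlDagCat_{(\infty,1)}$ by definition, since $\DagCatInf_{(\infty,1)}=\FlDagCat_{(\infty,1)}\cap\coFlDagCat_{(\infty,1)}$. Once we know that $R_{\flag}$ carries coflagged objects to coflagged objects, the adjunction of \cref{flag.prop.ambient-flagged-coreflection} restricts to full subcategories. This works because mapping anima in $\DagCatInf_{(\infty,1)}$ and $\coFlDagCat_{(\infty,1)}$ are computed in $\HermFlag_{(\infty,1)}$. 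The counit is the counit of the ambient coreflection.

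So the key step is the following. Let $X=(i:A\to\Herm(\calC))$ with $i$ fully faithful. We must show that $\overline i:A\to\Herm(\calC_{S_X})$ is fully faithful. By \cref{flag.prop.ambient-flagged-coreflection}, $\overline i$ is characterized by
\begin{align}
  \Herm(\kappa)\circ\overline i\simeq i,
\end{align}
where $\kappa:\calC_{S_X}\to\calC$ is the fully faithful second factor of the factorization from \cref{flag.lem.anti-involutive-factorization}. Its underlying functor $\kappa_*$ is the full inclusion $(\calC_{S_X})_*\subseteq\calC_*$.

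By \cref{target.lem.equivariant-full-core} (1), $\Herm(\kappa)$ is fully faithful, that is, a monomorphism of anima (an inclusion of components). The argument then reduces to cancellation: if $g\circ f$ is fully faithful and $g$ is a monomorphism of anima, then $f$ is fully faithful. This holds because $\Omega$ of the fibers of $g$ vanish, so the fibers of $f$ over a point $y$ are the fibers of $gf$ over $g(y)$. Hence $\overline i$ is fully faithful, $R_{\flag}X$ is coflagged, and $R_{\flag}X$ is also flagged by \cref{flag.prop.ambient-flagged-coreflection}. Thus $R_{\flag}X\in\DagCatInf_{(\infty,1)}$.

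The main obstacle is bookkeeping rather than substance. One must confirm that the $\overline i$ produced by pulling back the coreflection along $j^{hC_2}\triv$ is indeed a lift of $i$ through $\Herm(\kappa)$, so that the displayed identity holds. This amounts to the adjunct correspondence between $i^\sharp\simeq\kappa\circ\overline i^\sharp$ and $i\simeq\Herm(\kappa)\overline i$ under $j^{hC_2}\triv\dashv\Herm$, which is naturality of that adjunction.

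Optionally, \cref{target.lem.equivariant-full-core} (2) yields a more explicit description. The image of $\Herm(\kappa)$ consists of the Hermitian structures whose underlying objects lie in $S_X$, and this contains $i(A)$. This matches the ``Hermitian completion'' picture: one adds all Hermitian points over objects already in the flag.
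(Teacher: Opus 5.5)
Your proposal is correct and follows the paper's proof. Both restrict the inclusion and show that $R_{\flag}$ preserves coflagged objects, using \cref{target.lem.equivariant-full-core} to see that $\Herm(\calC_{S_X})\to\Herm(\calC)$ is fully faithful and then cancelling it from the fully faithful composite $i$; your remarks on cancellation for monomorphisms of anima and on the flaggedness of $R_{\flag}X$ only spell out what the paper leaves implicit.
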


\begin{proof}
  Since $\DagCatInf_{(\infty,1)}=\FlDagCat_{(\infty,1)}\cap\coFlDagCat_{(\infty,1)}$, restrict the domain of $\inc_{\flag}$ to $\DagCatInf_{(\infty,1)}$ and restrict its codomain to $\coFlDagCat_{(\infty,1)}$.
  Denote the resulting inclusion by
  \begin{align}
    \inc_{\flag}|_{\co\flag}:
    \DagCatInf_{(\infty,1)}\hookrightarrow\coFlDagCat_{(\infty,1)}.
  \end{align}

  Let $X=(i:A\to\Herm(\calC))\in\coFlDagCat_{(\infty,1)}$.
  By \cref{flag.prop.ambient-flagged-coreflection}, the flag of $R_{\flag}X$ is the factorization $\overline{i} : A \to \Herm(\calC_{S_X})$.
  Since $(\calC_{S_X})_*\hookrightarrow\calC_*$ is fully faithful, \cref{target.lem.equivariant-full-core} shows that $\Herm(\calC_{S_X})\to\Herm(\calC)$ is fully faithful.
  Since the composite $i$ is also fully faithful, $\overline{i}$ is fully faithful.
  Hence $R_{\flag}X$ is coflagged.
\end{proof}

\begin{corollary}
  The right adjoints 
  \begin{align}
    R_{\flag}
    : \HermFlag_{(\infty,1)} \to \FlDagCat_{(\infty,1)}
    \quad \text{and} \quad 
    R_{\flag}|_{\co\flag}
    : \coFlDagCat_{(\infty,1)} \to \DagCatInf_{(\infty,1)}
  \end{align}
  preserve small limits.
\end{corollary}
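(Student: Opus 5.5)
The plan is to use only the general fact that a right adjoint preserves every limit that exists in its domain, after checking that the relevant limits exist.

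For $R_{\flag}$, I would invoke \cref{flag.prop.ambient-flagged-coreflection}, which gives $\inc_{\flag}\dashv R_{\flag}$. By \cref{target.lem.ambient-presentability}, $\HermFlag_{(\infty,1)}$ is presentable, so it admits all small limits. Hence $R_{\flag}$ carries every small limit cone in $\HermFlag_{(\infty,1)}$ to a limit cone in $\FlDagCat_{(\infty,1)}$. The only point to record is that such limits in $\FlDagCat_{(\infty,1)}$ exist and are computed by
\begin{align}
  R_{\flag}\Bigl(\lim_{K}\inc_{\flag}X_k\Bigr),
\end{align}
since $\FlDagCat_{(\infty,1)}$ is a coreflective full subcategory of a complete category.

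For $R_{\flag}|_{\co\flag}$, I would use \cref{flag.thm.hermitian-completion}, which gives $\inc_{\flag}|_{\co\flag}\dashv R_{\flag}|_{\co\flag}$. It remains to see that $\coFlDagCat_{(\infty,1)}$ has small limits. By \cref{flag.prop.ambient-coflagged-reflection}, it is a reflective full subcategory of $\HermFlag_{(\infty,1)}$, so it is closed under limits there. More concretely, limits in $\HermFlag_{(\infty,1)}$ are computed on the flag anima and on $\AInvCat_{(\infty,1)}$, because $\Herm$ is a right adjoint. Fully faithful maps of anima are stable under limits in $\Fun([1],\An)$. Hence the limit of a diagram of coflagged objects is again coflagged. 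The general fact then gives the claim.

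The main obstacle is the bookkeeping about which limits exist in the smaller subcategories, but the reflectivity and coreflectivity established earlier settle it. No explicit description of $R_{\flag}$ on limits is needed.
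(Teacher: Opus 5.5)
Your proposal is correct and is essentially the paper's argument: the corollary is stated without a separate proof, as an immediate consequence of the adjunctions $\inc_{\flag}\dashv R_{\flag}$ (\cref{flag.prop.ambient-flagged-coreflection}) and $\inc_{\flag}|_{\co\flag}\dashv R_{\flag}|_{\co\flag}$ (\cref{flag.thm.hermitian-completion}), since right adjoints preserve every limit that exists in their domain. Your extra bookkeeping is correct but only serves to make the statement non-vacuous: completeness of $\HermFlag_{(\infty,1)}$ by presentability, and closure of $\coFlDagCat_{(\infty,1)}$ under limits, either by reflectivity or by the componentwise computation together with stability of fully faithful maps of anima under limits.
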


\begin{proposition}\label{flag.prop.hermitian-completion-coproducts}
  The right adjoints
  \begin{align}
    R_{\flag}
    : \HermFlag_{(\infty,1)} \to \FlDagCat_{(\infty,1)}
    \quad \text{and} \quad
    R_{\flag}|_{\co\flag}
    : \coFlDagCat_{(\infty,1)} \to \DagCatInf_{(\infty,1)}
  \end{align}
  preserve small coproducts.
\end{proposition}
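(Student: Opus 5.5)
We need to show that $R_{\flag}$ and $R_{\flag}|_{\co\flag}$ preserve small coproducts; the proof will be a direct computation, mirroring the product argument for $L_{\co\flag}$. Let $X_\lambda=(i_\lambda:A_\lambda\to\Herm(\calC_\lambda))$, $\lambda\in\Lambda$, be a small family of Hermitian flags.

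First we compute coproducts in $\HermFlag_{(\infty,1)}$. By \cref{target.lem.herm-forgetful-adjoints} (1), (2), the forgetful functors to $\An$ and $\AInvCat_{(\infty,1)}$ both have right adjoints, so they preserve colimits. Hence the coproduct $\coprod_\Lambda X_\lambda$ has flag anima $\coprod_\Lambda A_\lambda$ and underlying anti-involutive category $\coprod_\Lambda\calC_\lambda$. By \cref{properties.prop.anti-involutive} (2), the latter has underlying category $\coprod_\Lambda(\calC_\lambda)_*$. Its flag is the composite
\begin{align}
  \coprod_\Lambda A_\lambda\xrightarrow{\coprod i_\lambda}\coprod_\Lambda\Herm(\calC_\lambda)\to\Herm(\textstyle\coprod_\Lambda\calC_\lambda),
\end{align}
where the second map comes from the coproduct inclusions. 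Equivalently, under \cref{target.lem.equivalences-of-flagged-and-coflagged} (1), the adjunct $j^{hC_2}\triv(\coprod A_\lambda)\simeq\coprod j^{hC_2}\triv(A_\lambda)\to\coprod\calC_\lambda$ is the coproduct of the adjuncts $i_\lambda^\sharp$.

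Next we compare the two sides. By \cref{flag.prop.ambient-flagged-coreflection}, $R_{\flag}$ is computed by the essentially surjective--fully faithful factorization of $i^\sharp$ from \cref{flag.lem.anti-involutive-factorization}. It therefore suffices to show that the coproduct of the factorizations
\begin{align}
  j^{hC_2}\triv(A_\lambda)\twoheadrightarrow\calC_{\lambda,S_{X_\lambda}}\hookrightarrow\calC_\lambda
\end{align}
is again such a factorization. Both conditions are checked on underlying functors, and $(-)_*$ preserves coproducts. So we need: a coproduct of essentially surjective functors is essentially surjective, and a coproduct of fully faithful functors is fully faithful. The first holds because $\pi_0\iota_0$ of a coproduct of categories is the disjoint union. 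The second holds because mapping anima in $\coprod(\calD_\lambda)$ are $\Map_{\calD_\lambda}$ within a summand and empty across summands. Uniqueness of factorizations then yields
\begin{align}
  \coprod_\Lambda R_{\flag}X_\lambda\xrightarrow{\simeq}R_{\flag}\Big(\coprod_\Lambda X_\lambda\Big).
\end{align}
We must check that this comparison is the canonical one. It is induced by the counits, so this follows from functoriality of factorizations. We also use that coproducts in $\FlDagCat_{(\infty,1)}$ agree with those in $\HermFlag_{(\infty,1)}$: the full subcategory is coreflective, and the computation above shows flaggedness is closed under coproducts.

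For the restriction $R_{\flag}|_{\co\flag}$ we must show that coproducts in $\coFlDagCat_{(\infty,1)}$ and in $\DagCatInf_{(\infty,1)}$ are computed in $\HermFlag_{(\infty,1)}$. This reduces to showing that the coflagged condition is stable under coproducts. This is the step I expect to be the main obstacle, since we need to understand $\Herm(\coprod\calC_\lambda)$. The plan is to show that the canonical map $\coprod_\Lambda\Herm(\calC_\lambda)\to\Herm(\coprod\calC_\lambda)$ is an equivalence.

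To do this, observe that the inclusion $(\calC_\lambda)_*\to\coprod(\calC_\mu)_*$ is fully faithful. By \cref{target.lem.equivariant-full-core}, each map $\Herm(\calC_\lambda)\to\Herm(\coprod\calC_\mu)$ is therefore fully faithful, with image the Hermitian points whose underlying object lies in the $\lambda$-th summand. These images are pairwise disjoint and cover $\Herm(\coprod\calC_\mu)$, because $\iota_0$ of the coproduct is $\coprod\iota_0(\calC_\mu)_*$. Each point lands in exactly one summand via $\epsilon$, and the anti-involution preserves summands, since $d$ on the coproduct is the coproduct of the $d_\lambda$. Consequently $\coprod i_\lambda$ is a coproduct of fully faithful maps followed by an equivalence, hence fully faithful. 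With this in hand, the first part gives the claim for $R_{\flag}|_{\co\flag}$.
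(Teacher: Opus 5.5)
Your proof is correct. Its overall shape matches the paper's: coproducts of Hermitian flags are computed componentwise, and the essentially surjective--fully faithful factorization defining $R_{\flag}$ commutes with coproducts. Your closure of the two factorization classes under coproducts carries the same content as the paper's identification $S_{\coprod_{\Lambda}X_\lambda}=\coprod_{\Lambda}S_{X_\lambda}$.

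The routes differ in two places.

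First, you get the coproduct in $\HermFlag_{(\infty,1)}$ from colimit-preservation of the two forgetful functors (\cref{target.lem.herm-forgetful-adjoints}), with flag the composite through the canonical map $\coprod_{\Lambda}\Herm(\calC_\lambda)\to\Herm(\coprod_{\Lambda}\calC_\lambda)$. The paper instead reads the coproduct off \cref{target.prop.mapping-spaces} after first proving that this map is an equivalence. Your organization shows that the claim for $R_{\flag}$ does not need that equivalence at all. It is used only to show that the coflagged condition is closed under coproducts. That closure is what makes coproducts in $\coFlDagCat_{(\infty,1)}$ and $\DagCatInf_{(\infty,1)}$ agree with those in $\HermFlag_{(\infty,1)}$, a point the paper leaves implicit.

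Second, you prove $\coprod_{\Lambda}\Herm(\calC_\lambda)\simeq\Herm(\coprod_{\Lambda}\calC_\lambda)$ by applying \cref{target.lem.equivariant-full-core} to each coprojection, and by using that $\iota_0$ of a coproduct is the coproduct of the $\iota_0(\calC_\lambda)_*$. The paper instead decomposes the equivariant core $\iota_0^{hC_2}(\coprod_{\Lambda}\calC_\lambda)$ and maps it to $\triv(\Lambda)$. It then uses that homotopy fixed points preserve pullbacks, together with $\triv(\Lambda)^{hC_2}\simeq\Lambda$.

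Your version reuses an existing lemma. For that reason, your remark that the anti-involution preserves summands is not actually needed: it is already encoded in the coprojections being morphisms of $\AInvCat_{(\infty,1)}$. The paper's computation exhibits $\Herm(\coprod_{\Lambda}\calC_\lambda)$ directly as fibred over $\Lambda$ with fibres $\Herm(\calC_\lambda)$. The underlying mechanism, fixed points commuting with pullbacks, is the same in both, since that is how \cref{target.lem.equivariant-full-core} is itself proved.
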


\begin{proof}
  Let $\Lambda$ be a small set, and let $X_\lambda=(i_\lambda:A_\lambda\to\Herm(\calC_\lambda))$ be a family of Hermitian flags.
  By \cref{properties.prop.anti-involutive} (2), the coproduct $\coprod_{\Lambda}\calC_\lambda$ has underlying $(\infty,1)$-category $\coprod_{\Lambda}(\calC_\lambda)_*$, and its anti-involution preserves each summand.
  Then there is an equivalence in $\Fun(BC_2,\An)$:
  \begin{align}
    \iota_0^{hC_2}(\coprod_{\Lambda}\calC_\lambda)
    \simeq
    \coprod_{\Lambda}\iota_0^{hC_2}(\calC_\lambda).
  \end{align}
  The summand projections give an equivariant map
  \begin{align}
    \iota_0^{hC_2}(\coprod_{\mu\in\Lambda}\calC_\mu)
    \longrightarrow\triv(\Lambda).
  \end{align}
  Since $BC_2$ is connected and $\Lambda$ is discrete, we have
  \begin{align}
    \triv(\Lambda)^{hC_2}
    \simeq\Map_{\An}(BC_2,\Lambda)
    \simeq\Lambda.
  \end{align}
  Homotopy fixed points preserve pullbacks, so for each $\lambda\in\Lambda$,
  \begin{align}
    \Herm(\coprod_{\mu\in\Lambda}\calC_\mu)\times_{\Lambda}\{\lambda\}
    &\simeq
    (\iota_0^{hC_2}(\coprod_{\mu\in\Lambda}\calC_\mu)
    \times_{\triv(\Lambda)}\triv(\{\lambda\}))^{hC_2}\\
    &\simeq\Herm(\calC_\lambda).
  \end{align}
  Consequently, the canonical comparison is an equivalence
  \begin{align}
    \coprod_{\Lambda}\Herm(\calC_\lambda)
    \xrightarrow{\simeq}
    \Herm(\coprod_{\Lambda}\calC_\lambda).
  \end{align}
  
  By \cref{target.prop.mapping-spaces}, the coproduct $\coprod_{\Lambda}X_\lambda$ is therefore computed on the flag anima and the underlying anti-involutive $(\infty,1)$-categories, with flag
  \begin{align}
    \coprod_{\Lambda}A_\lambda
    \xrightarrow{\coprod_{\Lambda}i_\lambda}
    \coprod_{\Lambda}\Herm(\calC_\lambda)
    \xrightarrow{\simeq}
    \Herm(\coprod_{\Lambda}\calC_\lambda).
  \end{align}
  Essential surjectivity and full faithfulness of morphisms of anima are preserved by small coproducts.
  
  Under the canonical identification $\pi_0(\iota_0(\coprod_{\Lambda}\calC_\lambda)_*) \simeq \coprod_{\Lambda}\pi_0(\iota_0(\calC_\lambda)_*)$, we have 
  \begin{align}
    S_{\coprod_{\Lambda}X_\lambda}
    =
    \coprod_{\Lambda}S_{X_\lambda}.
  \end{align}
  There are no morphisms between distinct summands.
  Hence there is an equivalence in $\AInvCat_{(\infty,1)}$:
  \begin{align}
    (\coprod_{\Lambda}\calC_\lambda)_{S_{\coprod_{\Lambda}X_\lambda}}
    \simeq
    \coprod_{\Lambda}(\calC_\lambda)_{S_{X_\lambda}},
  \end{align}
  compatibly with their inclusions into $\coprod_{\Lambda}\calC_\lambda$.
  Under this equivalence, the flag of $R_{\flag}(\coprod_{\Lambda}X_\lambda)$ is the coproduct of the flags of $R_{\flag}(X_\lambda)$.
  The formula in \cref{flag.prop.ambient-flagged-coreflection} therefore identifies the canonical comparison with an equivalence
  \begin{align}
    \coprod_{\Lambda}R_{\flag}(X_\lambda)
    \xrightarrow{\simeq}
    R_{\flag}(\coprod_{\Lambda}X_\lambda).
  \end{align}
  The empty coproduct is the common initial Hermitian flag $(\id_{\emptyset}:\emptyset\to\Herm(\emptyset)\simeq\emptyset)$, which is fixed by $R_{\flag}$.
  For a family in $\coFlDagCat_{(\infty,1)}$, the same comparison gives an equivalence in $\DagCatInf_{(\infty,1)}$:
  \begin{align}
    \coprod_{\Lambda}R_{\flag}|_{\co\flag}(X_\lambda)
    \xrightarrow{\simeq}
    R_{\flag}|_{\co\flag}(\coprod_{\Lambda}X_\lambda).
  \end{align}
\end{proof}

\begin{corollary}\label{direct.prop.unitary-local-adjunction}
  There is an adjunction
  \begin{align}
    L_{\co\flag}\ac_{\flag}:
    \SegRevU(\An) \rightleftarrows \coFlDagCat_{(\infty,1)}
    :\rmN_{\flag}.
  \end{align}
\end{corollary}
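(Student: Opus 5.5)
The plan is to compose two adjunctions that are already available and to check that both functors land where the statement asserts. The starting point is the adjunction $\ac_{\flag}\dashv\rmN_{\flag}$ of \cref{flagged-nerve.lem.adjunction} between $\Fun(\prism_{\rev}^{\myop},\An)$ and $\HermFlag_{(\infty,1)}$. Restricting its left adjoint along the full inclusion $\SegRevU(\An)\subseteq\Fun(\prism_{\rev}^{\myop},\An)$ gives, for $W\in\SegRevU(\An)$ and $Y\in\HermFlag_{(\infty,1)}$,
\begin{align}
  \Map_{\HermFlag_{(\infty,1)}}(\ac_{\flag}W,Y)\simeq\Map_{\Fun(\prism_{\rev}^{\myop},\An)}(W,\rmN_{\flag}Y).
\end{align}
Composing on the left with the reflection $L_{\co\flag}\dashv\inc_{\co\flag}$ of \cref{flag.prop.ambient-coflagged-reflection}, for $Y\in\coFlDagCat_{(\infty,1)}$ we get
\begin{align}
  \Map_{\coFlDagCat_{(\infty,1)}}(L_{\co\flag}\ac_{\flag}W,Y)\simeq\Map_{\HermFlag_{(\infty,1)}}(\ac_{\flag}W,Y)\simeq\Map_{\Fun(\prism_{\rev}^{\myop},\An)}(W,\rmN_{\flag}Y),
\end{align}
naturally in $W$ and $Y$.

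It remains to see that the right adjoint $\rmN_{\flag}$, restricted to $\coFlDagCat_{(\infty,1)}$, takes values in $\SegRevU(\An)$. Then the last mapping anima is a mapping anima in the full subcategory $\SegRevU(\An)$, and the displayed equivalence exhibits the asserted adjunction. Let $Y$ be coflagged. By \cref{flagged-nerve.lem.segal}, $\rmN_{\flag}Y$ is a dagger Segal anima, and by \cref{direct.thm.unitary} ((2)$\Rightarrow$(1)) it is unitarily complete. Hence $\rmN_{\flag}Y\in\SegRev(\An)\cap\calU_{\rev}(\An)=\SegRevU(\An)$.

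There is no serious obstacle; the only point requiring care is naturality. Since all three pieces are adjunctions between full subcategories of the same ambient categories, composing adjunctions in the $(\infty,2)$-categorical sense (left adjoints $\ac_{\flag}|_{\SegRevU(\An)}$ then $L_{\co\flag}$, right adjoints $\inc_{\co\flag}$ then $\rmN_{\flag}$ corestricted) handles naturality. The restriction of the domain of a left adjoint and the corestriction of a right adjoint to full subcategories containing its image both preserve adjunctions.

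We could also note that $\ac_{\flag}W$ is flagged by \cref{direct.thm.unit}, so $L_{\co\flag}\ac_{\flag}W$ is even univalent by \cref{flag.thm.univalentization}. This is not needed for the statement.
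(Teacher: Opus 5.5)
Your proposal is correct and follows the paper's proof, which is the same chain of mapping-anima equivalences: first the reflection $L_{\co\flag}\dashv\inc_{\co\flag}$, then $\ac_{\flag}\dashv\rmN_{\flag}$, then passage to the full subcategory $\SegRevU(\An)$. You also spell out a step the paper leaves implicit, namely that $\rmN_{\flag}$ sends coflagged objects into $\SegRevU(\An)$; this follows from \cref{flagged-nerve.lem.segal} and \cref{direct.thm.unitary}.
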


\begin{proof}
  For every $W\in\SegRevU(\An)$ and every $X\in\coFlDagCat_{(\infty,1)}$, we have 
  \begin{align}
    \Map_{\coFlDagCat_{(\infty,1)}}(L_{\co\flag}\ac_{\flag}W,X)
    &\simeq\Map_{\HermFlag_{(\infty,1)}}(\ac_{\flag}W,X) \\
    &\simeq\Map_{\Fun(\prism_{\rev}^{\myop},\An)}(W,\rmN_{\flag}X) \\
    &\simeq\Map_{\SegRevU(\An)}(W,\rmN_{\flag}X).
  \end{align}
\end{proof}

\begin{corollary}\label{flag.prop.coreflection-realization}
  There is a natural equivalence of functors
  \begin{align}
    \ac_{\flag}\rmN_{\flag}
    \simeq 
    R_{\flag} 
    :
    \HermFlag_{(\infty,1)} 
    \to 
    \FlDagCat_{(\infty,1)}.
  \end{align}
\end{corollary}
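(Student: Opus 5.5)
We plan to show that $\ac_{\flag}\rmN_{\flag}$, viewed as a functor into $\FlDagCat_{(\infty,1)}$, is right adjoint to the inclusion $\inc_{\flag}$. Uniqueness of adjoints, together with \cref{flag.prop.ambient-flagged-coreflection}, then gives the equivalence with $R_{\flag}$.

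First, $\ac_{\flag}\rmN_{\flag}$ lands in $\FlDagCat_{(\infty,1)}$. Indeed, $\rmN_{\flag}X$ is a dagger Segal anima by \cref{flagged-nerve.lem.segal}, and \cref{direct.thm.unit} (2) shows that $\ac_{\flag}$ carries dagger Segal anima to flagged objects.

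Next we compute the mapping anima. Let $Y\in\FlDagCat_{(\infty,1)}$ and $X\in\HermFlag_{(\infty,1)}$. Applying \cref{direct.thm.flagged-equivalence} (2), then (1) of the same theorem, gives the chain
\begin{align}
  \Map_{\HermFlag_{(\infty,1)}}(Y,X)
  &\simeq
  \Map_{\Fun(\prism_{\rev}^{\myop},\An)}(\rmN_{\flag}Y,\rmN_{\flag}X)\\
  &\simeq
  \Map_{\FlDagCat_{(\infty,1)}}(\ac_{\flag}\rmN_{\flag}Y,\ac_{\flag}\rmN_{\flag}X)\\
  &\simeq
  \Map_{\FlDagCat_{(\infty,1)}}(Y,\ac_{\flag}\rmN_{\flag}X).
\end{align}
The second step uses that $\rmN_{\flag}|_{\FlDagCat}$ is an equivalence onto $\SegRev(\An)$ with inverse $\ac_{\flag}$. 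The last step uses the counit equivalence $\varepsilon^{\flag}_Y$ from \cref{direct.thm.counit} (3). This chain exhibits $\ac_{\flag}\rmN_{\flag}$ as a right adjoint of $\inc_{\flag}$.

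The main obstacle is naturality: the equivalences above must be natural in both $X$ and $Y$, so that they genuinely form an adjunction. We handle this by writing the composite explicitly. It is postcomposition with the counit $\varepsilon^{\flag}_X\colon \ac_{\flag}\rmN_{\flag}X\to X$, which is a morphism in $\HermFlag_{(\infty,1)}$ whose source is flagged. So the claim is that $\varepsilon^{\flag}_X$ is a coreflection counit. This is the statement that $(\varepsilon^{\flag}_X)_*\colon\Map(Y,\ac_{\flag}\rmN_{\flag}X)\to\Map(Y,X)$ is an equivalence for every flagged $Y$.

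To verify this, we rewrite the map via the adjunction $\ac_{\flag}\dashv\rmN_{\flag}$ and the equivalence $\varepsilon^{\flag}_Y$. It becomes the map
\begin{align}
  \rmN_{\flag}(\varepsilon^{\flag}_X)_*\colon
  \Map(\rmN_{\flag}Y,\rmN_{\flag}\ac_{\flag}\rmN_{\flag}X)
  \to
  \Map(\rmN_{\flag}Y,\rmN_{\flag}X).
\end{align}
The triangle identity shows that $\rmN_{\flag}(\varepsilon^{\flag}_X)$ is inverse to the unit $\eta^{\flag}_{\rmN_{\flag}X}$. That unit is an equivalence by \cref{direct.thm.unit} (1), so $\rmN_{\flag}(\varepsilon^{\flag}_X)$ is an equivalence, and hence so is the map above.

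The characterization of right adjoints by a universal arrow then yields the adjunction. Comparing with $R_{\flag}$ gives the asserted natural equivalence. As a consistency check, \cref{direct.thm.counit} (2) identifies the underlying category of $\ac_{\flag}\rmN_{\flag}X$ with $(\calC_*)_A=(\calC_{S_X})_*$, matching the explicit formula in \cref{flag.prop.ambient-flagged-coreflection}.
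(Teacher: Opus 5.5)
Your argument is correct, but it takes a different route from the paper.

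\textbf{The paper's route.} The paper starts from the already constructed coreflection $R_{\flag}$ of \cref{flag.prop.ambient-flagged-coreflection}. Since $\ac_{\flag}\rmN_{\flag}X$ is flagged, the counit factors as $\varepsilon^{\flag}_X=\gamma_X\alpha_X$, where $\gamma_X$ is the counit of $\inc_{\flag}\dashv R_{\flag}$. It then shows that $\alpha_X$ is an equivalence componentwise. This uses three inputs: the explicit description of $\varepsilon^{\flag}_X$ in \cref{direct.thm.counit} (1), (2) (identity on flag anima, fully faithful with essential image $(\calC_*)_A$); the matching description of $\gamma_X$; and joint detection of equivalences (\cref{target.lem.hermitian-flag-equivalences}).

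\textbf{Your route.} You prove directly that $\varepsilon^{\flag}_X$ is itself a coreflection arrow. This is the standard fact that an adjunction whose right adjoint lands where the unit is invertible is idempotent. Via \cref{direct.thm.flagged-equivalence} (2), your check reduces to showing that $\rmN_{\flag}(\varepsilon^{\flag}_X)$ is an equivalence. That follows from the triangle identity together with \cref{direct.thm.unit} (1). Naturality comes for free because $\varepsilon^{\flag}$ is already a natural transformation, and uniqueness of adjoints finishes the proof.

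\textbf{Comparison.} Your route is purely formal. It needs only that the counit is an equivalence at flagged objects and the unit is an equivalence at dagger Segal anima. It would even give the existence of the coreflection independently of the factorization system of \cref{flag.lem.anti-involutive-factorization}. The explicit formula for $R_{\flag}$ is then needed only to identify its values, as in your consistency check. The paper's route uses the finer information of \cref{direct.thm.counit} (2) for arbitrary $X$, and in exchange it identifies the comparison $\alpha_X$ concretely on flag anima and on underlying categories.
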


\begin{proof}
  For every $X\in\HermFlag_{(\infty,1)}$, \cref{flagged-nerve.lem.segal} and \cref{direct.thm.unit} (2) show that $\ac_{\flag}\rmN_{\flag}X$ is flagged.
  The adjunction of \cref{flag.prop.ambient-flagged-coreflection} gives the factorization of $\varepsilon^{\flag}_{X}$ as 
  \begin{align}
    \ac_{\flag}\rmN_{\flag}X
    \xrightarrow{\alpha_X}
    \inc_{\flag}R_{\flag}X
    \xrightarrow{\gamma_X}
    X.
  \end{align}
  Here $\gamma_X:\inc_{\flag}R_{\flag}X\to X$ is the counit of the adjunction in \cref{flag.prop.ambient-flagged-coreflection}.
  By \cref{direct.thm.counit} (1), (2), the composite $\varepsilon^{\flag}_{X} = \gamma_X \alpha_X$ is the identity on the flag anima, and its underlying functor $(\varepsilon^{\flag}_{X})_{*}$ is fully faithful with essential image the full subcategory determined by $S_X$.
  The same statements hold for $\gamma_X$ by \cref{flag.prop.ambient-flagged-coreflection}.
  It follows that $\alpha_X$ is an equivalence on flag anima.

  Its underlying functor $(\alpha_X)_{*}$ is fully faithful since both $(\gamma_X)_{*}$ and $(\gamma_X)_{*}(\alpha_X)_{*}$ are fully faithful.
  It is essentially surjective since those two functors have the same essential image and $(\gamma_X)_{*}$ is fully faithful.
  Thus $\alpha_X$ is an equivalence on underlying anti-involutive $(\infty,1)$-categories.

  Therefore $\alpha_X$ is an equivalence by \cref{target.lem.hermitian-flag-equivalences} and \cref{properties.prop.anti-involutive} (2).
\end{proof}

\subsection{Compatibility of adjunctions}

We compare the two orders of applying the reflection and coreflection.
The comparison is the canonical Beck--Chevalley transformation, and it is an equivalence (\cref{flag.lem.inclusion-square-comparison,properties.prop.commuting-corrections}).

\begin{proposition}\label{properties.prop.presentability}
  The following statements hold.
  \begin{enumerate}
    \item The $(\infty,1)$-categories $\HermFlag_{(\infty,1)}$, $\FlDagCat_{(\infty,1)}$, $\coFlDagCat_{(\infty,1)}$, and $\DagCatInf_{(\infty,1)}$ are presentable.
    \item The four inclusions $\inc_{\flag}$, $\inc_{\co\flag}$, $\inc_{\co\flag}|_{\flag}$, and $\inc_{\flag}|_{\co\flag}$ and their four adjoints $R_{\flag}$, $L_{\co\flag}$, $R_{\flag}|_{\co\flag}$, and $L_{\co\flag}|_{\flag}$ are accessible.
  \end{enumerate}
\end{proposition}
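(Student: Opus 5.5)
We prove presentability by exhibiting each of the four $(\infty,1)$-categories as a reflective or coreflective accessible full subcategory of a presentable one, or as equivalent to one.
For $\HermFlag_{(\infty,1)}$ this is \cref{target.lem.ambient-presentability}.
For $\FlDagCat_{(\infty,1)}$ and $\DagCatInf_{(\infty,1)}$ we use Theorem~A: \cref{direct.thm.flagged-equivalence} (1) identifies $\FlDagCat_{(\infty,1)}$ with $\SegRev(\An)$.
This is the full subcategory of $\Fun(\prism_{\rev}^{\myop},\An)$ of objects local with respect to the small set of spine inclusions $\iota_{\rev,!}(\rmN\text{-spines})\to\yo_{\rev}[n]$.
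Indeed, a map $\iota_{\rev,!}\mathrm{Sp}[n]\to W$ is a map $\mathrm{Sp}[n]\to\iota_{\rev}^{\myop,*}W$, so locality with respect to these maps is exactly the Segal condition on the restriction.
Hence $\SegRev(\An)$ is an accessible localization of a presheaf category and is presentable by \cite[Proposition~5.5.4.15]{HTT}.
Adding the single morphism $e_{\rev}$ to this set shows, by \cref{direct.def.unitary-complete}, that $\SegRevU(\An)$ is again such a localization.
Then \cref{direct.cor.unitary-equivalence} gives presentability of $\DagCatInf_{(\infty,1)}$.
For $\coFlDagCat_{(\infty,1)}$ we use \cref{direct.cor.coflagged-is-localization}, or argue directly.
\Cref{target.lem.equivalences-of-flagged-and-coflagged} (2) presents it as $\Fun([1],\An)^{\fftag}\times_{\An}\AInvCat_{(\infty,1)}$.
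Here $\Fun([1],\An)^{\fftag}$ is presentable, being the localization of $\Fun([1],\An)$ at the single map of arrows $(\{0,1\}\to *)\to(*\to *)$ (compare \cref{direct.lem.e-and-u}).
Also, $\ev_1$ restricted to it is a right adjoint preserving filtered colimits, with left adjoint $H\mapsto(\varnothing\to H)$.
Hence the pullback is a pullback in $\PrR$, exactly as in the proof of \cref{target.lem.ambient-presentability}.

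For (2), every functor involved is an adjoint between presentable $(\infty,1)$-categories, which follows from (1) together with \cref{flag.prop.ambient-coflagged-reflection,flag.prop.ambient-flagged-coreflection,flag.thm.univalentization,flag.thm.hermitian-completion}.
Left adjoints between presentable categories are accessible, since they preserve all colimits.
Right adjoints between presentable categories are accessible by \cite[Proposition~5.4.7.7]{HTT}.
This covers all eight functors at once: $\inc_{\co\flag}$, $\inc_{\co\flag}|_{\flag}$, $R_{\flag}$ and $R_{\flag}|_{\co\flag}$ are right adjoints, while $L_{\co\flag}$, $L_{\co\flag}|_{\flag}$, $\inc_{\flag}$ and $\inc_{\flag}|_{\co\flag}$ are left adjoints.

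The main point to check is presentability of $\coFlDagCat_{(\infty,1)}$ and the pullback-in-$\PrR$ argument.
Concretely, we must verify that the inclusion $\Fun([1],\An)^{\fftag}\hookrightarrow\Fun([1],\An)$ is accessible, that is, closed under filtered colimits.
This holds because monomorphisms of anima are stable under filtered colimits, as filtered colimits in $\An$ are left exact and preserve diagonals.
If this becomes delicate, the fallback is the locality description: the objects of $\coFlDagCat_{(\infty,1)}$ are exactly the objects of the presentable $\HermFlag_{(\infty,1)}$ local with respect to the single morphism $\mathfrak e\amalg\mathfrak e\to\mathfrak u$, via \cref{direct.lem.e-and-u} (2).
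Such a subcategory is presentable by \cite[Proposition~5.5.4.15]{HTT}.
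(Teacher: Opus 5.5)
Your main argument is correct. Apart from $\coFlDagCat_{(\infty,1)}$, it is the paper's proof:
\begin{itemize}
  \item \cref{target.lem.ambient-presentability} for $\HermFlag_{(\infty,1)}$;
  \item Theorem~A together with \cite[Proposition~5.5.4.15]{HTT} for the spine and spine-plus-$e_{\rev}$ localizations, giving $\FlDagCat_{(\infty,1)}$ and $\DagCatInf_{(\infty,1)}$;
  \item \cite[Proposition~5.4.7.7]{HTT} for all eight adjoints.
\end{itemize}
For $\coFlDagCat_{(\infty,1)}$ the paper simply invokes \cref{direct.cor.coflagged-is-localization}. Your direct route is also valid. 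Restriction along $(\{0,1\}\to *)\to(*\to *)$ is the diagonal $A\to A\times_H A$ on maps into $(A\to H)$, so the local objects are exactly the monomorphisms. Moreover, $\ev_1$ on $\Fun([1],\An)^{\fftag}$ lies in $\PrR$. The paper's version is shorter because it reuses the locality computation of \cref{direct.lem.e-and-u}. Yours stays at the level of arrows of anima but has to re-run the $\PrR$ pullback argument of \cref{target.lem.ambient-presentability}.

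Your fallback, however, uses the wrong morphism. By \cref{direct.lem.e-and-u}, restriction along $\mathfrak e\amalg\mathfrak e\to\mathfrak u$ is the pair of projections $A\times_{\Herm(\calC)}A\to A\times A$. Locality therefore says that every path anima $\Map_{\Herm(\calC)}(i(a),i(b))$ is contractible, which is not the coflagged condition:
\begin{itemize}
  \item $(!:\{0,1\}\to\Herm(\oneDag))$ is local but not coflagged;
  \item $(\id:\Herm(\calC)\to\Herm(\calC))$ is coflagged but not local unless $\Herm(\calC)$ is empty or contractible.
\end{itemize}
The correct single morphism is $\mathfrak u\to\mathfrak e$, the analogue of your $(\{0,1\}\to *)\to(*\to *)$, whose restriction map is the diagonal. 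Equivalently, one can use $0:\mathfrak e\to\mathfrak u$, as in \cref{direct.cor.coflagged-is-localization}. Since the main argument never uses the fallback, the proof stands.
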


\begin{proof}
  (1)
  The presentability of $\HermFlag_{(\infty,1)}$ follows from \cref{target.lem.ambient-presentability}.
  By \cref{direct.cor.coflagged-is-localization}, $\coFlDagCat_{(\infty,1)}$ is a presentable accessible localization.

  The $(\infty,1)$-categories $\SegRev(\An)$ and $\SegRevU(\An)$ are the localizations of $\Fun(\prism_{\rev}^{\myop},\An)$ at the spine morphisms and at the spine morphisms together with the completeness morphism, respectively.
  They are presentable by \cite[Proposition~5.5.4.15]{HTT}.
  Hence \cref{direct.thm.flagged-equivalence} (1) and \cref{direct.cor.unitary-equivalence} give presentability of $\FlDagCat_{(\infty,1)}$ and $\DagCatInf_{(\infty,1)}$.

  (2)
  All eight functors occur in the adjunctions between presentable $(\infty,1)$-categories, so they are accessible by \cite[Proposition~5.4.7.7]{HTT}.
\end{proof}

\begin{remark}
  Neither
  \begin{align}
    L_{\co\flag} 
    : \HermFlag_{(\infty,1)} \to \coFlDagCat_{(\infty,1)} 
    \quad \text{nor} \quad 
    L_{\co\flag}|_{\flag} 
    : \FlDagCat_{(\infty,1)} \to \DagCatInf_{(\infty,1)}
  \end{align}
  preserves pullbacks:
  consider the following cospan in both $\HermFlag_{(\infty,1)}$ and $\FlDagCat_{(\infty,1)}$:
  \begin{align}
    \begin{tikzpicture}[auto,baseline=(current bounding box.center)]
      \node (tr) at (5,1.5) {$(\id_{*} : * \to \Herm(\oneDag))$};
      \node (bl) at (0,0) {$(\id_{*} : * \to \Herm(\oneDag))$};
      \node (br) at (5,0) {$(! : \{0,1\} \to \Herm(\oneDag))$};
      \draw[->] (tr) to node {$\displaystyle v_{1}$} (br);
      \draw[->] (bl) to node {$\displaystyle v_{0}$}  (br);
    \end{tikzpicture}
  \end{align}
  Here $!$ denotes the unique morphism to $\Herm(\oneDag)\simeq *$, and $v_r$ is the identity on $\oneDag$ and selects $r\in\{0,1\}$ on the flag anima.
  
  The pullback of this cospan in $\HermFlag_{(\infty,1)}$ is $(! : \emptyset \to \Herm(\oneDag))$.
  Then we have
  \begin{align}
    L_{\co\flag}(! : \emptyset \to \Herm(\oneDag))
    \simeq (! : \emptyset \to \Herm(\oneDag)).
  \end{align}
  However, we have
  \begin{align}
    L_{\co\flag}(\id_{*} : * \to \Herm(\oneDag))
    \simeq 
    L_{\co\flag}(! : \{0,1\} \to \Herm(\oneDag))
    &\simeq 
    (\id_{*} : * \to \Herm(\oneDag)), \\
    L_{\co\flag}(! : \emptyset \to \Herm(\oneDag))
    &\simeq 
    (! : \emptyset \to \Herm(\oneDag)).
  \end{align}
  So the pullback of the image of this cospan under $L_{\co\flag}$ in $\coFlDagCat_{(\infty,1)}$ is $(\id_{*} : * \to \Herm(\oneDag))$.
  The canonical comparison has flag component $\emptyset\to *$, which is not an equivalence.
  Therefore the above pullback is not preserved by $L_{\co\flag}$.

  By \cref{flag.prop.ambient-flagged-coreflection}, the pullback of the original cospan in $\FlDagCat_{(\infty,1)}$ is instead
  \begin{align}
    R_{\flag}(! : \emptyset \to \Herm(\oneDag))
    \simeq
    (! : \emptyset \to \Herm(\emptyset)).
  \end{align}
  The restriction $L_{\co\flag}|_{\flag}$ fixes this object, whereas the pullback of the cospan obtained by applying it is $(\id_* : * \to \Herm(\oneDag))$ in $\DagCatInf_{(\infty,1)}$.
  The canonical comparison again has flag component $\emptyset\to *$.
  Hence $L_{\co\flag}|_{\flag}$ does not preserve this pullback.
\end{remark}

\begin{remark}
  Neither
  \begin{align}
    R_{\flag} : \HermFlag_{(\infty,1)} \to \FlDagCat_{(\infty,1)}
    \quad \text{nor} \quad 
    R_{\flag}|_{\co\flag} : \coFlDagCat_{(\infty,1)} \to \DagCatInf_{(\infty,1)}
  \end{align}
  preserves pushouts:
  consider the following span in both $\HermFlag_{(\infty,1)}$ and $\coFlDagCat_{(\infty,1)}$:
  \begin{align}
    \begin{tikzpicture}[auto,baseline=(current bounding box.center)]
      \node (tl) at (0,1.5) {$(! : \emptyset \to \Herm(\oneDag))$};
      \node (tr) at (5,1.5) {$(\id_{*} : * \to \Herm(\oneDag))$};
      \node (bl) at (0,0) {$(\id_{*} : * \to \Herm(\oneDag))$};
      \draw[->] (tl) to node {$\displaystyle !$} (tr);
      \draw[->] (tl) to node {$\displaystyle !$}  (bl);
    \end{tikzpicture}
  \end{align}
  
  The pushout of this span in $\HermFlag_{(\infty,1)}$ is $(! : \{0,1\} \to \Herm(\oneDag))$.
  However, we have
  \begin{align}
    R_{\flag}(! : \emptyset \to \Herm(\oneDag))
    &\simeq 
    (! : \emptyset \to \Herm(\emptyset)), \\
    R_{\flag}(\id_{*} : * \to \Herm(\oneDag))
    &\simeq 
    (\id_{*} : * \to \Herm(\oneDag)), \\
    R_{\flag}(! : \{0,1\} \to \Herm(\oneDag))
    &\simeq (! : \{0,1\} \to \Herm(\oneDag)).
  \end{align}
  On the other hand, the pushout of the span obtained by applying $R_{\flag}$ is
  \begin{align}
    (\id_{\{0,1\}} : \{0,1\} \to \Herm(\oneDag \amalg \oneDag)).
  \end{align}
  The canonical comparison to the image of the original pushout has underlying fold functor $(\oneDag)_*\amalg(\oneDag)_*\to(\oneDag)_*$, which is not an equivalence.
  Therefore the above pushout is not preserved by $R_{\flag}$.

  By \cref{flag.prop.ambient-coflagged-reflection}, the pushout of the original span in $\coFlDagCat_{(\infty,1)}$ is instead
  \begin{align}
    L_{\co\flag}(! : \{0,1\} \to \Herm(\oneDag))
    \simeq
    (\id_* : * \to \Herm(\oneDag)).
  \end{align}
  The restriction $R_{\flag}|_{\co\flag}$ fixes this object, whereas the pushout of the span obtained by applying it is again $(\id_{\{0,1\}} : \{0,1\} \to \Herm(\oneDag\amalg\oneDag))$, now in $\DagCatInf_{(\infty,1)}$.
  The canonical comparison again has underlying fold functor $(\oneDag)_*\amalg(\oneDag)_*\to(\oneDag)_*$.
  Hence $R_{\flag}|_{\co\flag}$ does not preserve this pushout.
\end{remark}

\begin{proposition}\label{flag.lem.inclusion-square-comparison}
  The following statements hold.
  \begin{enumerate}
    \item The following square of inclusions commutes.
    \begin{align*}
      \begin{tikzpicture}[auto,baseline=(current bounding box.center)]
        \node (tl) at (0,1.5) {$\DagCatInf_{(\infty,1)}$};
        \node (tr) at (4,1.5) {$\FlDagCat_{(\infty,1)}$};
        \node (bl) at (0,0) {$\coFlDagCat_{(\infty,1)}$};
        \node (br) at (4,0) {$\HermFlag_{(\infty,1)}$};
        \draw[->] (tl) -- node {$\inc_{\co\flag}|_{\flag}$} (tr);
        \draw[->] (tl) -- node[swap] {$\inc_{\flag}|_{\co\flag}$} (bl);
        \draw[->] (tr) -- node {$\inc_{\flag}$} (br);
        \draw[->] (bl) -- node {$\inc_{\co\flag}$} (br);
      \end{tikzpicture}
    \end{align*}
    \item The canonical restriction equivalence
    \begin{align}
      \theta:
      R_{\flag}\inc_{\co\flag}
      \xrightarrow{\simeq}
      (\inc_{\co\flag}|_{\flag})(R_{\flag}|_{\co\flag})
    \end{align}
    has a left Beck--Chevalley mate
    \begin{align*}
      c:(L_{\co\flag}|_{\flag})R_{\flag}
      \to
      (R_{\flag}|_{\co\flag})L_{\co\flag}
      :\HermFlag_{(\infty,1)}\to\DagCatInf_{(\infty,1)}.
    \end{align*}
    Let $\eta:\id_{\HermFlag_{(\infty,1)}}\to\inc_{\co\flag}L_{\co\flag}$ be the unit.
    For every $X\in\HermFlag_{(\infty,1)}$, the component $c_X$ is adjoint, under the adjunction $L_{\co\flag}|_{\flag}\dashv\inc_{\co\flag}|_{\flag}$, to
    \begin{align*}
      R_{\flag}X
      \xrightarrow{R_{\flag}(\eta_X)}
      R_{\flag}\inc_{\co\flag}L_{\co\flag}X
      \xrightarrow{\theta_{L_{\co\flag}X}}
      (\inc_{\co\flag}|_{\flag})(R_{\flag}|_{\co\flag})L_{\co\flag}X.
    \end{align*}
  \end{enumerate}
\end{proposition}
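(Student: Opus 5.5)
Part (1) is formal: all four functors are inclusions of full subcategories of $\HermFlag_{(\infty,1)}$, and $\DagCatInf_{(\infty,1)}$ is by definition the intersection $\FlDagCat_{(\infty,1)}\cap\coFlDagCat_{(\infty,1)}$. Both composites $\inc_{\flag}\inc_{\co\flag}|_{\flag}$ and $\inc_{\co\flag}\inc_{\flag}|_{\co\flag}$ are therefore the inclusion of the same full subcategory $\DagCatInf_{(\infty,1)}\subseteq\HermFlag_{(\infty,1)}$. Full subcategory inclusions are monomorphisms in $\widehat{\Cat}_{(\infty,1)}$, so the space of such commuting squares is contractible, and the square commutes canonically.

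For (2), I first construct $\theta$. By \cref{flag.thm.hermitian-completion}, $R_{\flag}$ carries $\coFlDagCat_{(\infty,1)}$ into $\DagCatInf_{(\infty,1)}$, and $R_{\flag}|_{\co\flag}$ is by construction the restriction of $R_{\flag}$ with corestricted codomain. Hence $R_{\flag}\inc_{\co\flag}$ and $(\inc_{\co\flag}|_{\flag})(R_{\flag}|_{\co\flag})$ agree as functors $\coFlDagCat_{(\infty,1)}\to\FlDagCat_{(\infty,1)}$. Equivalently, $\theta$ is the right mate of the identity of the square in (1) with respect to the two coreflections $\inc_{\flag}\dashv R_{\flag}$ and $\inc_{\flag}|_{\co\flag}\dashv R_{\flag}|_{\co\flag}$. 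It is invertible because $\inc_{\flag}|_{\co\flag}$ and $\inc_{\flag}$ are fully faithful and the counit of the restricted adjunction is the restriction of the ambient counit.

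Next I define the left Beck--Chevalley mate of $\theta$ along the reflections $L_{\co\flag}\dashv\inc_{\co\flag}$ on the bottom and $L_{\co\flag}|_{\flag}\dashv\inc_{\co\flag}|_{\flag}$ on the top. Explicitly, $c$ is the composite
\begin{align}
  (L_{\co\flag}|_{\flag})R_{\flag}
  \xrightarrow{(L_{\co\flag}|_{\flag})R_{\flag}\eta}
  (L_{\co\flag}|_{\flag})R_{\flag}\inc_{\co\flag}L_{\co\flag}
  \xrightarrow{(L_{\co\flag}|_{\flag})\theta L_{\co\flag}}
  (L_{\co\flag}|_{\flag})(\inc_{\co\flag}|_{\flag})(R_{\flag}|_{\co\flag})L_{\co\flag}
  \xrightarrow{\varepsilon' (R_{\flag}|_{\co\flag})L_{\co\flag}}
  (R_{\flag}|_{\co\flag})L_{\co\flag},
\end{align}
where $\varepsilon'$ denotes the counit of $L_{\co\flag}|_{\flag}\dashv\inc_{\co\flag}|_{\flag}$, which exists by \cref{flag.thm.univalentization}.

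The claimed description of $c_X$ is then the standard fact that the adjunct of $\varepsilon'_Y\circ L(f)$ under $L\dashv\inc$ is $f$, for any $f:Z\to\inc Y$. Applied with $f=\theta_{L_{\co\flag}X}\circ R_{\flag}(\eta_X)$, this is one use of the triangle identity. The only point needing care, and hence the main obstacle, is coherence in the $(\infty,2)$-categorical sense: the mate must be formed in the homotopy $2$-category, and the pasting must be checked to be well defined there. Since every ingredient is a unit, counit or invertible $2$-cell, this verification is routine and can be done componentwise.
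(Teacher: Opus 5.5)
Your proposal is correct and follows essentially the same argument as the paper. In both, $\theta$ comes from $R_{\flag}|_{\co\flag}$ being the restriction of $R_{\flag}$, $c$ is the left Beck--Chevalley mate $c_X=\varepsilon'_{(R_{\flag}|_{\co\flag})L_{\co\flag}X}\circ(L_{\co\flag}|_{\flag})(\theta_{L_{\co\flag}X}\circ R_{\flag}(\eta_X))$, and the adjunct description follows from one triangle identity.

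There is one harmless slip. The mate of the identity square along the two coreflections runs from $(\inc_{\co\flag}|_{\flag})(R_{\flag}|_{\co\flag})$ to $R_{\flag}\inc_{\co\flag}$, so $\theta$ is the inverse of that mate, not the mate itself.
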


\begin{proof}
  (1) is clear.

  (2)
  By \cref{flag.thm.hermitian-completion}, $R_{\flag}|_{\co\flag}$ is the restriction of $R_{\flag}$ to $\coFlDagCat_{(\infty,1)}$, giving the canonical natural equivalence $\theta$.
  The horizontal adjunctions are $L_{\co\flag}\dashv\inc_{\co\flag}$ and $L_{\co\flag}|_{\flag}\dashv\inc_{\co\flag}|_{\flag}$.
  Define $c$ to be the left Beck--Chevalley mate of $\theta$ under these adjunctions.
  If $\varepsilon'$ denotes the counit of the second adjunction, the unit--counit formula for mates gives
  \begin{align}
    c_X
    =
    \varepsilon'_{(R_{\flag}|_{\co\flag})L_{\co\flag}X}
    (L_{\co\flag}|_{\flag})
    (\theta_{L_{\co\flag}X} R_{\flag}(\eta_X)).
  \end{align}
  The triangle identities identify its adjunct with the stated composite.
  This construction is natural in $X$.
\end{proof}

\begin{proposition}\label{properties.prop.commuting-corrections}
  The Beck--Chevalley transformation $c$ of \cref{flag.lem.inclusion-square-comparison} (2) is a natural equivalence
  \begin{align}
    (L_{\co\flag}|_{\flag})R_{\flag}
    \xrightarrow[\simeq]{c}
    (R_{\flag}|_{\co\flag})L_{\co\flag}
    : \HermFlag_{(\infty,1)}
    \to
    \DagCatInf_{(\infty,1)}.
  \end{align}
\end{proposition}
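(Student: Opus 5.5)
We compute both composites explicitly on an arbitrary $X=(i:A\to\Herm(\calC))$ using the formulas of \cref{flag.prop.ambient-coflagged-reflection,flag.prop.ambient-flagged-coreflection}. Then we check that $c_X$ is an equivalence via \cref{target.lem.hermitian-flag-equivalences}, i.e.\ on flag anima and on underlying $(\infty,1)$-categories.

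\emph{Left side.} First $R_{\flag}X=(\overline{i}:A\to\Herm(\calC_{S_X}))$. Applying $L_{\co\flag}$ replaces the flag by the image factorization $A\twoheadrightarrow\image(\overline{i})\hookrightarrow\Herm(\calC_{S_X})$ and keeps $\calC_{S_X}$. This gives $(\image(\overline{i})\hookrightarrow\Herm(\calC_{S_X}))$.

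\emph{Right side.} First $L_{\co\flag}X=(m:\image(i)\hookrightarrow\Herm(\calC))$. Applying $R_{\flag}$ restricts $\calC$ to the full subcategory on the set $S'$ of classes in the image of $\pi_0(\epsilon_\calC m)$. Since $A\to\image(i)$ is surjective on $\pi_0$, we get $S'=S_X$. The result is $(\overline{m}:\image(i)\to\Herm(\calC_{S_X}))$.

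So both sides have the same underlying anti-involutive $(\infty,1)$-category $\calC_{S_X}$. Moreover, by \cref{target.lem.equivariant-full-core}, $\Herm(\calC_{S_X})\to\Herm(\calC)$ is fully faithful and $\overline{i}$ factors $i$ through it. Uniqueness of image factorizations therefore identifies $\image(\overline{i})\simeq\image(i)$ as full subanima of $\Herm(\calC_{S_X})\subseteq\Herm(\calC)$. The remaining components meeting $\overline{i}(A)$ are exactly those meeting $i(A)$, because the inclusion is fully faithful. Concretely, $\image(i)\hookrightarrow\Herm(\calC)$ lands in $\Herm(\calC_{S_X})$, and this factorization is $\overline{m}$.

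The main work is to show that the Beck--Chevalley map $c_X$ realizes this identification, rather than merely that the two objects are abstractly equivalent. By \cref{flag.lem.inclusion-square-comparison} (2), $c_X$ is adjoint to $\theta_{L_{\co\flag}X}\circ R_{\flag}(\eta_X)$. I will track its components.
\begin{itemize}
  \item The map $R_{\flag}(\eta_X)$ has anima component $e:A\to\image(i)$. Its categorical component is the map $\calC_{S_X}\to\calC_{S'}$ induced by $\id_\calC$, which is an equivalence since $S'=S_X$.
  \item Passing to the adjunct under $L_{\co\flag}|_{\flag}\dashv\inc$ factors the anima component through the unit $A\twoheadrightarrow\image(\overline{i})$. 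This yields the induced map $\image(\overline{i})\to\image(i)$ of images, compatible with the maps into $\Herm(\calC_{S_X})$, and the categorical component remains the identity of $\calC_{S_X}$.
  \item A map between full subanima of $\Herm(\calC_{S_X})$ over $\Herm(\calC_{S_X})$ is an equivalence once the two subanima hit the same components. This holds by the preceding paragraph.
\end{itemize}
Hence $c_X$ is an equivalence on both components, and \cref{target.lem.hermitian-flag-equivalences} together with \cref{properties.prop.anti-involutive} (2) concludes. Naturality is automatic, since $c$ is already a natural transformation.

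The expected obstacle is this careful identification of the adjunct's components, specifically checking that the categorical component is the canonical equivalence $\calC_{S_X}\simeq\calC_{S'}$. I would handle it by reducing through the pullback descriptions in \cref{target.lem.equivalences-of-flagged-and-coflagged}. There, $R_{\flag}$ and $L_{\co\flag}$ are base changes of the factorization (co)reflections. The comparison then reduces to the standard fact that essentially surjective/fully faithful factorizations on anima and on $\AInvCat_{(\infty,1)}$ commute in this situation.
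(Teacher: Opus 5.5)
Your proposal is correct and follows essentially the same route as the paper: you compute the components of $c_X$ via the adjunct formula of \cref{flag.lem.inclusion-square-comparison} (2), use $S'=S_X$ (from essential surjectivity of $e$) together with the identity categorical component of the unit to get the categorical equivalence, identify $\image(\bar\imath)\simeq\image(i)$ using full faithfulness of $\Herm(\calC_{S_X})\to\Herm(\calC)$ from \cref{target.lem.equivariant-full-core}, and conclude by \cref{target.lem.hermitian-flag-equivalences}. The differences are cosmetic: the paper phrases the image identification as stability of image factorizations under pullback, and it obtains essential surjectivity of the flag comparison from the fact that its composite with $A\to\image(\bar\imath)$ is $e$, which amounts to the same component count you use.
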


\begin{proof}
  For $X=(i:A\to\Herm(\calC)) \in \HermFlag_{(\infty,1)}$, factor $i$ as $A\xtwoheadrightarrow{e}\image(i)\xhookrightarrow{m}\Herm(\calC)$.
  Since $e$ is essentially surjective, we have 
  \begin{align}
    \image(\pi_0(\epsilon_{\calC}i))
    =\image(\pi_0(\epsilon_{\calC}m))=S_X.
  \end{align}

  By \cref{flag.prop.ambient-flagged-coreflection}, the counits at $X$ and $\inc_{\co\flag}L_{\co\flag}X$ have underlying functors which are full inclusions with essential image $(\calC_*)_{S_X}$.
  Since $\eta_X$ has the identity as its underlying functor, naturality of the counit shows that the underlying functor of $R_{\flag}(\eta_X)$ is a functor between these full subcategories over $\calC_*$.
  It is therefore an equivalence.
  
  The unit of $L_{\co\flag}|_{\flag}\dashv\inc_{\co\flag}|_{\flag}$ also has the identity as its underlying functor.
  Thus the mate characterization in \cref{flag.lem.inclusion-square-comparison} (2) shows that the underlying functor of $c_X$ is an equivalence.
  Write $\gamma_X:\calC_{S_X} \hookrightarrow \calC$ for the anti-involutive $(\infty,1)$-category component of the counit at $X$, and let $\bar\imath:A\to\Herm(\calC_{S_X})$ be the flag of $R_{\flag}X$.
  By \cref{target.lem.equivariant-full-core}, $\Herm(\gamma_X)$ is fully faithful.
  Since $i\simeq\Herm(\gamma_X)\bar\imath$, stability of image factorization under pullback gives
  \begin{align}
    \image(\bar\imath)
    &\simeq
    \Herm(\calC_{S_X})\times_{\Herm(\calC)}\image(i) \\
    &\simeq
    \image(i).
  \end{align}
  The last equivalence holds since every component of $\image(i)$ has an underlying object in $S_X$.

  The flag component of $R_{\flag}(\eta_X)$ is $e$, whereas the restricted unit at $R_{\flag}X$ has flag component $A\to\image(\bar\imath)$.
  Hence the mate characterization identifies the flag component of $c_X$ with the displayed comparison of image factorizations.
  Its composite with $A\to\image(\bar\imath)$ is $e$.
  Thus it is an equivalence as well.
  By \cref{target.lem.hermitian-flag-equivalences}, every component of the already defined natural transformation $c$ is an equivalence.
  Hence $c$ is an equivalence in $\Fun(\HermFlag_{(\infty,1)},\DagCatInf_{(\infty,1)})$.
\end{proof}

\section{Model-categorical presentations}\label{presentation.section}

In this section, we identify the model categories of Parts~I and~II as presentations of $\DagCatInf_{(\infty,1)}$ (\cref{presentation.cor.all-models}).
We first compare the completeness generator of Part~II with the walking unitary used above.
We then identify the underlying $(\infty,1)$-category of the localized injective model with unitarily complete dagger Segal anima.

\subsection{The point-set completeness morphism}

We compare the completeness generator of Part~II with the map $e_{\rev}:\yo_{\rev}[0]\to E_{\rev}$.
The comparison identifies the walking unitary diagrams, including the selected vertex, after objectwise localization from simplicial sets to anima (\cref{unitary.lem.part-ii-coordinate-walking-unitary}).

\begin{remark}
  Recall that the $(\infty,1)$-category $\An$ is the underlying $(\infty,1)$-category of the Kan-Quillen model structure $\sSet_{\KQ}$:
  \begin{align}
    \ell
    : N(\sSet_{\KQ})
    \to N(\sSet_{\KQ})[\{\text{weak homotopy equivalences}\}^{-1}]
    \simeq \An.
  \end{align}

  By \cite{Aka26-2}, $\Fun(\prism_{\rev}^{\myop},\sSet)_{\inj}$ denotes the injective model structure induced by $\sSet_{\KQ}$.
  We denote the class of weak equivalences in this model structure by $W_{\rev}$.

  By \cite[Proposition 1.3.4.25]{HA}, we have the localization
  \begin{align}
    \ell_{\rev}
    : N(\Fun(\prism_{\rev}^{\myop},\sSet)_{\inj})
    \to N(\Fun(\prism_{\rev}^{\myop},\sSet)_{\inj})[W_{\rev}^{-1}]
    \simeq \Fun(\prism_{\rev}^{\myop},\An).
  \end{align}
\end{remark}

\begin{notation}\label{presentation.not.point-set}
  We fix some notational conventions.
  \begin{itemize}
    \item Let $\delta:\Set\to\sSet$ denote the discrete-simplicial-set functor and also its pointwise extension to presheaves $\delta : \sSet^{\dag} \to \sSpace^{\dag}$.
    \item Write $\Delta^n_{\rev}:=\delta\Hom_{\prism_{\rev}}(-,[n])$ and let $\Sp[n]_{\rev}$ be the colimit of the consecutive representable edges glued along their shared vertices.
    \item Write $s_{n} : \Sp[n]_{\rev} \hookrightarrow \Delta^n_{\rev}$ for the canonical inclusion.
    \item Let $\bbI_{\dag}^{\str}$ be the ordinary indiscrete groupoid on $\{0,1\}$, with dagger given by inversion, regarded as a dagger simplicial category with discrete mapping spaces.
    \item Using the dagger coherent nerve $\rmN_{\dag}$, put
    \begin{align}
      E_{\rev}^{\pstag}
      :=
      \delta \rmN_{\dag}(\bbI_{\dag}^{\str})
      \quad \text{and} \quad
      e_{\rev}^{\pstag}
      :\Delta^0_{\rev}
      \to 
      E_{\rev}^{\pstag},
    \end{align}
    where $e_{\rev}^{\pstag}$ selects the object $0$.
    This is the completeness generator used in Part II.
  \end{itemize}
\end{notation}

\begin{lemma}\label{unitary.lem.part-ii-theta}
  There is a natural equivalence of reversal simplicial spaces
  \begin{align}
    \vartheta^{\pstag}
    : \delta(\{0,1\}^{\bullet+1})
    \xrightarrow{\cong}
    E_{\rev}^{\pstag}.
  \end{align}
\end{lemma}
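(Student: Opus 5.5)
The plan is to compute $E_{\rev}^{\pstag}$ levelwise and then compare it with $\delta(\{0,1\}^{\bullet+1})$. By definition $E_{\rev}^{\pstag}=\delta\rmN_{\dag}(\bbI_{\dag}^{\str})$. In degree $n$, the dagger coherent nerve is the set of dagger simplicial functors $\mathfrak{C}_{\dag}[\Delta^n_{\rev}]\to\bbI_{\dag}^{\str}$, or equivalently the $n$-simplices of the ordinary homotopy coherent nerve $\rmN(\bbI^{\str})$. The reversal structure comes from the dagger.

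The key observation is that $\bbI^{\str}$ is an ordinary category with discrete hom-sets in which every hom-set is a singleton. A simplicial functor $\mathfrak{C}[\Delta^n]\to\bbI^{\str}$ therefore consists of:
\begin{itemize}
  \item an object assignment $\{0,\dots,n\}\to\{0,1\}$;
  \item maps of simplicial sets $\mathfrak{C}[\Delta^n](i,j)\to\Hom_{\bbI}(a_i,a_j)\cong *$.
\end{itemize}
The second datum is unique and is automatically compatible with composition. Hence the vertex map
\begin{align}
  \rmN(\bbI^{\str})_n\to\{0,1\}^{n+1}
\end{align}
is a bijection. I define $\vartheta^{\pstag}$ in degree $n$ as the inverse of this bijection: a tuple $(a_0,\dots,a_n)$ goes to the unique functor with those values on objects. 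Applying $\delta$ gives an isomorphism of discrete simplicial sets in each degree.

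Next I check naturality over $\prism_{\rev}$, not just over $\prism$. For order-preserving $\alpha:[m]\to[n]$, the induced map on the nerve is precomposition with $\mathfrak{C}[\alpha]$. On objects this sends $(a_0,\dots,a_n)$ to $(a_{\alpha(0)},\dots,a_{\alpha(m)})$, which is exactly the action on $\{0,1\}^{\bullet+1}$. For the reversal $\bar\rho_n$, and hence for every order-reversing map, the dagger nerve acts by composing with the identification $\mathfrak{C}[\Delta^n]^{\myop}\cong\mathfrak{C}[\Delta^n]$ induced by $i\mapsto n-i$, followed by the dagger of $\bbI_{\dag}^{\str}$. On objects this is again precomposition of the vertex function by the reversed vertex map, because the dagger is the identity on objects. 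Since a functor into $\bbI^{\str}$ is determined by its objects, it suffices to compare the object-level actions, and these agree. So the bijections form a natural isomorphism of $\prism_{\rev}^{\myop}$-diagrams, and $\vartheta^{\pstag}$ is an isomorphism, in particular an equivalence.

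I expect the main obstacle to be making the reversal part of the dagger coherent nerve explicit enough to see that its action on vertex labels is the expected precomposition. I would try first to unwind the Part II definition of $\rmN_{\dag}$ as a right adjoint to a dagger rigidification $\mathfrak{C}_{\dag}$ with $\mathfrak{C}_{\dag}[\Delta^n_{\rev}]$ the dagger-free extension of $\mathfrak{C}[\Delta^n]$. A dagger functor out of it into $\bbI_{\dag}^{\str}$ is then just a simplicial functor out of $\mathfrak{C}[\Delta^n]$. The uniqueness argument then reduces every coherence check to a check on objects.
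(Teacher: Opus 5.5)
Your proposal is correct and follows essentially the same route as the paper. The paper notes that the coherent nerve of $\bbI_{\dag}^{\str}$ is its ordinary nerve, since the mapping simplicial sets are discrete, and uses indiscreteness to get the vertex bijection $\{0,1\}^{n+1}\cong(\rmN_{\dag}\bbI_{\dag}^{\str})_n$. It then checks that order-preserving maps act by restriction and that the reversal acts by the dagger together with reversing the chain, i.e.\ $(x_0,\ldots,x_n)\mapsto(x_n,\ldots,x_0)$; your argument that a simplicial functor into singleton hom-sets is determined by its object assignment is a slightly more explicit form of this same reduction to vertices.
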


\begin{proof}
  The mapping simplicial sets of $\bbI_{\dag}^{\str}$ are discrete, so its coherent nerve is its ordinary nerve.
  Each sequence $(x_0,\ldots,x_n)$ determines a unique chain $x_0\to\cdots\to x_n$.
  Thus there is a bijection
  \begin{align}
    \vartheta_{n}
    : \{0,1\}^{n+1} 
    \xrightarrow{\cong} 
    (N_{\dag}(\bbI_{\dag}^{\str}))_{n}
    : (x_0,\ldots,x_n)
    \mapsto 
    (x_0\to\cdots\to x_n).
  \end{align}

  We check that it is compatible with order-preserving and order-reversing maps.
  For an order-preserving map $\alpha:[m]\to[n]$, restriction sends $(x_0\to\cdots\to x_n)$ to the sequence $(x_{\alpha(0)},\ldots,x_{\alpha(m)})$.
  Thus we have 
  \begin{align}
    \alpha^{*}\vartheta_{n}(x_0,\ldots,x_n)
    = \vartheta_{m}(x_{\alpha(0)},\ldots,x_{\alpha(m)}).
  \end{align}
  Since a reversal map applies the dagger and reverses the chain, $\rho_n^*(x_0,\ldots,x_n)=(x_n,\ldots,x_0)$.
  If $\beta:[m]\to[n]$ is order reversing, then $\rho_n\beta$ is order preserving and
  \begin{align}
    \beta^{*}(x_0,\ldots,x_n)
    &=
    (\rho_n\beta)^{*}(x_n,\ldots,x_0)\\
    &=
    (x_{n-\rho_n\beta(0)},\cdots,x_{n-\rho_n\beta(m)})\\
    &=
    (x_{\beta(0)},\cdots,x_{\beta(m)}). 
  \end{align}
  For a constant map with value $r$, both formulas give $(x_r,\ldots,x_r)$.
  Thus these bijections commute with every arrow of $\prism_{\rev}$.
  Therefore we obtain an equivalence of reversal simplicial sets
  \begin{align}
    \vartheta
    : \{0,1\}^{\bullet+1} 
    \xrightarrow{\cong}
    N_{\dag}(\bbI_{\dag}^{\str})
  \end{align}
  Applying $\delta$ gives the desired equivalence of reversal simplicial spaces.
\end{proof}

\begin{corollary}\label{unitary.lem.part-ii-coordinate-walking-unitary}
  There is an equivalence in $\Fun([1],\Fun(\prism_{\rev}^{\myop},\An))$:
  \begin{align}
    \ell_{\rev}^{[1]}(e_{\rev}^{\pstag}:\Delta^0_{\rev}\to E_{\rev}^{\pstag})
    \simeq
    (e_{\rev}:\yo_{\rev}[0]\to E_{\rev}).
  \end{align}
\end{corollary}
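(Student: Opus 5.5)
The plan is to combine \cref{unitary.lem.part-ii-theta} with the observation that $\ell_{\rev}$ is computed objectwise on discrete simplicial sets. First, I would recall that $\ell:N(\sSet_{\KQ})\to\An$ sends a discrete simplicial set $\delta S$ to the set $S$, regarded as a discrete anima. The localization $\ell_{\rev}$ is compatible with evaluation at each $[n]\in\prism_{\rev}$, since injective weak equivalences are objectwise. Consequently, for a presheaf of sets $P:\prism_{\rev}^{\myop}\to\Set$, there is a natural equivalence $\ell_{\rev}(\delta P)\simeq P$, where $P$ is regarded as a presheaf of discrete anima. This is natural in morphisms of presheaves of sets. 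One clean way to see it is that $\delta$ followed by $\ell_{\rev}$ restricted to set-valued presheaves agrees with the inclusion $\Fun(\prism_{\rev}^{\myop},\Set)\hookrightarrow\Fun(\prism_{\rev}^{\myop},\An)$. Both functors are obtained by postcomposing with the functor $\Set\to\An$, and the identification $N(\Fun(\prism_{\rev}^{\myop},\sSet))[W_{\rev}^{-1}]\simeq\Fun(\prism_{\rev}^{\myop},\An)$ of \cite[Proposition 1.3.4.25]{HA} is compatible with this postcomposition.

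Second, I would apply this to the arrow $e^{\pstag}_{\rev}$. By \cref{unitary.lem.part-ii-theta}, $E_{\rev}^{\pstag}\cong\delta(\{0,1\}^{\bullet+1})$, so its image is the presheaf $[n]\mapsto\{0,1\}^{n+1}$ with the action of $\prism_{\rev}$ by precomposition on vertices. This is exactly $E_{\rev}$; the proof of \cref{unitary.lem.part-ii-theta} checks the action formula $\beta^*(x_0,\dots,x_n)=(x_{\beta(0)},\dots,x_{\beta(m)})$ for both order-preserving and order-reversing $\beta$. Likewise, $\Delta^0_{\rev}=\delta\Hom_{\prism_{\rev}}(-,[0])$ maps to $\yo_{\rev}[0]$, because the Yoneda embedding into anima of a $1$-category sends objects to discrete representables.

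Third, I would check the arrow itself. Under $\vartheta^{\pstag}$, the map $e^{\pstag}_{\rev}$ selecting the object $0$ corresponds to the map of set-valued presheaves $\Hom(-,[0])\to\{0,1\}^{\bullet+1}$ sending the unique element in degree $n$ to the constant tuple $(0,\dots,0)$. This is $e_{\rev}$ by definition. Since both the source and target lie in the image of set-valued presheaves, where $\Fun(\prism_{\rev}^{\myop},\Set)\to\Fun(\prism_{\rev}^{\myop},\An)$ is fully faithful, the identification of the arrows reduces to this equality of maps of presheaves of sets. This gives the equivalence in $\Fun([1],\Fun(\prism_{\rev}^{\myop},\An))$.

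The main obstacle is the first step. It requires a precise statement that $\ell_{\rev}$ restricted to discrete presheaves agrees, naturally in $\Fun([1],-)$, with the inclusion of set-valued presheaves. I would handle it by noting that every object and morphism involved is injectively cofibrant, hence in the relevant subcategory. I would then invoke the naturality of the equivalence in \cite[Proposition 1.3.4.25]{HA} with respect to the diagram $\prism_{\rev}^{\myop}$, which reduces the claim to the case of $\sSet_{\KQ}$ and $\An$ itself.
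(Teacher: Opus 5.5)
Your proposal is correct and follows essentially the same route as the paper. Both arguments transport $e_{\rev}^{\pstag}$ along $\vartheta^{\pstag}$ to the map of discrete presheaves selecting the constant tuples $(0,\ldots,0)$, then use $\ell(\delta S)\simeq S$ to identify the localized source, target and arrow with $\yo_{\rev}[0]$, $E_{\rev}$ and $e_{\rev}$. Your justification of naturality is more explicit than the paper's remark that the identifications are ``natural in $[n]$'': you use the compatibility of the presentation equivalence with postcomposition by $\ell\circ\delta\simeq(\Set\hookrightarrow\An)$, together with full faithfulness of $\Fun(\prism_{\rev}^{\myop},\Set)\hookrightarrow\Fun(\prism_{\rev}^{\myop},\An)$.
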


\begin{proof}
  Since $[0]$ is terminal in $\prism_{\rev}$, the simplicial set $\Delta^0_{\rev}([n])$ consists of a single point for every $n\geq0$.
  In degree $n$, the morphism $e_{\rev}^{\pstag}$ sends this point to the chain $(0\xrightarrow{\id_0}0\xrightarrow{\id_0}\cdots\xrightarrow{\id_0}0)$.
  Thus the isomorphism $\vartheta^{\pstag}$ of \cref{unitary.lem.part-ii-theta} fits into the commutative square
  \begin{align}
    \begin{tikzpicture}[auto,baseline=(current bounding box.center)]
      \node (tl) at (0,1.5) {$\Delta^0_{\rev}$};
      \node (tr) at (4,1.5) {$\delta(\{0,1\}^{\bullet+1})$};
      \node (bl) at (0,0) {$\Delta^0_{\rev}$};
      \node (br) at (4,0) {$E_{\rev}^{\pstag}$};
      \draw[->] (tl) to node {$\scriptstyle *\mapsto(0,\ldots,0)$} (tr);
      \draw[->] (tl) to node[swap] {$\scriptstyle\id$} (bl);
      \draw[->] (tr) to node {$\scriptstyle\vartheta^{\pstag}$} node[swap] {$\scriptstyle\cong$} (br);
      \draw[->] (bl) to node {$\scriptstyle e_{\rev}^{\pstag}$} (br);
    \end{tikzpicture}
  \end{align}
  This defines an equivalence of arrows in $\Fun(\prism_{\rev}^{\myop},\sSet)$.

  For a set $S$, the anima $\ell(\delta S)$ is naturally equivalent to $S$, regarded as a discrete anima.
  Therefore, in degree $n$, we have
  \begin{align}
    \ell_{\rev}(\delta(\{0,1\}^{\bullet+1}))_{n}
    \simeq\{0,1\}^{n+1}.
  \end{align}
  For every map $\beta:[m]\to[n]$ of $\prism_{\rev}$, the induced morphism is
  \begin{align}
    \{0,1\}^{n+1}\to\{0,1\}^{m+1} 
    :(x_0,\ldots,x_n) \mapsto (x_{\beta(0)},\ldots,x_{\beta(m)}).
  \end{align}
  These are precisely the structure maps of $E_{\rev}$.
  Similarly, the definition $\Delta^0_{\rev}=\delta\Hom_{\prism_{\rev}}(-,[0])$ gives
  \begin{align}
    \ell_{\rev}(\Delta^0_{\rev})_{n}
    \simeq\Hom_{\prism_{\rev}}([n],[0])
    =\yo_{\rev}[0]_{n}.
  \end{align}
  These identifications are natural in $[n]$, so they give equivalences
  \begin{align}
    \ell_{\rev}(\Delta^0_{\rev})\simeq\yo_{\rev}[0]
    \quad \text{and} \quad
    \ell_{\rev}(\delta(\{0,1\}^{\bullet+1}))\simeq E_{\rev}.
  \end{align}
  Under these equivalences, the localization of the upper horizontal arrow is $e_{\rev}$.
  Applying the functor on arrow $(\infty,1)$-categories induced by $\ell_{\rev}$ to the above square therefore gives the desired equivalence in $\Fun([1],\Fun(\prism_{\rev}^{\myop},\An))$.
\end{proof}

\subsection{Localization and the models of Parts I and II}

We localize the injective model structure at the spine maps and the point-set completeness morphism.
Its underlying $(\infty,1)$-category is $\SegRevU(\An)$, so the flagged nerve identifies it with $\DagCatInf_{(\infty,1)}$ (\cref{presentation.prop.localized-presentation,presentation.prop.reduction}).

\begin{notation}\label{presentation.rem.localization-locality}
  Put
  \begin{align}
    S_{\rev}
    :=
    \{s_n:\Sp[n]_{\rev}\to\Delta^n_{\rev} \mid n\geq2\}
    \cup
    \{e_{\rev}^{\pstag}:\Delta^0_{\rev}\to E_{\rev}^{\pstag}\}.
  \end{align}
\end{notation}

\begin{proposition}\label{presentation.prop.localized-presentation}
  The following statements hold.
  \begin{enumerate}
    \item The fibrant objects of $L_{S_{\rev}}\Fun(\prism_{\rev}^{\myop},\sSet)_{\inj}$ are exactly the injectively fibrant reversal simplicial spaces whose associated reversal simplicial anima is a unitarily complete dagger Segal anima.
    \item The localized model category $L_{S_{\rev}}\Fun(\prism_{\rev}^{\myop},\sSet)_{\inj}$ presents $\SegRevU(\An)$:
    there is an equivalence of $(\infty,1)$-categories
    \begin{align}
      (L_{S_{\rev}}\Fun(\prism_{\rev}^{\myop},\sSet)_{\inj})_{(\infty,1)}
      \simeq
      \SegRevU(\An).
    \end{align}
  \end{enumerate}
\end{proposition}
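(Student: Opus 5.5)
The plan is to use the standard dictionary between left Bousfield localizations of combinatorial simplicial model categories and accessible localizations of their underlying presentable $(\infty,1)$-categories. The injective model structure $\Fun(\prism_{\rev}^{\myop},\sSet)_{\inj}$ is combinatorial, simplicial and left proper, since all objects are cofibrant. Hence the left Bousfield localization $L_{S_{\rev}}$ exists. Its fibrant objects are the injectively fibrant $S_{\rev}$-local objects, and its underlying $(\infty,1)$-category is the full subcategory of $\Fun(\prism_{\rev}^{\myop},\An)$ spanned by the objects local with respect to $\ell_{\rev}(S_{\rev})$. This is \cite[Proposition~5.5.4.15]{HTT} together with the comparison of derived mapping spaces in \cite[Proposition~1.3.4.25]{HA}. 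So both parts reduce to identifying locality for $\ell_{\rev}(S_{\rev})$ with membership in $\SegRevU(\An)$.

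For (1), let $W$ be injectively fibrant and write $\ell_{\rev}W$ for its image in $\Fun(\prism_{\rev}^{\myop},\An)$. The simplicial mapping spaces out of cofibrant objects into $W$ compute the mapping anima in $\Fun(\prism_{\rev}^{\myop},\An)$. Hence $W$ is $S_{\rev}$-local if and only if $\ell_{\rev}W$ is local for each $\ell_{\rev}(s_n)$ and for $\ell_{\rev}(e_{\rev}^{\pstag})$.

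For the spine maps, $\Delta^n_{\rev}$ localizes to $\yo_{\rev}[n]$, and $\Sp[n]_{\rev}$ localizes to the corresponding colimit of representable edges. The latter is a homotopy colimit because the gluing is along inclusions of discrete objects, which are cofibrations. Mapping out of these computes $W_n\to W_1\times_{W_0}\cdots\times_{W_0}W_1$, where $W_n$ means $(\ell_{\rev}W)([n])$. Since evaluation at $[n]\in\prism_{\rev}$ agrees with evaluation of the restriction to $\prism^{\myop}$, locality for all $s_n$ is exactly the Segal condition on $\iota_{\rev}^{\myop,*}\ell_{\rev}W$. For the completeness map, \cref{unitary.lem.part-ii-coordinate-walking-unitary} identifies $\ell_{\rev}(e_{\rev}^{\pstag})$ with $e_{\rev}$. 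Locality for it is therefore unitary completeness in the sense of \cref{direct.def.unitary-complete}. Together these two steps give (1).

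For (2), the underlying $(\infty,1)$-category of the localized model is the full subcategory of $S_{\rev}$-local objects, which by (1) is $\SegRev(\An)\cap\calU_{\rev}(\An)=\SegRevU(\An)$.

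The step I expect to be the main obstacle is the spine identification. One has to check that the point-set colimit $\Sp[n]_{\rev}$ is a homotopy colimit. One also has to check that its localization corepresents the iterated fiber product over $\prism_{\rev}$-presheaves, not merely over $\prism$-presheaves. I would handle the first point via cofibrancy of the injective structure, since the colimit is an iterated pushout along monomorphisms. For the second, the Yoneda lemma applied to representables of $\prism_{\rev}$ expresses the mapping anima out of each representable as the corresponding value of the presheaf, so no restriction to $\prism$-presheaves is needed.
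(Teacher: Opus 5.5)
Your proposal is correct and follows essentially the same route as the paper. You reduce both parts to $\ell_{\rev}(S_{\rev})$-locality of injectively fibrant objects, identify spine locality with the Segal condition because $\Sp[n]_{\rev}$ is an iterated pushout along monomorphisms (hence a homotopy colimit in the injective structure), and identify locality for $e_{\rev}^{\pstag}$ with unitary completeness via \cref{unitary.lem.part-ii-coordinate-walking-unitary}.

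The only point to fix is bibliographic. The statement that the underlying $(\infty,1)$-category of the left Bousfield localization is the full subcategory of local objects is \cite[Propositions~A.3.7.3 and A.3.7.8]{HTT}, which is what the paper cites. The reference \cite[Proposition~5.5.4.15]{HTT} only concerns the existence and presentability of the localization on the $\infty$-categorical side.
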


\begin{proof}
  (1)
  The injective model structure $\Fun(\prism_{\rev}^{\myop},\sSet)_{\inj}$ is left proper, combinatorial, and simplicial;
  every object is cofibrant.
  For an injectively fibrant $W \in \Fun(\prism_{\rev}^{\myop},\sSet)_{\inj}$, locality is detected by $\ell_{\rev}$ under the presentation equivalence of \cite[Proposition~1.3.4.25]{HA}.

  Each vertex inclusion $\Delta^0_{\rev} \to \Delta^1_{\rev}$ in $\Sp[n]_{\rev}$ is a levelwise monomorphism.
  The iterated pushout therefore computes the corresponding homotopy colimit.
  Hence we have 
  \begin{align}
    \Map(\ell_{\rev}(\Delta^n_{\rev}),\ell_{\rev}(W))
    &\simeq
    \ell_{\rev}(W)_n,\\
    \Map(\ell_{\rev}(\Sp[n]_{\rev}),\ell_{\rev}(W))
    &\simeq
    \ell_{\rev}(W)_1 \times_{\ell_{\rev}(W)_0} \cdots \times_{\ell_{\rev}(W)_0} \ell_{\rev}(W)_1.
  \end{align}
  Consequently, locality for $s_{n}$ is precisely the Segal condition.
  By \cref{unitary.lem.part-ii-coordinate-walking-unitary}, locality for $e_{\rev}^{\pstag}$ is unitary completeness.

  (2)
  By \cite[Propositions~A.3.7.3 and A.3.7.8]{HTT}, we have 
  \begin{align}
    (L_{S_{\rev}}\Fun(\prism_{\rev}^{\myop},\sSet)_{\inj})_{(\infty,1)}
    &\simeq
    \Fun(\prism_{\rev}^{\myop},\An)_{\ell_{\rev}^{[1]}(S_{\rev})\text{-local}} \\
    &\simeq
    \SegRevU(\An).
  \end{align}
\end{proof}

\begin{corollary}\label{presentation.prop.reduction}
  There is an equivalence of $(\infty,1)$-categories
  \begin{align}
    (L_{S_{\rev}}\Fun(\prism_{\rev}^{\myop},\sSet)_{\inj})_{(\infty,1)}
    \simeq
    \DagCatInf_{(\infty,1)}.
  \end{align}
\end{corollary}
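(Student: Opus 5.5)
The plan is to compose two equivalences already established. First, \cref{presentation.prop.localized-presentation} (2) identifies the underlying $(\infty,1)$-category of the localized injective model structure $L_{S_{\rev}}\Fun(\prism_{\rev}^{\myop},\sSet)_{\inj}$ with the full subcategory $\SegRevU(\An)\subseteq\Fun(\prism_{\rev}^{\myop},\An)$ of unitarily complete dagger Segal anima. Second, \cref{direct.cor.unitary-equivalence} shows that the flagged nerve restricts to an equivalence
\begin{align}
  \rmN_{\flag}|_{\DagCatInf_{(\infty,1)}}:\DagCatInf_{(\infty,1)}\xrightarrow{\simeq}\SegRevU(\An).
\end{align}
Its inverse is the restriction of $\ac_{\flag}$, by \cref{direct.thm.flagged-equivalence} (1).

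I would form the composite
\begin{align}
  (L_{S_{\rev}}\Fun(\prism_{\rev}^{\myop},\sSet)_{\inj})_{(\infty,1)}
  \simeq\SegRevU(\An)
  \xrightarrow[\simeq]{\ac_{\flag}}\DagCatInf_{(\infty,1)}.
\end{align}
The only point to check is that both equivalences use the same copy of $\SegRevU(\An)$. In the first equivalence it arises as the $\ell^{[1]}_{\rev}(S_{\rev})$-local objects. By the locality computation in the proof of \cref{presentation.prop.localized-presentation} (1), together with \cref{unitary.lem.part-ii-coordinate-walking-unitary}, these are exactly the presheaves that are local for the spine inclusions and for $e_{\rev}$. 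That is the same full subcategory that appears in \cref{direct.cor.unitary-equivalence}. Since both are full subcategories of $\Fun(\prism_{\rev}^{\myop},\An)$ cut out by the same conditions, they coincide.

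I expect no real obstacle here. The substantive inputs are the identification of the point-set completeness generator with $e_{\rev}$ and the flagged nerve equivalence, and both are already available.
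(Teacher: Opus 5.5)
Your proposal is correct and matches the paper's proof, which composes \cref{presentation.prop.localized-presentation} (2) with \cref{direct.cor.unitary-equivalence}. Your consistency check that both results use the same $\SegRevU(\An)$ is harmless, but it is already part of the statement of \cref{presentation.prop.localized-presentation} (2).
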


\begin{proof}
  Compose \cref{presentation.prop.localized-presentation} with \cref{direct.cor.unitary-equivalence}.
\end{proof}

\begin{corollary}\label{presentation.cor.all-models}
  The dagger Bergner, dagger Joyal, and dagger Rezk model categories of Parts I and II all present $\DagCatInf_{(\infty,1)}$.
\end{corollary}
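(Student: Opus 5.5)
The plan is to chain \cref{presentation.prop.reduction} with the Quillen equivalences of Parts~I and~II. Quillen equivalent model categories have equivalent underlying $(\infty,1)$-categories: a Quillen equivalence induces an equivalence of the $\infty$-categorical localizations at the weak equivalences \cite[Proposition~1.3.4.25]{HA}, together with the standard fact that derived functors of a Quillen equivalence descend to inverse equivalences. So it suffices to exhibit, for each of the three models, a zigzag of Quillen equivalences connecting it to $L_{S_{\rev}}\Fun(\prism_{\rev}^{\myop},\sSet)_{\inj}$.

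First I will check that the dagger Rezk model structure of Part~II is precisely the left Bousfield localization $L_{S_{\rev}}\Fun(\prism_{\rev}^{\myop},\sSet)_{\inj}$. If Part~II defines it with the Reedy or projective model structure rather than the injective one, I will instead use the identity Quillen equivalence between the projective (or Reedy) and injective structures. This identity has the same weak equivalences before localization. Left Bousfield localization at the same set $S_{\rev}$ of maps between cofibrant objects, with cofibrant replacements where needed, preserves Quillen equivalences by the standard result in Hirschhorn. So the localized structures remain Quillen equivalent, and \cref{presentation.prop.reduction} then gives that the dagger Rezk model presents $\DagCatInf_{(\infty,1)}$.

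Next, Part~II supplies a Quillen equivalence between the dagger Joyal and dagger Rezk model structures, and Part~I supplies one between the dagger Bergner and dagger Joyal model structures (through the dagger coherent nerve $\rmN_{\dag}$). Composing the induced equivalences of underlying $(\infty,1)$-categories with the equivalence just obtained gives the statement for all three models.

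The main obstacle is bookkeeping rather than mathematics. One must confirm that the completeness generator and spine maps used in Part~II to define the dagger Rezk structure are exactly the set $S_{\rev}$ of \cref{presentation.rem.localization-locality}, which is what \cref{unitary.lem.part-ii-coordinate-walking-unitary} and \cref{presentation.not.point-set} are designed to guarantee. One must also confirm that the underlying injective structure is the one being localized. If there is a mismatch, I would fall back on the observation that two left Bousfield localizations of Quillen-equivalent structures at sets with equivalent images under $\ell_{\rev}$ have the same local objects, by \cref{presentation.prop.localized-presentation}(1), and hence the same underlying $(\infty,1)$-category.
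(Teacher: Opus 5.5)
Your proposal is correct and is essentially the paper's argument: the Quillen equivalences of Parts~I and~II induce equivalences of underlying $(\infty,1)$-categories, and \cref{presentation.prop.reduction} identifies the common underlying $(\infty,1)$-category with $\DagCatInf_{(\infty,1)}$. Two small corrections: the fact that Quillen equivalences induce equivalences of underlying $(\infty,1)$-categories is \cite[Lemma~1.3.4.21]{HA} rather than Proposition~1.3.4.25, which is the diagram-category statement the paper uses for $\ell_{\rev}$; and your Hirschhorn contingency is not needed, since the paper takes the dagger Rezk model of Part~II to be $L_{S_{\rev}}\Fun(\prism_{\rev}^{\myop},\sSet)_{\inj}$ itself.
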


\begin{proof}
  The Quillen equivalences are given by \cite[Theorem~3.5.6]{Aka26} and \cite[Theorem~6.2.8 and Corollary~6.2.9]{Aka26-2}.
  Thus they induce equivalences of underlying $(\infty,1)$-categories by \cite[Lemma~1.3.4.21]{HA}.
  Apply \cref{presentation.prop.reduction}.
\end{proof}

\appendix
\section{Unitary equivalences and the unitary Dwyer--Kan criterion}\label{unitary.section}

In this appendix, we describe unitary equivalences using the anima of ordinary equivalences and relative homotopy fixed points.
We prove that this construction agrees with mapping out of the walking unitary (\cref{direct.thm.coherent-unitary-representability}).
It then gives an intrinsic criterion for equivalences after unitary completion (\cref{presentation.thm.unitary-dk}).

\subsection{Unitary equivalences}\label{direct.subsection.coherent-unitaries}

We define the anima of unitary equivalences in a dagger Segal anima $W$ and identify it with the anima of morphisms $E_{\rev}\to W$ (\cref{direct.thm.coherent-unitary-representability}).

\begin{notation}
  Let $W\in\SegRev(\An)$.
  For $x,y\in W_0$, put
  \begin{align}
    \Map_W(x,y):=\fib_{(x,y)}(W_1\xrightarrow{(\ev_{0},\ev_{1})}W_0\times W_0).
  \end{align}
\end{notation}

\begin{definition}[{\cite[Proposition~10.1]{Rez10}}]\label{direct.lem.equivalences-pullback}
  Let $V:\prism^{\myop}\to\An$ be a Segal anima.
  Define the anima $V_1^{\eqtag}$ of \emph{equivalences} in $V$ by the following pullback square in $\An$:
  \begin{align}
    \begin{tikzpicture}[auto,baseline=(current bounding box.center)]
      \node (tl) at (0,1.5) {$V_1^{\eqtag}$};
      \node (tm) at (4,1.5) {$V_3$};
      \node (bl) at (0,0) {$V_0\times V_0$};
      \node (bm) at (4,0) {$V_1\times V_1$};
      \draw[->] (tl) to (tm);
      \draw[->] (tl) to (bl);
      \draw[->] (tm) to node {$\scriptstyle(d_1d_3,d_1d_0)$} (bm);
      \draw[->] (bl) to node {$\scriptstyle s_0\times s_0$} (bm);
    \end{tikzpicture}
  \end{align}
  The morphism $V_1^{\eqtag}\to V_1$ is the composite of the projection to $V_3$ with $d_0d_3:V_3\to V_1$, which evaluates at the edge $\{1,2\}\subset[3]$.
\end{definition}

\begin{lemma}\label{direct.remark.rezk-walking-equivalence}
  Let $V:\prism^{\myop}\to\An$ be a Segal anima.
  There is a natural equivalence of anima over $V_{1}$:
  \begin{align}
    \Map_{\Fun(\prism^{\myop},\An)}(\iota_{\rev}^{\myop,*}E_{\rev},V)
    \simeq 
    V_1^{\eqtag}.
  \end{align}
\end{lemma}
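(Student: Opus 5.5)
The restriction $\iota_{\rev}^{\myop,*}E_{\rev}$ is the simplicial set $[n]\mapsto\{0,1\}^{n+1}$ with structure maps given by precomposition on vertices. This is the nerve of the indiscrete category on $\{0,1\}$, i.e.\ Rezk's walking equivalence $E$ (the $0$-coskeleton of $\{0,1\}$). The plan is therefore to identify $\Map(E,V)$ with $V_1^{\eqtag}$ via Rezk's argument \cite[Proposition~10.1 and its proof]{Rez10}, redone in anima. Concretely:

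\begin{enumerate}
  \item Since $V$ is Segal, $\Map(E,V)\simeq\Map(\ac E,\ac V)$ up to completion is not available, because $V$ need not be complete. So I would instead work directly with a presentation of $E$ as a colimit of representables relative to Segal localization.
  \item Write $E^{(3)}\subseteq E$ for the simplicial subset generated by the single $3$-simplex $(0,1,0,1)$. There is a map $\Delta^3\to E$ picking $(0,1,0,1)$.
  \item Form the pushout
  \begin{align}
    P:=\Delta^3\amalg_{\Delta^{\{0,2\}}\amalg\Delta^{\{1,3\}}}(\Delta^0\amalg\Delta^0),
  \end{align}
  which collapses the edges $\{0,2\}$ and $\{1,3\}$.
\end{enumerate}

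Mapping $P$ into $V$ gives exactly the pullback defining $V_1^{\eqtag}$. Here one uses that $s_0:V_0\to V_1$ corresponds to $\Delta^1\to\Delta^0$, that the faces $d_1d_3$ and $d_1d_0$ are the restrictions to the edges $\{0,2\}$ and $\{1,3\}$, and that the resulting map to $V_1$ is restriction to $\{1,2\}$. The induced map $P\to E$ sends the middle edge to $0\to1$.

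The key step is to show that $P\to E$ becomes an equivalence after localization at the spine inclusions, i.e.\ that it is a Segal (equivalently, Dwyer--Kan after completion) equivalence. Then mapping into the Segal anima $V$ inverts it, giving $\Map(E,V)\simeq\Map(P,V)\simeq V_1^{\eqtag}$. This compatibility over $V_1$ holds because both maps to $V_1$ are restriction along the edge $(0,1)$.

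To prove that $P\to E$ is a Segal equivalence, I would test against complete Segal anima first. Segal localization followed by completion is the functor $\ac$, and $\ac E\simeq jI\simeq *$. Meanwhile $\ac P$ is the free category on a morphism $f$ together with left and right inverses in the $2$-out-of-$6$ sense, which is contractible by the standard argument. This shows that $P\to E$ is a Rezk (complete Segal) equivalence.

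That is not enough for non-complete $V$, and this is the main obstacle. The plan for it is the Segal-space analogue of Rezk's Theorem~6.2:
\begin{enumerate}
  \item For a Segal anima $V$, the anima $\Map(E,V)$ is computed via the skeletal filtration of $E$, whose nondegenerate simplices are the alternating strings.
  \item Each stage $\Map(\mathrm{sk}_nE,V)\to\Map(\mathrm{sk}_{n-1}E,V)$ for $n\geq4$ is an equivalence. This is because filling alternating $n$-simplices is forced by Segal and by the invertibility already witnessed in the $3$-skeleton.
  \item The $3$-skeleton computation reduces to $P$.
\end{enumerate}

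If this direct route proves too combinatorial, the fallback is to cite \cite[Proposition~10.1]{Rez10}, which states precisely this for Segal spaces, and to transport it along the presentation equivalence of simplicial spaces with simplicial anima. The naturality in $V$ is automatic because all identifications are restriction maps along fixed morphisms of simplicial anima.
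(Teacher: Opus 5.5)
Your reduction is sound: $P$ is Rezk's finite model of the walking equivalence, $\Map(P,V)$ is literally the pullback defining $V_1^{\eqtag}$, and it suffices to show that every Segal anima inverts $P\to E$. There is a small labelling slip: with the $3$-simplex $(0,1,0,1)$ the middle edge $\{1,2\}$ goes to $1\to 0$, so use $(1,0,1,0)$ to be over $V_1$ via the edge $(0,1)$.

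The genuine gap is your argument for that key step, because the skeletal filtration does not behave as you claim. $E$ has exactly two nondegenerate simplices in each dimension, namely the two alternating strings, so $|\mathrm{sk}_nE|\simeq S^n$. Take the constant simplicial anima $V=\Const_{\prism^{\myop}}X$, which is Segal (even complete). Then $\Map(K,V)\simeq\Map_{\An}(|K|,X)$, so the stage $\Map(\mathrm{sk}_nE,V)\to\Map(\mathrm{sk}_{n-1}E,V)$ is $\Map(S^n,X)\to\Map(S^{n-1},X)$. This fails to be an equivalence for $X=K(\mathbb{Z},n)$. Likewise $\Map(\mathrm{sk}_3E,V)\simeq\Map(S^3,X)$ is not $\Map(P,V)\simeq X$, since $P$ is weakly contractible. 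So steps (2) and (3) of your plan are false, and the Segal-local statement is left unproved.

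The obstacle you identified is not real, however: the complete-Segal test you already carried out suffices, because $P\to E$ is a bijection on vertices. By \cref{direct.lem.ordinary-completion}~(2), a Segal anima satisfies $V_n\simeq V_0^{n+1}\times_{(\iota_0\ac V)^{n+1}}\iota_0\Fun([n],\ac V)$ naturally in $[n]$. Writing a simplicial anima $K$ as a colimit of representables then gives, naturally in $K$,
\begin{align}
  \Map(K,V)\simeq\Map_{\An}(K_0,V_0)\times_{\Map_{\An}(K_0,\iota_0\ac V)}\Map_{\Cat_{(\infty,1)}}(\ac K,\ac V).
\end{align}
Apply this to $P\to E$. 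The first two factors are unchanged, and the third is an equivalence because $\ac P\simeq\ac E\simeq *$. Hence $\Map(E,V)\simeq\Map(P,V)$, compatibly with restriction to the edge, i.e.\ over $V_1$.

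Alternatively, do what the paper does: identify $\iota_{\rev}^{\myop,*}E_{\rev}$ with the nerve of $I$ and cite \cite[Theorem~6.2]{Rez01}. That theorem identifies $\Map(E,V)\to V_1$ with the inclusion of homotopy equivalences for every Segal space. It is this result, not \cite[Proposition~10.1]{Rez10}, that is the statement about $E$; the latter is where the $V_3$-pullback description of $V_1^{\eqtag}$ comes from.
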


\begin{proof}
  The underlying simplicial anima $\iota_{\rev}^{\myop,*}E_{\rev}$ is the discrete nerve of $I$.
  \Cite[Theorem~6.2]{Rez01} gives the desired equivalence over $V_1$.
\end{proof}

\begin{construction}\label{direct.construction.equivalence-action}
  Let $W\in\SegRev(\An)$ and write $\ac_{\flag}W=(i_W:A_W\to\Herm(\calC_W))$.
  Define a morphism of $C_{2}$-anima 
  \begin{align}
    \widetilde i_W:
    \triv(W_0)\to \iota_0^{hC_2}\calC_W
  \end{align}
  to be the adjunct of the composite $i_W(\eta^{\flag}_W)_0 : W_0 \to \Herm(\calC_W)$ under the adjunction $\triv\dashv(-)^{hC_2}$.

  Define a $C_2$-anima $W_1^{\eqtag,C_2} : BC_{2} \to \An$ by the following pullback in $\Fun(BC_{2},\An)$:
  \begin{align}
    W_1^{\eqtag,C_2}
    :=
    \triv(W_0\times W_0)
    \times_{(\widetilde i_W)^2, (\iota_0^{hC_2}\calC_W)^2, (\ev_{0},\ev_{1})}
    \Fun^{\conj}(\triv([1]),\iota_0^{hC_2}\calC_W).
  \end{align}
\end{construction}

\begin{lemma}\label{direct.lem.equivalence-action}
  Let $W\in\SegRev(\An)$ and write $\ac_{\flag}W=(i_W:A_W\to\Herm(\calC_W))$.
  The unit $\eta^{\flag}_W : W \to \rmN_{\flag}\ac_{\flag}W$ induces a natural equivalence of anima over $W_0\times W_0$:
  \begin{align}
    (\iota_{\rev}^{\myop,*}W)_1^{\eqtag}
    \xrightarrow{\simeq}
    (W_1^{\eqtag,C_2})_*.
  \end{align}
\end{lemma}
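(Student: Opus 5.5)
The plan is to compute both sides as fibers over $W_0\times W_0$ and compare them using the unit equivalence of \cref{direct.thm.unit}. Write $V:=\iota_{\rev}^{\myop,*}W$ and $\calC:=(\calC_W)_*$. By \cref{direct.lem.underlying-realization} (2), $(\eta^{\flag}_W)_0$ and $\theta_W:\ac V\to\calC$ are equivalences, and the vertex map $W_0\to\iota_0\calC$ is identified with the degree-zero ordinary unit. By \cref{target.lem.homotopy-fixed-point-formalism} (6), evaluating the defining pullback of $W_1^{\eqtag,C_2}$ at $*\in BC_2$ gives
\begin{align}
  (W_1^{\eqtag,C_2})_*
  \simeq
  (W_0\times W_0)\times_{(\iota_0\calC)^2}\iota_0\Fun([1],\iota_0\calC),
\end{align}
because the underlying anima of $\iota_0^{hC_2}\calC_W$ is $\iota_0\calC$ and $(\widetilde i_W)_*$ is the underlying flag. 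Since $\iota_0\calC$ is an anima, $\iota_0\Fun([1],\iota_0\calC)$ is precisely the anima of equivalences in $\calC$. Thus the target is the pullback of the anima of equivalences of $\calC$ along $W_0\times W_0\to(\iota_0\calC)^2$.

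For the source, I use \cref{direct.lem.equivalences-pullback} and \cref{direct.remark.rezk-walking-equivalence}. These give $V_1^{\eqtag}\simeq\Map(\iota_{\rev}^{\myop,*}E_{\rev},V)$ over $V_1$. The unit identifies $V$ with $\rmN_{\flag}\ac_{\flag}W$ restricted to $\prism$, which by \cref{flagged-nerve.lem.value-of-flagged-nerve} is $W_0^{\bullet+1}\times_{(\iota_0\calC)^{\bullet+1}}\rmN(\calC)_\bullet$. The Rezk pullback square is built from $V_3$, $V_1$, $V_0$ and degeneracies. Each of these is a base change of the corresponding term for $\rmN(\calC)$ along powers of $W_0\to\iota_0\calC$. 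Pullbacks commute with this degreewise base change, so
\begin{align}
  V_1^{\eqtag}\simeq (W_0\times W_0)\times_{(\iota_0\calC)^2}\rmN(\calC)_1^{\eqtag}.
\end{align}
Here $\rmN(\calC)_1^{\eqtag}$ is computed for the complete Segal anima $\rmN(\calC)$. Some bookkeeping is needed at this step: the fourfold vertex base change in $V_3$ must be matched against the two vertex constraints coming from $V_0\times V_0$, so that the net effect is base change along only the two endpoints.

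It remains to identify $\rmN(\calC)_1^{\eqtag}$ with $\iota_0\Fun([1],\iota_0\calC)$ over $\rmN(\calC)_1=\iota_0\Fun([1],\calC)$. This holds because $\rmN(\calC)_1^{\eqtag}\simeq\Map(\rmN(jI),\rmN\calC)\simeq\iota_0\Fun(jI,\calC)$ by full faithfulness of $\rmN$. Evaluation at $\{0\to1\}$ then identifies this with the anima of invertible arrows, that is, $\iota_0\Fun([1],\iota_0\calC)$, via the standard fact that $[1]\to jI$ is a localization.

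I expect the main obstacle to be naturality and compatibility. The comparison map should be the one induced by $\eta^{\flag}_W$, and not merely an abstract equivalence. I would handle this by writing the map as the composite of $(\eta^{\flag}_W)_3$ applied to the Rezk square with the projection $\pr$ of \cref{direct.lem.underlying-realization}, and then checking that the resulting map on fibers over each $(x,y)\in W_0^2$ is the restriction of the known equivalence $\Map_V(x,y)\simeq\Map_{\calC}(x,y)$ to equivalence components.
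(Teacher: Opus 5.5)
Your argument is correct. On the target side it is the paper's: evaluate the pullback of \cref{direct.construction.equivalence-action} at $*$ via \cref{target.lem.homotopy-fixed-point-formalism} (6), using $(\widetilde i_W)_*\simeq\epsilon_{\calC_W}i_W(\eta^{\flag}_W)_0$.

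On the source side you take a different route. The paper never looks at $V_3$. From \cref{direct.thm.unit} (1) and the degree-one pullback of \cref{flagged-nerve.lem.value-of-flagged-nerve}, it observes that $V\to\rmN(\calC)$ is fully faithful, hence preserves and reflects equivalences. It then restricts $V_1\simeq W_0^2\times_{(\iota_0\calC)^2}\iota_0\Fun([1],\calC)$ to the components of equivalences.

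You instead push the whole base-change description of $V$ through the defining Rezk pullback of \cref{direct.lem.equivalences-pullback}. Your bookkeeping worry resolves cleanly. The functor $Z\mapsto Z_3\times_{Z_1\times Z_1}(Z_0\times Z_0)$ is a finite limit of evaluations, so it commutes with the fiber product $V\simeq W_0^{\bullet+1}\times_{(\iota_0\calC)^{\bullet+1}}\rmN(\calC)$ of simplicial anima. On $[n]\mapsto S^{n+1}$ it returns $S\times S$, because the vertex map underlying $(d_1d_3,d_1d_0)$ is the bijection $\{0,2\}\sqcup\{1,3\}\cong[3]$.

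Your route works literally from the definition of $V_1^{\eqtag}$. It needs neither reflection of equivalences by fully faithful maps nor the fact that $V_1^{\eqtag}\to V_1$ is an inclusion of components, and naturality in $W$ is automatic. The Rezk-type input is not eliminated, though, only moved to the universal case $\rmN(\calC)$. The paper's argument is two lines.

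Two small points.

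First, $\iota_{\rev}^{\myop,*}E_{\rev}$ is the levelwise discrete nerve of the $1$-groupoid $I$, not $\rmN(jI)$, which is levelwise contractible. Your step $\Map(\iota_{\rev}^{\myop,*}E_{\rev},\rmN\calC)\simeq\iota_0\Fun(jI,\calC)$ holds because $\ac(\iota_{\rev}^{\myop,*}E_{\rev})\simeq jI$ and $\ac\dashv\rmN$. Full faithfulness of $\rmN$ alone does not give it.

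Second, in the compatibility check you flagged, watch the orientation. The Rezk projection records $(c,d)$, with $\{0,2\}$ degenerate at $c$ and $\{1,3\}$ degenerate at $d$. But the edge $\{1,2\}$ used for $V_1^{\eqtag}\to V_1$ runs from $d$ to $c$. So its endpoints are the swap of the Rezk projection, and you should fix which structure map to $W_0\times W_0$ you mean, for instance by reading off the edge $\{0,1\}$. The paper sidesteps this by using the endpoints of the arrow in $V_1$.
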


\begin{proof}
  By \cref{direct.thm.unit} (1) and the degree-one pullback of \cref{flagged-nerve.lem.value-of-flagged-nerve}, the composite
  \begin{align}
    \iota_{\rev}^{\myop,*}W
    \xrightarrow{\iota_{\rev}^{\myop,*}\eta_W^{\flag}}
    \iota_{\rev}^{\myop,*}\rmN_{\flag}\ac_{\flag}W
    \xrightarrow{\pr_{\ac_{\flag}W}}
    \rmN(\calC_W)_*
  \end{align}
  is fully faithful and has degree-zero morphism $\epsilon_{\calC_W}i_W(\eta_W^{\flag})_0$.
  It therefore preserves and reflects equivalences.
  Restricting the degree-one pullback to equivalences gives
  \begin{align}
    (\iota_{\rev}^{\myop,*}W)_1^{\eqtag}
    &\simeq
    (W_0\times W_0)
    \times_{(\epsilon_{\calC_W}i_W(\eta_W^{\flag})_0)^2,(\iota_0(\calC_W)_*)^2,(\ev_0,\ev_1)}
    \Fun([1],\iota_0(\calC_W)_*).
  \end{align}
  By \cref{target.lem.homotopy-fixed-point-formalism} (6), evaluation at $*\in BC_2$ preserves the pullback in \cref{direct.construction.equivalence-action}.
  Since $(\widetilde i_W)_*\simeq\epsilon_{\calC_W}i_W(\eta_W^{\flag})_0$, the same pullback computes $(W_1^{\eqtag,C_2})_*$.
  This gives the desired equivalence, naturally in $W$ and over $W_0\times W_0$.
\end{proof}

\begin{definition}\label{direct.def.coherent-unitary-equivalences}
  Let $W\in\SegRev(\An)$.
  Define the anima $W_1^{\rmu}$ of \emph{unitary equivalences} in $W$ by the following pullback in $\An$:
  \begin{align}
    W_1^{\rmu}
    &:=
    (W_0\times W_0)
    \times_{c_{W_0\times W_0},\Map_{\An}(BC_2,W_0\times W_0),(\pr_1)^{hC_2}}
    (W_1^{\eqtag,C_2})^{hC_2}.
  \end{align}
  Here $c_{W_0\times W_0}$ is the constant-path morphism, and $\pr_1$ is the first projection of the pullback in \cref{direct.construction.equivalence-action}.
\end{definition}

\begin{lemma}\label{direct.lem.unitaries-as-hermitian-paths}
  Let $W\in\SegRev(\An)$ and write $\ac_{\flag}W=(i_W:A_W\to\Herm(\calC_W))$.
  The following statements hold.
  \begin{enumerate}
    \item There is a natural equivalence of anima
    \begin{align}
      W_1^{\rmu}
      &\simeq
      (W_0\times W_0)
      \times_{(i_W(\eta_W^{\flag})_0)^2,\Herm(\calC_W)^2,(\ev_0,\ev_1)}
      \Fun([1],\Herm(\calC_W)).
    \end{align}
    \item The equivalence in (1) is over $W_0\times W_0$:
    the two endpoint evaluations on $W_1^{\rmu}$ correspond to the projections to the two factors of $W_0\times W_0$.
  \end{enumerate}
\end{lemma}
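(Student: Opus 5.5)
The plan is to compute the homotopy fixed points $(W_1^{\eqtag,C_2})^{hC_2}$ directly from the defining pullback of \cref{direct.construction.equivalence-action}, and then substitute the result into the pullback of \cref{direct.def.coherent-unitary-equivalences}. Everything reduces to three facts already established: homotopy fixed points commute with pullbacks, they commute with cotensoring by a trivially acted category, and $\triv(W_0\times W_0)^{hC_2}$ is the free loop-type mapping anima.

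First, by \cref{target.lem.homotopy-fixed-point-formalism} (6), applied in $\Fun(BC_2,\An)$ (viewed inside $\Fun(BC_2,\widehat{\Cat}_{(\infty,1)})$), taking $hC_2$ of the defining pullback gives
\begin{align}
  (W_1^{\eqtag,C_2})^{hC_2}
  \simeq
  \triv(W_0\times W_0)^{hC_2}
  \times_{((\iota_0^{hC_2}\calC_W)^{hC_2})^2}
  \Fun^{\conj}(\triv([1]),\iota_0^{hC_2}\calC_W)^{hC_2}.
\end{align}
Next I identify each term.
\begin{itemize}
  \item By \cref{target.lem.constant-groupoid-limit}, $\triv(W_0\times W_0)^{hC_2}\simeq\Map_{\An}(BC_2,W_0\times W_0)$.
  \item By definition, $(\iota_0^{hC_2}\calC_W)^{hC_2}=\Herm(\calC_W)$.
  \item By \cref{target.lem.homotopy-fixed-point-formalism} (7), $\Fun^{\conj}(\triv([1]),\iota_0^{hC_2}\calC_W)^{hC_2}\simeq\Fun([1],\Herm(\calC_W))$, compatibly with $(\ev_0,\ev_1)$ by naturality in the source $[1]\leftarrow\{0,1\}$.
\end{itemize}
The map $(\widetilde i_W)^{hC_2}$ becomes a map $\Map_{\An}(BC_2,W_0\times W_0)\to\Herm(\calC_W)^2$.

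The key point is to identify the composite with the constant-path map $c_{W_0\times W_0}$. Since $\widetilde i_W$ is the adjunct of $i_W(\eta^{\flag}_W)_0$ under $\triv\dashv(-)^{hC_2}$, the composite $(\widetilde i_W)^{hC_2}\circ c$ is, by the triangle identity, exactly $i_W(\eta^{\flag}_W)_0$ (squared). Here $c$ is the unit $W_0\times W_0\to\triv(W_0\times W_0)^{hC_2}$, and this unit is the constant-path map. Pasting pullbacks then gives
\begin{align}
  W_1^{\rmu}
  \simeq
  (W_0\times W_0)\times_{\Map_{\An}(BC_2,W_0\times W_0)}(W_1^{\eqtag,C_2})^{hC_2}
  \simeq
  (W_0\times W_0)\times_{\Herm(\calC_W)^2}\Fun([1],\Herm(\calC_W)).
\end{align}
This uses that the map $\pr_1^{hC_2}$ is the base-change projection of the fixed-point pullback, and it proves (1). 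Since $\Herm(\calC_W)$ is an anima, $\Fun([1],\Herm(\calC_W))$ is its path anima.

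For (2), the projection of the pasted pullback to $W_0\times W_0$ is the first factor. The endpoint evaluations on $W_1^{\rmu}$ are defined through $\pr_1$ followed by the two factors, and these match the two projections of $W_0\times W_0$. Naturality in $W$ follows because every identification used is natural.

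The main obstacle I expect is the coherence check that the unit of $\triv\dashv(-)^{hC_2}$ agrees with $c_{W_0\times W_0}$. A second delicate check is that the identification in \cref{target.lem.homotopy-fixed-point-formalism} (7) is compatible with the endpoint maps. I would handle both by writing every object as a limit over $BC_2$ and using that limits of constant diagrams are cotensors.
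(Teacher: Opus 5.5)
Your proposal is correct and follows essentially the same route as the paper. You take homotopy fixed points of the defining pullback using \cref{target.lem.homotopy-fixed-point-formalism} (6), (7) and \cref{target.lem.constant-groupoid-limit}. You then use the adjunction identity $(\widetilde i_W)^{hC_2}c_{W_0}\simeq i_W(\eta^{\flag}_W)_0$ and paste pullbacks along $c_{W_0\times W_0}$; the two coherence points you flag (the unit of $\triv\dashv(-)^{hC_2}$ being the constant-path map, and endpoint compatibility in (7)) are exactly what the paper treats as immediate from the adjunction and naturality.
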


\begin{proof}
  We prove (1) and (2) together.
  By \cref{target.lem.constant-groupoid-limit} and \cref{target.lem.homotopy-fixed-point-formalism} (6), (7), taking homotopy fixed points of the pullback in \cref{direct.construction.equivalence-action} gives
  \begin{align}
    (W_1^{\eqtag,C_2})^{hC_2}
    &\simeq
    \Map_{\An}(BC_2,W_0\times W_0)
    \times_{((\widetilde i_W)^2)^{hC_2},\Herm(\calC_W)^2,(\ev_{0},\ev_{1})}
    \Fun([1],\Herm(\calC_W)).
  \end{align}
  Let $c_{W_0}:W_0\to\Map_{\An}(BC_2,W_0)$ be the constant-path morphism.
  The adjunction defining $\widetilde i_W$ gives
  \begin{align}
    (\widetilde i_W)^{hC_2}c_{W_0}
    \simeq
    i_W(\eta_W^{\flag})_0:
    W_0\to\Herm(\calC_W).
  \end{align}
  Taking its square and pulling back along $c_{W_0\times W_0}$ as in \cref{direct.def.coherent-unitary-equivalences} gives (1).
  All comparisons are natural in $W$ and preserve the projection to $W_0\times W_0$.
\end{proof}

\begin{lemma}\label{direct.lem.unitary-forgetful-and-degeneracy}
  Let $W\in\SegRev(\An)$ and write $\ac_{\flag}W=(i_W:A_W\to\Herm(\calC_W))$.
  The following statements hold.
  \begin{enumerate}
    \item Evaluation of a homotopy fixed point at $*\in BC_2$ gives the forgetful morphism
    \begin{align}
      W_1^{\rmu}
      \to (\iota_{\rev}^{\myop,*}W)_1^{\eqtag}
      \to (\iota_{\rev}^{\myop,*}W)_1.
    \end{align}
    Under \cref{direct.lem.unitaries-as-hermitian-paths} (1), this morphism is induced by forgetting the Hermitian structure on the path.
    \item Taking homotopy fixed points of the equivariant identity arrows defined by $\widetilde i_W$, and using $c_{W_0}$, gives a morphism of anima 
    \begin{align}
      s_0^{\rmu}
      :W_0\to W_1^{\rmu}.
    \end{align}
    Under \cref{direct.lem.unitaries-as-hermitian-paths} (1), it sends $w\in W_0$ to the constant path at $i_W(\eta_W^{\flag})_0(w)$.
    \item Its composite with $W_1^{\rmu}\to W_1$ is the ordinary degeneracy $s_0:W_0\to W_1$.
  \end{enumerate}
\end{lemma}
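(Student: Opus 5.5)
The plan is to unwind all three statements through the pullback description of $W_1^{\rmu}$ in \cref{direct.lem.unitaries-as-hermitian-paths}, using throughout that evaluation at $*\in BC_2$ is the counit of $\triv\dashv(-)^{hC_2}$.

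For (1), write the counit of $W_1^{\eqtag,C_2}$ as a morphism of $C_2$-anima $\triv((W_1^{\eqtag,C_2})^{hC_2})\to W_1^{\eqtag,C_2}$ and evaluate it at $*$. Precompose the resulting map with the projection $W_1^{\rmu}\to(W_1^{\eqtag,C_2})^{hC_2}$ of \cref{direct.def.coherent-unitary-equivalences}. This gives $W_1^{\rmu}\to(W_1^{\eqtag,C_2})_*$, and composing with the inverse of the equivalence of \cref{direct.lem.equivalence-action} gives $W_1^{\rmu}\to(\iota_{\rev}^{\myop,*}W)_1^{\eqtag}$. The further map to $(\iota_{\rev}^{\myop,*}W)_1$ is the structure map of \cref{direct.lem.equivalences-pullback}. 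To identify this composite with forgetting Hermitian structure, I would check on each pullback factor separately, using that the identification in \cref{target.lem.homotopy-fixed-point-formalism} (6) is compatible with evaluation at $*$:
\begin{itemize}
  \item on $\Fun([1],\Herm(\calC_W))\simeq\Fun^{\conj}(\triv([1]),\iota_0^{hC_2}\calC_W)^{hC_2}$ (by \cref{target.lem.homotopy-fixed-point-formalism} (7)), evaluation at $*$ is postcomposition with $\epsilon_{\calC_W}$;
  \item on the factor $W_0\times W_0$, after pulling back along $c_{W_0\times W_0}$, evaluation at $*$ is the identity.
\end{itemize}
Finally, \cref{direct.lem.equivalence-action} identifies the pullback $(W_0\times W_0)\times\Fun([1],\iota_0(\calC_W)_*)$ with $(\iota_{\rev}^{\myop,*}W)_1^{\eqtag}$ over $W_0\times W_0$. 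So the map is ``forget the Hermitian structure on the path''.

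For (2), the equivariant identity arrows are the map $\triv(W_0)\to\Fun^{\conj}(\triv([1]),\iota_0^{hC_2}\calC_W)$ obtained by composing $\widetilde i_W$ with precomposition along $[1]\to[0]$. Pairing it with the diagonal $\triv(W_0)\to\triv(W_0\times W_0)$ gives an equivariant map $\triv(W_0)\to W_1^{\eqtag,C_2}$, because both legs agree over $(\iota_0^{hC_2}\calC_W)^2$. Taking homotopy fixed points and precomposing with $c_{W_0}$ gives a map $W_0\to(W_1^{\eqtag,C_2})^{hC_2}$. Its projection to $\Map_{\An}(BC_2,W_0\times W_0)$ is $c_{W_0\times W_0}\Delta$, so it factors through the pullback, and this factorization defines $s_0^{\rmu}$. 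Under \cref{direct.lem.unitaries-as-hermitian-paths} (1), the $\Fun([1],\Herm(\calC_W))$-component is $(\widetilde i_W)^{hC_2}c_{W_0}\simeq i_W(\eta^{\flag}_W)_0$ followed by the constant path, as claimed.

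For (3), combine (1) and (2): the composite $W_0\to W_1$ corresponds under \cref{direct.lem.equivalence-action} to $w\mapsto(w,w,\id_{\epsilon i(w)})$. The degree-one pullback of \cref{flagged-nerve.lem.value-of-flagged-nerve}, transported along the unit equivalence of \cref{direct.thm.unit} (1), carries $s_0$ to exactly this map, by naturality of $\eta^{\flag}_W$ in $[1]\to[0]$. One also has to check that the map $(\iota_{\rev}^{\myop,*}W)_1^{\eqtag}\to W_1$ is compatible there, which amounts to the Rezk identification of the edge $\{1,2\}$.

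I expect the main obstacle to be the coherence bookkeeping in (1): showing that the counit evaluation commutes with the identifications of \cref{target.lem.homotopy-fixed-point-formalism} (6), (7) and with the equivalence of \cref{direct.lem.equivalence-action}, rather than just matching objectwise. I would handle this by working with the counit as a natural transformation of functors $BC_2\times\sfK\to\An$, so that compatibility follows from naturality of limits.
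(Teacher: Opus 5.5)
Your proposal is correct and follows the paper's route. For (1), you compose the projection $W_1^{\rmu}\to(W_1^{\eqtag,C_2})^{hC_2}$ with evaluation at $*$ and the (inverse of the) equivalence of \cref{direct.lem.equivalence-action}, identifying the path component with $\Fun([1],\epsilon_{\calC_W})$. For (2), you build $s_0^{\rmu}$ from the equivariant constant-path map over $\triv(\Delta_{W_0})$ together with $c_{W_0}$. For (3), you combine these with the fact that the unit commutes with degeneracies. Your extra check, that the Rezk map $(\iota_{\rev}^{\myop,*}W)_1^{\eqtag}\to W_1$ (evaluating the edge $\{1,2\}$) corresponds to the inclusion of invertible arrows, spells out a step the paper leaves implicit in the proof of \cref{direct.lem.equivalence-action}.
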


\begin{proof}
  (1)
  Compose the second projection in \cref{direct.def.coherent-unitary-equivalences} with evaluation at $*\in BC_2$ and the equivalence of \cref{direct.lem.equivalence-action}.
  This is over $W_0\times W_0$, since $\ev_*c_{W_0\times W_0}\simeq\id_{W_0\times W_0}$.
  Naturality of evaluation identifies its path component in \cref{direct.lem.unitaries-as-hermitian-paths} with
  \begin{align}
    \Fun([1],\Herm(\calC_W))
    \xrightarrow{\Fun([1],\epsilon_{\calC_W})}
    \Fun([1],\iota_0(\calC_W)_*)
    \hookrightarrow\iota_0\Fun([1],(\calC_W)_*).
  \end{align}

  (2)
  The equivariant constant-path morphism gives a composite
  \begin{align}
    \triv(W_0)
    \xrightarrow{\widetilde i_W}
    \iota_0^{hC_2}\calC_W
    \to
    \Fun^{\conj}(\triv([1]),\iota_0^{hC_2}\calC_W).
  \end{align}
  Its endpoint morphism is $(\widetilde i_W)^2\triv(\Delta_{W_0})$.
  Thus \cref{direct.construction.equivalence-action} gives a morphism $\triv(W_0)\to W_1^{\eqtag,C_2}$ over $\triv(\Delta_{W_0})$.
  Taking homotopy fixed points and precomposing with $c_{W_0}$ gives a morphism over $c_{W_0\times W_0}\Delta_{W_0}$, so \cref{direct.def.coherent-unitary-equivalences} supplies $s_0^{\rmu}$.
  Naturality of constant paths and the adjunction defining $\widetilde i_W$ identify its path component with the constant path at $i_W(\eta_W^{\flag})_0(w)$.

  (3)
  By (1) and (2), forgetting the Hermitian structure on $s_0^{\rmu}(w)$ gives the constant path at $\epsilon_{\calC_W}i_W(\eta_W^{\flag})_0(w)$.
  The comparison in \cref{direct.lem.equivalence-action} is induced by the simplicial unit, which is an equivalence and commutes with degeneracies.
  Hence the composite is $s_0$.
\end{proof}

\begin{lemma}\label{direct.lem.walking-unitary-evaluation}
  Let $X=(i:A\to\Herm(\calC))\in\HermFlag_{(\infty,1)}$.
  The following statements hold.
  \begin{enumerate}
    \item There is a natural equivalence of anima
    \begin{align}
      \Map_{\Fun(\prism_{\rev}^{\myop},\An)}(E_{\rev},\rmN_{\flag}X)
      \simeq
      (A\times A)
      \times_{i^2,\Herm(\calC)^2,(\ev_0,\ev_1)}
      \Fun([1],\Herm(\calC)).
    \end{align}
    \item Under the equivalence in (1), evaluation at the vertices $0,1$ corresponds to the two endpoints in $A$.
    In particular, restriction along $e_{\rev}$ corresponds to the first endpoint projection.
    \item Under the equivalence in (1), evaluation of the arrow $(0,1)$ in $(\rmN_{\flag}X)_1$ corresponds to forgetting the Hermitian structure on the path.
    \item Under the equivalence in (1), precomposition with $E_{\rev}\to\yo_{\rev}[0]$ sends $a\in A$ to the constant path at $i(a)$.
  \end{enumerate}
\end{lemma}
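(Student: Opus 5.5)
The plan is to reduce everything to the computations already carried out for the walking unitary. By \cref{direct.lem.walking-unitary}, $E_{\rev}\simeq\rmN_{\flag}(\mathfrak u)$ and $e_{\rev}\simeq\rmN_{\flag}(0)$. Since $\mathfrak u$ is flagged, \cref{direct.thm.flagged-equivalence} (2) gives a natural equivalence
\begin{align}
  \Map_{\Fun(\prism_{\rev}^{\myop},\An)}(E_{\rev},\rmN_{\flag}X)
  \simeq
  \Map_{\HermFlag_{(\infty,1)}}(\mathfrak u,X).
\end{align}
I would then recompute the right-hand side as in the proof of \cref{direct.lem.e-and-u} (2), but stop one step earlier. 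Using \cref{target.prop.mapping-spaces} and the adjunction $j^{hC_2}\triv\dashv\Herm$, it is
\begin{align}
  \Map_{\An}(\{0,1\},A)\times_{\Map_{\An}(\{0,1\},\Herm(\calC))}\Map_{\An}(I,\Herm(\calC)).
\end{align}
Because $I$ is the indiscrete groupoid on $\{0,1\}$, the restriction $\Map_{\An}(I,\Herm(\calC))\to\Herm(\calC)^2$ identifies with $(\ev_0,\ev_1):\Fun([1],\Herm(\calC))\to\Herm(\calC)^2$. Here I use the equivalence $I\simeq j[1]$ after groupoidification, equivalently $\Map_{\An}(I,H)\simeq\iota_0\Fun([1],H)$ for an anima $H$. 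This yields (1).

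For (2), evaluation at the vertices $0,1$ of $E_{\rev}$ is restriction along the two maps $\yo_{\rev}[0]\to E_{\rev}$ selecting the constant tuples $0$ and $1$. Under \cref{direct.lem.walking-unitary} these correspond to the two maps $\mathfrak e\to\mathfrak u$. By naturality of \cref{target.prop.mapping-spaces}, exactly as in \cref{direct.lem.e-and-u} (3), they become the endpoint projections to $A$; the case $e_{\rev}$ is the first one. For (4), the map $E_{\rev}\to\yo_{\rev}[0]$ is $\rmN_{\flag}$ of the collapse $\mathfrak u\to\mathfrak e$. Precomposition with it is induced by $\{0,1\}\to *$ and $I\to *$, which gives the diagonal $A\to A^2$ and the constant path at $i(a)$.

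The step I expect to be the main obstacle is (3). The arrow $(0,1)\in(E_{\rev})_1$ defines $\yo_{\rev}[1]\to E_{\rev}$, and under $\rmN_{\flag}$ it is induced by a morphism $[1]_{\flag}\to\mathfrak u$. On anti-involutive categories this is $[1]_{\AI}\to j^{hC_2}\triv(I)$, adjoint to $[1]\to jI$ picking the unique arrow $0\to1$. To identify the induced map on mapping anima, I would use \cref{flagged-nerve.lem.value-of-flagged-nerve}. The resulting map goes from the pullback in (1) to $A^2\times_{(\iota_0\calC_*)^2}\iota_0\Fun([1],\calC_*)$. Tracing the adjunctions, its path component is a path $I\to\Herm(\calC)$ sent to the composite $[1]\to jI\to j\Herm(\calC)\xrightarrow{\epsilon_\calC^\sharp}\calC_*$. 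So one gets $\Fun([1],\epsilon_\calC)$ followed by the inclusion $\Fun([1],\iota_0\calC_*)\hookrightarrow\iota_0\Fun([1],\calC_*)$, which is the forgetful map. The delicate point is the compatibility of the counit $F_{\AI}(j\iota_0[1])\to j^{hC_2}\triv(\iota_0[1])$ in \cref{direct.construction.nerve} with $\epsilon_\calC$. I would handle it by checking naturality of the adjunct identifications in \cref{flagged-nerve.lem.value-of-flagged-nerve}, using that the underlying functor of $j^{hC_2}$ is $j$ (\cref{target.prop.equivariant-core}).
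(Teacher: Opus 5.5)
Your proposal is correct and follows essentially the same route as the paper. You pass to $\Map_{\HermFlag_{(\infty,1)}}(\mathfrak u,X)$ via \cref{direct.lem.walking-unitary} and \cref{direct.thm.flagged-equivalence} (2), and stop the computation of \cref{direct.lem.e-and-u} (2) at the factor $\Map_{\An}(I,\Herm(\calC))$. You then identify that factor with $\Fun([1],\Herm(\calC))$ because $[1]\to jI$ is a groupoid completion (which is what your phrase ``$I\simeq j[1]$ after groupoidification'' should say), and obtain (2)--(4) by naturality along $\mathfrak e\to\mathfrak u$, $[1]_{\flag}\to\mathfrak u$, and $\mathfrak u\to\mathfrak e$. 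Your treatment of (3) is more explicit than the paper's, which simply asserts that precomposition with $[1]_{\flag}\to\mathfrak u$ is $\Fun([1],\epsilon_{\calC})$ followed by the inclusion into $\iota_0\Fun([1],\calC_*)$.
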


\begin{proof}
  (1), (2)
  Since $\mathfrak u$ is flagged, \cref{direct.thm.flagged-equivalence} (2), \cref{direct.lem.walking-unitary}, and the proof of \cref{direct.lem.e-and-u} (2) give
  \begin{align}
    \Map_{\Fun(\prism_{\rev}^{\myop},\An)}(E_{\rev},\rmN_{\flag}X)
    &\simeq
    \Map_{\HermFlag_{(\infty,1)}}(\mathfrak u,X)\\
    &\simeq
    (A\times A)
    \times_{i^2,\Herm(\calC)^2,(\ev_0,\ev_1)}
    \Map_{\An}(I,\Herm(\calC)).
  \end{align}
  The functor $[1]\to I$ selecting $0\to1$ is a groupoid completion.
  Since $\Herm(\calC)$ is an anima, restriction identifies the last path factor with $\Fun([1],\Herm(\calC))$, compatibly with both endpoints.
  All comparisons are natural in $X$, and restriction along $e_{\rev}$ corresponds to the first endpoint, since $e_{\rev}$ selects $0$.

  (3)
  The morphism $\yo_{\rev}[1]\to E_{\rev}$ determined by $(0,1)$ corresponds to $[1]_{\flag}\to\mathfrak u$, sending the vertices to $0,1$ and the generating arrow to $0\to1$ in $I$.
  Under the preceding equivalences, precomposition is therefore induced by
  \begin{align}
    \Fun([1],\Herm(\calC))
    \xrightarrow{\Fun([1],\epsilon_{\calC})}
    \Fun([1],\iota_0\calC_*)
    \hookrightarrow\iota_0\Fun([1],\calC_*).
  \end{align}
  By \cref{flagged-nerve.lem.value-of-flagged-nerve}, its pullback along the specified endpoints is evaluation of $(0,1)$ in $(\rmN_{\flag}X)_1$.

  (4)
  The morphism $E_{\rev}\to\yo_{\rev}[0]$ corresponds to $\mathfrak u\to\mathfrak e$ collapsing both vertices to the selected object.
  Precomposition sends $a\in A$ to the constant path at $i(a)$.
\end{proof}

\begin{proposition}\label{direct.thm.coherent-unitary-representability}
  Let $W\in\SegRev(\An)$.
  The following statements hold.
  \begin{enumerate}
    \item There is a natural equivalence of anima
    \begin{align}
      \Map_{\Fun(\prism_{\rev}^{\myop},\An)}(E_{\rev},W)
      \simeq W_1^{\rmu}.
    \end{align}
    \item The equivalence in (1) is compatible with both endpoint evaluations.
    In particular, restriction along $e_{\rev}$ corresponds to the first endpoint projection $W_1^{\rmu}\to W_0$.
    \item Under the equivalence in (1), evaluation of the arrow $(0,1)$ in $W_1$ corresponds to the forgetful morphism $W_1^{\rmu}\to W_1$.
    \item Under the equivalence in (1), precomposition with $E_{\rev}\to\yo_{\rev}[0]$ corresponds to $s_0^{\rmu}:W_0\to W_1^{\rmu}$.
  \end{enumerate}
\end{proposition}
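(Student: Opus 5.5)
The plan is to reduce to the Hermitian-flag side via the unit equivalence and then compare the two path descriptions already computed. Let $W\in\SegRev(\An)$ and write $X:=\ac_{\flag}W=(i_W:A_W\to\Herm(\calC_W))$. By \cref{direct.thm.unit} (1), the unit $\eta^{\flag}_W:W\to\rmN_{\flag}X$ is an equivalence. Postcomposition with it therefore gives a natural equivalence
\begin{align}
  \Map_{\Fun(\prism_{\rev}^{\myop},\An)}(E_{\rev},W)
  \xrightarrow{\simeq}
  \Map_{\Fun(\prism_{\rev}^{\myop},\An)}(E_{\rev},\rmN_{\flag}X).
\end{align}
This equivalence is compatible with precomposition along $e_{\rev}$, along $\yo_{\rev}[1]\to E_{\rev}$ classifying $(0,1)$, and along $E_{\rev}\to\yo_{\rev}[0]$.

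Next I apply \cref{direct.lem.walking-unitary-evaluation} (1) to $X$. This identifies the target with
\begin{align}
  (A_W\times A_W)\times_{i_W^2,\Herm(\calC_W)^2,(\ev_0,\ev_1)}\Fun([1],\Herm(\calC_W)).
\end{align}
I then base change along the equivalence $(\eta^{\flag}_W)_0:W_0\xrightarrow{\simeq}A_W$ of \cref{direct.lem.underlying-realization} (2). This replaces $A_W\times A_W$ by $W_0\times W_0$ and $i_W^2$ by $(i_W(\eta^{\flag}_W)_0)^2$. The result is exactly the right-hand side of \cref{direct.lem.unitaries-as-hermitian-paths} (1), which is $W_1^{\rmu}$. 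This proves (1).

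The remaining items follow by matching each structure map across this chain.

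\textbf{Item (2).} \Cref{direct.lem.walking-unitary-evaluation} (2) identifies vertex evaluation with the endpoints in $A_W$. \Cref{direct.lem.unitaries-as-hermitian-paths} (2) identifies the endpoints of $W_1^{\rmu}$ with the two projections to $W_0\times W_0$. The degree-zero unit $(\eta^{\flag}_W)_0$ intertwines these two descriptions, so the equivalence is compatible with both endpoints.

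\textbf{Item (3).} \Cref{direct.lem.walking-unitary-evaluation} (3) says that evaluating $(0,1)$ in $(\rmN_{\flag}X)_1$ is forgetting the Hermitian structure. \Cref{direct.lem.unitary-forgetful-and-degeneracy} (1) says the forgetful map $W_1^{\rmu}\to W_1$ is also forgetting the Hermitian structure, transported through the equivalence of \cref{direct.lem.equivalence-action}. That equivalence is induced by the same unit $\eta^{\flag}_W$ in degree one, so the two forgetful maps agree.

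\textbf{Item (4).} \Cref{direct.lem.walking-unitary-evaluation} (4) identifies precomposition with $E_{\rev}\to\yo_{\rev}[0]$ with the constant path at $i_W(a)$. \Cref{direct.lem.unitary-forgetful-and-degeneracy} (2) identifies $s_0^{\rmu}$ with the constant path at $i_W(\eta^{\flag}_W)_0(w)$. These agree after composing with $(\eta^{\flag}_W)_0$.

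The main obstacle is not any single step but coherence. The identification of $W_1^{\rmu}$ in \cref{direct.lem.unitaries-as-hermitian-paths} passes through homotopy fixed points and the adjunct $\widetilde i_W$, whereas \cref{direct.lem.walking-unitary-evaluation} passes through $\Map(\mathfrak u,X)$ and the groupoid completion $[1]\to I$. One must check that both yield the same path-anima model $\Fun([1],\Herm(\calC_W))$ with the same endpoint maps, and not merely abstractly equivalent ones.

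To handle this, I would note that both descriptions are pullbacks of $\Fun([1],\Herm(\calC_W))\to\Herm(\calC_W)^2$ along maps from $W_0\times W_0$. These maps agree by the adjunction identity $(\widetilde i_W)^{hC_2}c_{W_0}\simeq i_W(\eta^{\flag}_W)_0$. It is then enough to compare the maps into the base and into $\Fun([1],\Herm(\calC_W))$. The comparison of forgetful maps in (3) reduces to the compatibility of \cref{direct.lem.equivalence-action} with the unit, since that lemma's equivalence is constructed from $\eta^{\flag}_W$ in degree one.
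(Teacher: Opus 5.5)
Your proposal is correct and follows the paper's proof essentially step for step: you transport along the unit equivalence $\eta^{\flag}_W$, apply \cref{direct.lem.walking-unitary-evaluation}~(1), base change along $(\eta^{\flag}_W)_0\times(\eta^{\flag}_W)_0$ to reach the description in \cref{direct.lem.unitaries-as-hermitian-paths}~(1), and then match items (2)--(4) using the corresponding parts of \cref{direct.lem.walking-unitary-evaluation,direct.lem.unitaries-as-hermitian-paths,direct.lem.unitary-forgetful-and-degeneracy} together with naturality of the unit in degrees zero and one. Your closing coherence remark only makes explicit what the paper leaves implicit: both sides are pullbacks of $\Fun([1],\Herm(\calC_W))\to\Herm(\calC_W)^2$ along maps identified by $(\widetilde i_W)^{hC_2}c_{W_0}\simeq i_W(\eta^{\flag}_W)_0$.
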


\begin{proof}
  (1)
  Write $\ac_{\flag}W=(i_W:A_W\to\Herm(\calC_W))$.
  By \cref{direct.thm.unit} (1), both $\eta_W^{\flag}:W\to\rmN_{\flag}\ac_{\flag}W$ and $(\eta_W^{\flag})_0:W_0\to A_W$ are equivalences.
  Applying \cref{direct.lem.walking-unitary-evaluation} (1), pulling back along $(\eta_W^{\flag})_0\times(\eta_W^{\flag})_0$, and using \cref{direct.lem.unitaries-as-hermitian-paths} (1), we obtain
  \begin{align}
    \Map_{\Fun(\prism_{\rev}^{\myop},\An)}(E_{\rev},W)
    &\simeq
    \Map_{\Fun(\prism_{\rev}^{\myop},\An)}(E_{\rev},\rmN_{\flag}\ac_{\flag}W)\\*
    &\simeq W_1^{\rmu}.
  \end{align}

  (2)
  Part (2) of \cref{direct.lem.unitaries-as-hermitian-paths,direct.lem.walking-unitary-evaluation} and naturality of the unit identify both endpoint evaluations.
  In particular, restriction along $e_{\rev}$ corresponds to the first endpoint projection.

  (3)
  By \cref{direct.lem.walking-unitary-evaluation} (3) and \cref{direct.lem.unitary-forgetful-and-degeneracy} (1), both morphisms are induced on paths by $\Fun([1],\epsilon_{\calC_W})$.
  Naturality of the unit in degree one gives the asserted compatibility.

  (4)
  By \cref{direct.lem.walking-unitary-evaluation} (4) and \cref{direct.lem.unitary-forgetful-and-degeneracy} (2), both morphisms are induced by constant paths.
  Naturality of the unit in degree zero gives the asserted compatibility.
\end{proof}

\begin{corollary}\label{direct.cor.coherent-unitary-completeness}
  Let $W\in\SegRev(\An)$ and let $X=(i:A\to\Herm(\calC))\in\HermFlag_{(\infty,1)}$.
  The following statements hold.
  \begin{enumerate}
    \item $W$ is unitarily complete if and only if $s_0^{\rmu}:W_0\to W_1^{\rmu}$ is an equivalence.
    \item There is a natural equivalence of anima over $A\times A$:
    \begin{align}
      (\rmN_{\flag}X)_1^{\rmu}
      \simeq A\times_{\Herm(\calC)}A.
    \end{align}
    \item The equivalence in (2) identifies $s_0^{\rmu}$ with the diagonal and the first endpoint evaluation with the first projection.
    \item $\rmN_{\flag}X$ is unitarily complete if and only if $i$ is fully faithful.
  \end{enumerate}
\end{corollary}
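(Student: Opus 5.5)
The plan is to read all four statements off \cref{direct.thm.coherent-unitary-representability}, \cref{direct.lem.walking-unitary-evaluation} and \cref{direct.lem.e-and-u}.

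For (1), \cref{direct.thm.coherent-unitary-representability} (1), (2) identifies $e_{\rev}^*:\Map(E_{\rev},W)\to W_0$ with the first endpoint projection $p_0:W_1^{\rmu}\to W_0$. By (4), precomposition with the collapse $E_{\rev}\to\yo_{\rev}[0]$ corresponds to $s_0^{\rmu}$. The collapse composed with $e_{\rev}$ is the identity of $\yo_{\rev}[0]$, so $p_0s_0^{\rmu}\simeq\id_{W_0}$. A retraction of $s_0^{\rmu}$ is an equivalence if and only if $s_0^{\rmu}$ is. Hence $W$ is unitarily complete exactly when $s_0^{\rmu}$ is an equivalence.

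For (2) and (3), first note that $\rmN_{\flag}X$ is Segal by \cref{flagged-nerve.lem.segal}, so \cref{direct.thm.coherent-unitary-representability} applies to $W=\rmN_{\flag}X$. It gives
\begin{align}
  (\rmN_{\flag}X)_1^{\rmu}\simeq\Map(E_{\rev},\rmN_{\flag}X)\simeq(A\times A)\times_{\Herm(\calC)^2}\Fun([1],\Herm(\calC)),
\end{align}
where the second equivalence is \cref{direct.lem.walking-unitary-evaluation} (1). Since $\Herm(\calC)$ is an anima, the right-hand side is the path-space model of $A\times_{\Herm(\calC)}A$. Both equivalences are over $A\times A$ by part (2) of each result.

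The constant path at $i(a)$ corresponds to the diagonal point $(a,a,\mathrm{refl})$. So, combining \cref{direct.thm.coherent-unitary-representability} (4) with \cref{direct.lem.walking-unitary-evaluation} (4), $s_0^{\rmu}$ is identified with the diagonal $\delta$. The first endpoint evaluation becomes $p_0$.

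The main obstacle is the identification of $W_0=(\rmN_{\flag}X)_0$ with $A$. Here $(\rmN_{\flag}X)_0\simeq A$ by \cref{flagged-nerve.lem.value-of-flagged-nerve}. One must check that this equivalence is compatible with the one implicit in \cref{direct.thm.coherent-unitary-representability}, which passes through $\ac_{\flag}\rmN_{\flag}X$ and the unit. I would handle this by naturality: the unit $\eta^{\flag}_{\rmN_{\flag}X}$ followed by $\rmN_{\flag}(\varepsilon^{\flag}_X)$ is the identity, and \cref{direct.thm.counit} (1) says $\varepsilon^{\flag}_X$ is the identity on $A$. Alternatively, one can bypass this by arguing directly via \cref{direct.lem.walking-unitary-evaluation}, which is stated for $X$ itself.

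For (4), combine (1) and (3): $\rmN_{\flag}X$ is unitarily complete if and only if $\delta:A\to A\times_{\Herm(\calC)}A$ is an equivalence, which holds if and only if $i$ is a monomorphism of anima, i.e.\ fully faithful. This agrees with \cref{direct.thm.unitary}.
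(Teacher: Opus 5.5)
Your proposal is correct and follows the paper's proof essentially step for step. For (1) you use \cref{direct.thm.coherent-unitary-representability} (1), (2), (4) with $\ev_0 s_0^{\rmu}\simeq\id_{W_0}$ and two-out-of-three; for (2)--(3) you compose it with \cref{direct.lem.walking-unitary-evaluation}, using $\Fun([1],\Herm(\calC))\simeq\Herm(\calC)$ over the diagonal; and (4) follows from (1) and (3). The compatibility of $(\rmN_{\flag}X)_0\simeq A$ that you flag as the main obstacle is already covered by the endpoint-compatibility clauses (2) of the two cited results, both of which land in $(\rmN_{\flag}X)_0$, so no extra triangle-identity argument is needed.
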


\begin{proof}
  (1)
  By \cref{direct.thm.coherent-unitary-representability} (1), (2), (4), unitary completeness is equivalent to the first endpoint projection being an equivalence, and $\ev_0s_0^{\rmu}\simeq\id_{W_0}$.
  The two-out-of-three property proves (1).

  (2), (3)
  By \cref{direct.thm.coherent-unitary-representability} (1), (2), (4) and \cref{direct.lem.walking-unitary-evaluation} (1), (2), (4), there are natural equivalences over $A\times A$:
  \begin{align}
    (\rmN_{\flag}X)_1^{\rmu}
    &\simeq
    (A\times A)
    \times_{i^2,\Herm(\calC)^2,(\ev_0,\ev_1)}
    \Fun([1],\Herm(\calC))\\
    &\simeq A\times_{\Herm(\calC)}A.
  \end{align}
  The last equivalence uses the constant-path equivalence $\Herm(\calC)\xrightarrow{\simeq}\Fun([1],\Herm(\calC))$ over $\Herm(\calC)^2$.
  The endpoint evaluations correspond to the projections to $A$, and $s_0^{\rmu}$ corresponds to the diagonal.

  (4)
  By (1) and (3), $\rmN_{\flag}X$ is unitarily complete if and only if $\Delta:A\to A\times_{\Herm(\calC)}A$ is an equivalence.
  This is equivalent to full faithfulness of $i$.
\end{proof}

\subsection{The unitary Dwyer--Kan criterion}

We characterize equivalences after unitary completion by full faithfulness and essential surjectivity up to unitary equivalences (\cref{presentation.thm.unitary-dk}).

\begin{definition}\label{presentation.def.unitary-dk}
  Let $f:W\to V$ be a morphism in $\SegRev(\An)$.
  We will say that $f$ is
  \begin{enumerate}
    \item \emph{fully faithful} if $\Map_W(x,y)\to\Map_V(f_0x,f_0y)$ is an equivalence for every $x,y\in W_0$;
    \item \emph{unitarily essentially surjective} if $W_0\times_{f_0,V_0,\ev_{0}}V_1^{\rmu} \xrightarrow{\ev_{1} \operatorname{pr}_2}V_0$ is essentially surjective.
  \end{enumerate}
\end{definition}

\begin{lemma}\label{presentation.lem.flagged-dk}
  Let $X=(i:A\to\Herm(\calC)), Y=(j:B\to\Herm(\calD)) \in \FlDagCat_{(\infty,1)}$, and let $(F,f):X\to Y$ be a morphism in $\FlDagCat_{(\infty,1)}$.
  Then $L_{\co\flag}|_{\flag}(F,f)$ is an equivalence if and only if both of the following conditions hold:
  \begin{enumerate}
    \item $F_*:\calC_*\to\calD_*$ is fully faithful;
    \item the projection $A\times_{\Herm(F)i,\Herm(\calD),j}B\to B$ is essentially surjective.
  \end{enumerate}
\end{lemma}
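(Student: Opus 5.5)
By \cref{flag.prop.ambient-coflagged-reflection}, $L_{\co\flag}|_{\flag}$ leaves the underlying anti-involutive $(\infty,1)$-category unchanged. It replaces the flag by its image, so $L_{\co\flag}|_{\flag}(F,f)$ has anti-involutive component $F$ and flag component the induced morphism
\begin{align}
  \bar f:\image(i)\to\image(j)
\end{align}
of full subanima of $\Herm(\calC)$ and $\Herm(\calD)$. This $\bar f$ is the restriction of $\Herm(F)$, well defined since $\Herm(F)i\simeq jf$. By \cref{target.lem.hermitian-flag-equivalences} and \cref{properties.prop.anti-involutive} (2), the morphism $L_{\co\flag}|_{\flag}(F,f)$ is an equivalence if and only if $F_*$ and $\bar f$ are both equivalences. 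The plan is to translate these two conditions into (1) and (2).

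First I reinterpret (2) on $\pi_0$. The projection $A\times_{\Herm(\calD)}B\to B$ is essentially surjective if and only if, for every $b\in B$, there is $a\in A$ with $\Herm(F)i(a)\simeq j(b)$ in $\Herm(\calD)$. Equivalently, $\pi_0(j)(\pi_0B)\subseteq\pi_0(\Herm(F)i)(\pi_0A)$. The reverse inclusion always holds because $\Herm(F)i\simeq jf$. Hence (2) is equivalent to $\Herm(F)i$ and $j$ hitting the same components of $\Herm(\calD)$.

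For ($\Leftarrow$), I first show that $F_*$ is essentially surjective. Since $Y$ is flagged, every object of $\calD_*$ is equivalent to $\epsilon_{\calD}j(b)$ for some $b$. By (2) this is equivalent to $\epsilon_{\calD}\Herm(F)i(a)\simeq F_*\epsilon_{\calC}i(a)$ for some $a$, using naturality of $\epsilon$. Together with (1), $F_*$ is an equivalence, so $F$ and hence $\Herm(F)$ are equivalences by \cref{target.lem.homotopy-fixed-point-formalism} (1). The map $\bar f$ is then the restriction of the equivalence $\Herm(F)$ to the full subanimum on the components meeting $i(A)$. Its target is the full subanimum of components meeting $j(B)$, and by the $\pi_0$ reformulation of (2) these two component sets correspond under $\Herm(F)$. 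Therefore $\bar f$ is an equivalence.

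For ($\Rightarrow$), $F_*$ is an equivalence and so in particular fully faithful, which gives (1). Then $\Herm(F)$ is an equivalence, and $\bar f$ being an equivalence forces $\pi_0\image(j)=\pi_0(\Herm(F))(\pi_0\image(i))$, which is (2) by the reformulation. The step I expect to need the most care is identifying the flag component of $L_{\co\flag}(F,f)$ with the restriction of $\Herm(F)$ to images. I would justify it by the functoriality of image factorization in \cite[Proposition~5.2.8.17]{HTT}: the induced map on the middle terms is the unique filler compatible with $m$ and $e$.
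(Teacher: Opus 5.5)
Your proposal is correct and follows essentially the same route as the paper. Both arguments identify $L_{\co\flag}|_{\flag}(F,f)$ as $F$ together with the induced map $\bar f$ on images, reduce via \cref{target.lem.hermitian-flag-equivalences} to $F_*$ and $\bar f$ being equivalences, derive essential surjectivity of $F_*$ from flaggedness of $Y$ and (2), and then treat $\bar f$ as the restriction of the equivalence $\Herm(F)$ to full subanima; the only difference is that you recast (2) once as equality of the sets of components of $\Herm(\calD)$ hit by $\Herm(F)i$ and by $j$, whereas the paper argues pointwise with explicit paths in each direction.
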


\begin{proof}
  Functorial image factorization supplies a morphism $\bar f : \image(i)\to\image(j)$ over $\Herm(F)$.
  By \cref{flag.thm.univalentization}, the underlying $(\infty,1)$-category component of $L_{\co\flag}|_{\flag}(F,f)$ is $F_*$ and its flag component is $\bar f$.
  By \cref{target.lem.hermitian-flag-equivalences}, $L_{\co\flag}|_{\flag}(F,f)$ is an equivalence if and only if $F_*$ and $\bar f$ are equivalences.

  ($\Rightarrow$)
  Suppose first that $L_{\co\flag}|_{\flag}(F,f)$ is an equivalence.
  Then $F_*$ and $\bar f$ are equivalences, so condition (1) holds.
  Write the image factorizations as $A \xtwoheadrightarrow{q_i} \image(i) \xhookrightarrow{\ell_i} \Herm(\calC)$ and $B \xtwoheadrightarrow{q_j} \image(j) \xhookrightarrow{\ell_j} \Herm(\calD)$.
  They satisfy $\ell_iq_i\simeq i$, $\ell_jq_j\simeq j$, and $\ell_j\bar f\simeq\Herm(F)\ell_i$.
  For $b\in B$, essential surjectivity of $q_i$ and the equivalence $\bar f$ give an $a\in A$ and a path $\bar f(q_i(a))\simeq q_j(b)$ in $\image(j)$.
  Applying $\ell_j$ gives a path in $\Herm(\calD)$:
  \begin{align}
    \Herm(F)i(a)
    \simeq \Herm(F)\ell_i(q_i(a)) 
    \simeq \ell_j\bar f(q_i(a)) 
    \simeq \ell_j(q_j(b))
    \simeq j(b).
  \end{align}
  This gives a point in the fiber of the projection in condition~(2) over $b$.
  Hence condition (2) holds.

  ($\Leftarrow$)
  Assume (1) and (2).
  Every object of $\calD_*$ is equivalent to $\epsilon_{\calD}j(b)$ for some $b\in B$.
  Condition (2), followed by $\epsilon_{\calD}$, gives an $x\in A$ and an equivalence $F_*(\epsilon_{\calC}i(x)) \simeq \epsilon_{\calD}j(b)$.
  Thus $F_*$ is essentially surjective, and (1) makes it an equivalence.
  Consequently $F$ and $\Herm(F)$ are equivalences.

  The morphism $\bar f$ is the restriction of this equivalence of Hermitian anima to two full subanima.
  It is therefore fully faithful.
  Every component of $\image(j)$ contains $j(b)$ for some $b\in B$, and (2) places it in the image of $\bar f$.
  Hence $\bar f$ is also essentially surjective and thus an equivalence.
  The criterion above now shows that $L_{\co\flag}|_{\flag}(F,f)$ is an equivalence.
\end{proof}

\begin{proposition}[The unitary Dwyer--Kan criterion]\label{presentation.thm.unitary-dk}
  Let $f : W \to V$ be a morphism in $\Seg_{\rev}(\An)$.
  Then the following conditions are equivalent:
  \begin{enumerate}
    \item $L_{\rmu}f$ is an equivalence.
    \item $f$ is fully faithful and unitarily essentially surjective.
  \end{enumerate}
\end{proposition}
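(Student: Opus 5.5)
The plan is to transport the statement across the flagged nerve equivalence and then apply \cref{presentation.lem.flagged-dk}. By \cref{direct.thm.flagged-equivalence} (1), write $W\simeq\rmN_{\flag}X$ and $V\simeq\rmN_{\flag}Y$ with $X=(i:A\to\Herm(\calC))$ and $Y=(j:B\to\Herm(\calD))$ flagged, namely $X=\ac_{\flag}W$ and $Y=\ac_{\flag}V$. Full faithfulness of the nerve on flagged objects writes $f\simeq\rmN_{\flag}(F,g)$ for a unique morphism $(F,g):X\to Y$ in $\FlDagCat_{(\infty,1)}$. By \cref{flag.cor.unitary-completion}, $L_{\rmu}f$ corresponds to $L_{\co\flag}|_{\flag}(F,g)$ under the equivalences of Theorem~A, so (1) holds if and only if $L_{\co\flag}|_{\flag}(F,g)$ is an equivalence. \Cref{presentation.lem.flagged-dk} then reduces (1) to two conditions: $F_*$ is fully faithful, and $A\times_{\Herm(\calD)}B\to B$ is essentially surjective. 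It remains to match these with the two conditions of \cref{presentation.def.unitary-dk}.

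For full faithfulness, \cref{flagged-nerve.lem.value-of-flagged-nerve} in degree one identifies $\Map_{\rmN_{\flag}X}(a,a')$ with $\Map_{\calC_*}(\epsilon_{\calC}i(a),\epsilon_{\calC}i(a'))$, naturally in $(F,g)$. Hence $f$ is fully faithful if and only if $F_*$ is fully faithful on the objects in the image of $\epsilon_{\calC}i$. Since $X$ is flagged, these objects exhaust $\calC_*$ up to equivalence, so this is exactly full faithfulness of $F_*$.

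For unitary essential surjectivity, \cref{direct.cor.coherent-unitary-completeness} (2), (3) gives $V_1^{\rmu}\simeq B\times_{\Herm(\calD)}B$ over $B\times B$, with $\ev_0,\ev_1$ the two projections. Then
\begin{align}
  W_0\times_{f_0,V_0,\ev_0}V_1^{\rmu}
  \simeq A\times_{g,B}(B\times_{\Herm(\calD)}B)
  \simeq A\times_{jg,\Herm(\calD),j}B,
\end{align}
and $\ev_1\pr_2$ becomes the projection to $B$. Using $jg\simeq\Herm(F)i$, which holds because $(F,g)$ is a morphism of Hermitian flags, this is precisely the projection in condition (2) of \cref{presentation.lem.flagged-dk}.

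The main obstacle is naturality. The identifications of $V_1^{\rmu}$ and $\Map_W$ must be compatible with $f$ and with $f_0\simeq g$, so that the pullback above is taken along the correct maps. I expect this to follow from the naturality of \cref{direct.thm.coherent-unitary-representability} in $W$ and of \cref{flagged-nerve.lem.value-of-flagged-nerve} in $X$, together with the triangle identities relating $\eta^{\flag}$ and $\varepsilon^{\flag}$.
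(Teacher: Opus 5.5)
Your proposal is correct and follows essentially the same route as the paper: you identify $f$ with the image under $\rmN_{\flag}$ of a morphism of flagged objects (your $(F,g)$, which is $\ac_{\flag}f$ in the paper), use \cref{flag.cor.unitary-completion} to reduce to \cref{presentation.lem.flagged-dk}, and then match full faithfulness via the degree-one pullback of \cref{flagged-nerve.lem.value-of-flagged-nerve} and unitary essential surjectivity via \cref{direct.cor.coherent-unitary-completeness} (2), (3). The naturality issue you flag is handled in the paper as you expect, through naturality of the unit $\eta^{\flag}$ together with the flag compatibility $jg\simeq\Herm(F)i$.
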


\begin{proof}
  By \cref{direct.thm.unit} (1), (2), $\ac_{\flag}W$ and $\ac_{\flag}V$ are flagged, and the unit maps $\eta^{\flag}_W$ and $\eta^{\flag}_V$ are equivalences.
  Naturality of the unit identifies $f$ with $\rmN_{\flag}\ac_{\flag}f$ under these equivalences.
  By \cref{flag.cor.unitary-completion}, $L_{\rmu}f$ is an equivalence if and only if $L_{\co\flag}|_{\flag}(\ac_{\flag}f)$ is an equivalence.
  We compare the two conditions of \cref{presentation.lem.flagged-dk}, applied to $\ac_{\flag}f$, with those in condition (2).
  Write $\ac_{\flag}W=(i_W:A_W\to\Herm(\calC_W))$ and $\ac_{\flag}V = (i_V:A_V\to\Herm(\calC_V))$.

  First, the degree-one pullback of \cref{flagged-nerve.lem.value-of-flagged-nerve} and the unit give, naturally in $x,y\in W_0$,
  \begin{align}
    \Map_W(x,y)
    \simeq
    \Map_{(\calC_W)_*}
    (\epsilon_{\calC_W}i_W(\eta^{\flag}_W)_0(x),
    \epsilon_{\calC_W}i_W(\eta^{\flag}_W)_0(y)).
  \end{align}
  Naturality identifies the map induced by $f$ with the map induced by the underlying functor of $\ac_{\flag}f$.
  Since $\ac_{\flag}W$ is flagged and $(\eta^{\flag}_W)_0:W_0\to A_W$ is an equivalence, the composite $\epsilon_{\calC_W}i_W(\eta^{\flag}_W)_0$ is essentially surjective.
  Transport along equivalences therefore shows that $f$ is fully faithful if and only if the underlying functor of $\ac_{\flag}f$ is fully faithful.
  This is condition (1) of \cref{presentation.lem.flagged-dk}.

  Next, the degree-zero case of \cref{flagged-nerve.lem.value-of-flagged-nerve} identifies the flag component of $\ac_{\flag}f$ with $(\rmN_{\flag}\ac_{\flag}f)_0:A_W\to A_V$.
  Naturality of the unit and \cref{direct.cor.coherent-unitary-completeness} (2), (3) give
  \begin{align}
    W_0\times_{f_0,V_0,\ev_0}V_1^{\rmu}
    &\simeq
    A_W\times_{(\rmN_{\flag}\ac_{\flag}f)_0,A_V,\ev_0}
    (\rmN_{\flag}\ac_{\flag}V)_1^{\rmu}\\
    &\simeq
    A_W\times_{i_V(\rmN_{\flag}\ac_{\flag}f)_0,\Herm(\calC_V),i_V}A_V.
  \end{align}
  The first equivalence covers $(\eta^{\flag}_V)_0:V_0\xrightarrow{\simeq}A_V$, and the second is over $A_V$.
  Since $\ac_{\flag}f$ is a morphism of Hermitian flags, its compatibility with the flag maps identifies the last projection to $A_V$ with the projection in \cref{presentation.lem.flagged-dk} (2).
  Hence $f$ is unitarily essentially surjective if and only if condition (2) of that lemma holds for $\ac_{\flag}f$.

  By \cref{presentation.lem.flagged-dk}, these two comparisons identify condition (2) of the proposition with $L_{\co\flag}|_{\flag}(\ac_{\flag}f)$ being an equivalence.
\end{proof}

\section*{References}
\printbibliography[heading=none]
\end{document}